\documentclass[11pt]{article}

\usepackage[a4paper,margin=1in]{geometry}
\usepackage{amsmath,amssymb,amsthm,mathtools}
\usepackage{microtype}
\usepackage{enumitem}
\usepackage{booktabs}
\usepackage{graphicx}
\usepackage{aliascnt}
\usepackage{xcolor}
\usepackage[colorlinks=true,linkcolor=blue!55!black,citecolor=blue!55!black,urlcolor=blue!55!black]{hyperref}
\usepackage[nameinlink,capitalise,noabbrev]{cleveref}

\usepackage{tikz}
\usetikzlibrary{arrows.meta,calc}

\newtheorem{theorem}{Theorem}[section]
\newaliascnt{lemma}{theorem}
\newtheorem{lemma}[lemma]{Lemma}
\aliascntresetthe{lemma}
\newaliascnt{proposition}{theorem}
\newtheorem{proposition}[proposition]{Proposition}
\aliascntresetthe{proposition}
\newaliascnt{corollary}{theorem}
\newtheorem{corollary}[corollary]{Corollary}
\aliascntresetthe{corollary}
\newaliascnt{claim}{theorem}

\aliascntresetthe{claim}
\newaliascnt{problem}{theorem}
\newtheorem{problem}[problem]{Problem}
\aliascntresetthe{problem}
\theoremstyle{definition}
\newaliascnt{definition}{theorem}
\newtheorem{definition}[definition]{Definition}
\aliascntresetthe{definition}
\newaliascnt{construction}{theorem}
\newtheorem{construction}[construction]{Construction}
\aliascntresetthe{construction}
\theoremstyle{remark}
\newtheorem*{remark}{Remark}

\newcommand{\eps}{\varepsilon}
\newcommand{\cA}{\mathcal A}
\newcommand{\cC}{\mathcal C}

\newcommand{\cF}{\mathcal F}

\newcommand{\cK}{\mathcal K}
\newcommand{\cP}{\mathcal P}

\newcommand{\cT}{\mathcal T}

\newcommand{\ext}{\operatorname{ext}}

\newcommand{\thr}{\vartheta}

\title{Tight Hamilton Cycles in Linearly Quasirandom 3-Graphs}
\author{Xichao Shu\thanks{Institute of Mathematics, Leipzig University, Leipzig, Germany. Supported by the Alexander von Humboldt Foundation in the framework of the Alexander von Humboldt Professorship of Daniel Kr{\'a}l' endowed by the Federal Ministry of Education and Research. Email: \texttt{xichao.shu@uni-leipzig.de}.}}
\date{}

\begin{document}
\maketitle
\begin{abstract}
We study tight Hamilton cycles in linearly quasirandom $3$-graphs.  An
$n$-vertex $3$-graph $H$ is $(p,\mu)$-dense if $e_H(X,Y,Z)\ge p|X||Y||Z|-\mu n^3$ for all $X,Y,Z\subseteq V(H)$. 
Ara\'ujo, Piga and Schacht asked whether the conditions
$p,\alpha>1/4$ and $\delta_2(H)\ge\alpha n$ force a tight Hamilton cycle. We give a negative answer to this
question.  More generally, we determine the asymptotically sharp
minimum-codegree threshold for the existence of a tight Hamilton cycle
for every density $p\in(0,1)$.  The resulting threshold is a
discontinuous piecewise-defined function with four distinct regimes,
and matching constructions show that every piece is best possible.
The proof combines the absorption method and a fixed-length connecting
lemma above density $1/3$ with a canonical-component Hamilton framework
at and below density $1/3$.
\end{abstract}

\section{Introduction}
\label{sec:introduction}

\subsection{Background}
Tur{\'a}n-type extremal problems form a central theme in extremal
combinatorics: given a fixed hypergraph $F$, one asks how many edges an
$F$-free hypergraph can have. 
For a $k$-graph $F$, let $\operatorname{ex}_k(n,F)$ denote the maximum
number of edges in an $F$-free $k$-graph on $n$ vertices, and let
\[
    \pi(F)
    :=
    \lim_{n\to\infty}
    \frac{\operatorname{ex}_k(n,F)}{\binom nk}
\]
be its Tur{\'a}n density.  For graphs, the classical results of Mantel
and Tur{\'a}n determine the extremal numbers of complete graphs
\cite{Mantel1907,Turan1941}, while the Erd{\H{o}}s--Stone theorem
determines the Tur{\'a}n density of every graph
\cite{ErdosStone1946}.

The corresponding problems for uniform hypergraphs are substantially
more difficult.  A central and longstanding example is Tur{\'a}n's
tetrahedron problem, which asks for the Tur{\'a}n density of the
complete $3$-graph $K_4^{(3)}$.  Tur{\'a}n conjectured that
\[
    \pi(K_4^{(3)})=\frac59,
\]
but this remains open~\cite{Turan1941,Keevash2011Survey}. Erd{\H{o}}s offered \$500 for determining
$\pi(K_t^{(k)})$ for any fixed $t>k>2$, and \$1000 for resolving the
whole family of complete-hypergraph Tur{\'a}n problems
\cite{Erdos1981}.  For further background on hypergraph Tur{\'a}n problems, see
Keevash~\cite{Keevash2011Survey}.

Many standard lower-bound constructions for hypergraph Tur{\'a}n
problems are blow-ups of finite configurations and contain
linear-sized vertex sets of very low induced density.  Erd{\H{o}}s and
S{\'o}s~\cite{ES2} proposed excluding this source of inhomogeneity by
imposing hereditary density conditions on the host hypergraph.  This
led to the notion of uniform Tur{\'a}n density. A major exact result was obtained independently by Glebov,
Kr{\'a}l' and Volec~\cite{GlebovKralVolec2016} and by Reiher,
R{\"o}dl and Schacht~\cite{ReiherRodlSchacht2018WeakQR}: both groups
proved that the uniform Tur{\'a}n density of $K_4^{(3)-}$ is $1/4$,
where $K_4^{(3)-}$ denotes the tetrahedron with one edge removed.
Reiher,
R{\"o}dl and Schacht~\cite{ReiherRodlSchacht2018Vanishing} subsequently characterised the $3$-graphs of
uniform Tur{\'a}n density zero and showed that every positive uniform
Tur{\'a}n density is at least $1/27$.  Garbe, Kr{\'a}l' and Lamaison~\cite{GarbeKralLamaison2024}
later showed that this lower bound is attained.  In another direction, Buci{\'c}, Cooper,
Kr{\'a}l', Mohr and Munh{\'a} Correia~\cite{BucicCooperKralMohrMunhaCorreia2023} determined the uniform
Tur{\'a}n densities of tight cycles in $3$-uniform hypergraphs. Further results and open
problems are discussed in the survey of
Reiher~\cite{Reiher2020Survey}.

These hereditary density conditions belong to the broader theory of
hypergraph quasirandomness.  For graphs, Chung, Graham and Wilson showed
that several natural quasirandomness properties are equivalent
\cite{ChungGrahamWilson1989}.  This equivalence breaks down for
hypergraphs, where several genuinely different notions arise; see, for
example,
\cite{Ch90,Ch91,Chung10,ConlonHanPersonSchacht2012,
LenzMubayi_eig,lenz2015poset,Towsner,quasihyper}.
The hereditary density condition appearing in the definition of uniform
Tur{\'a}n density may be viewed as a one-sided form of linear
quasirandomness.  In this paper we use the following formulation, which
is particularly convenient for monotone containment problems.

\begin{definition}[$(p,\mu)$-density]
\label{def:dense}
Let $k\ge2$ and let $0<p,\mu<1$.  A $k$-graph $H$ on $n$ vertices is
called \emph{$(p,\mu)$-dense} if, for all
$X_1,\ldots,X_k\subseteq V(H)$,
\[
    e_H(X_1,\ldots,X_k)
    \ge
    p|X_1|\cdots|X_k|-\mu n^k,
\]
where $e_H(X_1,\ldots,X_k)$ denotes the number of ordered $k$-tuples
$(x_1,\ldots,x_k)\in X_1\times\cdots\times X_k$ such that
$\{x_1,\ldots,x_k\}\in E(H)$.
\end{definition}

Linear quasirandomness is naturally adapted to linear hypergraphs, in which no two edges meet
in more than one vertex. The results of
Kohayakawa, Nagle, R{\"o}dl and Schacht~\cite{KohayakawaNagleRodlSchacht2010} and of Conlon, H{\`a}n,
Person and Schacht~\cite{ConlonHanPersonSchacht2012} show that this condition is closely connected with
counting labelled copies of every fixed linear hypergraph.  Non-linear spanning configurations are
considerably more delicate, since their edges repeatedly overlap in
larger sets.

We study a spanning extension of the above extremal questions.  Rather
than asking whether a fixed hypergraph is forced by a hereditary density
condition, we ask when a linearly quasirandom hypergraph must contain a
spanning tight cycle.  This combines the Erd{\H{o}}s--S{\'o}s notion of uniform density with
Dirac-type conditions for Hamiltonicity. The prototype is Dirac's theorem~\cite{Dirac1952}, which states that every
graph on $n\ge3$ vertices with minimum degree at least $n/2$ contains
a Hamilton cycle.

We focus on 3-uniform hypergraphs.  A \emph{3-graph} is a pair
$H=(V,E)$ with $E\subseteq\binom V3$.  A tight path is an ordering
$v_1,\ldots,v_m$ of distinct vertices whose edges are the consecutive
triples $v_iv_{i+1}v_{i+2}$ for $i=1,\ldots,m-2$.  Imposing the same
condition cyclically gives a tight cycle, and a tight Hamilton cycle is
a tight cycle containing every vertex of $H$. 
More generally, for $1\le\ell<k$, an $\ell$-cycle in a $k$-graph is
obtained from a cyclic ordering of its vertices by taking blocks of $k$
consecutive vertices whose initial positions are spaced $k-\ell$
vertices apart.  Thus, consecutive edges overlap in exactly $\ell$
vertices.  An $\ell$-cycle containing every vertex is called
Hamiltonian.  The cases $\ell=1$ and $\ell=k-1$ are referred to as
loose and tight cycles, respectively.

Hamiltonicity under linear quasirandomness was first established for
loose cycles.  Lenz, Mubayi and Mycroft~\cite{LenzMubayiMycroft2016} proved that $(p,\mu)$-density,
together with a mild minimum vertex-degree condition, forces a Hamilton
$1$-cycle.  Han, Shu and Wang~\cite{HanShuWangSODA2021,HanShuWang2026} subsequently
developed an absorption framework for non-linear Hamilton cycles in
linearly quasirandom and uniformly dense hypergraphs.  For tight Hamilton cycles
in $3$-graphs, previous positive results required stronger
quasirandomness assumptions, including cherry-quasirandomness and
localised codegree-density conditions
\cite{AignerHorevLevy2021,GanHan2022,
AraujoPigaSchacht2022}.

The local condition considered in this paper is minimum codegree. For a pair $xy\in\binom{V(H)}2$, let $N_H(x,y)=\{z\in V(H):xyz\in E(H)\}$ and $d_H(x,y)=|N_H(x,y)|$.  The minimum codegree of $H$ is
\[
    \delta_2(H)=\min_{xy\in\binom{V(H)}2}d_H(x,y).
\]

Dirac-type conditions for Hamilton cycles in dense uniform hypergraphs
have been studied extensively. Katona and Kierstead introduced
Hamilton $\ell$-cycles in uniform hypergraphs
\cite{KatonaKierstead1999}.  R{\"o}dl, Ruci{\'n}ski and
Szemer{\'e}di proved the asymptotic minimum-codegree threshold for tight
Hamilton cycles~\cite{RodlRucinskiSzemeredi2008}.  K{\"u}hn, Mycroft
and Osthus treated the complementary case $(k-\ell)\nmid k$; together,
these results asymptotically determine the minimum-codegree threshold
for every Hamilton $\ell$-cycle~\cite{KuhnMycroftOsthus2010}. 
For minimum vertex degree in $3$-graphs, Reiher, R{\"o}dl,
Ruci{\'n}ski, Schacht and Szemer{\'e}di proved that the asymptotically
sharp threshold is $5/9$
\cite{ReiherRodlRucinskiSchachtSzemeredi2019}.  The aim of the
present paper is to determine the minimum-codegree threshold when the
host $3$-graph is assumed to be linearly quasirandom.

\subsection{Our results}

Ara{\'u}jo, Piga and Schacht studied tight Hamilton cycles under
localised codegree-density assumptions and exhibited a balanced
$(1/4,1/4)$ obstruction~\cite[Example~1.2]{AraujoPigaSchacht2022}.
This suggested that, under the weaker $(p,\mu)$-density condition, the
simultaneous assumptions $p>1/4$ and $\delta_2(H)>(1/4)n$ might already
force a tight Hamilton cycle.  The question was stated explicitly in
the SODA version of Han, Shu and Wang as Conjecture~1.10
\cite{HanShuWangSODA2021}.

\begin{problem}\cite{AraujoPigaSchacht2022,HanShuWangSODA2021}
\label{pro1}
Fix $p,\alpha>1/4$.  Do there exist $n_0$ and $\mu>0$ such that every
$(p,\mu)$-dense $3$-graph $H$ on $n\ge n_0$ vertices with
$\delta_2(H)\ge\alpha n$ contains a tight Hamilton cycle?
\end{problem}

We answer this question as part of a complete determination of the
minimum-codegree threshold over the full density range $p\in(0,1)$.

Put
\[
    p_\star:=1-\frac1{\sqrt2}
        =0.292893\ldots,
\]
and let $p_0$ be the unique solution of $p_0=(1-p_0)^3$ in $(1/4,1/3)$.  Thus
\[
    p_0=0.3176721961\ldots.
\]
For $p\in(0,1)$, put $r:=1-p$ and define
\begin{equation}
\label{eq:full-threshold}
 \thr(p):=
 \begin{cases}
  \dfrac13,
      &0<p\le p_\star,\\[1.2ex]
  \dfrac{r+4r^2-2r^3}{6},
      &p_\star<p\le p_0,\\[1.2ex]
  \dfrac{r^2}{2},
      &p_0<p\le\dfrac13,\\[1.2ex]
  \left(\dfrac{1-\sqrt{(4p-1)/3}}2\right)^2,
      &\dfrac13<p<1.
 \end{cases}
\end{equation}

\begin{theorem}[Complete threshold theorem]
\label{thm:main}
For every $p\in(0,1)$ and every $\alpha>\thr(p)$, there exist
$\mu>0$ and $n_0$ such that every $(p,\mu)$-dense $3$-graph $H$ on
$n\ge n_0$ vertices with 
\[
    \delta_2(H)\ge\alpha n
\]
contains a tight Hamilton cycle.

Conversely, for every $p\in(0,1)$ and every $\eps,\mu>0$, there exist
arbitrarily large $(p,\mu)$-dense $3$-graphs $H$ satisfying
\[
    \delta_2(H)\ge\bigl(\thr(p)-\eps\bigr)n
\]
and containing no tight Hamilton cycle.  Thus $\thr(p)$ is the
asymptotically sharp minimum-codegree threshold at density $p$.
\end{theorem}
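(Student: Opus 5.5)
The proof splits into an upper-bound part (every $(p,\mu)$-dense $3$-graph above the threshold is Hamiltonian) and a lower-bound part (explicit constructions just below $\thr(p)$). I will do the constructions first, since they explain which obstruction governs each regime.

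\textbf{Lower bounds.} For each piece of \eqref{eq:full-threshold} I will exhibit a $(p,\mu)$-dense family with codegree about $\thr(p)n$ that has no tight Hamilton cycle. The natural model is a random-like construction: colour the pairs (or vertex pairs within a partition) randomly, and let the edges be the triples whose pair-colouring satisfies a local rule. Such constructions are linearly quasirandom, with density equal to the probability that a random triple is accepted.
\begin{itemize}
\item For $p\le p_\star$ the constant $1/3$ is a divisibility/space obstruction. I will use a tripartite-type rule in which every tight walk is forced to cycle through classes in a fixed pattern, so that $n\not\equiv 0\pmod 3$ kills Hamiltonicity while codegrees stay near $n/3$.
\item For $p>1/3$ the formula $\bigl((1-\sqrt{(4p-1)/3})/2\bigr)^2$ suggests a space barrier: a set $S$ of size $\beta n$ such that every edge meets $S$ in a constrained way, so that a Hamilton cycle would need $|S|\ge n/3$-type counts. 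Here $\beta$ is chosen by solving the density equation $p=\ldots(\beta)$ for the random pair-colouring, and the codegree equals $\beta^2 n$.
\item The two middle pieces, $r^2/2$ and $(r+4r^2-2r^3)/6$, should come from optimising a mixture of these two obstructions: a connectivity (disconnected tight components) barrier balanced against a parity barrier. For each piece I will verify three things: density via an expectation plus Chernoff estimate over all $X,Y,Z$; codegree via the worst pair type; and non-Hamiltonicity via an invariant along tight walks.
\end{itemize}

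\textbf{Upper bound for $p>1/3$.} I will follow the absorption method. The steps are:
\begin{enumerate}
\item A \emph{connecting lemma}: any two ordered pairs from a ``good'' set can be joined by a tight path of bounded, fixed length. Density $>1/3$ together with $\delta_2>\thr(p)n$ should make the link graphs robustly connected. Counting many such paths uses $(p,\mu)$-density on neighbourhoods, with the $\sqrt{(4p-1)/3}$ term expected to be exactly where a link-graph component argument becomes tight.
\item An \emph{absorbing path} built from vertex absorbers: for each vertex $v$ there are $\Omega(n^{c})$ short paths that can absorb $v$. These exist by codegree plus density counting, and a random selection connected via step 1 gives the absorbing path.
\item A \emph{reservoir}.
\item An \emph{almost cover} by long tight paths, via weak regularity and a fractional tight-path tiling in the reduced graph, which inherits density $p$.
\item Connecting everything and absorbing the leftover vertices.
\end{enumerate}

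\textbf{Upper bound for $p\le 1/3$.} The fixed-length connection may fail, so I will instead pass to the reduced graph and identify a canonical tight component containing a fractional perfect matching that is not bipartite-like, in the sense of supporting closed walks of every residue mod $3$. I will then run the Hamilton framework (absorption with respect to this component) inside it. The main work is a structural lemma: if $\delta_2>\thr(p)n$ and the graph is $p$-dense, then some tight component both (a) spans almost all vertices in the fractional-matching sense and (b) contains a tight closed walk of length $\not\equiv 0\pmod 3$. The threshold pieces correspond exactly to failure of (a) or (b).

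\textbf{Main obstacle.} I expect this structural lemma in the regime $p_\star<p\le 1/3$ to be the hardest step, because that is where the two barriers interact. I would first attempt to prove it by analysing link graphs of pairs: with codegree $\alpha n$, each link graph has $\ge\alpha n$ vertices, and density gives a lower bound on the edges between links. A careful optimisation over component sizes should then reproduce the cubic expressions $r+4r^2-2r^3$ and $p_0=(1-p_0)^3$.
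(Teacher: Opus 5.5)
Your architecture for the upper bounds matches the paper's, but the lower-bound mechanisms you propose for the two outer branches would violate the density hypothesis, and the middle branches are not constructed at all.

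\textbf{Lower bounds.} A vertex-class rule forcing the class sequence of every tight walk to be $3$-periodic leaves some class triple edgeless (for the sum-zero rule, $e_H(V_0,V_0,V_1)=0$). It is therefore not $(p,\mu)$-dense for any $p>0$. Randomising the phases does not rescue it. Suppose every tight component is periodic, and take a cluster $V_i$ of a slice refining the phases. A component with phase masses $x_0,x_1,x_2$ on $V_i^2$ carries density at most $3x_0x_1x_2\le(x_0+x_1+x_2)^3/9$ on $V_i^3$, so the total is at most $1/9$. This is the same computation the paper uses for aperiodicity, and it rules out any divisibility barrier for $p\in(1/9,p_\star]$.

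The first branch is instead a space barrier. Take $|X|\approx 2n/3$ with pairs inside $X$ red with probability $r$, red triangles forming a component trapped in $X$, and every other edge meeting $Y$. A Hamilton cycle then has no red consecutive pair, hence no three consecutive $X$-vertices, so $|X|\le 2|Y|$. The codegree is $\min\{r^2|X|,|Y|\}$, and keeping it near $n/3$ is exactly what produces the breakpoint $r^2=1/2$, i.e.\ $p_\star=1-1/\sqrt2$.

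For $p>1/3$, your space barrier fails in the same way: if every edge meets a set $S$ with $|S|<n/3$, then $V\setminus S$ spans no edge, contradicting density. The correct example is the biased Ara\'ujo--Piga--Schacht barrier. Pairs are red with probability $\rho$, where $\rho^3+(1-\rho)^3=p$, the edges are monochromatic triangles, and two roots extend only red, respectively only blue, pairs. Colour then propagates along any arc between the roots, and the codegree $\rho^2$ is that of red pairs.

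No parity barrier is involved in the middle. The branch $\thr_2$ is a three-layer space barrier: monochromatic triangles, blue edges all meeting $M\cup Y$ with $|M\cup Y|<n/3$, red edges missing $Y$. The point $p_0$ is where the red-only layer can no longer have density $r^3\ge p$. The branch $\thr_3$ is a rooted two-pocket barrier.

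\textbf{Upper bounds.} Both key lemmas are only conjectured, and the one for $p\le1/3$ is formulated too weakly to give Hamiltonicity.

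For $p>1/3$, what must be proved is that the regularised transition digraph on ordered pair-cells is strongly connected and aperiodic. A state family closed under tight extension, with cluster masses $s_{ij}$, would satisfy $\Phi(s_{ij},s_{jk},s_{ki})\ge\tau>1/3$ and both cherry bounds $\ge\Delta>a(\tau)^2$; a finite scalar lemma excludes this. Fixed length comes from diagonal triads of density $>1/3>1/9$. Your link-graph heuristic is not an argument for either point and says nothing about the period.

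For $p\le1/3$, your conditions (a) and (b) do not suffice. The rooted two-pocket graph is a single spanning tight component, since each edge $x_{\mathrm R}x_{\mathrm B}v$ joins a red pocket to the blue component. It has a perfect fractional matching and contains blue copies of $K_4^{(3)}$, hence closed tight walks of order $4$. Yet it has no tight Hamilton cycle.

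The Lang--Sanhueza-Matamala framework needs more. In almost every $s$-subset through every fixed $6$-set, including those containing one root but not the other, there must be an actual tight component that is exactly spanning. In addition, the consistency condition (F4) must hold for $J-x$ and $J-y$. Meeting these forces a canonical choice: the unique spanning component when $p\le p_\star$, and the unique pair-complete component when $p>p_\star$. It also requires lifting this reduced component to an actual one, independently of the regularisation.

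These uniqueness statements are where $r^2/2$ enters (two proper pockets) and one source of $1/3$ (two spanning components, via the weighted Goodman identity). The remaining pieces $\thr_1$, $\thr_2$ and the condition $p\le r^3$ defining $p_0$ come from a rooted fractional-matching separator. None of this is present in your plan.
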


For convenience in the proofs, we refer to the four branches of
\eqref{eq:full-threshold} as
\[
\begin{aligned}
    \thr_1(p)&:=\frac13,
        &&0<p\le p_\star,\\
    \thr_2(p)&:=\frac{r+4r^2-2r^3}{6},
        &&p_\star<p\le p_0,\\
    \thr_3(p)&:=\frac{r^2}{2},
        &&p_0<p\le\frac13,\\
    \thr_4(p)&:=
        \left(\frac{1-\sqrt{(4p-1)/3}}2\right)^2,
        &&\frac13<p<1.
\end{aligned}
\]

\begin{figure}[htbp]
\centering
\begin{tikzpicture}[x=1cm,y=25cm,>=Latex]

\definecolor{curveblue}{RGB}{0,0,180}

\def\xps{3}      
\def\xk{6}       
\def\xthird{8}   
\def\xone{12.0}    

\def\yone{0.333333}
\def\ybeta{0.298211}
\def\ygamma{0.222786}
\def\ytwonine{0.177777}
\def\yonenine{0.088888}
\def\yzero{0}

\draw[->,thick] (0,0) -- (13.2,0) node[right] {$p$};
\draw[->,thick] (0,0) -- (0,0.365) node[above] {$\thr(p)$};

\draw (0,0.005) -- (0,-0.005) node[below=8pt] {$0$};

\draw (\xps,0.005) -- (\xps,-0.005);
\node[below=8pt] at (\xps,0) {$p_\star$};
\node[below=22pt] at (\xps,0) {$0.2929$};

\draw (\xk,0.005) -- (\xk,-0.005);
\node[below=8pt] at (\xk,0) {$p_0$};
\node[below=22pt] at (\xk,0) {$0.3177$};

\draw (\xthird,0.005) -- (\xthird,-0.005) node[below=8pt] {$\dfrac13$};
\draw (\xone,0.005) -- (\xone,-0.005) node[below=8pt] {$1$};

\draw (0.12,\yone) -- (-0.05,\yone) node[left=10pt] {$\dfrac13$};
\draw (0.12,\ytwonine) -- (-0.05,\ytwonine) node[left=10pt] {$\dfrac29$};
\draw (0.12,\yonenine) -- (-0.05,\yonenine) node[left=10pt] {$\dfrac19$};
\draw (0.12,0) -- (-0.05,0) node[left=10pt] {$0$};

\draw[dashed,gray] (0,\ybeta) -- (\xk,\ybeta);
\node[left=10pt] at (0.1,\ybeta) {$0.3182$};

\draw[dashed,gray] (0,\ygamma) -- (\xk,\ygamma);
\node[left=10pt] at (0.1,\ygamma) {$0.2328$};

\draw[dashed,gray] (0,\ytwonine) -- (\xthird,\ytwonine);
\draw[dashed,gray] (0,\yonenine) -- (\xthird,\yonenine);

\draw[dashed,gray] (\xps,0) -- (\xps,\yone);
\draw[dashed,gray] (\xk,0) -- (\xk,\ybeta);
\draw[dashed,gray] (\xthird,0) -- (\xthird,\ytwonine);

\draw[curveblue,very thick] (0,\yone) -- (\xps,\yone);

\draw[curveblue,very thick]
    (\xps,\yone)
    .. controls (3.5,0.325) and (4.5,0.312) ..
    (\xk,\ybeta);

\draw[curveblue,very thick]
    (\xk,\ygamma)
    .. controls (6.5,0.205) and (7.6,0.185) ..
    (\xthird,\ytwonine);

\draw[curveblue,very thick]
    (\xthird,\yonenine)
    .. controls (9.0,0.0005) and (11.5,0.000005) ..
    (\xone,0);

\draw[curveblue,fill=white,thick] 
(0,\yone) circle (2.5pt);
\fill[curveblue] (\xps,\yone) circle (2.5pt);
\fill[curveblue] (\xk,\ybeta) circle (2.5pt);
\draw[curveblue,fill=white,thick] (\xk,\ygamma) circle (2.5pt);
\fill[curveblue] (\xthird,\ytwonine) circle (2.5pt);
\draw[curveblue,fill=white,thick] (\xthird,\yonenine) circle (2.5pt);
\draw[curveblue,fill=white,thick] 
(\xone,0) circle (2.5pt);


\node at (6.3,0.334)
{$\dfrac{r+4r^2-2r^3}{6}$};

\node at (7.6,0.226)
{$\dfrac{r^2}{2}$};

\node at (11,0.1)
{$\left(\dfrac{1-\sqrt{(4p-1)/3}}{2}\right)^2$};

\end{tikzpicture}
\caption{The threshold function $\thr(p)$.}
\label{fig:threshold-function}
\end{figure}

\Cref{fig:threshold-function} depicts the threshold function
$\thr(p)$, where $r:=1-p$.  The two interior points marked on the
horizontal axis are $p_\star:=1-1/\sqrt2$ and the unique
$p_0\in(1/4,1/3)$ satisfying $p_0=(1-p_0)^3$.
The two additional values marked on the vertical axis are
$\thr_2(p_0)\approx0.3182$ and
$\thr_3(p_0)\approx0.2328$.
Neither axis is drawn to scale; the positions of the transition
points and the special branch values are adjusted to display the
four regimes clearly.

The first branch of \Cref{thm:main} gives a particularly strong
negative answer to \Cref{pro1}.

\begin{corollary}[The threshold at density $1/4$]
\label{cor:quarter-density}
At density $p=1/4$, the asymptotically sharp minimum-codegree threshold
for a tight Hamilton cycle is $1/3$.

More precisely, for every $\alpha>1/3$, there exist $\mu>0$ and $n_0$
such that every $(1/4,\mu)$-dense $3$-graph $H$ on $n\ge n_0$
vertices with
\[
    \delta_2(H)\ge\alpha n
\]
contains a tight Hamilton cycle.  Conversely, for every $\eps,\mu>0$,
there exist arbitrarily large $(1/4,\mu)$-dense $3$-graphs $H$ with
\[
    \delta_2(H)\ge
    \left(\frac13-\eps\right)n
\]
and no tight Hamilton cycle.
\end{corollary}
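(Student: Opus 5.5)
This corollary should follow directly from \Cref{thm:main} once we check which branch of \eqref{eq:full-threshold} contains $p=1/4$. Since $p_\star=1-1/\sqrt2\approx0.2929>1/4$, we have $0<1/4\le p_\star$, so $1/4$ lies in the first regime and $\thr(1/4)=\thr_1(1/4)=1/3$. To confirm $1/4<p_\star$ without decimals, note that $1/4<1-1/\sqrt2$ is equivalent to $1/\sqrt2<3/4$, that is, $1/2<9/16$, which is true.

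For the positive direction, fix $\alpha>1/3=\thr(1/4)$. Apply the first half of \Cref{thm:main} with $p=1/4$. It supplies $\mu>0$ and $n_0$ such that every $(1/4,\mu)$-dense $3$-graph on $n\ge n_0$ vertices with $\delta_2(H)\ge\alpha n$ contains a tight Hamilton cycle. This is exactly the first assertion.

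For the converse, fix $\eps,\mu>0$ and apply the second half of \Cref{thm:main} with $p=1/4$. It yields arbitrarily large $(1/4,\mu)$-dense $3$-graphs with $\delta_2(H)\ge(\thr(1/4)-\eps)n=(1/3-\eps)n$ and no tight Hamilton cycle.

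Together the two directions say that $1/3$ is the asymptotically sharp threshold at density $1/4$. The only substantive step is the branch check $1/4\le p_\star$; everything else is a direct specialisation of \Cref{thm:main}, so no real obstacle is expected. In particular, since $1/3>1/4$, taking $\alpha\in(1/4,1/3)$ gives a negative answer to \Cref{pro1}.
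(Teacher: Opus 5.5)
Your proposal is correct and follows the paper's route: the corollary is the specialisation of \Cref{thm:main} at $p=1/4$, which lies in the first regime $(0,p_\star]$, so that $\thr(1/4)=\thr_1(1/4)=1/3$. Your exact check $1/4<p_\star \iff 1/2<9/16$ is the only verification needed; the lower bound at this density is realised by the two-part space barrier of \Cref{con:two-part} via \Cref{prop:two-part-properties}.
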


Thus, at density $1/4$, the relevant codegree value is $1/3$, rather
than $1/4$.  Moreover, since the first branch remains constant on the
whole interval $(0,p_\star]$, one may choose any $p \in(1/4,p_\star]$ and any
$\alpha\in(1/4,1/3)$ to obtain a direct counterexample to \Cref{pro1}.

\begin{remark}
The first version of this work already gave a negative answer to
\Cref{pro1} using the same two-part red--blue palette as the construction
behind the first branch of \eqref{eq:full-threshold}.  There the
parameters were chosen so that both the density and the normalised
minimum codegree were asymptotic to $p_0$.

The present optimisation reveals the stronger conclusion that, for
every $p\le p_\star$, the same underlying palette yields a codegree
obstruction asymptotic to $1/3$.  Thus the constants $1/4$, $1/3$, and
$p_0$ play different roles: $1/4$ is the proposed simultaneous
density--codegree bound, $1/3$ is the sharp codegree threshold in the
first regime, and $p_0$ is the density at which the middle space
obstruction disappears.
\end{remark}

The function $\thr$ has three genuine transition points.  At
$p=p_\star$, the first two formulae agree.  At $p=p_0$, the feasible
space obstruction disappears and the threshold drops from
$\thr_2(p_0)$ to the right-hand value $\thr_3(p_0)$.  Finally, at
$p=1/3$, the threshold equals $2/9$, whereas the right-hand limit of
the above-one-third branch is $1/9$.  The two discontinuities reflect changes in the available extremal
mechanisms, rather than intersections of the displayed formulae.

\subsection{Proof strategy}

The proof naturally separates at density $1/3$, but both parts use the
same ordered pair-state regularity framework.

\paragraph{The range $p>1/3$.}
We use the absorption framework of Han, Shu and Wang
\cite{HanShuWang2026}.  Roughly speaking, this framework reduces
Hamiltonicity to a robust connecting property.  If every two ordered
endpoint pairs can be joined by many vertex-disjoint tight paths of one
fixed length, both in the whole hypergraph and inside a suitable
reservoir, then the absorbing method produces a tight Hamilton cycle.
The principal technical input is the following fixed-length connecting
lemma.

\begin{lemma}[Connecting lemma]
\label{lem:connecting}
For every $p>1/3$ and every $\alpha>\thr_4(p)$, there exist $\mu>0$,
$n_0$ and an integer $t$ such that the following holds for all
$n\ge n_0$.  If $H$ is an $n$-vertex $(p,\mu)$-dense $3$-graph with
$\delta_2(H)\ge\alpha n$, then any two disjoint ordered pairs can be
joined by a tight path with exactly $t$ internal vertices.
\end{lemma}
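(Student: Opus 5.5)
The plan is to work with the ordered pair-state digraph $D_H$. Its vertices are the ordered pairs $(x,y)$ of distinct vertices, and it has an arc $(x,y)\to(y,z)$ whenever $xyz\in E(H)$. A tight path with $t$ internal vertices from $(a,b)$ to $(c,d)$ is exactly a directed walk of length $t+2$ in $D_H$ from $(a,b)$ to $(c,d)$ whose underlying vertex sequence has no repeated vertex. Since $t$ will be a constant and every pair has at least $\alpha n$ out-neighbours, the requirement of distinct vertices costs only $O(t)$ choices at each step. So it suffices to prove a robust statement: for some bounded $t$, there are $\Omega(n^{t})$ walks of length $t+2$ between any two ordered pairs.

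\textbf{Step 1 (expansion from a single pair).} Starting from $(a,b)$, the codegree condition gives $\ge\alpha n$ choices of the next vertex, and the same holds after every step. Iterating a bounded number of times, and applying the ordered pair-state regularity lemma to $H$, I would show that the set $R^+(a,b)$ of pair-states reachable robustly (by $\Omega(n^{s})$ walks) in $s$ steps has positive measure. Moreover, it contains a dense "regular block" $X\times Y$, with $X,Y$ clusters of the regular partition. Symmetrically, the backward-reachable set $R^-(c,d)$ contains a block $Y'\times Z'$.

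\textbf{Step 2 (the key closure argument; the main obstacle).} I would pass to the reduced setting, where the pair-states become pairs of clusters, and let $\mathcal S$ be the forward-closed set generated by a block. The claim is that every forward-closed set of pair-states must essentially contain all pair-states, up to measure $o(1)$.

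To prove this, suppose $\mathcal S$ is closed and misses a set of positive measure. Closedness means that no edge $xyz$ has $(x,y)\in\mathcal S$ and $(y,z)\notin\mathcal S$. Because $(p,\mu)$-density controls only product sets, I would decompose $\mathcal S$ via the cluster partition into unions of blocks $V_i\times V_j$. I would then apply $e_H(X,Y,Z)\ge p|X||Y||Z|-\mu n^3$ to triples of clusters (one triple entering $\mathcal S$, one leaving it), and combine this with the requirement that every pair in $\mathcal S$ has $\ge\alpha n$ forward neighbours staying in $\mathcal S$.

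Optimising over all possible shapes of $\mathcal S$ should reduce to a two-parameter extremal problem. The extremal shape is presumably a vertex split $V=A\cup B$ with $|A|=an$, where $\mathcal S$ consists of pairs whose second coordinate lies in $A$ (plus a pattern on $B$). In that problem, closedness forces the density of the cross triples to be at most roughly $(1-a)^2$-type quantities, the codegree condition forces $\alpha\le a^2$, and density $p$ forces $3(1-2a)^2\ge 4p-1$-type constraints. Solving gives $a=\bigl(1-\sqrt{(4p-1)/3}\bigr)/2$, that is, exactly $\alpha\le\thr_4(p)$. Hence $\alpha>\thr_4(p)$ rules out any nontrivial closed set. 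I expect this step to be the hardest: one must show that this split is the worst possible closed configuration, which requires a careful (possibly LP/convexity) optimisation over cluster-level patterns.

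\textbf{Step 3 (fixing the length).} Steps 1 and 2 show that $R^+(a,b)$ and $R^-(c,d)$ meet in a bounded number of steps, but the lengths may differ between instances. To make the length exactly $t$, I would exhibit inside the giant strongly connected part closed walks of two coprime lengths, for example tight cycles of lengths $4$ and $5$ through a common robust block. I would obtain these by counting with $p>1/3$: above $1/3$ the "mod-$3$" parity obstruction (the construction used for the lower branches) is excluded. Splicing the appropriate number of such loops into the connecting walk then adjusts every length to a common value $t$ depending only on $p$, $\alpha$ and the regularity constants. Finally, the robustness $\Omega(n^{t})$ lets me choose a walk with no repeated vertices and avoiding $\{a,b,c,d\}$ except at its ends.
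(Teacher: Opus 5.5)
\textbf{Step 2 has a genuine gap, and it is where the whole content of the lemma lies.} You take the reduced states to be pairs of clusters and propose to write a closed set $\mathcal S$ as a union of blocks $V_i\times V_j$, with a vertex split as the extremal shape. Closed families of pair-states are not of this form.

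Consider the configuration that actually produces $\thr_4$: colour all pairs red with probability $a$ and blue with probability $1-a$, and keep exactly the monochromatic triangles. The red pairs form a closed set, the density is $a^3+(1-a)^3$, and red pairs have codegree $(a^2+o(1))n$. Yet every cluster pair is split in proportion $a:(1-a)$. The cluster-pair digraph is complete here, although no tight path joins a red pair to a blue one. Conversely, a genuine vertex split such as $\{(x,y):y\in A\}$ is closed only if no edge meets both $A$ and $V\setminus A$, which $(p,\mu)$-density forbids.

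What you need is a regularisation of the pairs themselves: the ordered pair-cells of a regular slice, with dense triads as transitions. A closed family is then recorded by the fraction $s_{ij}\in[0,1]$ of pair-cell weight it occupies over $(V_i,V_j)$. A dense triad $(a,b,c)$ yields all three transitions $a\to b\to c\to a$, so each dense triad lies entirely inside or entirely outside a closed family. In particular, the complement is closed too. This gives two kinds of constraint:
\begin{itemize}
\item $\Phi(s_{ij},s_{jk},s_{ki})\ge\tau$ for all, possibly repeated, indices, where $\Phi(x,y,z)=xyz+(1-x)(1-y)(1-z)$;
\item the two cherry bounds: $\sum_k\lambda_ks_{jk}s_{ki}\ge\Delta$ if $s_{ij}>0$, and $\sum_k\lambda_k(1-s_{jk})(1-s_{ki})\ge\Delta$ if $s_{ij}<1$.
\end{itemize}
The complementary bound, which your plan omits, is indispensable: for $\Delta\le(1-a)^2$ the constant matrix $s\equiv1-a$ satisfies everything else.

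You must then show these force $s\equiv0$ or $s\equiv1$ once $\Delta>a^2$, where $a$ is the smaller root of $a^3+(1-a)^3=\tau$. The argument runs as follows.
\begin{itemize}
\item The triples $(i,i,i)$ put each diagonal entry in $[0,a]\cup[1-a,1]$.
\item The triples $(i,i,j)$ and $(i,j,j)$ rule out mixed low and high diagonals.
\item After $s\mapsto1-s$ all diagonals are low, and $(i,i,j)$ then gives $s_{ij}\le1-\tau$.
\item For a maximal entry $M=s_{ij}>a$, the convex quadratic $g_M(w)=w^2-2(1-M)w+(1-M)-\tau$ is negative on $[a,M]$. This forces $s_{jk}s_{ki}<a^2$ for every $k$, contradicting the cherry bound.
\end{itemize}
Your ``optimisation over shapes'', whose constraints are only indicated up to ``type'', supplies none of this.

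\textbf{Step 3 also fails as stated.} A tight cycle of length $4$ is a copy of $K_4^{(3)}$, and these need not exist. Order the vertices, colour pairs uniformly at random with two colours, and let $x<y<z$ be an edge iff $xy$ and $xz$ receive different colours. For every $p<1/2$ and $\mu>0$ this is $(p,\mu)$-dense for large $n$, and all codegrees are $(1/2+o(1))n$, so the hypotheses hold. But it contains no $K_4^{(3)}$.

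The right mechanism lives in the bounded reduced digraph. A dense triad on $(V_i,V_i,V_i)$ gives a reduced closed walk of length $3$, so the period divides $3$. A period-$3$ phase decomposition would bound the dense-triad weight on $(i,i,i)$ by $3s^0s^1s^2\le1/9<1/3$, so the period is $1$. Primitivity then gives reduced walks of one fixed length $L$ between any two states. These are lifted to tight paths with exactly $L+2$ internal vertices by the hypergraph extension lemma. The lift starts from a pair-cell in which the two-step extension set of $(a,b)$ has positive density (not a full block $X\times Y$), and ends analogously at $(c,d)$. Linear-type $(p,\mu)$-density alone cannot count tight paths, so the $\Omega(n^t)$ robustness you invoke also has to come from this regularity lifting.
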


The difficulty is that $(p,\mu)$-density tests only vertex subsets,
whereas a tight path evolves through its terminal ordered pair.  A
regular slice partitions ordered cluster pairs into states; a dense
regular triad gives a directed transition.  Above $1/3$ the whole
pair-state digraph is strongly connected and aperiodic.  A finite
scalar lemma excludes every non-trivial closed state family, and the
extension lemma lifts a fixed-length reduced walk to an actual tight
path.  This part of the proof is kept essentially unchanged from the
original version of the paper.

\paragraph{The range $0<p\le1/3$.}
Arbitrary-pair connectability is false below $1/3$, even when the
minimum codegree is slightly larger than $n/3$; the construction in
\Cref{subsec:con3} records this obstruction.  We therefore use the
Hamilton-framework formalism of Lang and Sanhueza-Matamala
\cite{LangSanhuezaMatamala2026}, in the specialised form used by Lin,
Wang and Zhou~\cite{LinWangZhou2026}.  Instead of proving that the
entire state digraph is primitive, we colour every pair by its actual
tight component and select one canonical transition component.  The
finite structural theorem proves that this component is spanning,
has a robust perfect fractional matching, is aperiodic, and is
consistent under two one-root deletions.

The remaining issue is the passage from the reduced component to one
actual tight component.  We use a locally cleaned multicolour regular slice refining the actual component-shadow colours.  This keeps each
state inside one actual colour, handles repeated indices by coherent
tagged copies, and provides the vertex-lower-regularity needed to
average an actual fractional cover.  We then prove that the selected
actual component is independent of every bounded refinement, attach
all exceptional vertices, and use the interior of the reduced matching
cone to obtain an exact perfect fractional matching on the actual
vertex set.

The three branches below $1/3$ arise from two competing obstructions.
A rooted fractional-matching separator gives
\[
    \alpha\le \thr_1(p)+o(1),\qquad
    \alpha\le \thr_2(p)+o(1),\qquad
    p\le(1-p)^3+o(1),
\]
whereas a two-pocket or period-three obstruction gives 
\[
    \alpha\le \thr_3(p)+o(1).
\]
For $p\le p_\star$, the active space bound is
$\thr_1(p)=1/3$.  On the interval $p_\star<p\le p_0$, the space
obstruction remains feasible and its sharp bound is $\thr_2(p)$.
Once $p>p_0$, the condition $p\le(1-p)^3+o(1)$ becomes impossible,
so the space obstruction disappears and the remaining sharp bound is
$\thr_3(p)$.

\paragraph{Paper organisation.}
The lower-bound constructions and the obstruction to arbitrary-pair
connectability are given in Section~\ref{sec:constructions}. Section~\ref{sec:mainthm}
contains the two upper-bound reductions, separating the ranges above and
at or below density $1/3$. The common
regularity toolkit is developed in Section~\ref{sec:regularity}, and the
connecting lemma above $1/3$ is proved in Section~\ref{sec:connecting}.
At and below $1/3$, Section~\ref{sec:canonical-lifting} develops the
local Hamilton framework and the lifting of the canonical component,
whose structural properties are proved in
Section~\ref{sec:canonical-proof}.  We conclude in
Section~\ref{sec:concluding}.

\section{Constructions}
\label{sec:constructions}

In this section we give the lower-bound constructions corresponding to
all four branches of the threshold function in
\eqref{eq:full-threshold}.  The first two are space barriers, the third
is a rooted two-pocket barrier, and the fourth is the
biased Ara{\'u}jo--Piga--Schacht construction.  We finish with a
separate construction showing that, below density $1/3$, Hamiltonicity
and arbitrary-pair connectability have genuinely different codegree
thresholds.

All the constructions are obtained from quasirandom colourings of
pairs.  We use the following standard concentration statement.

\begin{lemma}[Finite-palette concentration]
\label{lem:palette-concentration}
Fix a finite vertex-partition template and, for every unordered pair of
parts, a probability distribution on a finite set of pair colours.
Colour all pairs independently according to these distributions, with
deterministic colours allowed.  Let $H$ be the $3$-graph obtained by
retaining a fixed collection of coloured triangle patterns.  Then,
with probability $1-o(1)$, the following statements hold.
\begin{enumerate}[label=\textup{(\roman*)},leftmargin=2.5em]
    \item Uniformly over all $U_1,U_2,U_3\subseteq V(H)$, the number
    of retained ordered triples in $U_1\times U_2\times U_3$ differs
    from its expectation by $o(n^3)$.

    \item Uniformly over all vertex pairs of every fixed part type and
    colour, the codegree differs from its conditional expectation by
    $o(n)$.
\end{enumerate}
\end{lemma}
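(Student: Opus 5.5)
We prove the two parts separately, conditioning on the random colouring only through a few concentration events and a union bound. Set up notation first. Let $V=V_1\cup\dots\cup V_m$ be the fixed template, with $|V_i|=\beta_i n+O(1)$. Each pair $uv$ with $u\in V_i$, $v\in V_j$ receives colour $c$ with probability $q_{ij}(c)$, independently over pairs. A triple $uvw$ is retained if and only if the coloured triangle it spans belongs to the fixed finite list of patterns. So for a fixed ordered triple $(u,v,w)$ of distinct vertices, the retention probability depends only on the parts of $u,v,w$.

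For part~(i), fix $U_1,U_2,U_3$. Let $X=X(U_1,U_2,U_3)$ be the number of retained ordered triples in $U_1\times U_2\times U_3$. Then $X$ is a function of the $\binom n2$ independent pair colours. Changing the colour of one pair $uv$ affects only triples containing both $u$ and $v$. At most $6n$ ordered triples do, so $X$ has bounded differences with constants $c_e\le 6n$. McDiarmid's inequality gives
\[
\Pr\bigl(|X-\mathbb E X|\ge \gamma n^3\bigr)\le 2\exp\!\Bigl(-\frac{2\gamma^2n^6}{\binom n2 (6n)^2}\Bigr)\le 2\exp(-c\gamma^2 n^2).
\]
There are at most $8^n$ choices of $(U_1,U_2,U_3)$, and $8^n e^{-c\gamma^2n^2}=o(1)$ for every fixed $\gamma>0$. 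Letting $\gamma\to0$ slowly gives the uniform $o(n^3)$ statement. This step is the only place where the exponent matters: the quadratic exponent $n^2$ coming from pair-level independence is exactly what beats the $2^{O(n)}$ union bound. I expect this to be the main point to get right, but it works because the number of pairs is $\Theta(n^2)$ while each pair influences only $O(n)$ triples.

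For part~(ii), fix a pair $uv$, and condition on its colour $c$; the statement is phrased for pairs of a given part type and colour. For each other vertex $w$, the indicator that $uvw$ is retained depends only on the colours of $uw$ and $vw$. These pairs are distinct for distinct $w$, so the indicators are conditionally independent. Their conditional mean depends only on the parts of $u,v,w$ and on $c$. The codegree $d_H(u,v)$ is therefore a sum of $n-2$ independent indicators, and Chernoff's bound gives deviation at least $\gamma n$ with probability at most $2e^{-2\gamma^2 n}$. A union bound over the $\binom n2$ pairs costs a factor $n^2$, which is still $o(1)$. Deterministic colours are covered automatically, as degenerate distributions.

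Intersecting the two events completes the proof. Expectations in (i) may differ from the ``template'' value $\sum p\,\beta$-type formula by $O(n^2)$ because of triples with repeated vertices and rounding of part sizes; this is absorbed into $o(n^3)$.
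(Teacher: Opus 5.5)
Your proposal is correct and follows the paper's proof essentially step for step. For (i), you use a bounded-differences (McDiarmid) estimate over the $\Theta(n^2)$ independent pair colours with per-pair effect $O(n)$, which gives $\exp(-\Omega(n^2))$ tails that beat the $8^n$ union bound. For (ii), you condition on the colour of the fixed pair, note that the $n-2$ indicators depend on disjoint pairs of colours and are therefore independent, and finish with a Chernoff bound and a union bound over the $O(n^2)$ pairs; your explicit constants make the paper's brief argument more precise.
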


\begin{proof}
For fixed $U_1,U_2,U_3$, the retained-triple count is a function of
$O(n^2)$ independent pair colours, and changing one pair colour changes
this count by at most $O(n)$.  The bounded-differences inequality
therefore gives failure probability $\exp(-\Omega(n^2))$ for an error
of order $\varepsilon n^3$.  A union bound over the at most $8^n$
ordered triples of vertex subsets proves \textup{(i)}.

After conditioning on the colour of a fixed pair, its codegree is a sum
of $O(n)$ independent bounded random variables, one for each possible
third vertex.  Chernoff's inequality and a union bound over the
$O(n^2)$ pairs prove \textup{(ii)}.
\end{proof}

\subsection{The first branch: a two-part space barrier}
\label{subsec:first-branch-construction}

\begin{construction}[Two-part space barrier]
\label{con:two-part}
Fix $p\in(0,p_\star]$, put $r:=1-p$, and choose a sufficiently small
constant $\tau>0$.  Partition
\[
    V=X\dot\cup Y
\]
with
\[
    |X|=\left(\frac23+\tau+o(1)\right)n,
    \qquad
    |Y|=\left(\frac13-\tau+o(1)\right)n.
\]
Colour every pair inside $X$ independently red with probability $r$
and blue with probability $p$.  Define a $3$-graph $H$ by retaining
\begin{enumerate}[label=\textup{(\alph*)},leftmargin=2.6em]
    \item every triple of type $YYY$ or $XYY$;
    \item a triple of type $XXY$ if and only if its pair inside $X$ is
    blue; and
    \item a triple contained in $X$ if and only if its three pairs are
    red.
\end{enumerate}
Choose a colouring satisfying \Cref{lem:palette-concentration}.
\end{construction}

\begin{proposition}
\label{prop:two-part-properties}
For every $p\in(0,p_\star]$ and every $\eps,\mu>0$, the parameter
$\tau$ in \Cref{con:two-part} can be chosen so that, for all
sufficiently large $n$, the resulting $3$-graph is $(p,\mu)$-dense,
satisfies
\[
    \delta_2(H)\ge\left(\frac13-\eps\right)n,
\]
and contains no tight Hamilton cycle.
\end{proposition}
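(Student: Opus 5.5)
The plan is to verify the three claimed properties separately: the codegree bound, the density, and the absence of a tight Hamilton cycle. The first two are first-moment computations transferred to $H$ by \Cref{lem:palette-concentration}. The third is a counting argument on the cyclic order. I would fix $0<\tau<\eps/2$ and let $n$ be large enough that all $o(1)$ errors from \Cref{lem:palette-concentration} and from the part sizes are below $\min\{\eps,\mu\}/10$.

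\emph{Codegrees.} I compute the expected codegree for each pair type and colour, and \Cref{lem:palette-concentration}(ii) transfers these to every pair up to $o(n)$.
\begin{itemize}
\item A pair inside $Y$ lies in every triple of type $YYY$ or $XYY$, so its codegree is $n-2$.
\item A pair $xy$ with $x\in X$, $y\in Y$ is completed by every vertex of $Y$, and by those $z\in X$ with $xz$ blue. Its codegree is about $|Y|+p|X|\ge(\tfrac13-\tau+\tfrac23 p)n\ge n/3$ once $\tau\le 2p/3$.
\item A blue pair inside $X$ is completed exactly by the vertices of $Y$, since an $X$-triple needs all three pairs red. Its codegree is $|Y|=(\tfrac13-\tau+o(1))n\ge(\tfrac13-\eps)n$.
\item A red pair inside $X$ is completed by no vertex of $Y$, since type $XXY$ needs a blue pair. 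It is completed by those $z\in X$ whose two pairs to it are both red, giving codegree about $r^2|X|=r^2(\tfrac23+\tau)n$. Here I use $p\le p_\star=1-1/\sqrt2$, which is equivalent to $r^2\ge 1/2$. This yields at least $(\tfrac13+\tfrac\tau2)n-o(n)$.
\end{itemize}
The red pairs inside $X$ are the only place the hypothesis $p\le p_\star$ enters.

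\emph{Density.} By \Cref{lem:palette-concentration}(i), it suffices to show that every triple of distinct vertices is retained with probability at least $p$. Triples of types $YYY$ and $XYY$ are retained with probability $1$, and triples of type $XXY$ with probability exactly $p$. Triples inside $X$ are retained with probability $r^3$. Since $p\mapsto p-(1-p)^3$ is increasing and vanishes at $p_0$, and $p\le p_\star<p_0$, we get $r^3\ge p$. Degenerate ordered triples with repeated vertices number $O(n^2)$, which is absorbed into $\mu n^3$. Hence $e_H(U_1,U_2,U_3)\ge p|U_1||U_2||U_3|-\mu n^3$ for all $U_1,U_2,U_3$.

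\emph{No tight Hamilton cycle.} This is the main step. Suppose $v_1\dots v_n$ is a tight Hamilton cycle and consider consecutive vertices $v_i,v_{i+1}\in X$.
\begin{itemize}
\item If the pair $v_iv_{i+1}$ is blue, no $X$-triple contains it, so both $v_{i-1}$ and $v_{i+2}$ lie in $Y$.
\item If the pair is red, no $XXY$-triple contains it, so both $v_{i-1}$ and $v_{i+2}$ lie in $X$.
\end{itemize}
Since $Y\neq\emptyset$, the cycle is not entirely in $X$. So every maximal cyclic run of $X$-vertices is preceded by a vertex of $Y$.

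Suppose some run has length at least $3$, and let $v_iv_{i+1}$ be its first two vertices. Because $v_{i-1}\in Y$, the pair $v_iv_{i+1}$ cannot be red, so it is blue. But then $v_{i+2}\in Y$, contradicting that the run has length at least $3$. Hence every $X$-run has length at most $2$. Each run is followed by a distinct vertex of $Y$, so $|X|\le 2|Y|$. This contradicts $|X|=(\tfrac23+\tau+o(1))n>2(\tfrac13-\tau+o(1))n$ for large $n$.

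The only delicate points are bookkeeping: choosing $\tau$ small relative to $\eps$ and $p$, and absorbing the concentration errors. The structural argument itself is short once one notices that the colour of a consecutive $XX$ pair forces the parts of both of its neighbours.
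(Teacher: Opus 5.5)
Your proposal is correct and matches the paper's proof step for step: the same codegree table (using $r^2\ge 1/2$ for red $X$-pairs), the same pattern-density check via $r^3\ge p$ for $p\le p_\star<p_0$, and the same bound $|X|\le 2|Y|$ from $X$-runs of length at most two. The only cosmetic difference is in excluding long runs: the paper rules out red consecutive pairs globally by propagating red around the whole cycle, whereas you argue locally that the first pair of each $X$-run is preceded by a $Y$-vertex, hence blue, and so forces the next vertex into $Y$.
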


\begin{proof}
The densities of the four vertex-part patterns $YYY$, $XYY$, $XXY$
and $XXX$ are, respectively,
\[
    1,\qquad 1,\qquad p,\qquad r^3.
\]
Since $p\le p_\star<p_0$ and $p_0$ is the unique solution of
$p=(1-p)^3$, we have $p\le r^3$.  Thus every part pattern has density
at least $p$, and \Cref{lem:palette-concentration}\textup{(i)} gives
$(p,\mu)$-density for all sufficiently large $n$.

Write $x:=|X|/n$ and $y:=|Y|/n$.  Up to a uniform $o(1)$ error, the
normalised codegrees are
\begin{center}
\begin{tabular}{@{}ll@{}}
\toprule
pair type & normalised codegree \\
\midrule
red pair inside $X$ & $r^2x$ \\
blue pair inside $X$ & $y$ \\
pair between $X$ and $Y$ & $y+px$ \\
pair inside $Y$ & $1$ \\
\bottomrule
\end{tabular}
\end{center}
Indeed, a red pair inside $X$ can be completed only inside $X$, where
the two new pairs must also be red.  A blue pair inside $X$ can be
completed by every vertex of $Y$, while an $XY$-pair can be completed
by every other vertex of $Y$ and by the blue neighbours of its
$X$-endpoint.  Since $r^2\ge1/2$ for $p\le p_\star$, we have
\[
    r^2x\ge\frac13+\frac\tau2+o(1),
    \qquad
    y=\frac13-\tau+o(1).
\]
Choose $\tau<\eps/3$ and then $n$ sufficiently large.  This gives the
claimed minimum-codegree bound.

Suppose that $H$ contains a tight Hamilton cycle.  If one consecutive
pair on the cycle is red, then each adjacent edge must be a red
triangle contained in $X$.  The red colour therefore propagates around
the cycle and forces every vertex of the cycle to lie in $X$, a
contradiction.  Hence the cycle has no red consecutive pair.  In
particular, it contains no three consecutive vertices of $X$, because
an edge contained in $X$ is a red triangle.  Every cyclic block of
vertices from $X$ consequently has length at most two, so
\[
    |X|\le2|Y|,
\]
contrary to $|X|=(2/3+\tau+o(1))n>2|Y|$.
\end{proof}

\subsection{The second branch: a three-layer space barrier}
\label{subsec:middle-layer-construction}

\begin{construction}[Three-layer space barrier]
\label{con:middle-layer}
Fix $p\in(p_\star,p_0]$, put $r:=1-p$, and define
\[
    m_*:=\frac{1-2r^2}{6r},
    \qquad
    y_*:=\frac{2r^2+2r-1}{6r}.
\]
Then $m_*,y_*\ge0$ and $m_*+y_*=1/3$.  Choose a sufficiently small
constant $\tau>0$ and partition
\[
    V=A\dot\cup M\dot\cup Y
\]
so that
\[
    |A|=\left(\frac23+\tau+o(1)\right)n,
    \qquad
    |M|=(m_*+o(1))n,
    \qquad
    |Y|=(y_*-\tau+o(1))n.
\]
Put $X:=A\dot\cup M$.  Colour every pair inside $X$ independently red
with probability $r$ and blue with probability $p$, and colour every
pair meeting $Y$ blue.  Define a $3$-graph $H$ by retaining
\begin{enumerate}[label=\textup{(\alph*)},leftmargin=2.6em]
    \item every red triangle contained in $X$; and
    \item every blue triangle which is not contained entirely in $A$.
\end{enumerate}
Choose a colouring satisfying \Cref{lem:palette-concentration}.
\end{construction}

\begin{proposition}
\label{prop:middle-layer-properties}
For every $p\in(p_\star,p_0]$ and every $\eps,\mu>0$, the parameter
$\tau$ in \Cref{con:middle-layer} can be chosen so that, for all
sufficiently large $n$, the resulting $3$-graph is $(p,\mu)$-dense,
satisfies
\[
    \delta_2(H)\ge\bigl(\thr_2(p)-\eps\bigr)n,
\]
and contains no tight Hamilton cycle.
\end{proposition}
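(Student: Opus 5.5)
The plan is to verify the three properties one at a time, taking the colouring to satisfy \Cref{lem:palette-concentration}. I expect every step to be routine except the codegree bookkeeping, and in particular the check that $m_*$ balances two of the codegree types. Throughout, write $a:=|A|/n$, $m:=|M|/n$, $y:=|Y|/n$ and $x:=a+m$. Up to $o(1)$ these are $a=2/3+\tau$, $m=m_*$, $y=y_*-\tau$ and $x=2/3+\tau+m_*$. First I will check $m_*\ge0$ and $y_*>0$ on $(p_\star,p_0]$: we have $r^2\le1/2$ there, and $2r^2+2r-1>0$ because $r\ge 1-p_0>0.68$. Choosing $\tau<y_*$ then makes $Y$ non-empty.

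\textbf{Density.} I will list the part patterns and compute each pattern's expected triple density.
\begin{itemize}
\item $AAA$: only red triangles are retained, so the density is $r^3$.
\item $AAM$, $AMM$ and $MMM$: every monochromatic triangle is retained, so the density is $r^3+p^3$.
\item Patterns with exactly one vertex in $Y$: the two pairs meeting $Y$ are blue, so the triple is kept exactly when its $X$-pair is blue, giving density $p$.
\item Patterns with at least two vertices in $Y$: the density is $1$.
\end{itemize}
Since $p\le p_0$ and $p_0=(1-p_0)^3$, we get $r^3\ge p$. So every pattern has density at least $p$, and \Cref{lem:palette-concentration}\textup{(i)} gives $(p,\mu)$-density for large $n$.

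\textbf{Codegree.} By \Cref{lem:palette-concentration}\textup{(ii)}, the normalised codegrees are, up to $o(1)$, as follows.
\begin{itemize}
\item A red pair in $X$ is completed only by red triangles in $X$, so its codegree is $r^2x$.
\item A blue pair inside $A$ is completed by vertices of $M$ joined to both ends in blue, and by every vertex of $Y$. This gives $p^2m+y$.
\item A blue pair meeting $M$ has codegree $p^2x+y$.
\item An $XY$-pair has codegree $px+y$.
\item A $YY$-pair has codegree $1$.
\end{itemize}
The last three types clearly dominate the second. So the key computation is to show that the two binding quantities are both at least $\thr_2(p)-O(\tau)$.

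For the red pair, substituting $m_*=(1-2r^2)/(6r)$ gives
\[
r^2\Bigl(\tfrac23+m_*\Bigr)=\frac{4r^2+r-2r^3}{6}=\thr_2(p),
\]
and the extra $+\tau$ in $x$ only helps. For the blue pair inside $A$, I will use $m_*+y_*=1/3$ and $1-p^2=r(2-r)$ to write
\[
p^2m_*+y_*=\tfrac13-(1-p^2)m_*=\tfrac13-\frac{(2-r)(1-2r^2)}{6}.
\]
Expanding, this equals $(r+4r^2-2r^3)/6=\thr_2(p)$. This identity is where $m_*$ is presumably chosen, and it is the one step I must verify carefully. The $-\tau$ in $y$ costs at most $\tau$, so taking $\tau<\eps/2$ gives $\delta_2(H)\ge(\thr_2(p)-\eps)n$.

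\textbf{No tight Hamilton cycle.} I will argue exactly as in \Cref{prop:two-part-properties}. If some consecutive pair of the cycle is red, then both edges through it are red triangles inside $X$. Redness therefore propagates around the cycle, forcing $V\subseteq X$, which contradicts $Y\ne\emptyset$. Hence every edge of the cycle is a blue triangle not contained in $A$, so the cycle has no three consecutive vertices of $A$. Every maximal cyclic block of $A$-vertices thus has length at most two, and the blocks are separated by vertices of $M\cup Y$. This gives $|A|\le 2(|M|+|Y|)=(2/3-2\tau+o(1))n$, contradicting $|A|=(2/3+\tau+o(1))n$.
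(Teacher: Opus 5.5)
Your proof is correct and essentially the paper's. You use the same pattern densities, the same codegree table with the red-pair and blue-$A$-pair rows balanced by $m_*$, and the same monochromaticity argument; the only cosmetic difference is that you bound $|A|\le 2|M\cup Y|$ by counting blocks of $A$-vertices, where the paper counts incidences to get $n\le 3|M\cup Y|$, and both amount to $|M\cup Y|\ge n/3$.
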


\begin{proof}
A triple contained in $A$ is retained when its three pairs are red, and
therefore has density $r^3$.  Since $p\le p_0$, one has $p\le r^3$.
A triple contained in $X$ and meeting $M$ is retained when it is all
red or all blue, and hence has density $r^3+p^3\ge p$.  A triple with
exactly one vertex in $Y$ and two vertices in $X$ is retained precisely
when the pair inside $X$ is blue, so its density is $p$.  Every triple
with at least two vertices in $Y$ is retained.  Thus every part pattern
has density at least $p$, and \Cref{lem:palette-concentration}\textup{(i)}
gives $(p,\mu)$-density.

Write
\[
    \ell:=\frac{|A|}{n},
    \qquad
    m:=\frac{|M|}{n},
    \qquad
    y:=\frac{|Y|}{n},
    \qquad
    x:=\ell+m=1-y.
\]
The asymptotic codegrees are
\begin{center}
\begin{tabular}{@{}ll@{}}
\toprule
pair type & normalised codegree \\
\midrule
red pair in $X$ & $r^2x$ \\
blue pair in $A$ & $y+p^2m$ \\
blue pair in $X$ with an endpoint in $M$ & $y+p^2x$ \\
pair between $X$ and $Y$ & $y+px$ \\
pair inside $Y$ & $1$ \\
\bottomrule
\end{tabular}
\end{center}
At $\tau=0$, direct substitution gives
\[
    r^2(1-y_*)
    =y_*+p^2m_*
    =\frac{r+4r^2-2r^3}{6}
    =\thr_2(p).
\]
The other three rows are strictly larger.  Choosing $\tau$ sufficiently
small and then $n$ sufficiently large gives
\[
    \delta_2(H)\ge\bigl(\thr_2(p)-\eps\bigr)n.
\]

Every edge of $H$ is a monochromatic triangle.  Since tightly adjacent
edges share a pair, every tight cycle is monochromatic.  A red tight
cycle is contained in $X$ and therefore misses $Y$.  Every blue edge,
on the other hand, meets
\[
    S:=M\cup Y,
\]
and
\[
    |S|=(m_*+y_*-\tau+o(1))n
       =\left(\frac13-\tau+o(1)\right)n.
\]
A tight Hamilton cycle has exactly $n$ edges and each vertex lies in
exactly three of them.  If such a cycle were blue, counting incidences
between its edges and $S$ would give
\[
    n\le3|S|<n,
\]
a contradiction.
\end{proof}

\begin{remark}
At $p=p_\star$, one has $m_*=0$, and the limiting form of
\Cref{con:middle-layer} is the two-part construction.  The middle class
appears precisely when $p>p_\star$.
\end{remark}

\subsection{The third branch: a rooted two-pocket barrier}
\label{subsec:rooted-two-pocket-construction}

\begin{construction}[Rooted two-pocket barrier]
\label{con:rooted-two-pocket}
Fix $p\in(p_0,1/3]$ and put $r:=1-p$.  Let
\[
    V=W\dot\cup\{x_{\mathrm R},x_{\mathrm B}\},
    \qquad
    W=X\dot\cup Y,
\]
where $|X|,|Y|=(1/2+o(1))n$.  Colour pairs inside $X$ and inside $Y$
independently red with probability $r$ and blue with probability $p$,
and colour every pair between $X$ and $Y$ blue.  Define $H$ as follows.
\begin{enumerate}[label=\textup{(\alph*)},leftmargin=2.6em]
    \item A triple contained in $W$ is an edge if and only if it is a
    monochromatic triangle.

    \item For $u,v\in W$, the triple $x_{\mathrm R}uv$ is an edge if
    and only if $uv$ is red, while $x_{\mathrm B}uv$ is an edge if and
    only if $uv$ is blue.

    \item Every triple $x_{\mathrm R}x_{\mathrm B}v$ with $v\in W$ is
    an edge.
\end{enumerate}
Choose a colouring satisfying \Cref{lem:palette-concentration}.
\end{construction}

\begin{proposition}
\label{prop:rooted-two-pocket-properties}
For every $p\in(p_0,1/3]$ and every $\eps,\mu>0$, all sufficiently
large instances of \Cref{con:rooted-two-pocket} are $(p,\mu)$-dense,
satisfy
\[
    \delta_2(H)\ge\bigl(\thr_3(p)-\eps\bigr)n,
\]
and contain no tight Hamilton cycle.
\end{proposition}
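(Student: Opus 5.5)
The proof will verify the three properties in turn: density, codegree and the absence of a tight Hamilton cycle. The two roots $x_{\mathrm R},x_{\mathrm B}$ lie in only $O(n^2)$ triples, so they are invisible to $(p,\mu)$-density and affect codegrees only by $O(1)$.

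\emph{Density.} By \Cref{lem:palette-concentration}\textup{(i)} it suffices to check that every part pattern on $W$ has expected density at least $p$.
\begin{itemize}[leftmargin=2.5em]
    \item A triple inside $X$ (or inside $Y$) is retained iff it is monochromatic, so its density is $r^3+p^3=1-3p+3p^2$. Since
    \[
        1-3p+3p^2-p=(1-p)(1-3p)\ge0
        \qquad\text{for } p\le 1/3,
    \]
    this density is at least $p$.
    \item For a triple of type $XXY$ (or $XYY$), the two cross pairs are blue. Hence the triple is monochromatic iff its internal pair is blue, which gives density exactly $p$.
\end{itemize}

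\emph{Codegree.} By \Cref{lem:palette-concentration}\textup{(ii)}, up to $o(1)$ the normalised codegrees are as follows.
\begin{itemize}[leftmargin=2.5em]
    \item A red pair inside $X$ or $Y$: $r^2/2$, from red triangles in its own half, plus the single vertex $x_{\mathrm R}$.
    \item A blue pair inside a half: $p^2/2+1/2$.
    \item A cross pair: $p/2+p/2=p$.
    \item A pair $x_{\mathrm R}u$: about $r/2$.
    \item A pair $x_{\mathrm B}u$: $(1+p)/2$.
    \item The pair $x_{\mathrm R}x_{\mathrm B}$: about $1$.
\end{itemize}
The minimum is $r^2/2=\thr_3(p)$. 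The only comparison needing a check is $p\ge r^2/2$ on $(p_0,1/3]$, which is immediate since $r^2/2\le 1/2\cdot(1-p_0)^2<0.24<p_0$.

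\emph{No tight Hamilton cycle.} This is the main step, and the plan is a colour-propagation argument. Call a consecutive pair of the cycle a \emph{$W$-pair} if both of its vertices lie in $W$.
\begin{enumerate}[leftmargin=2.5em]
    \item Every edge inside $W$ is a monochromatic triangle. Therefore two consecutive $W$-pairs sharing a $W$-edge have the same colour.
    \item When the cycle passes a root, say $\dots a\,b\,x_{\mathrm R}\,c\,d\dots$, the edges $abx_{\mathrm R}$, $bx_{\mathrm R}c$ and $x_{\mathrm R}cd$ force $ab$, $bc$ and $cd$ all to be red. Here $bc$ is not a consecutive pair of the cycle, but it is still a pair inside $W$ and serves as a bridge. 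Likewise, passing $x_{\mathrm B}$ forces the surrounding pairs to be blue.
    \item Following the chain of consecutive pairs around the cycle, with the bridge pairs from step 2 inserted where a root is skipped, all pairs in this chain receive one common colour.
    \item The root $x_{\mathrm R}$ forces this common colour to be red, while $x_{\mathrm B}$ forces it to be blue. This is a contradiction.
\end{enumerate}

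The obstacle is the bookkeeping when the two roots are close on the cycle, at distance $1$ or $2$. For example, in $\dots u\,a\,x_{\mathrm R}\,x_{\mathrm B}\,b\,c\dots$ the pair $ab$ is not forced to have any colour. However, the cycle still contains a tight Hamilton path on $W$ running from $b\,c$ around to $u\,a$. The edge $uax_{\mathrm R}$ makes $ua$ red, and the edge $x_{\mathrm B}bc$ makes $bc$ blue. Along this path, consecutive $W$-pairs share $W$-edges and so must have the same colour, which contradicts $bc$ being blue and $ua$ being red. The case $a\,x_{\mathrm R}\,c\,x_{\mathrm B}\,d$ is handled in the same way: $ac$ is red, $cd$ is blue, and these are linked by a $W$-path whose consecutive pairs share monochromatic edges. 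I will check these two short cases explicitly and treat the generic case by step 3.
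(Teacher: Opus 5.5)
Your proposal is correct and follows the paper's proof: the same pattern densities, the same codegree table with minimum $r^2/2$ (using $p>r^2/2$ on $(p_0,1/3]$), and the same red/blue colour-propagation argument against a tight Hamilton cycle. The paper avoids your case split on the distance between the roots by taking an arc $x_{\mathrm R},v_1,\dots,v_t,x_{\mathrm B}$ with $t\ge2$, which exists since $n$ is large, and propagating red from $v_1v_2$ to $v_{t-1}v_t$; your distance-one and distance-two cases already use this long arc, so the bridge pairs are unnecessary.
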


\begin{proof}
The two roots affect every vertex-set density count by only $O(n^2)$,
so it is enough to consider triples in $W$.  A triple contained in one
of $X,Y$ is retained with density $r^3+p^3$, and
\[
    r^3+p^3-p=(1-p)(1-3p)\ge0
\]
for $p\le1/3$.  A triple meeting both $X$ and $Y$ is retained precisely
when its same-part pair is blue, and therefore has density $p$.
Hence \Cref{lem:palette-concentration}\textup{(i)} gives
$(p,\mu)$-density.

The asymptotic normalised codegrees are
\begin{center}
\begin{tabular}{@{}ll@{}}
\toprule
pair type & normalised codegree \\
\midrule
red pair inside $X$ or $Y$ & $r^2/2$ \\
blue pair inside $X$ or $Y$ & $(1+p^2)/2$ \\
blue pair between $X$ and $Y$ & $p$ \\
$x_{\mathrm R}v$, $v\in W$ & $r/2$ \\
$x_{\mathrm B}v$, $v\in W$ & $(1+p)/2$ \\
$x_{\mathrm R}x_{\mathrm B}$ & $1$ \\
\bottomrule
\end{tabular}
\end{center}
with a uniform $o(1)$ error.  For $p>p_0$, every row is at least
$r^2/2-o(1)$.  In particular,
\[
    \delta_2(H)\ge\left(\frac{r^2}{2}-\eps\right)n
                 =\bigl(\thr_3(p)-\eps\bigr)n.
\]

Suppose that $H$ contains a tight Hamilton cycle.  The two roots split
it into two arcs, one of which has the form
\[
    x_{\mathrm R},v_1,v_2,\ldots,v_t,x_{\mathrm B}
\]
with $t\ge2$ and $v_1,\ldots,v_t\in W$.  The first edge forces
$v_1v_2$ to be red.  Every internal edge is a monochromatic triangle,
so all consecutive pairs $v_iv_{i+1}$ along the arc are red.  The last
edge, however, forces $v_{t-1}v_t$ to be blue, a contradiction.
\end{proof}

\subsection{The fourth branch: the biased Ara{\'u}jo--Piga--Schacht barrier}
\label{subsec:above-third-construction}

\begin{construction}[Biased propagation barrier]
\label{con:sharp-curve}
Fix $p\in(1/3,1)$ and set
\[
    \rho:=\frac{1-\sqrt{(4p-1)/3}}{2}.
\]
Then $\rho\in(0,1/3)$,
\[
    p=\rho^3+(1-\rho)^3,
    \qquad
    \rho^2=\thr_4(p).
\]
Let $W$ be a set of $n-2$ vertices and let $x,y$ be two additional
vertices.  Colour every pair of $W$ independently red with probability
$\rho$ and blue with probability $1-\rho$.  Define a $3$-graph $H$ on
$W\cup\{x,y\}$ as follows.
\begin{enumerate}[label=\textup{(\alph*)},leftmargin=2.6em]
    \item A triple contained in $W$ is an edge if and only if it is
    monochromatic.

    \item For $u,v\in W$, the triple $xuv$ is an edge if and only if
    $uv$ is red, while $yuv$ is an edge if and only if $uv$ is blue.

    \item Every triple $xyv$ with $v\in W$ is an edge.
\end{enumerate}
Choose a colouring satisfying \Cref{lem:palette-concentration}.
\end{construction}

\begin{proposition}
\label{prop:sharp-curve}
For every $p\in(1/3,1)$ and every $\eps,\mu>0$, all sufficiently large
instances of \Cref{con:sharp-curve} are $(p,\mu)$-dense, satisfy
\[
    \delta_2(H)\ge\bigl(\thr_4(p)-\eps\bigr)n,
\]
and contain no tight Hamilton cycle.
\end{proposition}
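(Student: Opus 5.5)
The proof will follow the pattern of \Cref{prop:rooted-two-pocket-properties}, in three steps: density, codegree, and a propagation argument ruling out a tight Hamilton cycle.

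\emph{Density.} The two extra vertices $x,y$ change every count $e_H(U_1,U_2,U_3)$ by only $O(n^2)$, so it suffices to look at triples inside $W$. Each such triple is an edge exactly when it is monochromatic. Its probability of being retained is therefore $\rho^3+(1-\rho)^3$. We first verify the identity $p=\rho^3+(1-\rho)^3$. Writing $s=\sqrt{(4p-1)/3}$, we have $\rho=(1-s)/2$, and
\[
\rho^3+(1-\rho)^3=1-3\rho(1-\rho)=1-\tfrac34(1-s^2)=\tfrac14+\tfrac34 s^2=p .
\]
We also note that $\rho\in(0,1/3)$, since $s\in(1/3,1)$ for $p\in(1/3,1)$. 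Hence \Cref{lem:palette-concentration}\textup{(i)} gives $(p,\mu)$-density for large $n$.

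\emph{Codegrees.} By \Cref{lem:palette-concentration}\textup{(ii)}, the normalised codegrees hold up to a uniform $o(1)$ error.
\begin{itemize}[leftmargin=2.5em]
\item A red pair in $W$ has codegree $\rho^2$ from $W$, plus $x$.
\item A blue pair in $W$ has codegree $(1-\rho)^2$, plus $y$.
\item A pair $xv$ with $v\in W$ has codegree $\rho$, coming from its red neighbours, plus $y$.
\item A pair $yv$ with $v\in W$ has codegree $1-\rho$.
\item The pair $xy$ has codegree $1$.
\end{itemize}
Since $\rho<1/2$, the minimum of these values is $\rho^2=\thr_4(p)$. This gives $\delta_2(H)\ge(\thr_4(p)-\eps)n$.

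\emph{No tight Hamilton cycle.} The two special vertices split any tight Hamilton cycle into two arcs. Take the arc $x,v_1,\dots,v_t,y$ with all $v_i\in W$ and $t\ge2$. We check that such an arc exists with $t\ge 2$. The other arc has length $n-2-t$, so for large $n$ we may choose the arc with $t\ge2$; alternatively, $x$ and $y$ could be adjacent on the cycle, but then the opposite arc is long.
\begin{itemize}[leftmargin=2.5em]
\item The edge $xv_1v_2$ forces $v_1v_2$ to be red.
\item Each internal edge $v_iv_{i+1}v_{i+2}$ is monochromatic, so red propagates to every consecutive pair $v_iv_{i+1}$.
\item The final edge $v_{t-1}v_ty$ forces $v_{t-1}v_t$ to be blue, a contradiction.
\end{itemize}
The case $t=2$ is handled directly: the single pair $v_1v_2$ would have to be both red and blue.

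The only delicate point is the choice of arc. We must make sure that an arc of the required shape with at least two interior vertices exists. This holds because $n$ is large, so at least one of the two arcs contains at least two vertices of $W$. Everything else is routine.
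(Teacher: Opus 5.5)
Your proposal is correct and follows essentially the same route as the paper: the two special vertices affect only $O(n^2)$ triples and $\rho^3+(1-\rho)^3=p$ gives density, the codegree table has minimum $\rho^2$, and red propagates along an $x$--$y$ arc with at least two interior vertices until it clashes with the blue demand at $y$. Your explicit check of the identity $p=\rho^3+(1-\rho)^3$ and your separate treatment of the case $t=2$ are details the paper leaves implicit.
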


\begin{proof}
The two special vertices affect only $O(n^2)$ ordered triples.  A
triple contained in $W$ is retained with probability
\[
    \rho^3+(1-\rho)^3=p,
\]
so \Cref{lem:palette-concentration}\textup{(i)} gives
$(p,\mu)$-density.

If $uv\subseteq W$ is red, then its extensions consist of $x$ and the
common red neighbours of $u,v$ in $W$, giving
\[
    d_H(u,v)=(\rho^2+o(1))n.
\]
If $uv$ is blue, the analogous value is
$((1-\rho)^2+o(1))n$, which is larger because $\rho<1/2$.  Moreover,
for every $u\in W$,
\[
    d_H(x,u)=(\rho+o(1))n,
    \qquad
    d_H(y,u)=((1-\rho)+o(1))n,
\]
and $d_H(x,y)=n-2$.  Hence
\[
    \delta_2(H)\ge(\rho^2-\eps)n
                 =\bigl(\thr_4(p)-\eps\bigr)n.
\]

Suppose that $H$ contains a tight Hamilton cycle.  Of the two arcs
between $x$ and $y$, choose one with at least two internal vertices and
write it as
\[
    x,w_1,w_2,\ldots,w_m,y,
    \qquad m\ge2.
\]
The first edge forces $w_1w_2$ to be red.  Since every internal edge is
monochromatic, the red colour propagates along the arc, so
$w_{m-1}w_m$ is red.  The last edge forces the same pair to be blue, a
contradiction.
\end{proof}

\begin{corollary}[Complete lower bound]
\label{cor:complete-lower-bound}
For every $p\in(0,1)$ and every $\eps,\mu>0$, there are arbitrarily
large $(p,\mu)$-dense $3$-graphs with minimum codegree at least
\[
    \bigl(\thr(p)-\eps\bigr)n
\]
and no tight Hamilton cycle.
\end{corollary}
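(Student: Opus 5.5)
The corollary is obtained by assembling the four constructions according to the branch of \eqref{eq:full-threshold} that contains $p$. Given $p\in(0,1)$ and $\eps,\mu>0$, we split into the four intervals $(0,p_\star]$, $(p_\star,p_0]$, $(p_0,1/3]$ and $(1/3,1)$, on which $\thr(p)$ equals $\thr_1(p)$, $\thr_2(p)$, $\thr_3(p)$ and $\thr_4(p)$, respectively.

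On $(0,p_\star]$ we use \Cref{con:two-part}. By \Cref{prop:two-part-properties}, after choosing $\tau$ suitably, every sufficiently large instance is $(p,\mu)$-dense, has $\delta_2(H)\ge(1/3-\eps)n=(\thr(p)-\eps)n$, and contains no tight Hamilton cycle.

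On $(p_\star,p_0]$ we use \Cref{con:middle-layer} together with \Cref{prop:middle-layer-properties}.

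On $(p_0,1/3]$ we use \Cref{con:rooted-two-pocket} together with \Cref{prop:rooted-two-pocket-properties}.

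On $(1/3,1)$ we use \Cref{con:sharp-curve} together with \Cref{prop:sharp-curve}.

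In each case the cited proposition is stated for every $\eps,\mu>0$ and for all sufficiently large $n$. It therefore suffices to check two points.

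First, the random colouring required by \Cref{lem:palette-concentration} exists for every large $n$. This holds because the lemma's conclusions hold with probability $1-o(1)$, which is positive for large $n$, so a suitable colouring exists for each such $n$.

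Second, the part sizes, prescribed only up to $(\cdot+o(1))n$, can be realised by integers for each large $n$. This is also immediate, since the sizes may be rounded: for the rooted constructions, for example, take $|W|=n-2$ and split it as evenly as the construction requires.

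Hence for each $p$ there are $(p,\mu)$-dense examples on $n$ vertices for all sufficiently large $n$, in particular arbitrarily large ones, with $\delta_2(H)\ge(\thr(p)-\eps)n$ and no tight Hamilton cycle.

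There is no genuine obstacle here, since all the work lies in the four propositions. The only point needing care is that each proposition's parameter range matches exactly the half-open interval on which the corresponding branch of $\thr$ is active, including the endpoints $p_\star$, $p_0$ and $1/3$. Comparing the hypotheses $p\in(0,p_\star]$, $p\in(p_\star,p_0]$, $p\in(p_0,1/3]$ and $p\in(1/3,1)$ with the cases in \eqref{eq:full-threshold} shows that they coincide, so every $p\in(0,1)$ is covered by exactly one construction.
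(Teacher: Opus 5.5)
Your proposal is correct and follows the paper's proof: both split $(0,1)$ into $(0,p_\star]$, $(p_\star,p_0]$, $(p_0,1/3]$ and $(1/3,1)$ and invoke the corresponding four propositions. Your extra remarks, on the existence of a suitable colouring and on rounding the part sizes, are harmless bookkeeping that is already implicit in those propositions.
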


\begin{proof}
Use \Cref{prop:two-part-properties} when $p\le p_\star$,
\Cref{prop:middle-layer-properties} when $p_\star<p\le p_0$,
\Cref{prop:rooted-two-pocket-properties} when $p_0<p\le1/3$, and
\Cref{prop:sharp-curve} when $p>1/3$.
\end{proof}

\subsection{Hamiltonicity versus arbitrary-pair connectability below one third}
\label{subsec:con3}

We finally record why the global connecting lemma used above density
$1/3$ cannot be the correct tool below this barrier.  Call a $3$-graph
\emph{globally pair-connectable} if every two disjoint ordered pairs
can be joined by a tight path.

\begin{construction}[Obstruction to arbitrary-pair connectability]
\label{con:pair-connectability-obstruction}
Fix $\rho\in(2/3,1)$ and partition
\[
    V=X\dot\cup Y,
    \qquad
    |X|=\left(\frac{1}{2\rho}+o(1)\right)n,
    \qquad
    |Y|=n-|X|.
\]
Colour every pair inside $X$ independently red with probability $\rho$
and blue with probability $1-\rho$.  Define $H$ by retaining
\begin{enumerate}[label=\textup{(\alph*)},leftmargin=2.6em]
    \item every triple of type $YYY$ or $XYY$;
    \item a triple of type $XXY$ if and only if its pair inside $X$ is
    blue; and
    \item a triple contained in $X$ if and only if it is a
    monochromatic triangle.
\end{enumerate}
Choose a colouring satisfying \Cref{lem:palette-concentration}.
\end{construction}

\begin{proposition}
\label{prop:pair-connectability-obstruction}
For every fixed $\rho\in(2/3,1)$ and every $\eps,\mu>0$, all
sufficiently large instances of
\Cref{con:pair-connectability-obstruction} are
$(1-\rho,\mu)$-dense and satisfy
\[
    \delta_2(H)\ge\left(\frac\rho2-\eps\right)n.
\]
Nevertheless, some two disjoint ordered pairs cannot be joined by any
tight path.
\end{proposition}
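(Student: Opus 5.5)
The proof follows the pattern of \Cref{prop:two-part-properties}. There are three parts: a density check on the vertex-part patterns, a codegree table, and a colour-propagation argument. Throughout write $x:=|X|/n=1/(2\rho)+o(1)$ and $y:=|Y|/n=1-x$. Since $\rho>2/3$ we have $x<3/4$, so both parts are linear in $n$.

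\emph{Density.} By \Cref{lem:palette-concentration}\textup{(i)}, it suffices to check that every part pattern has density at least $1-\rho$.
\begin{itemize}
\item Patterns $YYY$ and $XYY$ have density $1$.
\item Pattern $XXY$ has density exactly $1-\rho$.
\item Pattern $XXX$ is retained exactly when its three pairs are all red or all blue, so it has density $\rho^3+(1-\rho)^3$. One computes
\[
\rho^3+(1-\rho)^3-(1-\rho)=\rho(3\rho-2)\ge0
\]
for $\rho\ge2/3$.
\end{itemize}
This is where the hypothesis $\rho>2/3$ enters, and it gives $(1-\rho,\mu)$-density.

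\emph{Codegrees.} Up to a uniform $o(1)$ error, \Cref{lem:palette-concentration}\textup{(ii)} gives the following normalised codegrees.
\begin{itemize}
\item A red pair in $X$ extends only to red triangles inside $X$, so its codegree is $\rho^2x=\rho/2$.
\item A blue pair in $X$ extends to all of $Y$ and to the common blue neighbours in $X$, so its codegree is
\[
y+(1-\rho)^2x=\frac{2\rho-1+(1-\rho)^2}{2\rho}=\frac{\rho}{2}.
\]
\item An $XY$ pair extends to the rest of $Y$ and to the blue neighbours of its $X$-endpoint, so its codegree is
\[
y+(1-\rho)x=\frac{2\rho-1+1-\rho}{2\rho}=\frac12\ge\frac\rho2.
\]
\item A pair inside $Y$ has codegree $1$.
\end{itemize}
The choice $|X|=n/(2\rho)$ is exactly what balances the two tight rows at $\rho/2$. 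For large $n$ this gives $\delta_2(H)\ge(\rho/2-\eps)n$.

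\emph{Non-connectability.} By concentration there is a red pair $ab$ inside $X$; take $c,d\in Y$ distinct from $a,b$. Suppose a tight path $a,b,v_1,\ldots,v_t,c,d$ joined the ordered pairs $(a,b)$ and $(c,d)$.
\begin{itemize}
\item The only edges containing a red pair are red triangles inside $X$. Indeed, $XXY$ edges need a blue $X$-pair, and $XXX$ edges are monochromatic.
\item So the edge $abv_1$ forces $v_1\in X$ with $bv_1$ red. By induction every consecutive pair along the path is red and lies in $X$.
\item This contradicts the final pair $cd\subseteq Y$. The degenerate case $t=0$ also fails, since then $bc$ would have to be a red pair inside $X$, but $c\in Y$.
\end{itemize}

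No step is a real obstacle. The only care needed is to confirm that the two minimal codegree rows coincide and that $\rho(3\rho-2)\ge0$; both are the short identities displayed above.
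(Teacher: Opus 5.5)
Your proposal is correct and follows the paper's proof essentially step for step. It uses the same part-pattern densities with the identity $\rho^3+(1-\rho)^3-(1-\rho)=\rho(3\rho-2)$, the same codegree table (the two $X$-rows balanced at $\rho/2$, the $XY$-row at $1/2$), and the same red-propagation argument confining any tight path that starts at a red ordered pair in $X$ to $X$. Your separate $t=0$ case is already covered by the induction, since the edge $abc$ would force $c\in X$.
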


\begin{proof}
The densities of the four part patterns $YYY$, $XYY$, $XXY$ and $XXX$
are
\[
    1,\qquad 1,\qquad 1-\rho,
    \qquad \rho^3+(1-\rho)^3.
\]
Since $\rho>2/3$,
\[
    \rho^3+(1-\rho)^3-(1-\rho)
    =\rho(3\rho-2)>0.
\]
Thus the minimum part-pattern density is $1-\rho$, and the concentration
lemma gives $(1-\rho,\mu)$-density.

Up to a uniform $o(1)$ error, the normalised codegrees are
\begin{center}
\begin{tabular}{@{}ll@{}}
\toprule
pair type & normalised codegree \\
\midrule
pair inside $Y$ & $1$ \\
pair between $X$ and $Y$ & $1/2$ \\
red pair inside $X$ & $\rho/2$ \\
blue pair inside $X$ & $\rho/2$ \\
\bottomrule
\end{tabular}
\end{center}
For example, a red pair inside $X$ has
$\rho^2|X|=(\rho/2+o(1))n$ common red neighbours in $X$, while a blue
pair inside $X$ has every vertex of $Y$ and
$(1-\rho)^2|X|+o(n)$ common blue neighbours, whose total is again
$(\rho/2+o(1))n$.  This proves the codegree bound.

Let $(u,v)$ be an ordered red pair inside $X$.  If a tight path starts
with $(u,v)$ and has next vertex $w$, then $uvw$ being an edge forces
$w\in X$, because a red $X$-pair has no extension in $Y$.  Moreover,
$uvw$ is a monochromatic triangle, so the new terminal pair $(v,w)$ is
again red.  Iterating shows that every vertex of the path lies in $X$
and every consecutive pair is red.  Hence $(u,v)$ cannot be joined to
any disjoint ordered pair containing a vertex of $Y$.
\end{proof}

Taking $\rho=2/3+\xi$, where
\[
    0<\xi<\frac13-p_0,
\]
gives density
\[
    p=1-\rho=\frac13-\xi>p_0
\]
and normalised minimum codegree asymptotic to
\[
    \frac\rho2=\frac13+\frac\xi2.
\]
At the same density, the Hamiltonicity threshold is
\[
    \thr_3(p)=\frac{(1-p)^2}{2}=\frac{\rho^2}{2}
             <\frac\rho2.
\]
Consequently, after choosing the quasirandomness error sufficiently
small, \Cref{thm:main} implies that these examples contain a tight
Hamilton cycle, even though they are not globally pair-connectable.
Thus, below density $1/3$, the codegree condition forcing arbitrary-pair
connectability is strictly stronger than the condition forcing a tight
Hamilton cycle.  This final observation is not used in the proof of
\Cref{thm:main}.

\section{The two upper-bound reductions}
\label{sec:mainthm}

The proof of the upper bound in \Cref{thm:main} separates naturally at
density $1/3$.  Above $1/3$, the arbitrary-pair connecting lemma is
available, and the result follows from the absorption framework of Han,
Shu and Wang.  At and below $1/3$, arbitrary-pair connectability is no
longer forced at the Hamiltonicity threshold, so we instead reduce the
problem to a local Hamilton-framework theorem.  We carry out these two
reductions separately.

\subsection{The range above one third}
\label{subsec:above-third-reduction}

Throughout this subsection, $p>1/3$, so the relevant branch of the
threshold function is $\thr_4(p)$.  We first recall the absorption
statement used in the original proof.

Let $1\le \ell<k$, and let $\gamma>0$ and $t\in\mathbb N$.  A $k$-graph
$H$ is called \emph{$(k,\ell,\gamma,t)$-connectable} if, for every two
disjoint ordered $\ell$-sets $L_1,L_2\subseteq V(H)$, there are at least
$\gamma|V(H)|$ pairwise vertex-disjoint sets $T\subseteq
V(H)\setminus(L_1\cup L_2)$ of size $t$ such that the concatenation
$L_1TL_2$ forms an $\ell$-path.

\begin{theorem}[Absorption framework, {\cite[Theorem~1.9]{HanShuWang2026}}]
\label{thm:absorption-framework}
Given integers $1\le \ell<k$ and $t\in\mathbb N$, and reals
$p,\beta>0$, there exist $\xi',\xi,\gamma,\mu_0>0$ and an integer
$n_0$ such that the following holds for every $\mu\in(0,\mu_0)$ and
every $n\ge n_0$ with $n\in(k-\ell)\mathbb N$.  Suppose that $H$ is an
$n$-vertex $(p,\mu)$-dense $k$-graph satisfying
$\delta_1(H)\ge\beta n^{k-1}$ and that $H$ is
$(k,\ell,\gamma,t)$-connectable.  Suppose further that there is a set
$A\subseteq V(H)$ of size $\xi n$ such that, for every
$B\subseteq V(H)$ with $|B|\le\xi'n$, the induced hypergraph
$H[A\cup B]$ is $(k,\ell,\gamma,t)$-connectable.  Then $H$ contains a
Hamilton $\ell$-cycle.
\end{theorem}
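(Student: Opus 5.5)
Since \Cref{thm:absorption-framework} is quoted from \cite{HanShuWang2026}, we do not reprove it here. If one had to reconstruct the argument, the natural route is the standard absorbing method, adapted so that every step uses only three ingredients: $(p,\mu)$-density, the vertex-degree bound $\delta_1(H)\ge\beta n^{k-1}$, and $(k,\ell,\gamma,t)$-connectability. Codegree-type hypotheses are never invoked. The plan has four steps, carried out in the following order.
\begin{enumerate}[label=\textup{(\arabic*)},leftmargin=2.5em]
    \item Build an absorbing $\ell$-path $P_{\mathrm{abs}}$, disjoint from $A$.
    \item Keep $A$ aside as the connecting reservoir.
    \item Cover almost all remaining vertices by a bounded number of long vertex-disjoint $\ell$-paths.
    \item Join everything into one cycle through $A$, and absorb the leftover.
\end{enumerate}

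\textbf{Absorbers.} For each $(k-\ell)$-set $S$, an absorber should be a bounded-size configuration which spans an $\ell$-path with the same two end $\ell$-sets both with and without $S$. The plan is to build it from an edge through each vertex of $S$, which is plentiful by $\delta_1(H)\ge\beta n^{k-1}$. The gaps are then closed by connecting paths of length $t$: connectability supplies $\gamma n$ disjoint choices for every gap. Combining the two gives $\Omega(n^{c})$ absorbers for every $S$, where $c$ is the number of vertices in an absorber. A random sample of absorbers, followed by deletion of overlapping ones, yields a family of $\Theta(n)$ disjoint absorbers such that every $S$ has linearly many of them. These are chained into $P_{\mathrm{abs}}$ using connectability again. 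This step uses $\xi'n$ vertices, which is why $A$ must stay connectable after adding a set $B$ of size at most $\xi' n$.

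\textbf{Almost-cover.} Apply the weak hypergraph regularity lemma to $H-V(P_{\mathrm{abs}})-A$. The $(p,\mu)$-density passes to the cluster hypergraph: every $k$ clusters span density at least $p-o(1)$, so every $k$-tuple of clusters is a dense regular tuple. Partition the clusters almost perfectly into $k$-tuples. Inside each dense regular $k$-tuple, greedily embed long $\ell$-paths that wind through the parts. Although $\ell$-paths are not linear hypergraphs, an $\ell$-path can be embedded vertex by vertex inside a single dense regular tuple, because each extension step only requires typical $\ell$-sets to have large neighbourhoods. This leaves $o(n)$ uncovered vertices.

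\textbf{Connecting and absorbing.} Connect the end $\ell$-sets of the paths from the almost-cover and of $P_{\mathrm{abs}}$ cyclically, using paths with $t$ internal vertices each drawn from $A\cup B$. Here $B$ is the set of uncovered vertices put into the reservoir pool. The number of connections is bounded, each consumes $t$ vertices, and connectability of $H[A\cup B']$ holds for each intermediate set $B'$ of size at most $\xi'n$, so the connections can be made successively. Finally, the unused vertices of $A$ together with the leftover are partitioned into $(k-\ell)$-sets, using $n\in(k-\ell)\mathbb N$ to adjust divisibility, and are absorbed into $P_{\mathrm{abs}}$ one set at a time.

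\textbf{Main obstacle.} The main difficulty is the absorber construction. The absorbing structures for $\ell>k/2$ (for example tight paths) are non-linear, so their supersaturation cannot be read off from linear quasirandomness by counting lemmas. The remedy we would attempt first is to build the absorbers entirely from single edges, which the vertex-degree condition controls, glued together by connectability, which supplies the robust $\gamma n$ disjoint choices at each gap. The counting is then done through connections rather than through any embedding lemma.
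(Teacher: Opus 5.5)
You handle \Cref{thm:absorption-framework} exactly as the paper does. The paper imports it from \cite[Theorem~1.9]{HanShuWang2026} and gives no proof, so as a citation your proposal matches. The reconstruction you sketch, however, would not work as written, and the gap is at exactly the step you flag as the main obstacle.

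An absorber for a $(k-\ell)$-set $S$ is a set $T$ such that $T$ and $T\cup S$ both span $\ell$-paths with the \emph{same} ordered end $\ell$-sets. A connector supplied by $(k,\ell,\gamma,t)$-connectability is tied to its two end $\ell$-sets, and connectors for different pairs of ends are unrelated. Suppose you hang an edge on each vertex of $S$ and close the gaps by connectors. The $\ell$-sets at which the connectors next to $S$ attach then contain vertices of $S$ in one version and not in the other. So the two versions use different $t$-sets, and their vertex sets do not differ by exactly $S$.

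What is required is a bounded local switch whose vertex set is the same in both versions apart from $S$. For $k=3$, $\ell=2$ the simplest is a configuration spanning both $w,x,y,z$ and $w,x,v,y,z$ as tight paths. For $\ell\ge2$ such rooted configurations are non-linear. Moreover, $\delta_1(H)\ge\beta n^{k-1}$ together with $(p,\mu)$-density does not supply them. Take the $3$-graph of triangles of a random graph $G$ and add one extra vertex whose link is the complement of $G$: that vertex lies in no absorber at all. So absorbers have to be extracted from connectability itself, by an argument your sketch does not provide.

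Independently, the counting step fails. Connectability only guarantees $\gamma n$ pairwise disjoint connectors for each pair of ends, so each gap contributes a factor of order $n$, not $n^{t}$. With $g$ gaps your argument yields on the order of $n^{c-(t-1)g}$ absorbers for $S$, rather than $\Omega(n^{c})$. Sampling $c$-sets at rate $\Theta(n^{1-c})$ then leaves a fixed $S$ with $O(n^{1-(t-1)g})$ absorbers in expectation, which is $O(1)$ for $t\ge2$, instead of linearly many. The usual repair is to sample connector-free cores and add connectors greedily afterwards, but that again presupposes a core with the absorbing property.

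There are also two smaller points.
\begin{enumerate}
\item The weak regularity lemma makes only all but an $\varepsilon$-fraction of cluster $k$-tuples regular. Regularity with respect to vertex sets also gives no control over neighbourhoods of $\ell$-sets when $\ell\ge2$, so the vertex-by-vertex embedding is not justified as stated. Tiling with complete $k$-partite $k$-graphs of bounded part size, which every linear-sized set contains by $(p,\mu)$-density and Erd\H{o}s's theorem, would serve instead.
\item Connectability of $H[A\cup B]$ concerns only ordered $\ell$-sets inside $A\cup B$. The set $B$ must therefore contain the end $\ell$-sets you want to join, not just the uncovered vertices.
\end{enumerate}
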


We shall also use the following standard reservoir lemma.

\begin{lemma}[Reservoir lemma, {\cite[Lemma~8.1]{KuhnMycroftOsthus2010}}]
\label{lem:reservoir}
Let $1/n\ll\varepsilon\ll\xi,\alpha<1$.  Let $H$ be a $3$-graph on
$n$ vertices with $\delta_2(H)\ge\alpha n$, and let
$A\subseteq V(H)$ be chosen uniformly among the sets of size $\xi n$.
Then, with probability $1-o(1)$, every pair
$S\in\binom{V(H)}2$ satisfies
\[
    |N_H(S)\cap A|\ge(\alpha-\varepsilon)|A|.
\]
\end{lemma}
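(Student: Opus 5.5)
The statement is the reservoir lemma, quoted from K\"uhn, Mycroft and Osthus, so we can simply cite it. For completeness, the plan is a standard concentration argument for a uniformly random subset, followed by a union bound over the $\binom n2$ pairs.

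First fix a pair $S\in\binom{V(H)}2$. Its neighbourhood $N_H(S)$ has size $d:=d_H(S)\ge\alpha n$. Since $A$ is a uniformly random $\xi n$-subset of $V(H)$, the quantity $|N_H(S)\cap A|$ follows a hypergeometric distribution with mean
\[
    \frac{d\,|A|}{n}\ge\alpha|A|.
\]
Next apply the hypergeometric form of Chernoff's inequality. The hypergeometric distribution is at least as concentrated as the corresponding binomial one, for instance by Hoeffding's comparison. This gives
\[
    \Pr\bigl(|N_H(S)\cap A|<(\alpha-\varepsilon)|A|\bigr)
    \le\Pr\Bigl(|N_H(S)\cap A|<\mathbb E|N_H(S)\cap A|-\varepsilon|A|\Bigr)
    \le 2\exp\bigl(-c\,\varepsilon^2|A|\bigr),
\]
where $c>0$ is absolute. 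Since $|A|=\xi n$, the right-hand side is at most $2\exp(-c\varepsilon^2\xi n)$.

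Finally, take a union bound over the at most $n^2$ pairs $S$. The total failure probability is at most $2n^2\exp(-c\varepsilon^2\xi n)$. This is $o(1)$ because $1/n\ll\varepsilon,\xi$. Hence with probability $1-o(1)$ every pair satisfies $|N_H(S)\cap A|\ge(\alpha-\varepsilon)|A|$.

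There is no real obstacle. The only point needing care is to use a tail bound valid for sampling without replacement. If one prefers to avoid hypergeometric bounds, an alternative is to pick each vertex independently with probability $\xi$, then adjust the resulting set to size exactly $\xi n$ by adding or removing $o(n)$ vertices, which changes every intersection by only $o(|A|)$.
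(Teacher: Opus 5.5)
Your proposal is correct and matches the paper, which gives no proof of its own and simply cites K\"uhn--Mycroft--Osthus. Your supplementary argument, a hypergeometric Chernoff--Hoeffding tail bound for each pair $S$ followed by a union bound over the $\binom n2$ pairs, is the standard proof of that cited lemma and is valid for any fixed $\varepsilon,\xi>0$ as $n\to\infty$.
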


We now deduce the upper bound above $1/3$ from the connecting lemma.

\begin{proof}[Proof of the upper bound for $p>1/3$]
Fix $p>1/3$ and $\alpha>\thr_4(p)$.  Choose
\[
    \thr_4(p)<\alpha^-<\alpha^+<\alpha.
\]
Apply \Cref{lem:connecting} with parameters $p$ and $\alpha^-$.  This
gives $\mu_{\rm con}>0$, an integer $n_{\rm con}$ and a fixed number
$t$ of internal vertices.

Apply \Cref{thm:absorption-framework} with $k=3$, $\ell=2$, this value
of $t$, density parameter $p$, and vertex-degree parameter
$\beta=\alpha/3$.  Let
$\xi',\xi,\gamma,\mu_{\rm abs},n_{\rm abs}$ be the resulting constants.
Choose the remaining parameters so that
\[
    1/n\ll\mu\ll\min\{\mu_{\rm abs},\xi^3\mu_{\rm con}\},
    \qquad
    \varepsilon,\xi'\ll\xi,
    \qquad
    \gamma t\ll
    \min\{\alpha-\alpha^-,\alpha^+-\alpha^-\}.
\]
Let $H$ be an $n$-vertex $(p,\mu)$-dense $3$-graph with
$\delta_2(H)\ge\alpha n$.  For every vertex $v$,
\[
    \deg_H(v)
       =\frac12\sum_{u\ne v}d_H(u,v)
       \ge\frac{\alpha n(n-1)}2,
\]
and hence $\delta_1(H)\ge\alpha n^2/3$ for sufficiently large $n$.
Thus the vertex-degree condition in
\Cref{thm:absorption-framework} is satisfied.

We next prove that $H$ is $(3,2,\gamma,t)$-connectable.  Fix two
disjoint ordered pairs $(x_1,x_2)$ and $(y_1,y_2)$.  Suppose that
vertex-disjoint internal sets $T_1,\ldots,T_{i-1}$ have already been
chosen, and put
\[
    H_i:=H\left[V(H)\setminus\bigcup_{j<i}T_j\right].
\]
Before the greedy procedure terminates, fewer than $\gamma tn$ vertices
have been removed.  Therefore every pair in $H_i$ has codegree at least
\[
    \alpha n-\gamma tn\ge\alpha^-|V(H_i)|.
\]
Moreover, induced subgraphs inherit one-sided density after rescaling
the error term.  Since $|V(H_i)|\ge(1-\gamma t)n$, our choice of $\mu$
ensures that $H_i$ is $(p,\mu_i)$-dense for some
$\mu_i\le\mu_{\rm con}$.  The connecting lemma therefore supplies a
new tight path from $(x_1,x_2)$ to $(y_1,y_2)$ with exactly $t$ internal
vertices.  Continuing greedily gives at least $\gamma n$ pairwise
vertex-disjoint connectors.  Hence $H$ is
$(3,2,\gamma,t)$-connectable.

It remains to verify the reservoir condition.  Choose a set
$A\subseteq V(H)$ of size $\xi n$ satisfying the conclusion of
\Cref{lem:reservoir}.  Let $B\subseteq V(H)$ have size at most $\xi'n$
and put $U:=A\cup B$.  The parameter hierarchy gives
\[
    (\alpha-\varepsilon)|A|\ge\alpha^+|U|,
\]
so every pair in $H[U]$ has codegree at least $\alpha^+|U|$.

We show, again greedily, that $H[U]$ is
$(3,2,\gamma,t)$-connectable.  After deleting fewer than
$\gamma t|U|$ internal vertices, let $U'$ be the remaining set.  Every
pair in $H[U']$ has codegree at least
\[
    \alpha^+|U|-\gamma t|U|\ge\alpha^-|U'|.
\]
Also $|U'|\ge\xi n/2$.  Consequently, the density error in the induced
hypergraph $H[U']$ is at most a constant multiple of $\mu/\xi^3$, and
our choice of $\mu$ makes $H[U']$ $(p,\mu')$-dense with
$\mu'\le\mu_{\rm con}$.  The connecting lemma supplies the next
connector, and the greedy argument produces $\gamma|U|$ pairwise
disjoint connectors.  Thus $H[A\cup B]$ is
$(3,2,\gamma,t)$-connectable for every $B$ with $|B|\le\xi'n$.

All assumptions of \Cref{thm:absorption-framework} are now verified.
Since $k-\ell=1$, there is no divisibility restriction, and $H$ contains
a Hamilton $2$-cycle, that is, a tight Hamilton cycle.
\end{proof}

\subsection{The range at and below one third}
\label{subsec:below-third-reduction}

For $0<p\le1/3$, the construction in
\Cref{prop:pair-connectability-obstruction} shows that the Hamiltonicity
threshold does not force arbitrary-pair connectability.  We therefore
replace the global connecting reduction by the Hamilton-framework
formalism of Lang and Sanhueza-Matamala, in the specialised form used
by Lin, Wang and Zhou
\cite{LangSanhuezaMatamala2026,LinWangZhou2026}.

Two edges of a $3$-graph are \emph{tightly adjacent} if they share a
pair.  A \emph{tight component} is an edge-maximal tightly connected
subgraph.  A \emph{perfect fractional matching} in a $3$-graph $G$ is
a function $\omega:E(G)\to[0,1]$ such that
\[
    \sum_{\substack{e\in E(G)\\v\in e}}\omega(e)=1
    \qquad\text{for every }v\in V(G).
\]
A closed tight walk of order $m$ is the image of a homomorphism
$C_m^{(3)}\to G$.

\begin{definition}[Hamilton framework]
\label{def:framework}
Let $s\ge6$ and let $\cP_s$ be a family of $s$-vertex $3$-graphs.  A
choice $\cF(G)\subseteq G$ for every $G\in\cP_s$ is a Hamilton
framework if the following conditions hold.
\begin{enumerate}[label=\textup{(F\arabic*)},leftmargin=2.8em]
 \item $\cF(G)$ is a spanning tight component of $G$.
 \item $\cF(G)$ has a perfect fractional matching.
 \item $\cF(G)$ contains a closed tight walk whose order is congruent
 to $1$ modulo $3$.
 \item If $J-x,J-y\in\cP_s$ for distinct $x,y\in V(J)$, then
 $\cF(J-x)\cup\cF(J-y)$ is tightly connected in $J$.
\end{enumerate}
\end{definition}

We use the following consequence of the hypergraph bandwidth theorem.

\begin{theorem}[Framework embedding,
{\cite{LangSanhuezaMatamala2026,LinWangZhou2026}}]
\label{thm:framework-embedding}
Fix $s\ge6$ and suppose that $\cP_s$ admits a Hamilton framework.
There exists $n_0$ such that the following holds for every $n\ge n_0$.
Let $H$ be an $n$-vertex $3$-graph.  If, for every fixed $6$-set $R\subseteq V(H)$, at least a
$1-1/s^2$ fraction of the $s$-sets $S\supseteq R$ induce members
of $\cP_s$, then $H$ contains a tight Hamilton
cycle.
\end{theorem}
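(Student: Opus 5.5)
The plan is to derive this from the Lang--Sanhueza-Matamala blow-up/bandwidth machinery for hypergraphs. The idea is to show that the sampling hypothesis turns $H$ into a \emph{Hamilton-connected} host in their sense. That means three things: every part of $H$ is tied into one tight component; this component carries an almost perfect fractional matching; it contains a closed walk of order $1\bmod 3$; and it reaches every vertex in a rooted way. The Hamilton cycle is then built by the usual absorption scheme together with an almost-spanning cover. For the reduced-graph part I would follow the specialised form in Lin--Wang--Zhou.

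\emph{Step 1: local frameworks glue into one global component.} Call an $s$-set $S$ good if $H[S]\in\cP_s$, and write $\cF(S):=\cF(H[S])$. For two good $s$-sets $S,S'$ with $|S\triangle S'|=2$, set $J:=H[S\cup S']$. Then $J-x=H[S]$ and $J-y=H[S']$, so (F4) says $\cF(S)\cup\cF(S')$ is tightly connected. The density hypothesis, applied with a fixed $6$-set $R$, shows that good sets form a well-connected family under such swaps. From this I would conclude that the frameworks of almost all good sets lie in a single tight component $\cC$ of $H$. Rootedness at $6$-sets is what makes every pair and every edge relevant to absorption meet $\cC$: for each $R$, most $s$-supersets are good, and (F1) gives spanning frameworks, so every vertex lies in edges of $\cC$ through $R$.

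\emph{Step 2: pass to a reduced graph and obtain the structural inputs.} I would take a regular slice and the reduced $3$-graph $\mathcal R$. Random $s$-sets of clusters reflect random $s$-sets of vertices, so almost every $s$-set of clusters again spans a member of $\cP_s$, and Step 1 produces a single component $\cC_{\mathcal R}$. I would then derive three properties.
\begin{enumerate}[label=\textup{(\roman*)},leftmargin=2.5em]
\item Averaging the fractional matchings from (F2) over random $s$-sets gives a near-perfect fractional matching of $\cC_{\mathcal R}$.
\item Condition (F3), transferred through the gluing, gives a closed walk of order $1\bmod3$ in $\cC_{\mathcal R}$. This removes divisibility and parity obstructions.
\item The blow-up lemma for regular slices converts (i) into an almost spanning collection of long tight paths inside $\cC$.
\end{enumerate}

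\emph{Step 3: connect and absorb.} The paths from Step 2 are joined using connections inside $\cC$, which is tightly connected by Step 1. Leftover vertices are handled with absorbers, built from the rooted $6$-set condition. For each vertex $v$, many good $s$-sets contain $v$ together with a prescribed pair, and the spanning framework there supplies short tight paths that can swallow $v$. The closed walk from Step 2(ii) fixes lengths modulo $3$.

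The main obstacle is Step 1 and its transfer to the reduced graph. One has to show that the components chosen in different samples are consistent, so that they all define the same actual tight component of $H$, and that this survives regularisation. I would first try a chain argument over swaps $S\to S'$ in the Johnson graph of good $s$-sets, using (F4) at each swap. The $1-1/s^2$ density ensures that good sets form a connected, robust subfamily even after conditioning on a fixed $6$-set.
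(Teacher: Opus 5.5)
The paper contains no proof of this statement. It is quoted from Lang--Sanhueza-Matamala, in the specialised form of Lin--Wang--Zhou \cite{LangSanhuezaMatamala2026,LinWangZhou2026}, and used as a black box; the paper's own work is only to verify the hypothesis, via \Cref{lem:inheritance} and \Cref{thm:local-framework}. Your opening sentence (derive it from their machinery) is therefore all the paper does.

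The outline you add, however, is not a proof, and the step you yourself call the main obstacle is left open. In Step~2 there is no reason for a cluster $s$-set of the reduced graph to lie in $\cP_s$. The family $\cP_s$ is arbitrary, neither monotone nor hereditary, and reduced edges are defined by density thresholds. At best a cluster $s$-set contains a copy of some sampled member of $\cP_s$, and only the monotone consequences of \textup{(F1)}--\textup{(F4)} transfer.

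More importantly, nothing in your outline ensures that tight walks of $\cC_{\mathcal R}$ lift to tight paths inside the particular actual component $\cC$ of Step~1. They could instead land in another tight component of $H$ whose shadow meets the same clusters. Your Johnson-graph swap argument only gives consistency among samples of $H$ itself and says nothing about what survives regularisation.

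Even the rooted claim in Step~1 needs more than you give. That \emph{almost all} good $s$-sets have frameworks in $\cC$ says nothing about the supersets of a fixed $6$-set $R$, which form a vanishing proportion of all $s$-sets. A separate rooted chain argument is needed before you can conclude that every vertex meets $\cC$.

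Step~3 has two further gaps. First, tight connectivity of $\cC$ in $H$ is not what the connecting step needs. You need bounded-length connections between prescribed end pairs that avoid an arbitrary small set of already used vertices, in practice inside a reserved set. A tight component may have only long connections, or may lose connectivity after a few deletions. Robust connections have to come from the reduced structure, which returns you to the unresolved lifting problem.

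Second, \textup{(F1)} only tells you that $v$ lies in some edge of $\cF(S)$, and tight connectivity of $\cF(S)$ only gives detours through $v$. An absorber needs a tight path $P$ together with a second tight path on $V(P)\cup\{v\}$ with the same end pairs. For instance, you would need $x_1x_2x_3x_4$ with $x_1x_2x_3$, $x_2x_3x_4$, $x_1x_2v$, $x_2vx_3$, $vx_3x_4\in E(H)$. Nothing in \textup{(F1)}--\textup{(F4)} produces such gadgets, let alone $\Omega(n^b)$ of them for a fixed gadget size $b$.

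As it stands your argument is a plan whose hard parts are missing; here, as in the paper, the statement should simply be cited.
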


The one-sided density condition and minimum codegree are inherited by
almost every induced subgraph of sufficiently large fixed order.  For
the density condition, we use \cite[Lemma~8.1]{Lang2023}, while the
codegree inheritance follows from
\cite[Lemma~4.3]{LangSanhuezaMatamala2026}. Applying these two statements to fixed $6$-sets and to codegree, and
absorbing the negligible normalisation error into $\nu$, gives the
following.

\begin{lemma}[Inheritance]
\label{lem:inheritance}
For every $p,\alpha,\nu>0$ and every sufficiently large fixed $s$,
there is $\mu>0$ such that the following holds.  If $H$ is sufficiently
large, $(p,\mu)$-dense and satisfies
$\delta_2(H)\ge\alpha|V(H)|$, then, for every fixed $6$-set
$R\subseteq V(H)$, at least a $1-1/s^2$ fraction of the $s$-sets
$S\supseteq R$ satisfy
\[
    H[S]\text{ is $(p,\nu)$-dense}
    \qquad\text{and}\qquad
    \delta_2(H[S])\ge(\alpha-\nu)s.
\]
\end{lemma}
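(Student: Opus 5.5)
The plan is to prove the two inheritance properties separately for a uniformly random $s$-set $S\supseteq R$, each with failure probability at most $1/(2s^2)$, and then take a union bound. Fix a $6$-set $R$ and write $S=R\cup S'$, where $S'$ is a uniformly random $(s-6)$-subset of $V(H)\setminus R$. The six forced vertices of $R$ affect any normalised quantity on $S$ by only $O(1/s)$. This error is absorbed into $\nu$ once $s$ is large in terms of $\nu$.

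\emph{Codegree.} For a fixed pair $xy$ with $x,y\in S$, conditioned on $x,y\in S$, the quantity $|N_H(x,y)\cap S|$ is hypergeometric with mean at least $\alpha(s-8)$ roughly. Its lower tail below $(\alpha-\nu/2)s$ is at most $\exp(-c\nu^2 s)$. The union bound runs over the $\binom s2$ pairs inside $S$. Equivalently, count the pairs of $S$ that are bad, i.e. have few neighbours in $S$: the expected number is $O(s^2e^{-c\nu^2s})$, and Markov's inequality gives failure probability $O(s^2e^{-c\nu^2 s})\le 1/(2s^2)$ for large $s$. This is the content of \cite[Lemma~4.3]{LangSanhuezaMatamala2026}, so I would cite it and only check that the rooting at $R$ changes nothing beyond $O(1/s)$.

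\emph{Density.} Here I would invoke \cite[Lemma~8.1]{Lang2023}, which states that $(p,\mu)$-density passes to random $s$-subsets as $(p,\nu)$-density with probability $1-e^{-\Omega(s)}$ once $\mu\ll\nu$ and $s$ is large. To justify it directly, I would argue as follows. For $X_1,X_2,X_3\subseteq S$, the deficit $p|X_1||X_2||X_3|-e_H(X_1,X_2,X_3)$ is controlled through a weak-regularity/cut-norm argument. Approximate $H$ by its weak (Frieze--Kannan) regularity partition with $O_\nu(1)$ parts. The $(p,\mu)$-density forces every weighted triple-of-parts density to be at least $p-O(\mu)$ in a cut sense, in fact at least $p$ minus small error after averaging. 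A random sample preserves, with high probability, both the part proportions and the cut-norm smallness of the error term; the latter is the sampling lemma for cut norm of Alon--Fernandez de la Vega--Kannan--Karpinski type. The density lower bound on $S$ then follows with error $\nu s^3$. The rooting costs $6s^2\le \nu s^3/2$.

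\emph{Main obstacle.} The main obstacle is the density inheritance, because it quantifies over all $2^{3s}$ triples of subsets of $S$. A naive union bound is therefore too expensive, and one needs the cut-norm sampling step. Since this is exactly \cite[Lemma~8.1]{Lang2023}, I would apply it with $\nu/2$, pick $s$ large enough for both tails to be below $1/(2s^2)$, then pick $\mu$, and combine the two bounds by the union bound.
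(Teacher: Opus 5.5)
Your proposal is correct and matches the paper: the paper also proves the lemma only by citing \cite[Lemma~8.1]{Lang2023} for inheritance of $(p,\mu)$-density and \cite[Lemma~4.3]{LangSanhuezaMatamala2026} for codegree inheritance, then applying them with a fixed $6$-set and absorbing the normalisation error into $\nu$. Your self-contained sketches go beyond what the paper writes but are consistent with it: hypergeometric tails plus a first-moment bound for codegrees, and weak regularity plus cut-norm sampling for density, with $\mu\ll K^{-3}\nu$ for the $K=K(\nu)$ parts. Their explicit tail bounds also show why a $1-1/s^2$ fraction is attainable.
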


The finite statement to be proved in the remainder of the paper is the
following.

\begin{theorem}[Local framework theorem]
\label{thm:local-framework}
Fix $p\in(0,1/3]$ and $\alpha>\thr(p)$.  For all sufficiently small
$\nu>0$ and all sufficiently large fixed $s$, the family of
$s$-vertex $(p,\nu)$-dense $3$-graphs with minimum codegree at least
$(\alpha-\nu)s$ admits a Hamilton framework.
\end{theorem}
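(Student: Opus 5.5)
The plan is to define $\cF(G)$ for each $s$-vertex $(p,\nu)$-dense $G$ with $\delta_2(G)\ge(\alpha-\nu)s$ by a canonical rule, and then verify (F1)--(F4) through a regularity reduction.

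First, apply a regular-slice lemma (ordered pair-state version, refined by the tight-component colours of pairs) to $G$ with a bounded number of clusters. This gives a reduced $3$-graph $\cR$ whose cluster tuples inherit density at least $p-o(1)$ from $(p,\nu)$-density. Each reduced pair-state also inherits relative codegree at least $\alpha-o(1)$, and every state lies inside one actual tight component of $G$.

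Next, work in the reduced pair-state digraph, where a dense triad gives a transition, and select the canonical component. This is the transition component $\cC$ that carries the most fractional-matching weight; ties are broken by a fixed rule depending only on the actual colour. We then prove the finite structural statements about $\cC$, by contradiction against the three obstructions of \Cref{sec:constructions}.

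\textbf{Spanning and matching (F1, F2).} If $\cC$ fails to admit a perfect fractional matching, Farkas' lemma gives a rooted separator, that is, a weighting of clusters certifying a space barrier. Optimising the densities subject to $(p,\nu)$-density and the codegree bound recovers $\alpha\le\thr_1(p)$, or $\alpha\le\thr_2(p)$ together with $p\le(1-p)^3$. We need the perfect fractional matching to be robust, meaning it lies in the interior of the matching cone, so that it survives perturbation.

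\textbf{Aperiodicity (F3).} If $\cC$ had only closed walks of length divisible by $3$, or if two components split the vertex set into pockets, the codegree and density constraints would force the two-pocket structure. This gives $\alpha\le r^2/2=\thr_3(p)$.

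\textbf{Consistency (F4).} Deleting a single vertex from $J$ changes the reduced structure by only $o(1)$. So the canonical rule picks the same actual component in $J-x$ and $J-y$, and their union is tightly connected through shared edges in $J-x-y$.

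Then lift from the reduced component back to $G$. We attach every exceptional vertex to $\cF(G)$ using its codegree, which gives (F1). We average the reduced fractional matching over the regular triads, using vertex-lower-regularity, and correct the small errors with the interior slack, which gives an exact perfect fractional matching on $V(G)$ (F2). A reduced closed walk of length $\equiv1\pmod 3$ is embedded via the extension lemma (F3). We also show that the chosen actual component does not depend on which bounded refinement was used, which is needed for F4.

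The main obstacle is the finite extremal optimisation behind F2. We must show that any Farkas separator is dominated by one of the constructions, so that the two-part and three-layer barriers are exactly the extremal space configurations. This leads to a constrained polynomial optimisation whose boundary cases meet at $p_\star$ and $p_0$. I would first reduce the problem to at most three cluster classes by a symmetrisation or convexity argument, and then solve the resulting low-dimensional program explicitly.
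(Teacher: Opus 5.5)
Your architecture (component-coloured slice, a reduced canonical component, then lifting) matches the paper's. The gap is that you never establish any rigidity of the selected component, and your F1 and F4 steps never use $\alpha>\thr(p)$, so as written they prove too much.

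Take \Cref{con:rooted-two-pocket} with $x_{\mathrm B}$ deleted, at codegree just below $r^2/2$. Its reduced system has three transition components: red on $X$, red on $Y$, and blue. Blue is spanning, pair-complete and has a perfect fractional matching, so your argmax rule picks it. Yet $x_{\mathrm R}$ has codegree about $rn/2$, and every pair through it lies in a red shadow, so no codegree argument attaches it to the blue component. In fact this graph has no spanning tight component at all.

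The missing ingredient is the paper's \textup{(C1)}:
\begin{enumerate}[label=\textup{(\roman*)},leftmargin=2.6em]
\item A spanning reduced component exists. A minimal set of clusters covered by no support forces $p\le 1/8$ or $p\le 8/27<p_0$, and is impossible outright when $\alpha>1/3$.
\item It is the unique spanning component if $p\le p_\star$, and the unique pair-complete component if $p>p_\star$.
\item All other components lie inside one proper set of clusters, respectively there is at most one of them. This is because two proper pockets force $\alpha\le r^2/2$.
\end{enumerate}
Only with this does \textup{(F1)} lift to \emph{every} vertex, not just the exceptional ones. A vertex $v$ outside the actual component would have root-colour classes $X_D(v)$ of size at least $\alpha n$, whose reduced realisations would cover all clusters by noncanonical supports, contradicting \textup{(C1)}. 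The same structure makes the selected actual component intrinsic, namely the unique spanning tight component. That is the refinement-independence you assert without proof. So the two-pocket bound is not primarily an aperiodicity statement; it is what makes the selection rigid.

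Your F4 step fails independently. The actual tight components of $J-x$ and $J-y$ belong to different graphs, and a single vertex can merge or split them. Hence the component-coloured reduced systems need not be $o(1)$-close even when the uncoloured slices are. Moreover, an argmax of a continuous quantity is unstable without a quantitative gap.

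The paper instead regularises both deletions on one common refinement by $(\operatorname{col}_{J-x}(uv),\operatorname{col}_{J-y}(uv))$. It then proves that the two canonical components share a positive joint state. Otherwise, the mass of joint states projecting into the first canonical component is a closed two-colour profile, and \Cref{lem:two-colour-profile} gives a contradiction:
\begin{enumerate}[label=\textup{(\alph*)},leftmargin=2.6em]
\item for $p\le p_\star$, the weighted Goodman identity yields $\alpha\le 1/3$;
\item for $p>p_\star$, the corner bounds $(p^2+r^2)/2<p$ and $t\le 1-2p$ yield $\alpha\le(1-2p)^2<\thr(p)$.
\end{enumerate}

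Two smaller points. First, your F3 bound $\alpha\le r^2/2$ gives no contradiction for $p<1-\sqrt{2/3}$, where $r^2/2>1/3$. The first regime needs the paper's separate argument with a pure canonical diagonal: either $3x_0x_1x_2\le 1/9$, or $3\alpha\le 1$ after summing the three phase extensions. Second, the F2 separator needs no general symmetrisation once \textup{(C1)} is available. Fractional Berge--Tutte on the link of a zero-cover root, combined with the convex combination using $\theta=(1+p)/2$, gives $\thr_1$, $\thr_2$ and $p\le r^3$ directly.
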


We now deduce the global upper bound from this local theorem.

\begin{proof}[Proof of the upper bound for $0<p\le1/3$]
Fix $p\in(0,1/3]$ and $\alpha>\thr(p)$.  Choose
\[
    \thr(p)<\bar\alpha<\alpha.
\]
Apply \Cref{thm:local-framework} with parameters $p$ and
$\bar\alpha$.  Choose $\nu>0$ sufficiently small and then a
sufficiently large fixed $s$ so that the theorem applies, and let
$\cP_s$ be the family of $s$-vertex $(p,\nu)$-dense $3$-graphs with
minimum codegree at least $(\bar\alpha-\nu)s$.

Apply \Cref{lem:inheritance} with the global codegree
parameter $\alpha$, decreasing $\mu$ if necessary.  If $H$ is a
sufficiently large $(p,\mu)$-dense $3$-graph with
$\delta_2(H)\ge\alpha|V(H)|$, then, for every fixed $6$-set $R\subseteq V(H)$, at least a
$1-1/s^2$ fraction of the $s$-sets $S\supseteq R$ satisfy
\[
    H[S]\text{ is }(p,\nu)\text{-dense}
    \quad\text{and}\quad
    \delta_2(H[S])\ge(\alpha-\nu)s
                         \ge(\bar\alpha-\nu)s.
\]
Thus all these induced subgraphs belong to $\cP_s$.  By
\Cref{thm:local-framework}, the family $\cP_s$ admits a Hamilton
framework, and \Cref{thm:framework-embedding} gives a tight Hamilton
cycle in $H$.
\end{proof}

Together with the four constructions in \Cref{sec:constructions}, the
two deductions above prove \Cref{thm:main} once
\Cref{lem:connecting,thm:local-framework} have been established.

\section{Regularity tools}
\label{sec:regularity}

The proofs on the two sides of the density barrier $1/3$ use the same
regularity framework, although the reduced structures arising from it
are used in different ways.  In this section we collect the regularity
tools needed throughout the paper and formulate them in a form suitable
for both parts of the argument.

We begin with regular complexes, ordered pair-cells and a locally
cleaned regular slice, and then establish the resulting counting,
extension and codegree-inheritance properties.  Finally, we describe
the additional component-aware refinement needed in the range at and
below $1/3$.

\subsection{Regular complexes}
\label{subsec:regular-complexes}

We use the language of regular complexes.  A $(k,h)$-complex
$\mathcal H$ with vertex classes $V_1,\ldots,V_h$ is an $h$-partite
hypergraph whose edges have size at most $k$ and which is downward
closed: if $e\in E(\mathcal H)$ and $e'\subseteq e$, then
$e'\in E(\mathcal H)$.  We write $\mathcal H^{(j)}$ for its
$j$-uniform level.  If $\mathcal J$ is a $(j-1)$-graph, then
$K_j(\mathcal J)$ denotes the family of $j$-sets whose
$(j-1)$-subsets are all edges of $\mathcal J$.

Let $\mathcal H^{(j)}$ be a $j$-graph supported on a
$(j-1)$-graph $\mathcal H^{(j-1)}$.  We say that
$\mathcal H^{(j)}$ is $(d_j,\delta,r)$-regular with respect to
$\mathcal H^{(j-1)}$ if, for every collection
$\mathcal Q_1,\ldots,\mathcal Q_s$ of at most $r$ subgraphs of
$\mathcal H^{(j-1)}$ satisfying
\[
    \left|\bigcup_{q=1}^s K_j(\mathcal Q_q)\right|
    \ge
    \delta |K_j(\mathcal H^{(j-1)})|,
\]
we have
\[
    \left|
        E(\mathcal H^{(j)})
        \cap
        \bigcup_{q=1}^sK_j(\mathcal Q_q)
    \right|
    =
    (d_j\pm\delta)
    \left|\bigcup_{q=1}^sK_j(\mathcal Q_q)\right|.
\]

Let $\mathbf d=(d_2,\ldots,d_k)$.  A $(k,h)$-complex
$\mathcal H$ is called
\emph{$(\mathbf d,\delta_k,\delta,r)$-regular} if, for every
$2\le j\le k-1$, the level $\mathcal H^{(j)}$ is
$(d_j,\delta,1)$-regular with respect to
$\mathcal H^{(j-1)}$, and the top level $\mathcal H^{(k)}$ is
$(d_k,\delta_k,r)$-regular with respect to
$\mathcal H^{(k-1)}$.  When several cells occur at the same level,
the same definition is applied to each cell separately, and their
densities may be different.  We shall only apply the counting and
extension lemmas to complexes of bounded order whose non-zero densities
are bounded away from zero.

We shall also use the following notation for labelled copies.  Let
$\mathcal G$ and $\mathcal H$ be complexes with respective vertex
classes $X_1,\ldots,X_h$ and $V_1,\ldots,V_h$.  We say that
$\mathcal H$ \emph{respects the partition} of $\mathcal G$ if every
edge of $\mathcal G$ is supported on the corresponding vertex classes
of $\mathcal H$.  Assuming this, a labelled copy of $\mathcal G$ in
$\mathcal H$ is \emph{partition-respecting} if it is given by an
injective map
\[
    \phi:V(\mathcal G)\longrightarrow V(\mathcal H)
\]
such that $\phi(X_i)\subseteq V_i$ for every $i\in[h]$ and
$\phi(e)\in E(\mathcal H)$ for every $e\in E(\mathcal G)$.  We write
$|\mathcal G|_{\mathcal H}$ for the number of labelled
partition-respecting copies of $\mathcal G$ in $\mathcal H$.

For a pattern complex $\mathcal G$, we call the densities of the cells
of the host complex $\mathcal H$ corresponding to edges of
$\mathcal G$ \emph{relevant}.  Thus, for each
$e\in E(\mathcal G^{(j)})$, the corresponding $j$-cell of
$\mathcal H$ has a relative density with respect to its underlying
$(j-1)$-complex.  These densities may vary from cell to cell, but only
a common positive lower bound will be required; cells not used by
$\mathcal G$ play no role.  The expected number of copies, or of
extensions of a fixed copy, is the standard product of the available
choices of vertices in each class and the relevant cell densities.
With this notation, we use the following two standard consequences of
regularity.

\begin{lemma}[Dense counting lemma,
{\cite[Lemma~6]{CooleyFountoulakisKuhnOsthus2009}}]
\label{lem:dense-counting}
Let $k,h,b$ be fixed positive integers, and let $\varepsilon,d>0$.
There exist $n_0$ and $\delta>0$ such that the following holds for every
$n\ge n_0$.  Let $\mathcal G$ be a $(k-1,h)$-complex on at most $b$
vertices, and let $\mathcal H$ be a $(k-1,h)$-complex with vertex
classes of size $n$ which respects the partition of $\mathcal G$.
Suppose that every relevant $j$-cell $C$ of $\mathcal H$, for
$2\le j\le k-1$, is $(d_C,\delta,1)$-regular with respect to its
underlying $(j-1)$-complex, where $d_C\ge d$.  Then
$|\mathcal G|_{\mathcal H}$ is equal to its expected value up to a
factor $1\pm\varepsilon$.
\end{lemma}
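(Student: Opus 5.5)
The Dense counting lemma (\Cref{lem:dense-counting}) is the standard counting lemma for regular complexes. The plan is to prove it by induction on the number of vertices of $\mathcal G$, peeling off one vertex at a time. The argument only uses the lower-level regularity $(d_C,\delta,1)$ for $2\le j\le k-1$, together with the fact that every relevant density is at least $d$. We choose the hierarchy $1/n_0\ll\delta\ll\varepsilon,d,1/b,1/k,1/h$, with intermediate accuracies $\varepsilon_b\ll\varepsilon_{b-1}\ll\dots\ll\varepsilon_1=\varepsilon$.

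For the inductive step, fix a vertex $x\in X_i$ of $\mathcal G$ and let $\mathcal G'=\mathcal G-x$. By induction, $|\mathcal G'|_{\mathcal H}$ equals its expectation up to a factor $1\pm\varepsilon'$. Writing $|\mathcal G|_{\mathcal H}=\sum_{\psi}\mathrm{ext}(\psi)$ over copies $\psi$ of $\mathcal G'$, we must show two things:
\begin{enumerate}[label=\textup{(\roman*)},leftmargin=2.5em]
\item all but an $\varepsilon'$-fraction of copies $\psi$ have $\mathrm{ext}(\psi)=(1\pm\varepsilon'')\,n\prod_{e\ni x}d_{C_e}$, where $d_{C_e}$ is the relative density of the cell of the edge $e\ni x$;
\item the exceptional copies contribute negligibly.
\end{enumerate}
For (ii), it suffices to bound $\mathrm{ext}(\psi)\le n$ and to use that the expected count is at least $d^{O(b^{k})}n^{|\mathcal G|}$. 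This is bounded away from $0$ relative to $n^{|\mathcal G|}$, so an $\varepsilon'$-fraction error is absorbed once $\varepsilon'\ll d^{O(b^k)}$.

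Statement (i) is a second-moment, or Cauchy--Schwarz, statement. Count copies of the doubled pattern $\mathcal G^{\times2}_x$, obtained by taking two copies of $x$ sharing $\mathcal G'$; this pattern has at most $b+1$ vertices. Applying the induction hypothesis for the vertex count $b+1$ requires organising the induction on $b$ with an enlarged range, so we prove the statement for all patterns with at most $2b$ vertices from the start. This gives $\sum_\psi\mathrm{ext}(\psi)$ and $\sum_\psi\mathrm{ext}(\psi)^2$ both asymptotically matching the squares of the expectations. The variance is then small, and Chebyshev yields (i).

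The obstacle to this naive plan is circularity: computing $\sum_\psi\mathrm{ext}(\psi)$ is exactly $|\mathcal G|_{\mathcal H}$. The standard remedy, which we follow, is to prove directly that the neighbourhood link complex of a typical copy $\psi$ inherits regularity. Concretely, the edges at $x$ of level $j$ are cut out by $(j-1)$-sets in the link. We proceed level by level, starting at $j=2$, where this is ordinary graph regularity: a typical vertex has degree $\approx d_Cn$ into each class, which follows from $(d_C,\delta,1)$-regularity of pairs. At level $j$, the link $(j-1)$-graph of a typical $\psi$ is $\sqrt\delta$-regular with density $d_{C}$ relative to the link of level $j-1$. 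This follows because a failure for many $\psi$ would produce a subgraph $\mathcal Q$ of $\mathcal H^{(j-1)}$ of non-negligible clique mass on which the density of $\mathcal H^{(j)}$ deviates, contradicting $(d_C,\delta,1)$-regularity.

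Having shown that the link complex is regular, $\mathrm{ext}(\psi)$ is just the number of vertices of $V_i$ lying in a regular complex. That number is counted by applying the lemma to a pattern with fewer levels or vertices, completing the induction. This level-by-level inheritance is the main technical step. Its bookkeeping (the $\delta\mapsto\sqrt\delta$ losses across at most $k$ levels and $b$ vertices) is why $\delta$ must be chosen tiny in terms of $\varepsilon,d,b,k$. Since the result is exactly \cite[Lemma~6]{CooleyFountoulakisKuhnOsthus2009}, in the paper we would simply cite it, noting that it applies because all relevant densities are at least $d$ and only $(\cdot,\delta,1)$-regularity is used.
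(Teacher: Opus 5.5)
The paper gives no proof of this lemma. It imports the statement from \cite[Lemma~6]{CooleyFountoulakisKuhnOsthus2009}, so your closing decision to cite it matches the paper and is all that is needed.

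The self-contained argument you sketch beforehand would not work as written.
\begin{itemize}
\item The doubled-pattern route is circular. An induction on the number of vertices cannot call on a pattern with $b+1$ vertices, and proving the statement ``for all patterns on at most $2b$ vertices from the start'' only pushes the problem to $2b+1$.
\item The link-inheritance step that replaces it is asserted exactly where the difficulty lies. For a copy $\psi$ of $\mathcal G-x$, $\mathrm{ext}(\psi)$ is the size of $V_i\cap\bigcap_{e\ni x}N_{\mathcal H}(\psi(e-x))$. These neighbourhood sets come from several different cells, so there is no ``link $(j-1)$-graph of $\psi$'' whose regularity could be tested.
\item The configurations $(\psi(e-x),v)$ arising from many bad copies $\psi$ form a union over the images of the vertices of $\psi$ outside $e-x$. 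Such a union is not of the form $K_j(\mathcal Q)$ for a single $\mathcal Q\subseteq\mathcal H^{(j-1)}$. Since $(d_C,\delta,1)$-regularity gives no control over unions, the claimed contradiction with regularity does not follow.
\item At level $2$, similarly, typical degrees are not enough: you need typical common neighbourhoods of the already embedded tuples.
\end{itemize}

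The missing idea is to impose the constraints at $x$ one edge at a time, in order of increasing level. For the current edge $e$, fix the images of all vertices outside $e$. Then the previously imposed constraints, and the badness of $\psi$, become conditions on proper subsets of $e$. The relevant configurations are then exactly $K_j(\mathcal Q)$ for one $\mathcal Q\subseteq\mathcal H^{(j-1)}$, to which $r=1$ regularity applies.

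With this observation you do not need typical copies at all. Write $|\mathcal G|_{\mathcal H}$ as a sum over labelled maps $\phi$ of a product of indicators. Replace the indicator of one top-level edge $e_t$ at a time by $d_{C_t}$.
\begin{itemize}
\item For fixed $\phi$ outside $e_t$, the remaining factors, as a function of $\phi|_{e_t}$, form $\mathbf 1_{K_{k-1}(\mathcal Q)}$ with $\mathcal Q\subseteq\mathcal H^{(k-2)}$. Downward closure of $\mathcal G$ supplies all lower-level sets inside $e_t$, and every other edge meets $e_t$ in a proper subset.
\item Regularity therefore bounds the change by $\delta n^{k-1}$ in both the large-$\mathcal Q$ and small-$\mathcal Q$ cases. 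After summing over $\phi$ outside $e_t$, the change is at most $\delta n^{|\mathcal G|}$.
\item Once all top-level edges are replaced, induction on $k$ counts the $(k-2)$-skeleton.
\item The expected count is at least $(1-o(1))d^{2^b}n^{|\mathcal G|}$. Choosing $\delta\ll\varepsilon d^{2^b}2^{-b}$ therefore turns the accumulated additive error into a factor $1\pm\varepsilon$.
\end{itemize}
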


\begin{lemma}[Extension lemma,
{\cite[Lemma~5]{CooleyFountoulakisKuhnOsthus2009}}]
\label{lem:extension}
Let $k,h,b',b''$ be fixed positive integers with $b''<b'$, and let
$\beta,d>0$.  There exist $r,n_0$ and regularity parameters
$\delta,\delta_k>0$ such that the following holds for every
$n\ge n_0$.  Let $\mathcal G$ be a $(k,h)$-complex on at most $b'$
vertices, let $\mathcal G'$ be an induced subcomplex of $\mathcal G$ on
at most $b''$ vertices, and let $\mathcal H$ be a $(k,h)$-complex with
vertex classes of size $n$ which respects the partition of
$\mathcal G$.  Suppose that every relevant $j$-cell $C$ of
$\mathcal H$, for $2\le j\le k-1$, is
$(d_C,\delta,1)$-regular with respect to its underlying
$(j-1)$-complex, and every relevant $k$-cell $C$ is
$(d_C,\delta_k,r)$-regular with respect to its underlying
$(k-1)$-complex, where $d_C\ge d$.

Then all but at most
$\beta|\mathcal G'|_{\mathcal H}$ partition-respecting labelled copies
of $\mathcal G'$ extend to partition-respecting labelled copies of
$\mathcal G$.  Moreover, for every non-exceptional copy, the number of
extensions is within a factor $1\pm\beta$ of the expected number.
\end{lemma}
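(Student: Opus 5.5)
The plan is to deduce the Extension lemma from a counting lemma for the full $(k,h)$-complex, combined with a second-moment argument over copies of $\mathcal G'$. First, I would upgrade \Cref{lem:dense-counting} to a counting lemma in which the top level is also counted. This is the standard hypergraph counting lemma: if the lower levels are $(d_C,\delta,1)$-regular and the $k$-cells are $(d_C,\delta_k,r)$-regular with $r$ large and $\delta_k\ll d$, then the number of partition-respecting labelled copies of any $(k,h)$-complex on at most $2b'$ vertices is within a factor $1\pm\varepsilon$ of its expected value. I would take this from the same source, \cite{CooleyFountoulakisKuhnOsthus2009}, and treat it as a black box. The only point to check is that the parameters can be chosen to serve all complexes on at most $2b'$ vertices at once, which holds because there are finitely many of them.

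Next, for a copy $\phi$ of $\mathcal G'$, let $\operatorname{ext}(\phi)$ be its number of extensions to copies of $\mathcal G$, and let $E$ be the expected value of $\operatorname{ext}(\phi)$. This expected value is the same for every $\phi$, since it is the product of the class sizes for the new vertices and the relevant cell densities of the edges meeting $V(\mathcal G)\setminus V(\mathcal G')$. I would apply the upgraded counting lemma to three complexes: $\mathcal G'$, $\mathcal G$, and the amalgam $\mathcal G\sqcup_{\mathcal G'}\mathcal G$, obtained by gluing two copies of $\mathcal G$ along $\mathcal G'$. Since $\mathcal G'$ is induced, the amalgam is a genuine complex on at most $2b'$ vertices, and its expected count is $|\mathcal G'|_{\mathcal H}\cdot E^2$ up to $1\pm\varepsilon$. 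This gives
\[
\sum_\phi \operatorname{ext}(\phi)=(1\pm\varepsilon)|\mathcal G'|_{\mathcal H}E,
\qquad
\sum_\phi \operatorname{ext}(\phi)^2=(1\pm\varepsilon)|\mathcal G'|_{\mathcal H}E^2+O(n^{-1})|\mathcal G'|_{\mathcal H}E^2 .
\]
The error term accounts for pairs of extensions that share a new vertex. Such pairs correspond to non-injective images, and these are fewer by a factor $O(1/n)$ because all relevant densities are at least $d$.

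Expanding $\sum_\phi(\operatorname{ext}(\phi)-E)^2$ then gives at most $O(\varepsilon)|\mathcal G'|_{\mathcal H}E^2$. By Chebyshev's inequality, at most $O(\varepsilon/\beta^2)|\mathcal G'|_{\mathcal H}$ copies satisfy $|\operatorname{ext}(\phi)-E|>\beta E$. Choosing $\varepsilon\ll\beta^3$, and then $\delta,\delta_k,r,n_0$ from the counting lemma, makes the number of exceptional copies at most $\beta|\mathcal G'|_{\mathcal H}$. Every non-exceptional copy has $\operatorname{ext}(\phi)\ge(1-\beta)E>0$, so it extends, which proves both assertions.

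The main obstacle is the top-level counting lemma. Its proof requires the regular slice machinery, namely the interplay of $(d_k,\delta_k,r)$-regularity against $r$ subcomplexes with the lower-level regularity. It is not a consequence of \Cref{lem:dense-counting} alone, so I would cite it rather than reprove it. The remaining steps are routine.
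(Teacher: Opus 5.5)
Your argument is correct, but the paper contains no proof of this statement to compare against: it is imported verbatim from \cite[Lemma~5]{CooleyFountoulakisKuhnOsthus2009}. What you give is the standard deduction of an extension lemma from a full counting lemma, in three steps. First, count $\mathcal G'$, $\mathcal G$ and the glued complex $\mathcal G\sqcup_{\mathcal G'}\mathcal G$. Second, bound the pairs of extensions with a coincidence among the new vertices trivially by $O(n^{v'+2w-1})$, where $v':=|V(\mathcal G')|$ and $w:=|V(\mathcal G)\setminus V(\mathcal G')|$, against a main term of order at least $d^{O(1)}n^{v'+2w}$. Third, finish with Chebyshev and $\varepsilon\ll\beta^3$.

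The citation buys brevity. Your route buys transparency, at the price of a different black box: the top-level counting lemma for $(k,h)$-complexes on at most $2b'$ vertices, with repeated classes and cell-dependent densities at least $d$. As you say, this does not follow from \Cref{lem:dense-counting} and must itself be cited.

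One small correction concerns inducedness of $\mathcal G'$. It is not what makes the glued object a complex, since a union of two down-closed families is always down-closed. Its role is to ensure that every edge of $\mathcal G$ outside $\mathcal G'$ meets the new vertices. This makes the first moment $(1\pm O(\varepsilon))|\mathcal G'|_{\mathcal H}E$, and the expected count of the glued complex equal to the expected number of copies of $\mathcal G'$ times $E^2$. Without it, copies of $\mathcal G'$ that do not span $\mathcal G[V(\mathcal G')]$ would have no extensions at all.
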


\subsection{Ordered pair-cells and regular slices}
\label{subsec:regular-slices}

We next introduce the ordered pair-cells used throughout the paper.
Let $V(H)=V_1\cup\cdots\cup V_m$ be a cluster partition, and put $\lambda_i:=\frac{|V_i|}{n}$ for $i\in[m]$. For $(i,j)\in[m]^2$, write
\[
    \overrightarrow V_{ij}
    =
    \begin{cases}
        V_i\times V_j, & i\ne j,\\
        \{(x,y)\in V_i^2:x\ne y\}, & i=j.
    \end{cases}
\]
An ordered pair partition of $\overrightarrow V_{ij}$ is written as
\[
    \overrightarrow V_{ij}
    =
    P_{ij}^1\cup\cdots\cup P_{ij}^{q_{ij}}.
\]
For a pair-cell $P_{ij}^a$, define its relative weight by
\[
    w(P_{ij}^a)
    :=
    \frac{|P_{ij}^a|}{|\overrightarrow V_{ij}|}.
\]
We always take the partitions coherently under reversal: if
$P_{ij}^a$ is a pair-cell over $(i,j)$, then
\[
 (P_{ij}^a)^\top
 :=
 \{(y,x):(x,y)\in P_{ij}^a\}
\]
is a pair-cell over $(j,i)$.  This may be enforced by a bounded common
refinement.

For pair-cells $P_{ij}^a$, $P_{jk}^b$ and $P_{ki}^c$, let
\[
    \mathcal T=(P_{ij}^a,P_{jk}^b,P_{ki}^c)
\]
denote the corresponding triad.  Its ordered support is
\[
K_3(\mathcal T)
:=
\bigl\{
    \{x,y,z\}:
    x,y,z\text{ are distinct},
    (x,y)\in P_{ij}^a,\ 
    (y,z)\in P_{jk}^b,\ 
    (z,x)\in P_{ki}^c
\bigr\}.
\]
The reduced weight of $\mathcal T$ is
\[
    w(\mathcal T)
    :=
    w(P_{ij}^a)w(P_{jk}^b)w(P_{ki}^c),
\]
and its $H$-density is
\[
    d_H(\mathcal T)
=
\frac{|E(H)\cap K_3(\mathcal T)|}{|K_3(\mathcal T)|}.
\]
When applying the regularity terminology, we view each ordered pair-cell
as the corresponding bipartite graph between the two cluster
occurrences; in this way, $\mathcal T$ determines a $3$-partite
$2$-complex. Fix $\delta_3>0$ and $r\in\mathbb N$.  A triad $\mathcal T$ is called
\emph{$H$-regular} if the restriction of $H$ to $K_3(\mathcal T)$ is
$(d_H(\mathcal T),\delta_3,r)$-regular with respect to the underlying
$2$-complex of $\mathcal T$.  For a fixed density threshold $d_0>0$, we
call $\mathcal T$ \emph{dense} if it is $H$-regular and
$d_H(\mathcal T)\ge d_0$.

The notation permits repeated cluster indices.  Whenever a bounded
pattern uses the same cluster more than once, we split that cluster into
a bounded number of almost equal subclusters and assign distinct
occurrences to distinct subclusters.  By the slicing lemma, all required
regularity and positive-density conditions are preserved, with the
losses absorbed into the parameter hierarchy. If a subset has relative density bounded below by a positive constant in
a pair-cell, averaging permits the corresponding refined pair-cell to
be chosen so that its relative density remains bounded below by a
positive constant. Thus repeated indices are only a notational convenience,
and every application of a counting or extension lemma involves
distinct cluster occurrences.

We shall use the following locally cleaned form of the regular slice
lemma.  It is a standard consequence of the error-function version of
the regular slice lemma of Allen, B{\"o}ttcher, Cooley and
Mycroft~\cite{ABCM17}, followed by a bounded cleaning and refinement.

\begin{lemma}[Regular slice with local cleaning]
\label{lem:regular-slice}
For every $\eta,\delta_2,\delta_3>0$ and integers $r,m_0$,
there exist integers $M_0$ and $n_0$ such that every $3$-graph $H$ on
$n\ge n_0$ vertices
has an equitable cluster partition
\[
    V(H)=V_1\cup\cdots\cup V_m,
    \qquad
    m_0\le m\le M_0,
\]
and ordered pair partitions
\[
    \overrightarrow V_{ij}
    =
    P_{ij}^1\cup\cdots\cup P_{ij}^{q_{ij}},
    \qquad
    (i,j)\in[m]^2,
\]
with the following properties.
\begin{enumerate}
    \item Each $q_{ij}$ is at most $M_0$, every pair-cell has relative
    weight at least $1/M_0$, and every pair-cell is
    $(w(P_{ij}^a),\delta_2,1)$-regular with respect to its complete
    bipartite support.

    \item For every ordered cluster triple $(i,j,k)$, the total reduced
    weight of triads which are not $H$-regular is at most $\eta$.

    \item For every pair-cell $P_{ij}^a$,
    \[
        \sum_{\substack{k\in[m],\,b,c\\
        (P_{ij}^a,P_{jk}^b,P_{ki}^c)
        \text{ is not }H\text{-regular}}}
        \lambda_k w(P_{jk}^b)w(P_{ki}^c)
        \le\eta.
    \]
\end{enumerate}
\end{lemma}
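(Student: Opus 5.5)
The plan is to derive \Cref{lem:regular-slice} from the error-function regular slice lemma of Allen, B{\"o}ttcher, Cooley and Mycroft~\cite{ABCM17}, followed by three bounded post-processing steps: symmetrisation under reversal, local cleaning, and removal of light cells. First, apply the regular slice lemma with a density-of-irregularity parameter $\eta'\ll\eta$, a $2$-level regularity parameter $\delta_2'\ll\delta_2$, top-level parameters $\delta_3,r$, and a lower bound $m_0'\ge m_0$ on the number of clusters. This gives an equitable partition $V_1\cup\cdots\cup V_m$ with $m\le M_0'$, and for each pair of clusters a partition of the bipartite graph into at most $M_0'$ graphs. Each of these is $(1/\ell,\delta_2',1)$-regular, where $\ell$ is a fixed number of parts. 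The lemma also ensures that, for every cluster triple, all but an $\eta'$-proportion of the triads (weighted by reduced weight) have $H$ restricted to them $(d,\delta_3,r)$-regular.

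The ordered pair-cells are then read off from these graphs. For $i\ne j$, each bipartite graph between $V_i$ and $V_j$ yields a cell over $(i,j)$ together with its transpose over $(j,i)$, so reversal coherence holds automatically. For $i=j$, I split each cluster into two almost equal halves, as in the repeated-index convention, and use the slice restricted to the halves; by the slicing lemma, regularity is preserved with slightly worse constants. Property~1 then follows with $M_0\ge M_0'$. The bound on relative weight holds because every cell has weight $1/\ell\pm\delta_2'$.

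The main step is property~3, the local cleaning; property~2 is just the global statement. For each pair-cell $P^a_{ij}$, let $\mathrm{bad}(P^a_{ij})$ denote the weighted sum in property~3. Averaging over all pair-cells, weighted by $\lambda_i\lambda_j w(P^a_{ij})$, the mean of $\mathrm{bad}$ is at most about $3\eta'$. This is because summing over all triads of all cluster triples counts each irregular triad at most three times, and the total irregular weight is at most $\eta'$ per triple. By Markov's inequality, the pair-cells with $\mathrm{bad}>\eta/2$ have total weight at most $6\eta'/\eta$, which is small.

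This is where I expect the real obstacle to lie: these cells cannot simply be deleted, because the pair partition must cover $\overrightarrow V_{ij}$ exactly. I would handle this in one of two ways.
\begin{enumerate}[label=\textup{(\roman*)},leftmargin=2.5em]
    \item Merge all bad cells over $(i,j)$ into a good cell over the same pair. A union of a bounded number of regular bipartite graphs is still regular, with weight equal to the sum. The triads through the merged cell gain irregular mass of at most the weight of the absorbed bad cells.
    \item If every cell over some $(i,j)$ is bad, exploit the error-function form of~\cite{ABCM17}. Choose the slice so that the weighted irregularity is small for every cluster pair $(i,j)$, not merely on average; this is precisely what the error-function formulation provides, and it costs only a further averaging step.
\end{enumerate}
Taking $\eta'\ll\eta^2/M_0'^{\,2}$ keeps each cell's local bad sum below $\eta$ after merging. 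Merging only coarsens the partition, so property~2 survives with $\eta$ in place of $\eta'$.

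Finally, the conditions that $H$-regularity is measured with $\delta_3$ and $r$, and that merged cells remain $(w,\delta_2,1)$-regular, are checked by the union-of-regular-pairs observation together with the hierarchy $\delta_2'\ll\delta_2$. I expect the bookkeeping in the merging step to be the only genuinely delicate part of the argument.
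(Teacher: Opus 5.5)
There is a genuine gap at exactly the step you flag as delicate, and better bookkeeping will not close it.

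First, the merging in (i) does not do what you claim. A union of regular pair-cells is again a regular pair-cell, but $H$ need not be regular on the merged triads. Suppose $(P^a_{ij},P^b_{jk},P^c_{ki})$ and $(P^{a'}_{ij},P^b_{jk},P^c_{ki})$ have different $H$-densities. Testing the merged triad on the subcomplex with $Q_{ij}=P^a_{ij}$, and on the one with $Q_{ij}=P^{a'}_{ij}$, exhibits irregularity. So merging can spoil every triad through the good cell, for every $k$; it does not merely add the weight of the absorbed cells.

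Second, Markov only makes bad cells light on average over cluster pairs. On a single pair $(i,j)$ they can carry almost all the weight. The choice $\eta'\ll\eta^2/(M_0')^2$, which you use to control each cell's local bad sum, is circular: $M_0'$ is produced by the slice lemma from $\eta'$.

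Third, property~2 is a statement about \emph{every} cluster triple. The regular slice lemma of \cite{ABCM17} controls irregular polyads only globally, with an exceptional proportion $\varepsilon_3$ fixed before the number $t$ of clusters is known. So property~2 is not ``just the global statement''.

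Your fallback (ii) is essentially the paper's own proof. The paper applies the error-function version with $\varepsilon(t)\ll\eta t^{-C}$ and asserts that the global exceptional bound then implies both local ones. It has the same circularity. The error function there governs only the pair level, while the triple-level exceptional proportion cannot be made $\ll t^{-C}$ when $t$ ranges up to $t_1(\varepsilon_3)$.

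Worse, no partition has properties~2 and~3 once the pair-cells are genuinely regular, i.e.\ once $\delta_2$ is small compared with all cell weights. That is the regime your construction produces and every later counting application needs. (Read literally, $M_0$ may depend on $\delta_2$, and cells of weight at most $\delta_2^2$ then satisfy the pair condition vacuously; that degenerate regime is useless downstream.)

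Here is the obstruction. Take disjoint $N$-sets $A=\{a_1,\dots,a_N\}$, $B=\{b_1,\dots,b_N\}$ and $C$, and let $H:=\{a_sb_uc: s\le u,\ c\in C\}$.
In any equitable partition, as $s$ varies, the relative rank of $a_s$ inside the $A$-part of its cluster is nearly uniformly distributed, and likewise for $b_s$. Hence, for a positive proportion of $s$, the clusters $V_i\ni a_s$ and $V_j\ni b_s$ have $A$- and $B$-parts of constant relative size. Moreover, $a_s$ and $b_s$ cut these parts into lower and upper pieces of constant relative size.
Now take any of the $\Omega(m)$ clusters $V_k$ with $|V_k\cap C|\ge|V_k|/6$. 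Then $H$ is complete on $(\text{lower }A\text{-part of }V_i)\times(\text{upper }B\text{-part of }V_j)\times(V_k\cap C)$. It is empty on $(\text{upper }A\text{-part of }V_i)\times(\text{lower }B\text{-part of }V_j)\times(V_k\cap C)$.
Since the pair-cells are regular, the triangle counting lemma splits both boxes among the triads over $(i,j,k)$ in proportion to their reduced weights. For $\delta_3$ below an absolute constant, every $H$-regular triad $\mathcal T$ has density $d_H(\mathcal T)\pm\delta_3$ on both boxes. Hence the non-$H$-regular triads over $(i,j,k)$ have reduced weight at least $1-2\delta_3-o(1)$, violating property~2. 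Averaging over $k$ then gives a cell over $(i,j)$ violating property~3.

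So neither your argument nor the paper's sketch can establish the lemma as stated. What the slice lemma together with your Markov step does give is an averaged form of properties~2 and~3, holding for all but a small weighted proportion of cluster triples and pair-cells.
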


The lemma follows by applying the error-function version with an error
function $\varepsilon(t)$ satisfying
$\varepsilon(t)\ll\eta t^{-C}$ for every $t$, where $C$ is a
sufficiently large absolute constant.  Since there are only boundedly
many cluster triples and pair-cells, and every pair-cell has weight
bounded below in terms of $M_0$, the global exceptional estimates imply
the two local estimates.  The bounded refinements required for reversal
coherence and repeated cluster occurrences are absorbed by increasing
$M_0$.

\subsection{Reduced consequences}
\label{subsec:reduced-consequences}

We now record the two numerical consequences of the slice that will be
used on both sides of $1/3$.

\begin{lemma}[Slice counting facts]
\label{lem:slice-counting-facts}
Suppose that $0<\tau<p<1$ and that $d_0$, $\eta$, $\mu$ and the
regularity parameters are sufficiently small in terms of $p-\tau$.  Then the regular slice
may be chosen so that the following hold.
\begin{enumerate}
    \item For every ordered cluster triple $(i,j,k)$, the total reduced
    weight of dense triads supported on $(V_i,V_j,V_k)$ is at least
    $\tau$.

    \item If $\mathcal A_{ij}$, $\mathcal A_{jk}$ and
    $\mathcal A_{ki}$ are collections of pair-cells with total weights
    $a_{ij}$, $a_{jk}$ and $a_{ki}$, respectively, then
    \[
        \sum_{\substack{P_{ij}^a\in\mathcal A_{ij},\,
                        P_{jk}^b\in\mathcal A_{jk},\,
                        P_{ki}^c\in\mathcal A_{ki}}}
        w(P_{ij}^a)w(P_{jk}^b)w(P_{ki}^c)
        =
        a_{ij}a_{jk}a_{ki}.
    \]
    In particular, the total reduced weight of any subfamily of these
    triads is at most $a_{ij}a_{jk}a_{ki}$.
\end{enumerate}
\end{lemma}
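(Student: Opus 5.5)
Part (ii) is pure bookkeeping, so I will dispose of it first.  Because the pair-cells $P_{ij}^1,\dots,P_{ij}^{q_{ij}}$ partition $\overrightarrow V_{ij}$, their relative weights add up to $1$.  For collections $\mathcal A_{ij},\mathcal A_{jk},\mathcal A_{ki}$ of pair-cells, the product weight $w(\mathcal T)$ is multiplicative over the three coordinates.  The sum over the product set therefore factors as
\[
\Bigl(\sum_{\mathcal A_{ij}}w\Bigr)\Bigl(\sum_{\mathcal A_{jk}}w\Bigr)\Bigl(\sum_{\mathcal A_{ki}}w\Bigr)=a_{ij}a_{jk}a_{ki}.
\]
Since all weights are nonnegative, the sum over any subfamily is at most this product.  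Nothing about $H$ enters here.

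For part (i), I fix an ordered cluster triple $(i,j,k)$; repeated indices are handled by the splitting into subclusters described above, so I may assume distinct occurrences.  I apply $(p,\mu)$-density with $X_1=V_i$, $X_2=V_j$, $X_3=V_k$.  Because $|V_i|,|V_j|,|V_k|\ge n/(2M_0)$, this gives $e_H(V_i,V_j,V_k)\ge (p-\mu')|V_i||V_j||V_k|$, where $\mu'=O(\mu M_0^3)$.  The slice lemma outputs $M_0$ from $\eta$ and the regularity parameters, so I must choose $\mu\ll 1/M_0^3$.  This is the expected order in the hierarchy: $\mu$ is taken after the slice parameters, and the slice is applied to $H$ afterwards.

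Next I decompose the ordered triples in $V_i\times V_j\times V_k$ according to the triad $\mathcal T=(P_{ij}^a,P_{jk}^b,P_{ki}^c)$ that contains them.  Each triple lies in exactly one triad.  By the counting lemma, or directly from the $(w,\delta_2,1)$-regularity of the pair-cells, $|K_3(\mathcal T)|=(w(\mathcal T)\pm\delta')|V_i||V_j||V_k|$ with $\delta'$ small.  Dividing the edge count by $|V_i||V_j||V_k|$ yields
\[
p-\mu'\le\sum_{\mathcal T}w(\mathcal T)\,d_H(\mathcal T)+O(M_0^3\delta').
\]
I then split the triads into three classes.
\begin{itemize}
\item Non-regular triads contribute at most their total weight, which is at most $\eta$ by Lemma~\ref{lem:regular-slice}(2).
\item Regular triads with $d_H(\mathcal T)<d_0$ contribute less than $d_0\sum w\le d_0$.
\item Dense triads contribute at most their total weight, since $d_H\le 1$.
\end{itemize}
Hence the dense weight is at least $p-\mu'-\eta-d_0-O(M_0^3\delta')$, and this exceeds $\tau$ once every error term is below $(p-\tau)/4$.

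The main obstacle is making the error term $M_0^3\delta'$ compatible with the hierarchy, because $\delta_2$ is chosen before $M_0$.  The edge-count error per triad must therefore be summed over up to $M_0^3$ triads.  I would avoid this summation by exploiting that pair-cells have weight at least $1/M_0$: the two-sided regularity of each pair-cell gives $|K_3(\mathcal T)|$ within a relative factor $1\pm\varepsilon$ of $w(\mathcal T)|V_i||V_j||V_k|$.  This is the content of Lemma~\ref{lem:dense-counting} applied to a triangle with $d=1/M_0$.  If that approach fails, I would instead rely on the error-function form of the slice lemma, where $\delta_2\le\varepsilon(M_0)$, so that the regularity error can be chosen after $M_0$.

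With the relative error in hand, the summed error is $\varepsilon\sum_{\mathcal T}w(\mathcal T)=\varepsilon$, and the estimate above goes through with $O(M_0^3\delta')$ replaced by $\varepsilon$.
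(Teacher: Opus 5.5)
Your proposal is correct and follows the paper's proof essentially step for step. Both apply $(p,\mu)$-density to the three clusters, use the dense counting lemma to match each triad's normalised support with its reduced weight, split the triads into irregular ones (weight $\le\eta$), regular ones of density below $d_0$ (contribution $\le d_0$) and dense ones, and prove part (ii) by expanding the product of the three weight sums. Your extra care about summing per-triad counting errors over up to $M_0^3$ triads is warranted, since the paper compresses this into a uniform $o(1)$. However, your first fix, the counting lemma with $d=1/M_0$, is circular when $\delta_2$ is fixed before $M_0$; what actually closes the point is your fallback, the error-function version of the slice lemma, which is also what the paper relies on in its remark after the regular slice lemma.
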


\begin{proof}
By the repeated-cluster convention, it suffices to consider distinct
cluster occurrences.  Applying $(p,\mu)$-density to the three cluster
sets gives
\[
    \frac{e_H(V_i,V_j,V_k)}
         {|V_i||V_j||V_k|}
    \ge p-o(1),
\]
since the cluster sizes are bounded below by a positive multiple of
$n$ and $\mu$ is sufficiently small.

By the dense counting lemma, the normalised size of the support of each
triad is equal to its reduced weight up to $o(1)$, uniformly over the
bounded family of triads.  Triads which are not $H$-regular have total
reduced weight at most $\eta$, while $H$-regular triads of density less
than $d_0$ contribute at most $d_0+o(1)$ to the normalised edge count.
Hence the total reduced weight of dense triads is at least
\[
    p-d_0-\eta-o(1)\ge\tau.
\]
This proves the first assertion.  The second follows by expanding the
product of the three sums of pair-cell weights.
\end{proof}

For a pair-cell $P_{ij}^a$, define its \emph{dense reduced extension
weight} by
\[
    \operatorname{ext}(P_{ij}^a)
    :=
    \sum_{\substack{k\in[m],\,b,c\\
    (P_{ij}^a,P_{jk}^b,P_{ki}^c)\text{ is dense}}}
    \lambda_k w(P_{jk}^b)w(P_{ki}^c).
\]

\begin{lemma}[Reduced codegree inheritance]
\label{lem:reduced-codegree}
Let $\Delta<\alpha$, and suppose that
$\delta_2(H)\ge\alpha n$.  If $d_0$, $\eta$ and the regularity
parameters are sufficiently small in terms of $\alpha-\Delta$, then
every pair-cell $P_{ij}^a$ satisfies
\[
    \operatorname{ext}(P_{ij}^a)\ge\Delta.
\]
\end{lemma}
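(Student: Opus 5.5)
We count the actual extensions of the pairs in $P_{ij}^a$ and compare them with the reduced quantity. Fix a pair-cell $P_{ij}^a$ and double count
\[
    S:=\sum_{(x,y)\in P_{ij}^a} d_H(x,y)
\]
over ordered triples $(x,y,z)$ with $(x,y)\in P_{ij}^a$ and $xyz\in E(H)$. The codegree hypothesis gives $S\ge\alpha n|P_{ij}^a|-O(n^2)$. The $O(n^2)$ term accounts for the $z$ that coincide with $x$ or $y$, and it is negligible because $|P_{ij}^a|\ge|\overrightarrow V_{ij}|/M_0=\Omega(n^2)$.

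Next we partition these triples according to the cluster $V_k$ containing $z$ and the pair-cells $P_{jk}^b\ni(y,z)$ and $P_{ki}^c\ni(z,x)$. Every such triple then lies in $K_3(\mathcal T)$ for exactly one triad $\mathcal T=(P_{ij}^a,P_{jk}^b,P_{ki}^c)$, and we split the resulting sum into three parts.

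\textbf{Dense triads.} Each dense triad contributes at most $|K_3(\mathcal T)|$ triples. By \Cref{lem:dense-counting}, applied with the pair-cell regularity from \Cref{lem:regular-slice}(1) and using the repeated-cluster convention to handle $k\in\{i,j\}$, we have
\[
    |K_3(\mathcal T)|=(1\pm\varepsilon)\,w(\mathcal T)\,|V_i||V_j||V_k|=(1\pm\varepsilon)\,|P_{ij}^a|\,\lambda_k n\,w(P_{jk}^b)w(P_{ki}^c).
\]
Summing over dense triads, their total contribution is at most $(1+\varepsilon)\,n|P_{ij}^a|\operatorname{ext}(P_{ij}^a)$.

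\textbf{Regular triads of density below $d_0$.} These contribute at most $(d_0+\varepsilon)$ times their supports. By \Cref{lem:slice-counting-facts}(2), the supports have total size at most $(1+\varepsilon)n|P_{ij}^a|\sum_k\lambda_k=(1+\varepsilon)n|P_{ij}^a|$.

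\textbf{Non-regular triads.} By \Cref{lem:regular-slice}(3), these have total weight $\sum\lambda_k w w\le\eta$, so after the same counting they contribute at most $(1+\varepsilon)\eta n|P_{ij}^a|$.

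Putting the three parts together,
\[
    \alpha-o(1)\le(1+\varepsilon)\bigl(\operatorname{ext}(P_{ij}^a)+d_0+\eta+\varepsilon\bigr).
\]
Choosing $d_0,\eta,\varepsilon$ small in terms of $\alpha-\Delta$ then gives $\operatorname{ext}(P_{ij}^a)\ge\Delta$.

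The main obstacle is to make the counting estimate for $|K_3(\mathcal T)|$ hold uniformly with the factor $|P_{ij}^a|$ rather than $w(P_{ij}^a)|V_i||V_j|$. These agree up to $1\pm\delta_2$ by the definition of weight, so the real difficulty is handling $k=i$ or $k=j$. We would pass to subclusters as in the repeated-index convention. Splitting $V_k$ into subclusters changes only which pairs $(y,z)$ and $(z,x)$ are counted, with an $O(1/\text{subcluster count})$ loss from pairs where $z$ falls into the same subcluster as $x$ or $y$. That loss is absorbed into $\varepsilon$.
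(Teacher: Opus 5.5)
Your proposal is correct and follows the paper's proof. Both double-count $\sum_{(x,y)\in P_{ij}^a}d_H(x,y)\ge\alpha n|P_{ij}^a|$ over the triads containing $P_{ij}^a$, bounding the dense ones by $(\operatorname{ext}(P_{ij}^a)+o(1))n|P_{ij}^a|$ via the dense counting lemma, the non-$H$-regular ones by $(\eta+o(1))n|P_{ij}^a|$ via local cleaning property (3), and the sparse regular ones by $d_0n|P_{ij}^a|$. The paper argues by contradiction and leaves repeated indices to its general convention, whereas you argue directly and sketch the subcluster splitting; note that the case $i=j$, not only $k\in\{i,j\}$, also needs that splitting.
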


\begin{proof}
Suppose, to the contrary, that some pair-cell
$P=P_{ij}^a$ satisfies $\operatorname{ext}(P)<\Delta$.  We estimate $\sum_{(x,y)\in P}d_H(x,y)$ by separating the contributions from dense triads, triads which are not
$H$-regular, and $H$-regular triads of density less than $d_0$
containing $P$.

By the dense counting lemma, the total number of triples in the supports
of dense triads containing $P$ is at most
\[
    \bigl(\operatorname{ext}(P)+o(1)\bigr)n|P|.
\]
The local cleaning property gives total support at most
\[
    (\eta+o(1))n|P|
\]
for triads containing $P$ which are not $H$-regular.  Finally, the
$H$-regular triads of density less than $d_0$ contribute at most
\[
    (d_0+o(1))n|P|
\]
actual $H$-edges.  Consequently,
\[
\begin{aligned}
    \sum_{(x,y)\in P}d_H(x,y)
    &\le
    \bigl(\operatorname{ext}(P)+\eta+d_0+o(1)\bigr)n|P|\\
    &<
    \alpha n|P|,
\end{aligned}
\]
provided the parameters are sufficiently small.  This contradicts
$\delta_2(H)\ge\alpha n$.
\end{proof}

The preceding lemmas contain the regularity input common to both
ranges of the proof.  Above $1/3$, we shall use the ordered pair-cells
directly as the states of a directed reduced graph.  At and below
$1/3$, we first need to retain the additional information carried by
the tight components of the original $3$-graph; the corresponding
refinement is described in the next subsection.

\subsection{Component-coloured refinements}
\label{subsec:component-coloured-refinements}

For the range at and below $1/3$, we need to retain one additional
piece of information when passing to the regular slice, namely the
actual tight component whose shadow contains each pair.  We explain
here how this information can be incorporated into the regularity
framework developed above.

For a tight component $C$ of a $3$-graph $H$, let
$\partial C:=\{uv\in\binom{V(H)}2:uv\subseteq e\text{ for some }
e\in E(C)\}$ denote its shadow.  The shadows of distinct tight
components are pairwise disjoint, since two edges containing the same
pair are tightly adjacent.  In particular, whenever
$\delta_2(H)>0$, every pair belongs to the shadow of a unique tight
component.

\begin{lemma}[Component-coloured locally cleaned slice]
\label{lem:component-coloured-slice}
Fix $p,\alpha,\eta,\chi>0$, a bounded set of distinguished vertices,
and a bounded initial vertex partition.  Let $\zeta>0$ be sufficiently
small in terms of $\eta$ and $\chi$.  Then, for all sufficiently small
$\mu>0$ and all sufficiently large $n$, every $n$-vertex
$(p,\mu)$-dense $3$-graph $H$ satisfying
$\delta_2(H)\ge\alpha n$ admits a partition
\[
    V(H)=V_0\dot\cup V_1\dot\cup\cdots\dot\cup V_t,
    \qquad
    |V_0|\le\zeta n,
\]
with all distinguished vertices contained in $V_0$, and a cleaned
ordered pair-state system on $I=[t]$, with retained state families
$\mathcal A_{ij}$ and retained triad families $\mathcal T_{ijk}$,
satisfying the following properties.
\begin{enumerate}[label=\textup{(R\arabic*)},leftmargin=2.8em]

\item Every retained state is a regular pair-cell contained in the
shadow of one actual tight component, and every retained triad uses
three states carrying the same actual component colour.

\item Writing $\lambda_i:=|V_i|/(n-|V_0|)$, the retained state weights
are normalised so that $\sum_{a\in\cA_{ij}}w(a)=1$ for every ordered
cluster pair $(i,j)$.  Moreover, for every ordered
cluster triple $(i,j,k)$,
\[
    \sum_{(a,b,c)\in\mathcal T_{ijk}}
        w(a)w(b)w(c)
    \ge p-\eta,
\]
and every retained state $a\in\mathcal A_{ij}$ satisfies
\[
    \sum_{k\in I}\lambda_k
    \sum_{\substack{b\in\mathcal A_{jk},\,
                    c\in\mathcal A_{ki}\\
                    (a,b,c)\in\mathcal T_{ijk}}}
        w(b)w(c)
    \ge\alpha-\eta.
\]

\item Every retained triad is vertex-lower-regular: any three subsets
of its tagged classes, each of relative size at least $\chi$, contain
an edge of the actual tight component
carried by that triad, provided the regularity error is sufficiently
small.

\item The construction respects every prescribed bounded refinement
of the vertex and pair partitions.  It can also be performed jointly
for two one-root deletions.  More precisely, for two distinguished
vertices $x,y$, the common pair refinement may be taken according to
\[
    \bigl(
        \operatorname{col}_{H-x}(uv),
        \operatorname{col}_{H-y}(uv)
    \bigr).
\]
Consequently, a common positive state carrying colours $C_x$ and
$C_y$ in the two projected systems is a positive-density family of
actual pairs contained in $\partial C_x\cap\partial C_y$.

\item Repeated cluster indices are realised by three coherent tagged
copies; reversal preserves weights and
\[
    (a,b,c)\in\mathcal T_{ijk}
    \quad\Longleftrightarrow\quad
    (a^\top,c^\top,b^\top)\in\mathcal T_{jik}.
\]
All six permutations of the tagged copies are obtained from one common
$S_3$-invariant refinement.

\end{enumerate}
\end{lemma}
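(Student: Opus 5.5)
The construction combines \Cref{lem:regular-slice}, \Cref{lem:slice-counting-facts} and \Cref{lem:reduced-codegree}, applied not to $H$ itself but to $H$ together with the finite colouring of pairs by tight-component shadows. The first step is to bound the number of relevant colours. Since $\delta_2(H)\ge\alpha n$, every pair lies in the shadow of a unique tight component. Each component whose shadow has at least $\zeta n^2$ pairs is a \emph{major} colour, and there are at most $1/\zeta$ of them. All remaining pairs are merged into one residual colour. The initial vertex partition is refined by the prescribed bounded partition. The distinguished vertices, and one vertex set per refinement class if needed, are then moved into $V_0$. For \textup{(R4)}, the same procedure is run with the product colouring $(\operatorname{col}_{H-x},\operatorname{col}_{H-y})$. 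This is still a bounded palette after the major/residual merging in each coordinate.

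Next, \Cref{lem:regular-slice} is applied with the pair partition required to refine this colouring on every $\overrightarrow V_{ij}$. This is the ``prescribed bounded refinement'' mechanism: start the error-function regular slice lemma from a pair partition refining a bounded family of graphs. The reversal and $S_3$ coherence needed for \textup{(R5)} come from taking the common refinement under transposition and splitting repeated clusters into tagged subclusters, as described after \Cref{lem:regular-slice}.

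Cleaning proceeds as follows. A state is discarded if it carries the residual colour, if its weight is too small, or if it fails the local condition \textup{(iii)} of \Cref{lem:regular-slice}. A triad is retained only if it is dense and its three states carry the same major colour. This is automatic for dense triads: an edge of $H$ in $K_3(\mathcal T)$ belongs to a unique component, whose shadow contains all three of its pairs. Each state is contained in a single shadow, so the three states share that colour and \textup{(R1)} holds. Residual pairs have total density $O(\zeta\cdot\#\text{components})$. This is where I expect the \textbf{main obstacle}: the number of small components is not a priori bounded. I would instead argue directly that all edges in small components span $o(n^3)$ triples. Each small component $C$ has $|E(C)|\le n|\partial C|$, and the shadows are disjoint. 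Each small shadow pair therefore carries at most $d_H(uv)$ edges of its own component, but that could still be large.

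The fix I would try first uses the codegree condition. A pair in $\partial C$ has all its $\ge\alpha n$ extensions inside $C$, so $|\partial C|\ge\alpha n$ edges' worth of new pairs. Iterating the extension once gives $|\partial C|=\Omega(\alpha^2 n^2)$. Hence every component is major once $\zeta<\alpha^2/10$, and the residual colour is empty.

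With bounded colours, \textup{(R2)} follows. The weight bound $p-\eta$ comes from \Cref{lem:slice-counting-facts}(i) after renormalising over retained states; the discarded weight is $O(\eta)$, and the $V_0$ loss is $\zeta$. The extension bound $\alpha-\eta$ comes from \Cref{lem:reduced-codegree}, using local cleaning (iii) so that each state, not just an average one, loses only $O(\eta)$. Finally, \textup{(R3)} comes from the dense counting/extension lemmas applied to subsets. Three subsets of relative size $\chi$ in a $(d,\delta_3,r)$-regular triad with $\delta_3\ll\chi^3 d_0$ contain $(d\pm\delta_3)$-proportion edges of their $K_3$. This proportion is nonzero, and by \textup{(R1)} these edges lie in the carried component.
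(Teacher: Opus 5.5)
Your proposal is correct and follows essentially the same route as the paper. The codegree iteration shows that $uv\in\partial C$ gives $d_{\partial C}(z)\ge\alpha n$ for every $z\in N_H(u,v)$, so every shadow has at least $\alpha^2n^2/2$ pairs and only boundedly many colours occur. The pair partition is refined by the (joint) component colouring before slicing, dense triads are automatically monochromatic, and (R2)--(R5) come from slice counting, reduced codegree inheritance, regularity of restricted triads, the product colouring and the tagged-copy/reversal convention. Two minor points: the major/residual split is superfluous, and its threshold should not be identified with the exceptional-set parameter $\zeta$. Also, measure the cleaning losses by a local parameter chosen much smaller than $\eta$, so that the bounds come out as exactly $p-\eta$ and $\alpha-\eta$.
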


\begin{proof}
We first bound the number of actual component colours.  Let $C$ be a
tight component and put $Q:=\partial C$.  Choose $uv\in Q$.  Since
$uv$ lies in the shadow of $C$, every edge of $H$ containing $uv$
belongs to $C$.  Put $Z:=N_H(u,v)$.  Then $|Z|\ge\alpha n$, and for
every $z\in Z$ we have $uz,vz\in Q$.

Fix $z\in Z$.  Since $uz\in Q$, every edge containing $uz$ also
belongs to $C$.  Hence, for every $w\in N_H(u,z)$, we have $zw\in Q$.
It follows that $d_Q(z)\ge d_H(u,z)\ge\alpha n$.  Summing over
$z\in Z$, we obtain
\[
    2e(Q)
    \ge
    \sum_{z\in Z}d_Q(z)
    \ge
    \alpha^2n^2.
\]
Thus every nonempty component shadow contains at least
$\alpha^2n^2/2$ pairs.  Since distinct component shadows are
edge-disjoint, the number of actual component colours is bounded in
terms of $\alpha$.

Colour every pair by the unique actual tight component whose shadow
contains it.  We first apply simultaneous multicolour graph regularity
to these boundedly many shadow graphs, refining the prescribed vertex
and pair partitions.  We then apply the error-function regular-slice
lemma relative to the resulting bounded level-two refinement, followed
by the local cleaning from \Cref{lem:regular-slice}, with local error
$\eta_{\rm reg}$.  By the choice of $\zeta$, we may choose
$\eta_{\rm reg}$ and $d_0$ sufficiently small so that all losses are
absorbed into the target error $\eta$, with the regularity parameters
chosen sufficiently small in terms of $\chi$. Thus every
retained pair-cell is contained in one actual component colour.

An actual edge puts all three of its pairs in the shadow of the same
tight component.  Consequently, a triad whose three pair-cells do not
carry the same actual colour contains no edge of $H$.  We therefore
retain only regular monochromatic triads of density at least $d_0$.

For a cluster pair $ij$, let $m(a)$ denote the raw relative mass of a
retained state $a$, and put
$\sigma_{ij}:=\sum_{a\in\mathcal A_{ij}}m(a)$.  The local cleaning may
be chosen so that $1-\zeta\le\sigma_{ij}\le1$ for every ordered
cluster pair.  Define $w(a):=m(a)/\sigma_{ij}$.  Thus the retained state weights over every ordered cluster pair sum
exactly to one.  Since $\sigma_{ij}=1-O(\zeta)$, this normalisation changes every
product of a bounded number of state weights by a factor
$1+O(\zeta)$.  The losses coming from deleted pair-cells, irregular
triads, low-density regular triads and the subsequent normalisation
are therefore bounded by
$O(\zeta+\eta_{\rm reg}+d_0)$. 
By the choices above, the slice-counting argument gives
\[
    \sum_{(a,b,c)\in\cT_{ijk}}
        w(a)w(b)w(c)\ge p-\eta,
\]
for every ordered cluster triple $(i,j,k)$.

Similarly, fix a retained state $a\in\mathcal A_{ij}$.  Summing
$d_H(u,v)$ over the actual pairs $(u,v)$ represented by $a$, the local
codegree calculation from \Cref{lem:reduced-codegree} shows that the contribution lost through deleted pair-cells, irregular triads
and regular triads of density below $d_0$ is
$O(\zeta+\eta_{\rm reg}+d_0)$.  
Since every edge containing a pair represented by $a$ has the same
actual component colour as $a$, all remaining positive contributions
come from retained monochromatic triads.  By the same choices, and after accounting for the normalisation, we
obtain
\[
    \sum_{k\in I}\lambda_k
    \sum_{\substack{b\in\cA_{jk},\,c\in\cA_{ki}\\
                    (a,b,c)\in\cT_{ijk}}}
        w(b)w(c)\ge\alpha-\eta.
\]
This proves \textup{(R1)} and \textup{(R2)}.

Every retained pair-cell has density bounded below by $\omega_0$, and
every retained triad has $H$-density bounded below by $d_0$.  The
slicing lemma and the dense counting lemma therefore imply
\textup{(R3)}.

For two one-root deletions, we begin with the joint colour vector
\[
    \bigl(
        \operatorname{col}_{H-x}(uv),
        \operatorname{col}_{H-y}(uv)
    \bigr)
\]
on $V(H)\setminus\{x,y\}$ and regularise the two induced $3$-graphs
on one common refinement.  Both coordinate colourings use only
boundedly many colours, so this is again a bounded prescribed
level-two refinement.  Projecting to either coordinate gives the
component-coloured slice for the corresponding deletion.  In
particular, a common state of joint colour $(C_x,C_y)$ consists of
actual pairs lying simultaneously in $\partial C_x$ and
$\partial C_y$.

Finally, whenever repeated cluster indices occur, replace each cluster
by three tagged copies, regularise all six permutations of the tags
simultaneously, and take one common $S_3$-invariant refinement.
Reversal is imposed as part of the same common refinement.  This gives
\textup{(R4)} and \textup{(R5)}.
\end{proof}

After fixing a positive margin above the threshold, we use the
hierarchy
\begin{equation}
\label{eq:cleaning-hierarchy}
    \frac1n\ll\mu\ll\varepsilon
    \ll\zeta,\eta_{\rm reg}
    \ll d_0
    \ll\eta
    \ll\omega_0,\lambda_0
    \ll\chi,p,\alpha.
\end{equation}
Here $\eps$ is the graph-regularity error, while $\omega_0$ and
$\lambda_0$ are fixed lower bounds on the weights of retained states
and clusters, respectively.
Only clusters, states and triads above the displayed cleaning
thresholds are retained.

The dense counting lemma also allows us to realise every fixed finite
pattern carried by the cleaned state system.  We shall use the
following consequence repeatedly.

\begin{lemma}[Embedding a finite state pattern]
\label{lem:finite-state-pattern}
Let $\mathcal W$ be a fixed finite tight-walk pattern whose consecutive
pair positions are assigned retained states and whose edges are
assigned retained triads, consistently with the transition digraph.
Then $\mathcal W$ has a homomorphic embedding in the actual tight
component carrying these states.  The assertion remains valid for
closed patterns and for repeated cluster indices.
\end{lemma}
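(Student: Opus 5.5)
We realise $\mathcal W$ cluster by cluster with the extension lemma, keeping each window of three consecutive vertices inside its assigned triad. Tightness then forces all images into one actual component.

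\emph{Setup.} Write $\mathcal W$ as a cyclic or linear sequence of positions $1,\dots,L$. Position $\ell$ is assigned a cluster $V_{i_\ell}$, each consecutive pair $(\ell,\ell+1)$ a retained state $a_\ell\in\cA_{i_\ell i_{\ell+1}}$, and each edge $\{\ell,\ell+1,\ell+2\}$ a retained triad. Consistency with the transition digraph means that the triad at window $\ell$ has the form $(a_\ell,a_{\ell+1},c_\ell)$ with $(a_\ell,a_{\ell+1},c_\ell)\in\cT_{i_\ell i_{\ell+1} i_{\ell+2}}$. Here $c_\ell\in\cA_{i_{\ell+2}i_\ell}$ is the closing state.

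\emph{Step 1: separate repeated clusters.} Since $L$ is bounded, we use the tagged copies of (R5) and the repeated-cluster convention. Each cluster occurrence is assigned to a distinct subcluster, and each state or triad is replaced by its refined version. Their densities remain bounded below in terms of $\omega_0$ and $d_0$, and regularity degrades only within the hierarchy~\eqref{eq:cleaning-hierarchy}.

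\emph{Step 2: build the pattern complex and count.} Let $\mathcal G$ be the $(3,L)$-complex on vertex set $[L]$. Its $2$-edges are the pairs $\{\ell,\ell+1\}$ and $\{\ell,\ell+2\}$ (indices cyclic for closed patterns). Its $3$-edges are the windows $\{\ell,\ell+1,\ell+2\}$. Let $\mathcal H$ be the complex whose $2$-cells are the pair-cells $a_\ell$ and $c_\ell$, and whose $3$-cells are $E(H)\cap K_3(\cdot)$ of the assigned triads. By (R1) and the slice construction, every relevant $2$-cell is $(w,\delta_2,1)$-regular with $w\ge\omega_0$. Every relevant $3$-cell is $(d,\delta_3,r)$-regular with $d\ge d_0$.

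One point must be handled: a pair $\{\ell,\ell+2\}$ may also occur as a consecutive pair elsewhere in a short closed walk. Then its assigned states must coincide, and we assume this is part of the consistency hypothesis. For walks of order at most $4$, the collisions are absorbed by the same tagged-copy refinement.

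We then apply the dense counting lemma (\Cref{lem:dense-counting}) to the $2$-skeleton and the extension lemma (\Cref{lem:extension}) with $\mathcal G'$ taken to be the $2$-skeleton of $\mathcal G$, since the $3$-level is what needs extending. Alternatively, we apply the general counting form to all of $\mathcal G$. Either way, the number of partition-respecting copies of $\mathcal G$ in $\mathcal H$ is at least $(1-\beta)\prod(\text{densities})\prod|V_{i_\ell}|>0$. A simpler route uses (R3) inductively, embedding vertex by vertex into sets of relative size $\ge\chi$. We prefer the extension lemma because closing a cycle needs simultaneous control of the last two windows.

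\emph{Step 3: the copy lies in one component.} A copy $\phi$ maps each window to an edge of $H$. Consecutive window images share the pair $\phi(\ell+1)\phi(\ell+2)$, so they are tightly adjacent. The images therefore form a homomorphic tight walk inside a single tight component. By (R1), every state $a_\ell$ lies in the shadow of one actual component, and shadows of distinct components are disjoint. Hence that component is exactly the one carrying the states. For closed patterns, the cyclic windows are also edges, giving the required homomorphism from $C_L^{(3)}$.

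The main obstacle is Step 2 for closed walks with repeated indices. There we must ensure that the closing triads, for instance those containing $\{L-1,L,1\}$, are genuinely regular on the tagged copies and that no spurious identifications arise. This is exactly what the $S_3$-invariant common refinement of (R5) provides.
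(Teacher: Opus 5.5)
Your proposal is correct and follows essentially the paper's argument: separate repeated occurrences by tagged copies, count realisations of the resulting regular complex (relevant pair densities at least $\omega_0$, triad densities at least $d_0$), and use the fact that retained triads are monochromatic to put every realised edge into the carrying component; your tight-adjacency plus disjoint-shadow reasoning is an equivalent way of making that last step. There are two minor corrections. First, \Cref{lem:extension} needs $\mathcal G'$ to be an induced subcomplex on strictly fewer vertices, so the ``$2$-skeleton'' instance is not admissible and you should rely on your alternative full counting form (the paper's citation of \Cref{lem:dense-counting}, which as stated only counts the $2$-levels, needs the same reading). Second, tagged copies cannot reconcile two different states demanded on one actual pair in a closed walk of order at most $4$, so either build this into the consistency hypothesis or traverse the closed walk $k\equiv1\pmod 3$ times to reach order at least $5$ without changing the residue used in \Cref{lem:lift-F3}.
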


\begin{proof}
Use coherent tagged copies to separate repeated cluster occurrences.
The assigned states and triads form a fixed regular $3$-complex in
which every relevant pair density is bounded below by $\omega_0$ and
every relevant triad density is bounded below by $d_0$.  The dense
counting lemma therefore gives a positive number of
partition-respecting labelled realisations of the pattern.  Since all
assigned states and triads have the same actual component colour,
every hyperedge in such a realisation belongs to that actual tight
component.  Pulling the tagged classes back to their original clusters
gives the required homomorphic embedding.
\end{proof}

\section{Connecting lemma}
\label{sec:connecting}

In this section we prove Lemma~\ref{lem:connecting}.  We form a directed
graph whose vertices are the ordered pair-cells obtained in
Section~\ref{sec:regularity}.  A dense triad gives a transition between
two consecutive pair-states of a tight path.  We prove that this
pair-state digraph is strongly connected and aperiodic, obtain reduced
walks of one fixed length, and lift these walks to tight paths in the
original 3-graph.

Throughout this section, fix $p>1/3$ and $\alpha>\thr_4(p)$. Choose constants $\frac13<\tau<\tau'<p$
so that, if $a=a(\tau)$ is the smaller root of $a^3+(1-a)^3=\tau$, then $a^2<\alpha$.  Choose $\Delta$ with $a^2<\Delta<\alpha$. We choose $d_0$, $\eta$, $\mu$ and all regularity parameters sufficiently small in terms of the gaps in the inequalities above.

\subsection{The reduced pair-state digraph and lifting}
\label{subsec:pair-state-lifting}

Let $H$ be an $n$-vertex $(p,\mu)$-dense 3-graph with
$\delta_2(H)\ge\alpha n$, and apply Lemma~\ref{lem:regular-slice}.
We use the ordered pair-cells of the
resulting slice to encode the evolution of a tight path.

The vertices of the \emph{reduced pair-state digraph} $D$ are the
pair-cells $P_{ij}^a$.  We place a directed edge
\[
    P_{ij}^a\longrightarrow P_{jk}^b
\]
whenever there exists a pair-cell $P_{ki}^c$ such that $(P_{ij}^a,P_{jk}^b,P_{ki}^c)$ is a dense triad.  The direction records the change in the terminal
ordered pair of a tight path: extending a path ending in $(x,y)$ by a
vertex $z$ replaces its terminal pair by $(y,z)$.  Moreover, cyclic
relabelling preserves regularity and density, so every dense triad gives
the three transitions
\[
    P_{ij}^a\to P_{jk}^b,\qquad
    P_{jk}^b\to P_{ki}^c,\qquad
    P_{ki}^c\to P_{ij}^a.
\]

The main reduced statement is the following.

\begin{lemma}[Reduced pair-state theorem]
\label{lem:reduced-pair-state-main}
If $\mu$ and the regularity parameters are sufficiently small in terms
of $p$ and $\alpha$, then the reduced pair-state digraph $D$ is strongly
connected and aperiodic.
\end{lemma}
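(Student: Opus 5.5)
We argue by contradiction through a scalar "closed family" lemma, the approach announced in the introduction. Both properties of $D$ follow once we show that $D$ has no non-trivial closed state family. Concretely, for strong connectivity, suppose $D$ is not strongly connected. Then there is a terminal strong component, i.e.\ a nonempty family $\mathcal S$ of pair-cells closed under out-transitions with $\mathcal S$ not everything. For aperiodicity, suppose $D$ (strongly connected) has period $d\ge2$. The pair-cells then split into classes $\mathcal S_0,\dots,\mathcal S_{d-1}$ with all transitions going $\mathcal S_\ell\to\mathcal S_{\ell+1}$. Since every dense triad gives the three cyclic transitions $P_{ij}^a\to P_{jk}^b\to P_{ki}^c\to P_{ij}^a$, $D$ contains closed walks of length $3$, so $d=3$. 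Moreover, reversal coherence ($P\mapsto P^\top$ maps transitions to reversed transitions) relates $\mathcal S_\ell^\top$ to $\mathcal S_{-\ell}$.

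In both cases we encode the family by scalars. For each ordered cluster pair $(i,j)$ let $s_{ij}$ be the total weight of pair-cells over $(i,j)$ lying in $\mathcal S$ (respectively in a fixed class). Closedness says that a dense triad with first cell in $\mathcal S$ has its second (and, cycling, third) cell in $\mathcal S$. Dense triads therefore live only on the "monochromatic" products $\mathcal S\times\mathcal S\times\mathcal S$ or $\mathcal S^c\times\mathcal S^c\times\mathcal S^c$; in the period-$3$ case they live on $\mathcal S_0\times\mathcal S_1\times\mathcal S_2$ and cyclic shifts. By \Cref{lem:slice-counting-facts}(2), the dense weight on $(i,j,k)$ is at most $s_{ij}s_{jk}s_{ki}+(1-s_{ij})(1-s_{jk})(1-s_{ki})$ in the first case, and at most a similar sum of three products in the second. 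By \Cref{lem:slice-counting-facts}(1) this is at least $\tau>1/3$. On the other hand, \Cref{lem:reduced-codegree} applied to a cell $P_{ij}^a\in\mathcal S$ gives $\sum_k\lambda_k s_{jk}s_{ki}\ge\Delta$, since all its dense extensions stay in $\mathcal S$. The analogous bound holds for cells outside $\mathcal S$.

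The core is then a finite optimisation, which we expect to be the main obstacle. We apply the density inequality to suitable triples, including degenerate ones $(i,i,i)$ realised through tagged copies. The first aim is to show that for $\tau>1/3$ every $s_{ij}$ is forced close to $0$ or $1$, or else the codegree inequality forces it into a regime contradicting $\tau$. Indeed, $x^3+(1-x)^3\ge\tau$ forces $x\le a(\tau)$ or $x\ge1-a(\tau)$, and the three-product period-$3$ version is at most $1/3$ on the simplex balanced case, which is excluded by $\tau>1/3$. We then use the codegree bound: a cell in $\mathcal S$ over $(i,j)$ needs $\sum_k\lambda_k s_{jk}s_{ki}\ge\Delta>a^2$. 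This shows that "small" pairs (with $s\le a$) cannot host $\mathcal S$-cells, so the pattern of large $s_{ij}$ must itself be closed, and an averaging over $k$ gives a contradiction with nontriviality. The delicate point is mixing triples where different pairs sit on different sides of the dichotomy. We would first try to prove a two-cluster-type lemma: if $s_{ij}\ge1-a$ and $s_{jk}\le a$, then the triple $(i,j,k)$ violates the density bound $\tau$. This gives a consistent $0/1$ labelling of cluster pairs, from which the codegree inequality with threshold $a^2$ finishes the argument. The threshold $\thr_4(p)=a(p)^2$ is exactly the value at which the extremal biased Araújo–Piga–Schacht configuration of \Cref{con:sharp-curve} balances these inequalities.

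Finally, having excluded all non-trivial closed families, $D$ is strongly connected and aperiodic. We note that a small number of cells might be isolated; this is handled by \Cref{lem:reduced-codegree}, which guarantees that every cell has an out-transition, and by the cyclic triad transitions, which guarantee an in-transition.
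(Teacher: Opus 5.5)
Your reduction matches the paper's. A family of states closed under out-transitions yields a matrix $(s_{ij})$ with $\Phi(s_{ij},s_{jk},s_{ki})\ge\tau$ and the two extension inequalities. Aperiodicity follows from the diagonal triads on $(V_i,V_i,V_i)$; the correct bound there is $3s_i^0s_i^1s_i^2\le 1/9$, not $1/3$, but either one contradicts $\tau>1/3$.

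\textbf{The gap: off-diagonal entries are never controlled.} The gap is in the scalar core for strong connectivity. The inequality $x^3+(1-x)^3\ge\tau$ constrains only the diagonal entries $s_{ii}$. Nothing in your sketch forces an off-diagonal $s_{ij}$ out of $(a,1-a)$, so the ``consistent $0/1$ labelling'' is not available.

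Your two-cluster observation is correct: if one entry of a triple is $\ge 1-a$ and another is $\le a$, then $\Phi\le a<\tau$. However, it applies only to entries already known to be extreme. In particular, it does not exclude $s_{ii}\le a$ together with $s_{jj}\ge 1-a$, because no triple contains both diagonal entries. The paper handles this by combining the triples $(i,i,j)$ and $(i,j,j)$ with $u=s_{ij}$ and $v=s_{ji}$ arbitrary. It splits according to whether $u+v\ge1$ or $u+v\le1$, and uses convexity in $u+v$ to bound the relevant expression by $\max\{a,1/4\}<\tau$.

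Similarly, ``small pairs cannot host $\mathcal S$-cells'' does not follow pointwise. Take $s_{ij}=x$ tiny and $w=\sqrt{s_{jk}s_{ki}}$ slightly above $a$. Then the sharp upper bound $xw^2+(1-x)(1-w)^2\approx(1-a)^2$ exceeds $\tau$, so a single triple gives no contradiction.

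\textbf{The missing idea: an extremal-entry argument.} Consider the all-low case.
\begin{enumerate}
\item First use the triple $(i,i,j)$ to show $s_{ij}\le 1-\tau$ for all $i,j$. This triple is linear in $s_{ji}$, with endpoint values $(1-s_{ii})(1-s_{ij})$ and $s_{ii}s_{ij}\le a$, both below $\tau$ once $s_{ij}>1-\tau$.
\item Take $(i,j)$ with $s_{ij}=M$ maximal. Since $s_{jk},s_{ki}\le M$, the density inequality at $(i,j,k)$ with $w=\sqrt{s_{jk}s_{ki}}$ gives
\[
g_M(w):=w^2-2(1-M)w+(1-M)-\tau\ge0 .
\]
\item For $a<M\le1-\tau$, one has $g_M(a)=(1-2a)(a-M)<0$ and $g_M(M)=M^3+(1-M)^3-\tau<0$. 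Convexity of $g_M$ then forces $w<a$.
\item Hence $\sum_k\lambda_k s_{jk}s_{ki}\le a^2<\Delta$, contradicting the extension inequality at the positive entry $s_{ij}=M$. The case $M\le a$ is immediate.
\end{enumerate}
The all-high case then follows from the symmetry $s\mapsto1-s$.

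Without taking the maximum, you cannot bound $s_{jk},s_{ki}$ by the entry being tested, so the per-$k$ estimate on $s_{jk}s_{ki}$ fails and your sketch does not close.
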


We postpone the proof of
Lemma~\ref{lem:reduced-pair-state-main} to
Subsection~\ref{subsec:reduced-connectivity}.  We first show how it
implies Lemma~\ref{lem:connecting}.  We shall use the following standard
fact about primitive directed graphs.

\begin{lemma}[Fixed-length walks, {\cite{Neufeld1996}}]
\label{lem:fixed-length-walks}
For every integer $M$ there exists an integer $L$ such that the
following holds.  If $D$ is a strongly connected aperiodic directed
graph with at most $M$ vertices, then, for any two vertices $P,Q$ of
$D$ and every integer $\ell\ge L$, there is a directed walk of length
exactly $\ell$ from $P$ to $Q$.  We may assume that $L\ge4$.
\end{lemma}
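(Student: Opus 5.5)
This is the classical fact that a primitive digraph has a bounded exponent; I would prove it by a compactness and number-theoretic argument. The bound must be uniform over all digraphs with at most $M$ vertices. Since there are only finitely many such digraphs up to isomorphism, it suffices to prove the statement for a single strongly connected aperiodic digraph $D$. We then take $L$ to be the maximum over this finite family, increased to at least $4$ if necessary. Explicitly, Wielandt's bound $L=(M-1)^2+1$ would do, but a non-explicit $L$ is enough here.

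Fix such a $D$ and a vertex $P$. Let $S_P\subseteq\mathbb N$ be the set of lengths of closed walks through $P$. This set is closed under addition, since closed walks at $P$ can be concatenated. Its gcd equals the period of $D$, which is $1$ by aperiodicity. Here I use the standard fact that in a strongly connected digraph, every cycle length is a difference of closed-walk lengths at $P$. Concretely, go from $P$ to a vertex of the cycle and back, with or without traversing the cycle once in between.

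By the elementary numerical-semigroup lemma, an additively closed subset of $\mathbb N$ with gcd $1$ contains all sufficiently large integers. To see this, a finite subset already has gcd $1$, Bézout writes $1$ as an integer combination of its elements, and the Frobenius-type argument then fills every residue class beyond some threshold $N_P$.

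For any target vertex $Q$, strong connectivity gives a walk from $P$ to $Q$ of some length $d(P,Q)\le M-1$. For $\ell\ge N_P+M-1$, take a closed walk at $P$ of length $\ell-d(P,Q)\ge N_P$ and append the path to $Q$. This gives a walk of length exactly $\ell$. Setting $L(D):=\max_P N_P+M-1$ handles all pairs at once, and maximising over the finitely many $D$ gives the uniform $L$.

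The only mildly delicate step is showing that the gcd of $S_P$ is the period, i.e.\ that aperiodicity transfers from cycle lengths in all of $D$ to closed walks through the single vertex $P$. The detour argument above handles it: if $C$ is a cycle of length $c$ and $W$ is a closed walk from $P$ through a vertex of $C$, then both $|W|$ and $|W|+c$ lie in $S_P$, so $\gcd(S_P)$ divides $c$.
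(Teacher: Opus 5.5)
Your proof is correct. The paper does not prove this lemma. It quotes it from Neufeld's work on exponents of primitive digraphs, a line of results in the tradition of Wielandt's bound $(n-1)^2+1$. That route gives an explicit $L$ as a function of $M$.

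You prove it from scratch in two steps:
\begin{itemize}
\item The class of digraphs on at most $M$ vertices is finite, which reduces the uniformity in $M$ to a single digraph. Loops must be allowed in this count, since the pair-state digraph can have a loop when a dense triad repeats a diagonal pair-cell. The class stays finite, so this is harmless.
\item For a single digraph, your detour argument shows that the closed-walk lengths at a fixed vertex have gcd $1$. The numerical-semigroup fact then gives closed walks of all sufficiently large lengths, and prepending these to a short path gives every large $\ell$.
\end{itemize}

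Your argument is self-contained and elementary, but it yields only an inexplicit $L$, whereas the cited bound is quantitative. The qualitative version is all the paper needs: in the proof of the connecting lemma, $t=L+2$ only has to depend on the uniform bound $M$ on the number of pair-cells supplied by the regular slice lemma, not be explicit.
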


We next associate states of $D$ with the prescribed endpoint pairs.
For two disjoint ordered pairs $(a,b)$ and $(c,d)$, define
\[
\begin{aligned}
    S_{ab}
    &:=
    \{(x,y):a,b,x,y\text{ are distinct and }
             abx,bxy\in E(H)\},\\
    T_{cd}
    &:=
    \{(u,v):u,v,c,d\text{ are distinct and }
             uvc,vcd\in E(H)\}.
\end{aligned}
\]
By extending the corresponding endpoint pair twice and using
$\delta_2(H)\ge\alpha n$, we obtain
\[
    |S_{ab}|,\ |T_{cd}|
    \ge
    (\alpha^2-o(1))n^2.
\]
Since the number of pair-cells is bounded, there exists a constant
$\sigma>0$, depending only on $\alpha$ and the complexity of the slice,
such that $S_{ab}$ has relative density at least $\sigma$ in some
pair-cell.  We call any such pair-cell a \emph{starting state} for
$(a,b)$.  Similarly, a pair-cell in which $T_{cd}$ has relative density
at least $\sigma$ is called a \emph{terminal state} for $(c,d)$.

The following lemma lifts a bounded reduced walk between these endpoint
states to a tight path in $H$.

\begin{lemma}[Lifting reduced walks]
\label{lem:lifting-reduced-walks}
Let $P$ be a starting state for $(a,b)$ and let $Q$ be a terminal state
for $(c,d)$.  Suppose that there is a directed walk of length $r\ge4$
from $P$ to $Q$ in $D$, where $r$ is bounded.  Then $H$ contains a
tight path from $(a,b)$ to $(c,d)$ with exactly $r+2$ internal
vertices.
\end{lemma}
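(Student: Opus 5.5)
The plan is to realise the reduced walk as a fixed tight-walk pattern and embed it with the extension lemma, so that its first pair is a pair $(x,y)\in S_{ab}$ and its last pair is a pair $(u,v)\in T_{cd}$. Write the walk as $P=Q_0\to Q_1\to\cdots\to Q_r=Q$. For each step $Q_{s-1}\to Q_s$ fix a dense triad $\mathcal T_s$ witnessing it. Consecutive states $Q_{s-1}=P_{i_{s-1}i_s}$ and $Q_s=P_{i_si_{s+1}}$ share the cluster $i_s$. The walk therefore determines a cluster sequence $i_0,i_1,\ldots,i_{r+1}$ and a tight-path pattern $\mathcal G$ on vertices $z_0,\ldots,z_{r+1}$, with $z_s$ assigned to $V_{i_s}$. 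In $\mathcal G$ the pair $z_{s-1}z_s$ is assigned to the state $Q_{s-1}$, the triple $z_{s-1}z_sz_{s+1}$ to the triad $\mathcal T_s$, and the back pair $z_{s+1}z_{s-1}$ to the third pair-cell of $\mathcal T_s$. Repeated clusters are separated by the subcluster convention of Subsection~\ref{subsec:regular-slices}. Because $r$ is bounded, $\mathcal G$ is a bounded $(3,h)$-complex in which all relevant pair densities are at least $1/M_0$ and all triad densities are at least $d_0$. The pairs $z_0z_1$, the edges of $\mathcal G$, and the pair $z_rz_{r+1}$ then play the roles of $(x,y)$, the internal path, and $(u,v)$.

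To pin down both endpoints, I will apply Lemma~\ref{lem:extension} with $\mathcal G'$ the subcomplex induced on $\{z_0,z_1,z_r,z_{r+1}\}$. The assumption $r\ge4$ makes these four vertices pairwise non-consecutive except within the two end pairs. Hence $\mathcal G'$ consists only of the two pairs $z_0z_1\in P$ and $z_rz_{r+1}\in Q$, plus vertices with no relevant cells between the two ends, and its copies are simply elements of $P\times Q$ in distinct subclusters. By Lemma~\ref{lem:dense-counting}, $|\mathcal G'|_{\mathcal H}=(1\pm o(1))|P||Q|$. The extension lemma then says all but a $\beta$-fraction of these copies extend, each with $\Omega(n^{r-2})$ extensions. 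Next I count bad copies: $S_{ab}\cap P$ has density at least $\sigma$ in $P$ and $T_{cd}\cap Q$ has density at least $\sigma$ in $Q$, so there are at least $\sigma^2|P||Q|-O(n^3)$ copies with $(x,y)\in S_{ab}$, $(u,v)\in T_{cd}$ and all four vertices distinct and avoiding $a,b,c,d$. Choosing $\beta<\sigma^2/2$ gives a good copy. Among its $\Omega(n^{r-2})$ extensions, only $O(n^{r-3})$ reuse a vertex of $\{a,b,c,d,x,y,u,v\}$ or repeat a vertex, so some extension is injective and avoids $a,b,c,d$.

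This injective extension is a tight path $x,y,z_2,\ldots,z_{r-1},u,v$ with $r+2$ vertices, each consecutive triple an edge of $H$, because in the extension lemma $\mathcal H^{(3)}$ is the restriction of $H$ to the triad supports. Prepending $a,b$ uses the edges $abx,bxy$, and appending $c,d$ uses $uvc,vcd$. The result is a tight path from $(a,b)$ to $(c,d)$ whose internal vertices are exactly $x,y,z_2,\ldots,z_{r-1},u,v$, that is $r+2$ of them.

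The main obstacle is the parameter bookkeeping. The constant $\beta$ must be fixed before the regularity parameters, yet $\sigma$ depends on the slice complexity $M_0$, which itself depends on the regularity parameters. I plan to resolve this as in standard applications: $\sigma$ is bounded below by $\alpha^2/(2M_0^{2}m^2)$, so it suffices to apply the extension lemma for all bounded $r$ and patterns simultaneously, with $\beta\ll 1/M_0$ and $\delta_3\ll\beta$ via the error-function slice. This forces $\delta_3$ to be chosen after $M_0$ in the hierarchy, and I would rely on the error-function version of the slice lemma to make that ordering legitimate.
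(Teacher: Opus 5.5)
Your proposal is correct and is essentially the paper's own proof. Both arguments use the same tight-path complex on $z_0,\ldots,z_{r+1}$, built from the walk and the third cells of the witnessing triads, and the same subcomplex $\mathcal G'$ on $z_0,z_1,z_r,z_{r+1}$, which has no cross pairs because $r\ge4$. Both choose the extension-lemma error below the proportion of admissible endpoint copies, and both finish by prepending $a,b$ and appending $c,d$.

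One correction to your final paragraph. The error-function slice lemma lets only the pair-level error depend on the number of clusters, not $\delta_3$, so the ordering you propose is not actually available. It is also unnecessary. The pair-cells partition all ordered pairs of distinct vertices, so some cell meets $S_{ab}$ in relative density at least $|S_{ab}|/(n(n-1))\ge\alpha^2-o(1)$, and similarly for $T_{cd}$. Averaging over the subcluster split then loses only a constant factor, a step the paper states explicitly and you left implicit. Hence $\sigma$, and therefore $\beta$, can be fixed in terms of $\alpha$ alone before the slice is chosen.
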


\begin{proof}
Let $P_0,P_1,\ldots,P_r$ be the reduced walk, where $P_0=P$ and $P_r=Q$.  We may write
\[
    P_q=P_{i_qi_{q+1}}^{a_q},
    \qquad q=0,\ldots,r.
\]
For each $q=0,\ldots,r-1$, choose a pair-cell $C_q=P_{i_{q+2}i_q}^{c_q}$ such that $(P_q,P_{q+1},C_q)$ is a dense triad.

These cells define a labelled tight-path complex $\mathcal G$ on
vertices $z_0,z_1,\ldots,z_{r+1}$.
For $q=0,\ldots,r$, the ordered pair $(z_q,z_{q+1})$ is assigned to
$P_q$.  For $q=0,\ldots,r-1$, the ordered pair
$(z_{q+2},z_q)$ is assigned to $C_q$, and
$z_qz_{q+1}z_{q+2}$ is assigned to the top-level cell of $H$ in the
dense triad $(P_q,P_{q+1},C_q)$.

By the repeated-cluster convention from
Subsection~\ref{subsec:regular-slices}, repeated cluster occurrences may
be separated while preserving the required regularity and positive
densities.  The refined endpoint cells may also be chosen so that
$S_{ab}\cap P_0$ and $T_{cd}\cap P_r$ retain relative density bounded
below by a positive constant.

Let $\mathcal H_{\rm walk}$ be the regular host complex formed by the
selected refined pair-cells and dense triads, and let $\mathcal G'$ be
the subcomplex of $\mathcal G$ induced by $z_0,z_1,z_r,z_{r+1}$. Since $r\ge4$, the two endpoint pairs are separated in $\mathcal G'$.
Call a partition-respecting labelled copy of $\mathcal G'$ admissible if
\[
    (z_0,z_1)\in S_{ab}\cap P_0
    \quad\text{and}\quad
    (z_r,z_{r+1})\in T_{cd}\cap P_r.
\]
The endpoint density bounds imply that the admissible copies form a
positive proportion of all partition-respecting copies of
$\mathcal G'$ in $\mathcal H_{\rm walk}$.

Choose the error parameter in Lemma~\ref{lem:extension} smaller than
this proportion.  The extension lemma then guarantees that at least one
admissible copy of $\mathcal G'$ extends to a copy of $\mathcal G$.
Deleting the four fixed vertices $a,b,c,d$ from the host classes before
the application changes the relevant densities only negligibly, so the
extension may be chosen disjoint from them.

Consequently, there exist distinct vertices $x_0,x_1,\ldots,x_{r+1}$ realising $\mathcal G$, with
\[
    (x_0,x_1)\in S_{ab}
    \quad\text{and}\quad
    (x_r,x_{r+1})\in T_{cd}.
\]
The endpoint conditions give the four boundary edges
\[
    abx_0,\qquad bx_0x_1,\qquad
    x_rx_{r+1}c,\qquad x_{r+1}cd,
\]
while the copy of $\mathcal G$ gives $x_qx_{q+1}x_{q+2}\in E(H)$ for every $q=0,\ldots,r-1$.
Hence
\[
    a,b,x_0,x_1,\ldots,x_{r+1},c,d
\]
is a tight path from $(a,b)$ to $(c,d)$ with exactly $r+2$ internal
vertices.
\end{proof}

We are now ready to deduce the connecting lemma from Lemma~\ref{lem:reduced-pair-state-main}.  

\begin{proof}[Proof of Lemma~\ref{lem:connecting}, assuming
Lemma~\ref{lem:reduced-pair-state-main}]
Fix $p>1/3$ and $\alpha>\thr_4(p)$.  Let $M$ be a uniform upper bound on
the number of states in any reduced pair-state digraph obtained from
Lemma~\ref{lem:regular-slice}.  Let $L=L(M)$ be given by
Lemma~\ref{lem:fixed-length-walks}, and set $t:=L+2$.
Choose $\mu_0>0$ sufficiently small and $n_0$ sufficiently large so that
all applications of the regular slice and
Lemma~\ref{lem:lifting-reduced-walks} below are valid.

Let $H$ be an $n$-vertex $(p,\mu)$-dense 3-graph with
$0<\mu\le\mu_0$, $n\ge n_0$, and $\delta_2(H)\ge\alpha n$.  Let
$(a,b)$ and $(c,d)$ be two disjoint ordered pairs.  Apply
Lemma~\ref{lem:regular-slice} and form the corresponding reduced
pair-state digraph $D$.  Choose a starting state $P$ for $(a,b)$ and a
terminal state $Q$ for $(c,d)$.

By Lemma~\ref{lem:reduced-pair-state-main}, the digraph $D$ is strongly
connected and aperiodic.  Hence Lemma~\ref{lem:fixed-length-walks}
provides a directed walk of length exactly $L$ from $P$ to $Q$.
Lemma~\ref{lem:lifting-reduced-walks} then yields a tight path from
$(a,b)$ to $(c,d)$ with exactly $L+2=t$
internal vertices.  This proves the lemma.
\end{proof}

It remains to prove Lemma~\ref{lem:reduced-pair-state-main}.  In the
next subsection, we prove that the reduced pair-state digraph is strongly
connected and aperiodic.

\subsection{Strong connectivity and aperiodicity}
\label{subsec:reduced-connectivity}

The proof of Lemma~\ref{lem:reduced-pair-state-main} uses the following
finite scalar statement.

\begin{lemma}[Reduced scalar lemma]
\label{lem:scalar-main}
Let $I$ be a finite non-empty set, let $1/3<\tau<1$, let $a=a(\tau)\in(0,1/3)$ be the smaller root of
$a^3+(1-a)^3=\tau$, and let $\Delta>a^2$. Put $\Phi(x,y,z)=xyz+(1-x)(1-y)(1-z)$. There do not exist positive weights
$(\lambda_i)_{i\in I}$ summing to one and a non-trivial matrix
$(s_{ij})_{i,j\in I}\in[0,1]^{I\times I}$ satisfying the following
conditions:
\begin{enumerate}
    \item[(1)] For all $i,j,k\in I$, $\Phi(s_{ij},s_{jk},s_{ki})\ge \tau$.
    \item[(2)] For all $i,j\in I$ with $s_{ij}>0$, $\sum_{k\in I}\lambda_k s_{jk}s_{ki}\ge \Delta$.
    \item[(3)] For all $i,j\in I$ with $s_{ij}<1$, $\sum_{k\in I}\lambda_k(1-s_{jk})(1-s_{ki})\ge \Delta$.
\end{enumerate}
Here non-trivial means that the matrix is neither identically zero nor
identically one.
\end{lemma}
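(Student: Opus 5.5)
We argue by contradiction: suppose positive weights $\lambda$ and a non-trivial matrix $s$ satisfy (1)--(3). The plan has three steps: first force every entry of $s$ to sit near $0$ or near $1$, then turn that into a colouring of pairs in which the only admissible triangles are monochromatic, and finally contradict the codegree conditions (2) and (3). The quantitative input throughout is that $\tau>1/3$ forces each triangle of values $(s_{ij},s_{jk},s_{ki})$ to be nearly constant. On the diagonal $x=y=z=s$ we have $\Phi=s^3+(1-s)^3$, so (1) with $i=j=k$ gives $s_{ii}^3+(1-s_{ii})^3\ge\tau$, i.e.\ $s_{ii}\in[0,a]\cup[1-a,1]$. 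For general triples, $\Phi$ is affine in each variable, so its minimum over a box is attained at a vertex of the box. Using this we will show that if two of $x,y,z$ lie in opposite halves then $\Phi<\tau$. For example $\Phi(x,1-x',z)=xz(1-x')+(1-x)x'(1-z)$, and one checks this is at most $\max$ of such expressions, which stays below $1/3<\tau$ unless the values are near-constant. More precisely, we will prove that every admissible triple has all entries in $[0,a]$ or all in $[1-a,1]$. The route is to write $\Phi(x,y,z)\le \max(x,1-x)\,\bigl(yz+(1-y)(1-z)\bigr)$, and to note that $yz+(1-y)(1-z)\ge\tau>1/3$ already pins $y,z$ to the same side of $1/2$ in a quantitative way.

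Next we colour an ordered pair $ij$ \emph{red} if $s_{ij}\le a$ (in fact small) and \emph{blue} if $s_{ij}\ge 1-a$. By the previous step every triple $i,j,k$ is monochromatic. This includes degenerate triples, so the colouring is symmetric ($s_{ij}$ and $s_{ji}$ appear together in the triple $(i,j,i)$) and transitive. Hence $I$ splits into classes $I_1,\dots,I_q$, with blue pairs inside a class and red pairs between classes, or the other way round. Because the triple $(i,i,j)$ contains $s_{ii}$ and $s_{ij}$, every pair gets the diagonal colour, so in fact all of $s$ is near one constant. Since $s$ is non-trivial, some entry $s_{ij}$ is neither $0$ nor $1$: if $s$ is everywhere small but not identically $0$, choose $s_{ij}>0$; if everywhere large but not identically $1$, choose $s_{ij}<1$.

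To finish, in the first case condition (2) gives $\Delta\le\sum_k\lambda_k s_{jk}s_{ki}$. We need to show the entries are actually at most $a$ in the sense required, so that this sum is at most $a^2<\Delta$, which is a contradiction. The symmetric case uses (3) with $1-s$. The main obstacle is the first step: proving that admissibility forces \emph{each} entry into $[0,a]\cup[1-a,1]$ with the same threshold $a$, rather than some weaker bound. I would attack it by fixing $y=s_{jk}$ and $z=s_{ki}$ equal to $s_{jj}$-type diagonal values and using the triples $(i,j,j)$, $(i,i,j)$, which give $\Phi(s_{ij},s_{jj},s_{ji})\ge\tau$ and similar. Combined with monotonicity of $\Phi$ in each coordinate, this should show that $s_{ij}\le\max(s_{ii},s_{jj})$-type bounds hold, and hence $s_{ij}\le a$ whenever the diagonal is red. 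If this sharp form fails, I would instead extract the maximum entry $s^*$ and apply (1) to the triple realising it together with (2) at a maximising pair, aiming for the inequality $\Delta\le (s^*)^2$ and showing $s^*\le a$ from $\Phi\ge\tau$.
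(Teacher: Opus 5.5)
There is a genuine gap. Your first step is false, and both the colouring argument and the final estimate depend on it.

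Condition (1) does not force entries into $[0,a]\cup[1-a,1]$. Since $\tau=1-3a+3a^2$, one has $1-\tau=3a(1-a)>a$, and the triple $(0,0,1-\tau)$ has $\Phi=\tau$. For $\tau<1/2$ this entry even exceeds $1/2$. So the heuristic that $yz+(1-y)(1-z)\ge\tau$ pins $y,z$ to the same side of $1/2$ is also wrong: take $(y,z)=(3/5,0)$ and $\tau<2/5$.

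Condition (1) alone cannot bound the maximal entry by $a$ either. On $I=\{1,2\}$, the matrix with $s_{11}=s_{22}=0$ and $s_{12}=s_{21}=x$ satisfies (1) for every $x\le 1-\sqrt\tau$, and $1-\sqrt\tau>a$ because $a<1/2$. So your fallback, showing $s^*\le a$ ``from $\Phi\ge\tau$'', fails as well.

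Applying (2) at an arbitrary positive entry fails too. A triangle $(\varepsilon,b,b)$ with $a<b<1-\sqrt\tau$ and $\varepsilon$ small satisfies (1). Hence (1) does not bound the cherry products opposite a small positive entry by $a^2$. The monochromatic colouring, the symmetry and transitivity, and the bound $\sum_k\lambda_k s_{jk}s_{ki}\le a^2$ are therefore all unsupported.

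What (1) actually gives is weaker:
\begin{itemize}
\item the diagonal lies in $[0,a]\cup[1-a,1]$;
\item mixed diagonals are not excluded automatically and need a separate argument: apply (1) to $(i,i,j)$ and $(i,j,j)$ and compare $P=s_{ij}s_{ji}$ with $Q=(1-s_{ij})(1-s_{ji})$;
\item if every diagonal entry is at most $a$, the triple $(i,i,j)$, which is affine in $s_{ji}$, yields only $s_{ij}\le 1-\tau$.
\end{itemize}

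The missing idea is to bound the \emph{product} of the two other entries of a triangle through a maximal entry, not the maximal entry itself. Let $s_{ij}=M=\max_{k,\ell}s_{k\ell}$ with $a<M\le 1-\tau$, and put $b=s_{jk}$, $c=s_{ki}$, $w=\sqrt{bc}\le M$. Then $\tau\le\Phi(M,b,c)=bc+(1-M)(1-b-c)\le w^2-2(1-M)w+(1-M)$. The convex quadratic $g_M(w)=w^2-2(1-M)w+(1-M)-\tau$ satisfies $g_M(a)=(1-2a)(a-M)<0$ and $g_M(M)=M^3+(1-M)^3-\tau<0$. The second inequality uses $M\le 1-\tau<1-a$. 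Hence $w<a$ for every $k$, and (2) at $(i,j)$ gives $\Delta\le a^2$, a contradiction. If $M\le a$, the same bound $bc\le M^2\le a^2$ is immediate.

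The all-high case follows from $s\mapsto 1-s$, which preserves (1) and swaps (2) and (3). Your instinct to apply (2) at a maximising pair is right. Without this product estimate and the a priori bound $M\le 1-\tau$, however, the argument does not close.
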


We postpone the proof of Lemma~\ref{lem:scalar-main} to
Subsection~\ref{subsec:scalar-proof}.

\begin{proof}[Proof of Lemma~\ref{lem:reduced-pair-state-main}]
We first prove strong connectivity.  Suppose that $D$ is not strongly
connected, and let $\mathcal C$ be a non-empty proper set of states with
no directed edge from $\mathcal C$ to its complement.  For every
ordered cluster pair $(i,j)$, define
\[
    s_{ij}
    :=
    \sum_{P_{ij}^a\in\mathcal C}w(P_{ij}^a).
\]

For a dense triad, the three cyclic transitions among its states belong
to $D$.  Since no transition leaves $\mathcal C$, the three states of
every dense triad are either all in $\mathcal C$ or all outside
$\mathcal C$.  By Lemma~\ref{lem:slice-counting-facts}, the total
reduced weight of dense triads on any ordered cluster triple is at least
$\tau'$.  The total weight of all triads whose states lie in
$\mathcal C$ is $s_{ij}s_{jk}s_{ki}$,
and the corresponding weight outside $\mathcal C$ is $(1-s_{ij})(1-s_{jk})(1-s_{ki})$.
Consequently, 
\[
    \Phi(s_{ij},s_{jk},s_{ki})
    \ge
    \tau'
    >
    \tau
\]
for every $i,j,k\in[m]$.

Suppose that $s_{ij}>0$.  Then some state
$P_{ij}^a\in\mathcal C$ has positive weight.  By
Lemma~\ref{lem:reduced-codegree}, $\operatorname{ext}(P_{ij}^a)\ge\Delta$.
Every dense triad containing $P_{ij}^a$ has its other two states in
$\mathcal C$, and hence
\[
    \sum_{k\in[m]}\lambda_k s_{jk}s_{ki}
    \ge
    \Delta.
\]
Similarly, if $s_{ij}<1$, choose a positive-weight state outside
$\mathcal C$.  Any dense triad containing this state has its other two
states outside $\mathcal C$, and therefore
\[
    \sum_{k\in[m]}
    \lambda_k(1-s_{jk})(1-s_{ki})
    \ge
    \Delta.
\]
Thus the matrix $(s_{ij})$ satisfies
\textit{(1)}--\textit{(3)} of Lemma~\ref{lem:scalar-main}.  Since $\mathcal C$ is non-empty and
proper and every pair-cell has positive weight, this matrix is
non-trivial, contradicting Lemma~\ref{lem:scalar-main}.  Hence $D$ is
strongly connected.

It remains to prove that $D$ is aperiodic.  Recall that the period of a
strongly connected directed graph is the greatest common divisor of the
lengths of its closed directed walks.  For every cluster $V_i$,
Lemma~\ref{lem:slice-counting-facts} gives a dense triad supported on
$(V_i,V_i,V_i)$.  Its three cyclic transitions form a closed directed
walk of length $3$, so the period of $D$ divides $3$.

Suppose that the period is $3$.  Then the states admit a cyclic
partition
\[
    \mathcal C_0\cup\mathcal C_1\cup\mathcal C_2
\]
such that every directed edge goes from $\mathcal C_r$ to
$\mathcal C_{r+1}$, with indices modulo $3$.  Fix $i\in[m]$, and let
$s_i^r$ be the total weight of pair-cells over $(i,i)$ belonging to
$\mathcal C_r$.  Then
\[
    s_i^0+s_i^1+s_i^2=1.
\]
If the first state of a dense triad supported on $(V_i,V_i,V_i)$ lies
in $\mathcal C_r$, then its second and third states lie in
$\mathcal C_{r+1}$ and $\mathcal C_{r+2}$, respectively.  Hence the
total reduced weight of such dense triads is at most
\[
    \sum_{r=0}^2 s_i^r s_i^{r+1}s_i^{r+2}
    =
    3s_i^0s_i^1s_i^2
    \le
    \frac19.
\]
This contradicts the lower bound $\tau'>1/3$ from
Lemma~\ref{lem:slice-counting-facts}.  Therefore the period is not
$3$.  Since it divides $3$, it is equal to $1$, and $D$ is aperiodic.
\end{proof}

\subsection{Proof of the reduced scalar lemma}
\label{subsec:scalar-proof}

The argument in this subsection is independent of the regularity
reduction.  We first use the triangle inequalities to constrain the
diagonal entries and exclude matrices with both low and high diagonal
entries.  In the remaining cases, an extremal off-diagonal entry,
together with the corresponding extension inequality, yields the
desired contradiction.  The symmetry under replacing $s_{ij}$ by
$1-s_{ij}$ reduces the all-high case to the all-low case.

\begin{proof}[Proof of Lemma~\ref{lem:scalar-main}]
For $i,j\in I$, write
\[
    D_{ij}^+
    :=
    \sum_{k\in I}\lambda_k s_{jk}s_{ki},
    \qquad
    D_{ij}^-
    :=
    \sum_{k\in I}\lambda_k(1-s_{jk})(1-s_{ki}).
\]

Putting $i=j=k$ in \textit{(1)} gives
\[
    s_{ii}^3+(1-s_{ii})^3\ge\tau.
\]
Since $a$ is the smaller root of
$x^3+(1-x)^3=\tau$, it follows that $s_{ii}\in[0,a]\cup[1-a,1]$ for every $i\in I$.

We first show that the two types cannot both occur.  Suppose that
$s_{ii}=d\le a$ and $s_{jj}=e\ge1-a$ for some $i,j\in I$.  Put
\[
    u=s_{ij},\qquad v=s_{ji},\qquad
    P=uv,\qquad Q=(1-u)(1-v).
\]
Applying \textit{(1)} to $(i,i,j)$ and $(i,j,j)$ gives
\[
    dP+(1-d)Q\ge\tau,
    \qquad
    eP+(1-e)Q\ge\tau.
\]
If $P\ge Q$, then $u+v\ge1$, and
\[
    dP+(1-d)Q
    \le
    aP+(1-a)Q
    =
    uv+(1-a)(1-u-v).
\]
Writing $s=u+v$, the last expression is at most
$s^2/4+(1-a)(1-s)$.  This is a convex function of
$s\in[1,2]$, so its maximum is $\max\{1/4,a\}<\tau$, a
contradiction.
If $Q\ge P$, then $u+v\le1$, and similarly
\[
    eP+(1-e)Q
    \le
    (1-a)P+aQ
    =
    uv+a(1-u-v).
\]
Writing $s=u+v$, the last expression is at most
$s^2/4+a(1-s)$.  This is convex on $[0,1]$, and hence is at most
$\max\{a,1/4\}<\tau$, again a contradiction.  Thus either
\[
    s_{ii}\le a\quad\text{for every }i\in I,
\]
or
\[
    s_{ii}\ge1-a\quad\text{for every }i\in I.
\]

Assume first that all diagonal entries are at most $a$.  We claim that
\[
    s_{ij}\le1-\tau
    \qquad\text{for all }i,j\in I.
\]
Indeed, fix $i,j$ and put
$d=s_{ii}\le a$, $x=s_{ij}$ and $y=s_{ji}$.  If $x>1-\tau$, then
$\Phi(d,x,y)$ is linear in $y$, while
\[
    \Phi(d,x,0)=(1-d)(1-x)<\tau
\]
and
\[
    \Phi(d,x,1)=dx\le d\le a<\tau.
\]
This contradicts \textit{(1)}.

Let $M:=\max_{i,j\in I}s_{ij}$.
If $M=0$, then the matrix is identically zero.  Suppose that $M>0$, and choose $i,j$ with $s_{ij}=M$.  By \textit{(2)},
\[
    D_{ij}^+\ge\Delta.
\]
We show that $s_{jk}s_{ki}\le a^2$ for every $k\in I$.

Fix $k\in I$, and put
\[
    b=s_{jk},\qquad c=s_{ki},\qquad w=\sqrt{bc}.
\]
Since $b,c\le M$, the assertion is immediate if $M\le a$.  Assume that
$a<M\le1-\tau$.  By \textit{(1)},
\[
    \tau
    \le
    \Phi(M,b,c)
    =
    (1-M)(1-b-c)+bc.
\]
Since $b+c\ge2w$, it follows that
\[
    g_M(w)
    :=
    w^2-2(1-M)w+(1-M)-\tau
    \ge0.
\]
Using $\tau=a^3+(1-a)^3$, we obtain
\[
    g_M(a)=(1-2a)(a-M)<0
\]
and
\[
    g_M(M)=M^3+(1-M)^3-\tau<0.
\]
Indeed, $a<M\le1-\tau<1-a$, and
$x^3+(1-x)^3<\tau$ for $x\in(a,1-a)$.  Since $g_M$ is convex, it is
negative throughout $[a,M]$.  As $0\le w\le M$ and $g_M(w)\ge0$, we
must have $w<a$, and hence $bc<a^2$.

Therefore
\[
    D_{ij}^+
    =
    \sum_{k\in I}\lambda_k s_{jk}s_{ki}
    \le
    a^2,
\]
contradicting $D_{ij}^+\ge\Delta>a^2$.  Hence the all-low case forces
$s_{ij}=0$ for every $i,j$.

Finally, suppose that all diagonal entries are at least $1-a$.  Define $t_{ij}:=1-s_{ij}$.
Then $t_{ii}\le a$ for every $i$, and
\[
    \Phi(t_{ij},t_{jk},t_{ki})
    =
    \Phi(s_{ij},s_{jk},s_{ki}).
\]
Moreover, the two extension conditions are interchanged under this
substitution.  Thus $(t_{ij})$ satisfies the same three assumptions.
By the all-low case, $t_{ij}=0$ for every $i,j$, and hence
$s_{ij}=1$ for every $i,j$.  This contradicts the assumed
non-triviality and completes the proof.
\end{proof}

\begin{remark}
\label{rem:scalar-sharpness}
The bound $\Delta>a(\tau)^2$ is sharp for the scalar lemma.  Indeed, the
constant matrix $s_{ij}\equiv a(\tau)$ satisfies
\[
    \Phi(s_{ij},s_{jk},s_{ki})=\tau,
    \qquad
    D_{ij}^+=a(\tau)^2,
    \qquad
    D_{ij}^-=(1-a(\tau))^2.
\]
Thus a non-trivial scalar obstruction exists at
$\Delta=a(\tau)^2$.  As $\tau$ approaches $1/3$ from above, the smaller root $a(\tau)$
approaches $1/3$ from below.  Consequently, the corresponding scalar
codegree threshold $a(\tau)^2$ tends to $1/9$.
\end{remark}

\section{The local Hamilton framework}
\label{sec:canonical-lifting}

In this section, we complete the proof in the range at and below
$1/3$, assuming the finite structural theorem stated below.  The
component-coloured locally cleaned slice from
\Cref{lem:component-coloured-slice} already contains all regularity
information that will be needed.  We first formulate the associated
state system and state the canonical component theorem.  Then we lift
the canonical reduced component to the original $3$-graph and verify
the four properties of a Hamilton framework.  The proof of the
canonical component theorem is deferred to
Section~\ref{sec:canonical-proof}.

\subsection{The state system and the canonical component theorem}
\label{subsec:canonical-state-system}

We use the state system supplied by
\Cref{lem:component-coloured-slice}.  We record only the finite
information from this system that will be needed below.  After fixing
the cleaning bounds, the resulting system has bounded complexity and
fixed positive lower bounds on all retained cluster, state and triad
weights, while preserving reversal symmetry, the actual
tight-component colour of every retained state, and the joint
refinements required for two-root consistency.  These cleaning bounds
remain fixed throughout Sections~6 and~7 as the admissibility error
$\eta$ tends to zero.

Let $I$ be the cluster index set, and let $\lambda_i>0$ be the
normalised weight of the cluster $V_i$, so that
$\sum_{i\in I}\lambda_i=1$.  For every ordered pair $(i,j)$, let
$\cA_{ij}$ be the set of retained pair states over $(V_i,V_j)$.
Each state $a\in\cA_{ij}$ has a positive weight $w(a)$, and
$\sum_{a\in\cA_{ij}}w(a)=1$.  Reversal identifies $\cA_{ij}$ with
$\cA_{ji}$, preserves weights, and sends a state $a$ to its transpose
$a^\top$.

For every ordered triple $(i,j,k)$, let
$\mathcal T_{ijk}\subseteq\mathcal A_{ij}\times
\mathcal A_{jk}\times\mathcal A_{ki}$ be the family of retained
triads.  A triad $(a,b,c)\in\mathcal T_{ijk}$ creates the cyclic
transitions $a\to b\to c\to a$.

Reversal is an anti-automorphism of the transition digraph.  Thus, if
$C$ is a strongly connected component, then
$C^\top:=\{a^\top:a\in C\}$ is also a strongly connected component.
Throughout Sections~6 and~7, we identify $C$ and $C^\top$, and by a
transition component we mean the reversal class
$C\cup C^\top$, which may consist of one or two strongly connected
components.  Let $\mathcal C$ denote the set of these reversal classes.

For $C\in\mathfrak C$, define
$w_C(ij):=\sum_{a\in\cA_{ij}\cap C}w(a)$.  By construction, $w_C(ij)=w_C(ji)$ for every $C\in\mathcal C$ and $i,j\in I$.
For
$a\in\cA_{ij}$, define its extension weight by
\[
    \ext(a):=
    \sum_{k\in I}\lambda_k
    \sum_{\substack{b\in\cA_{jk},\,c\in\cA_{ki}\\
                    (a,b,c)\in\cT_{ijk}}}
        w(b)w(c).
\]
We call the system \emph{$(p,\alpha,\eta)$-admissible} if
\begin{align}
    \sum_{(a,b,c)\in\cT_{ijk}}w(a)w(b)w(c)
        &\ge p-\eta
        &&\text{for all }i,j,k\in I,
        \label{eq:state-density}\\
    \ext(a)&\ge\alpha-\eta
        &&\text{for every retained state }a.
        \label{eq:state-extension}
\end{align}

Thus \Cref{lem:component-coloured-slice}, applied with target error
$\eta$, produces precisely a $(p,\alpha,\eta)$-admissible state
system.

For $C\in\mathfrak C$, write
\[
    d_C(i):=\sum_{j\in I}\lambda_jw_C(ij),
    \qquad
    S_C:=\{i\in I:d_C(i)>0\}.
\]
We say that $C$ is \emph{spanning} if $S_C=I$, and
\emph{pair-complete} if $w_C(ij)>0$ for every $i,j\in I$.

We shall also use the reduced triad $3$-graph $R_C$ of a transition
component $C$.  Its vertices are the cluster indices, and its edges
are the cluster triples supporting retained triads of any strongly
connected representative of $C$.  This is well defined: distinct
representatives are transposes of one another, and reversal preserves
the underlying cluster triple.  Any two representatives also have the
same period, and we call $C$ aperiodic if one, equivalently every,
strongly connected representative is aperiodic.  Repeated cluster
indices are interpreted through the tagged-copy convention from
\Cref{lem:component-coloured-slice}. For an edge $e$
of $R_C$, let $\mathbf 1_e$ be its incidence vector, with multiplicities,
and define the matching cone
\[
    \cK(R_C):=
    \left\{
        \sum_{e\in E(R_C)}z_e\mathbf 1_e:
        z_e\ge0
    \right\}.
\]
Thus the cluster-weight vector
$\boldsymbol\lambda:=(\lambda_i)_{i\in I}$ lies in $\cK(R_C)$
precisely when $R_C$ has a perfect fractional matching with respect to
the cluster weights.

The state-level conditions immediately imply the following projected
inequalities.

\begin{lemma}[Projection to transition components]
\label{lem:state-projection}
Every $(p,\alpha,\eta)$-admissible state system satisfies
\[
    \sum_{C\in\mathfrak C}
        w_C(ij)w_C(jk)w_C(ki)\ge p-\eta
\]
for all $i,j,k\in I$, and, whenever $w_C(ij)>0$,
\[
    \sum_{k\in I}\lambda_kw_C(ik)w_C(jk)\ge\alpha-\eta.
\]
Moreover, for every $C\in\mathfrak C$, the graph formed by the pairs
$ij$ with $w_C(ij)>0$ is connected on its non-isolated vertices.
\end{lemma}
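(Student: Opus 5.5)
The proof will use one structural fact throughout: \emph{the three states of every retained triad lie in a single transition component}. Indeed, a triad $(a,b,c)\in\cT_{ijk}$ creates the transitions $a\to b\to c\to a$. These form a closed directed walk, so $a,b,c$ lie in one strongly connected component of the transition digraph, and hence in one reversal class $C\in\mathfrak C$. Consequently, for each fixed $(i,j,k)$ the family $\cT_{ijk}$ is partitioned according to the component $C$ containing its three states, and every triad in the part indexed by $C$ lies in $(\cA_{ij}\cap C)\times(\cA_{jk}\cap C)\times(\cA_{ki}\cap C)$.

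For the first inequality, I sum \eqref{eq:state-density} over this partition. The contribution of the part indexed by $C$ is at most the sum of $w(a)w(b)w(c)$ over the whole product $(\cA_{ij}\cap C)\times(\cA_{jk}\cap C)\times(\cA_{ki}\cap C)$. Expanding this product, exactly as in \Cref{lem:slice-counting-facts}(2), shows that it equals $w_C(ij)w_C(jk)w_C(ki)$. Summing over $C$ gives $\sum_C w_C(ij)w_C(jk)w_C(ki)\ge p-\eta$.

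For the second inequality, suppose $w_C(ij)>0$. Then there is a retained state $a\in\cA_{ij}\cap C$. By \eqref{eq:state-extension}, $\ext(a)\ge\alpha-\eta$. Every triad $(a,b,c)$ counted in $\ext(a)$ has $b,c\in C$ by the structural fact. Hence the inner sum for each $k$ is at most
\[
\sum_{b\in\cA_{jk}\cap C,\ c\in\cA_{ki}\cap C}w(b)w(c)=w_C(jk)w_C(ki).
\]
It follows that $\alpha-\eta\le\sum_k\lambda_kw_C(jk)w_C(ki)$. The symmetry $w_C(ki)=w_C(ik)$, recorded after the definition of $w_C$, turns this into the stated form $\sum_k\lambda_kw_C(ik)w_C(jk)$.

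For connectivity, let $G_C$ be the graph on $I$ whose edges are the pairs $ij$ with $w_C(ij)>0$. Since retained states have positive weight, $ij\in E(G_C)$ exactly when $\cA_{ij}$ or $\cA_{ji}$ meets $C$. I first reduce to a single strongly connected representative $C_0$ of the reversal class. If $a\in\cA_{ij}$ lies in $C_0^\top$, then $a^\top\in\cA_{ji}\cap C_0$ witnesses the same unordered pair. Hence $G_C$ is exactly the graph of unordered pairs underlying the states of $C_0$.

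Now take two edges $ij$ and $i'j'$ of $G_C$, witnessed by states $a,a'\in C_0$. Strong connectivity gives a directed walk $a=a_0\to a_1\to\cdots\to a_m=a'$ inside $C_0$. A transition from a state over $(x,y)$ to a state over $(y,z)$ shares the cluster $y$, so consecutive states along the walk give edges $xy,yz$ of $G_C$ meeting at $y$. The walk therefore traces a walk in $G_C$ from an endpoint of $ij$ to an endpoint of $i'j'$, so all non-isolated vertices of $G_C$ lie in one component.

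The only delicate point is bookkeeping for repeated indices (loops $ii$) and for the identification of $C$ with $C^\top$. Both are handled by the tagged-copy convention of \Cref{lem:component-coloured-slice}(R5) and the transpose argument above, so I expect no genuine obstacle.
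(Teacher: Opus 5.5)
Your proposal is correct and follows essentially the same route as the paper. The paper also uses the fact that each retained triad's three states form a directed $3$-cycle and so lie in one reversal class. It bounds the triad weights and $\ext(a)$ by the products $w_C(ij)w_C(jk)w_C(ki)$ and $w_C(jk)w_C(ki)$, then applies $w_C(ki)=w_C(ik)$. For connectivity it projects directed transition walks in a strongly connected representative to walks of cluster pairs sharing a cluster, and uses transposition to show that both representatives have the same undirected support. Your write-up just spells out the product expansion and the transpose witness more explicitly.
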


\begin{proof}
The three states of every retained triad form a directed $3$-cycle
and hence belong to the same transition component.  The total weight
of the retained triads on $(i,j,k)$ is therefore at most
$\sum_Cw_C(ij)w_C(jk)w_C(ki)$, and
\eqref{eq:state-density} gives the first assertion.

Now suppose that $w_C(ij)>0$ and choose
$a\in\mathcal A_{ij}\cap C$.  Every retained extension of $a$ uses
two states in the same strongly connected component as $a$, and hence
in the same reversal class $C$.  Therefore
\[
    \ext(a)\le\sum_k\lambda_k w_C(jk)w_C(ki)
             =\sum_k\lambda_k w_C(jk)w_C(ik),
\]
where the equality follows from $w_C(ki)=w_C(ik)$.  Now (4) gives
the second assertion.

Finally, each strongly connected representative of $C$ has connected
cluster-pair support: a directed transition path projects to a path of
cluster pairs in which consecutive pairs share a cluster.  The two
representatives of a reversal class have the same underlying
undirected cluster-pair support.  Hence the graph formed by the pairs
$ij$ with $w_C(ij)>0$ is connected on its non-isolated vertices.
\end{proof}

We can now state the finite structural theorem that drives the proof
below density $1/3$.  Its proof is postponed to
\Cref{sec:canonical-proof}.

\begin{theorem}[Canonical component theorem]
\label{thm:canonical}
Fix $p\in(0,1/3]$ and $\alpha>\thr(p)$, and fix the cleaning bounds
described above.  Then there exists $\eta_0>0$ such that, for every
$0\le\eta\le\eta_0$, every cleaned
$(p,\alpha,\eta)$-admissible state system has a canonically selected
component $G$ with the following properties.
\begin{enumerate}[label=\textup{(C\arabic*)},leftmargin=2.8em]
    \item
    $G$ is spanning.

    If $p\le p_\star$, then $G$ is the unique spanning component.
    Moreover, either $G$ is the only component with nonempty support,
    or there is a proper set $J\subsetneq I$ such that
    $S_D\subseteq J$ for every component $D\ne G$.

    If $p>p_\star$, then $G$ is the unique pair-complete component.
    Moreover, every spanning component is pair-complete, and at most
    one component $D\ne G$ has nonempty support.

    \item
    The reduced triad graph $R_G$ has a perfect fractional matching.
    Moreover, there exists $\rho>0$, depending only on $p$, $\alpha$ and
    the fixed cleaning bounds, such that every non-negative demand vector
    $\mathbf d=(d_i)_{i\in I}$ satisfying
    $|d_i-\lambda_i|\le\rho\lambda_i$ for every $i\in I$ belongs to
    $\cK(R_G)$.

    \item
    The canonical transition component $G$ is aperiodic.

    \item
    For two one-root deletions constructed from the common joint
    refinement in \Cref{lem:component-coloured-slice}, the two
    canonical components share a positive joint state on the common
    clusters.
\end{enumerate}
\end{theorem}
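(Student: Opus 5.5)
We would prove \Cref{thm:canonical} by reducing every assertion to the projected scalar data of \Cref{lem:state-projection}. For each reversal class $C$ we have the symmetric matrix $(w_C(ij))$. These matrices satisfy $\sum_C w_C(ij)=1$, the triangle inequality $\sum_C w_C(ij)w_C(jk)w_C(ki)\ge p-\eta$, and the codegree inequality $\sum_k\lambda_k w_C(ik)w_C(jk)\ge\alpha-\eta$ whenever $w_C(ij)>0$. Since the cleaning bounds fix a positive lower bound $\omega_0$ on nonzero weights and the number of states is bounded, we may argue by compactness as $\eta\to0$. It therefore suffices to analyse the limiting configurations with $\eta=0$ and strict inequality $\alpha>\thr(p)$; every conclusion that is an open condition then transfers to small $\eta$.

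\textbf{Step 1: support structure and selection of $G$.} Applying the codegree inequality twice, as in the proof of \Cref{lem:component-coloured-slice}, shows that any nonempty component has $d_C(i)\ge\alpha-\eta$ on its support and total mass at least roughly $\alpha^2$. Hence there are at most $\lfloor1/\alpha^2\rfloor$ components. Next, the diagonal analysis of \Cref{lem:scalar-main} gives information at the diagonal: taking $i=j=k$, at least one component has $w_C(ii)^3$ large. We define $G$ as follows. If $p>p_\star$, $G$ is the component carrying the majority of the weight on generic pairs. If $p\le p_\star$, $G$ is the unique component with $S_G=I$.

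To prove (C1), suppose two components $C,D$ both have large supports. Restricting the triangle inequality to triples $(i,j,k)$ with $i\in S_C\setminus S_D$ produces a two-colour inequality of the form $\Phi$-type. Combining it with the codegree bound yields an optimisation problem over block weights. We expect to show that this problem is infeasible exactly when $\alpha>\thr_1$ (for $p\le p_\star$) or $\alpha>\thr_3$ (for $p>p_\star$). These optimisation values are the ones realised by Constructions~\ref{con:two-part} and~\ref{con:rooted-two-pocket}, which is a strong hint as to which extremal block structures to compare against.

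\textbf{Step 2: fractional matching (C2), the main obstacle.} We would use Farkas' lemma. If $\boldsymbol\lambda$ is not in the interior of $\cK(R_G)$, then there is a vector $y\neq0$ with $\sum_i y_i\lambda_i\le0$ and $y_i+y_j+y_k\ge0$ on every edge of $R_G$. The standard reduction to extreme separators allows us to take $y$ supported on two or three values, which gives a space barrier set $S$ with $\lambda(S)\lesssim1/3$ such that every $G$-triad meets $S$. On triples inside $I\setminus S$, all retained triads must then lie in other components. Feeding this into the triangle inequality gives $p\le(1-p)^3$ on that block, mirroring Construction~\ref{con:middle-layer}. The codegree inequalities for $G$-states inside $I\setminus S$ and for states meeting $S$ should then force $\alpha\le\thr_1(p)$ or $\alpha\le\thr_2(p)$, since the optimisation reproduces $r^2x=y+p^2m$. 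The margin $\alpha-\thr(p)$ yields the robustness radius $\rho$ by compactness. This optimisation, and in particular showing that three-level separators reduce to the layered $A,M,Y$ profile, is where we expect most of the work.

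\textbf{Step 3: aperiodicity and root consistency.} For (C3) we follow the end of the proof of \Cref{lem:reduced-pair-state-main}. The period of $G$ divides $3$. A period-$3$ partition $\cC_0,\cC_1,\cC_2$ of $G$ bounds the $G$-triad weight on each diagonal $(i,i,i)$ by $3s^0s^1s^2$. Together with the codegree inequality restricted to $G$, this gives the period-three barrier with bound $\thr_3$, which is excluded. For (C4) we use (R4). Deleting one root changes every weight by $o(1)$, so both projected systems are admissible, and the canonical choice rule from Step 1 is stable under small perturbation. Because the choice is made by strict, margin-separated inequalities, both systems select components whose joint refinement shares the dominant generic-pair state, which gives a common positive joint state.
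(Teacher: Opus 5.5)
\textbf{There is a genuine gap in Step~2, the core of (C2).} Your outer shell matches the paper: project to $w_C$, pass to a zero-error limit by compactness, and get $\rho$ from a uniform cover gap plus Farkas. The step that fails is your claim that an extreme separator can be taken two- or three-valued and then yields a set $S$ with $\lambda(S)\lesssim 1/3$ meeting every $G$-triad. Fractional vertex covers of $3$-graphs are not half-integral. Even a two-valued cover below $1/3$ need not give such a transversal. If the edges of $R_G$ are the triples with at least two indices in a set $B$ of weight $0.6$ (all atoms small), then $\tfrac12\mathbf 1_B$ is a cover of weight $0.3$, while every transversal has weight about $0.6$. So a global hitting-set barrier of the type in \Cref{con:middle-layer} does not capture every failure of a perfect fractional matching. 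You give no reason the reduction holds for $R_G$ specifically.

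The missing idea is to localise at a root. Normalise the cover to have minimum $0$, split a zero-cover atom off as a zero-weight twin $x$, and note that $c$ restricted to the link $L_{R_G}(x)$ is a fractional vertex cover of a \emph{graph}. Fractional Berge--Tutte then gives $A\dot\cup S\dot\cup T$ with no link edge from $A$ to $A\cup T$ and $\lambda(A)-\lambda(S)\ge 1/3$ (\Cref{lem:cover-separator}). The obstruction is then read off at the pairs $xa$ with $a\in A$ (\Cref{lem:space-obstruction}):
\begin{itemize}
\item a canonical state over $xa$ extends only through $S$, giving $\alpha\le\lambda(S)\le 1/3$ below $p_\star$;
\item density on $xaz$ with $z\in A\cup T$ forces a noncanonical state over $xa$, and, using $w_G\ge p$ on every pair (pair-completeness plus \Cref{lem:two-colour-profile}\textup{(ii)}), gives $p\le r^3$;
\item the convex combination of the two extension bounds with $\theta=(1+p)/2$ gives $\thr_2(p)$.
\end{itemize}

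\textbf{The other steps are not yet proofs either.}

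\emph{Step~1.} You never show that a spanning component exists, and the ``majority on generic pairs'' rule is not made precise. The paper takes an inclusion-minimal index set $X$ not covered by any support. Then $|X|\ge 4$, each index of $X$ lies in $|X|-1\le\lfloor 1/\alpha\rfloor$ supports, and the four- and five-point product bounds $p\le 1/8$ and $p\le 8/27<p_0$ finish it. Uniqueness is not an optimisation you still have to solve. It follows from the weighted Goodman identity ($\alpha\le 1/3$ if two components are spanning) and, above $p_\star$, from the profile $u_{ij}\in\{0,1\}\cup[p,r]$ with the corner bound $K\le(1-2p)^2<\thr(p)$.

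\emph{Step~3, (C3).} The bound $3s^0s^1s^2\le 1/9$ controls only the $G$-part of the density on $(i,i,i)$. Other components contribute there, and $p$ may be at most $1/9$. Below $p_\star$ you need a pure canonical diagonal from \textup{(C1)}, and for $p\le 1/9$ the summed phase-extension bound $3\alpha\le 1$. Above $p_\star$ you need the phase analysis of \Cref{lem:canonical-aperiodic}: reversal sends phase $t$ to $-t$, $x^1_{ii}=x^2_{ii}=0$, no pure pairs, and averaging of $d_1$, which reaches $\alpha\le r^2/2$.

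\emph{Step~3, (C4).} Your stability-under-perturbation argument fails. $J-x$ and $J-y$ carry different tight-component colourings, which are discrete data not determined by the weights. Closeness of weights therefore says nothing about whether the two canonical components meet. The paper argues by contradiction: if no joint state is shared, the joint states whose first projection lies in $G_x$ form a triad-closed family whose complement contains $G_y$. The two-colour profile then gives a contradiction, via Goodman below $p_\star$ and the $(1-2p)^2$ bound above it.
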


The proof of \Cref{thm:canonical} is given in
\Cref{sec:canonical-proof}.  For the remainder of this section we
assume the theorem and use the component colours retained by
\Cref{lem:component-coloured-slice} to lift the canonical reduced
component to an actual tight component of the original $3$-graph.

\subsection{Lifting the canonical component}
\label{subsec:canonical-lifting}

Let $G$ be the canonical component supplied by
\Cref{thm:canonical}.  Every retained state carries an actual
tight-component colour.  Moreover, whenever two states are joined by
a transition, the retained triad producing that transition consists
of three states carrying the same actual colour.  Choose a strongly connected representative $G_0$ of $G$.  All states
of $G_0$ carry the same actual component colour.  The states of
$G_0^\top$ carry the same colour, since transposition does not change
the underlying pair.  Hence all states of the reversal class $G$
carry one common actual component colour.  Let $\mathcal C$ be this
actual tight component.

We prove that $C$ satisfies the four conditions required in the
Hamilton framework.

\begin{theorem}[Component-aware lifting]
\label{thm:component-lifting}
Fix $p\in(0,1/3]$ and $\alpha>\thr(p)$.  Let the state systems of a
$3$-graph and of the one-root deletions required in the Hamilton
framework be obtained from
\Cref{lem:component-coloured-slice}.  If their canonical components
satisfy \textup{(C1)}--\textup{(C4)}, then the corresponding actual
tight components can be chosen so that they satisfy
\textup{(F1)}--\textup{(F4)}.  Moreover, the actual component selected
for a fixed $3$-graph is independent of every bounded refinement used
in the regularisation.
\end{theorem}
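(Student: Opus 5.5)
The approach is to verify (F1)--(F4) for the actual tight component $C$ carrying the colour of the canonical reduced component $G$, working in the order: independence of refinement, (F3), (F1), (F2), (F4).

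\textbf{Independence of refinement.} Given two bounded refinements, take their common refinement; by (R4) this is again a component-coloured cleaned slice. Every state of the coarser system is, up to negligible mass, a union of states of the finer one, and each of these carries the same actual colour. Since (C1) characterises $G$ intrinsically, as the unique spanning component when $p\le p_\star$ and the unique pair-complete component when $p>p_\star$, the finer canonical component projects onto the coarser one. So both systems select the same actual colour $C$. I will also apply this argument to the two one-root deletions.

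\textbf{(F3) and (F1).} For (F3), aperiodicity (C3) provides closed walks in $G_0$ of every sufficiently large length. Pick one of length $\equiv1\pmod3$ and embed it homomorphically into $C$ using \Cref{lem:finite-state-pattern}; this gives a closed tight walk of order $\equiv1 \pmod 3$. For (F1), (C1) says $G$ is spanning, so every cluster $V_i$ with $i\in I$ meets states of colour $C$. By vertex-lower-regularity (R3), all but a $\chi$-fraction of the vertices of every $V_i$ lie in edges of $C$. The remaining vertices, namely $V_0$ and the atypical vertices, are attached separately. A vertex $v$ has $\delta_2\ge\alpha n$ extensions on every pair, so the link graph of $v$ has at least $\alpha n^2/2$ edges. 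These link edges land in pair states of various colours. I would show that a positive fraction of the link of $v$ lies in $\partial C$ by averaging against the pair-completeness or spanning structure of $G$: if $v$ had essentially no $C$-coloured link, then $v$ together with the other components would reproduce one of the forbidden configurations excluded in the proof of (C1). In that case $v$ lies in an edge of $C$.

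\textbf{(F2).} By (C2), every demand vector $\mathbf d$ within $\rho$ of $\boldsymbol\lambda$ lies in $\cK(R_G)$. First, lift each reduced edge weight $z_e$ to a uniform weighting of the actual $C$-edges in the corresponding regular triad. By the counting lemma this covers each typical vertex of $V_i$ with weight $d_i/\lambda_i\pm o(1)$. Next, cover the exceptional vertices with a small fractional weight on their attaching $C$-edges; this consumes at most $O(\zeta+\chi)$ of the clusters' capacity. Then choose $\mathbf d$ inside the interior of the cone to compensate exactly, so that every cluster has total load $1$. Finally, correct the remaining per-vertex discrepancies by a local balancing step: use the vertex-lower-regularity of triads to shift weight between vertices within a cluster, for example along $C$-edges sharing two clusters. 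The result is an exact perfect fractional matching.

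\textbf{(F4).} For two distinguished vertices $x,y$, use the joint refinement from (R4). (C4) gives a positive joint state, which is a positive-density set of actual pairs lying in $\partial C_x\cap\partial C_y$. A pair in both shadows is contained in an edge of $C_x$ and an edge of $C_y$, and both are edges of $J$ sharing that pair. Hence $C_x\cup C_y$ is tightly connected in $J$.

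\textbf{Main obstacle.} I expect the hardest step to be the exact perfect fractional matching in (F2), and in particular absorbing the exceptional vertices while keeping every vertex load exactly $1$. For this I would rely on the robustness margin $\rho$ from (C2) together with (R3). A second delicate point is showing that every exceptional vertex has a non-negligible link in $\partial C$.
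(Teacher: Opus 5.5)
The proposal has genuine gaps, and they sit where the paper does its real work.

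\textbf{(F1) and independence of refinement.} The missing idea is a quantitative visibility statement for actual colours, with constants depending only on $\alpha$ and not on the complexity of a particular slice. For a vertex $v$ and an actual component $D$, put $X_D(v):=\{x\ne v: vx\in\partial D\}$. Every edge on a pair of $\partial D$ lies in $D$. Hence $N_H(vx)\subseteq X_D(v)$ and $d_{\partial D}(x)\ge d_H(vx)\ge\alpha n$ for every $x\in X_D(v)$. So the nonempty classes $X_D(v)$ partition $V(H)\setminus\{v\}$ into at most $1/\alpha$ parts, and every cluster $V_i$ has a colour $D$ with $|X_D(v)\cap V_i|\ge(\alpha/2)|V_i|$. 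Counting $D$-pairs out of this set gives $D$-coloured pair density bounded below in terms of $\alpha$ from $V_i$ to some $V_j$. This survives the cleaning, so $i$ lies in the support of a retained component of colour $D$ (\Cref{lem:actual-colour-realisation}). If $v\notin V(C)$, all these colours are noncanonical and their supports cover $I$. That contradicts the \emph{conclusion} of (C1) directly. For $p\le p_\star$, all noncanonical supports lie in a proper $J$. For $p>p_\star$, the single noncanonical component with nonempty support would be spanning, hence pair-complete, contradicting uniqueness. This handles every vertex at once, including those in $V_0$. Your plan to reproduce a forbidden configuration from the proof of (C1) with $v$ added does not work as stated, since a single vertex carries no weight in the reduced system. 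The same argument, applied with $U=V(H)$ to a hypothetical second spanning actual component $D$, shows that $H$ has a unique spanning actual tight component. That is how the paper obtains independence of refinement, and so independence must come after (F1), not before. Your projection argument does not establish it. A fine retained triad need not lie in a coarse retained triad, since the coarse triad may be irregular or below $d_0$. Positivity of a colour in one slice also gives no lower bound on its mass relative to another slice's cleaning thresholds. So neither characterisation in (C1) transfers between two slices.

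\textbf{(F2).} Your primal lifting gives loads $1\pm o(1)$ only on typical vertices, and the ``local balancing'' step is the whole difficulty. Vertex-lower-regularity only guarantees an edge among any three $\chi$-fraction sets. That gives no mechanism for making every individual load exactly $1$: low-degree vertices, overloaded vertices and cascading corrections are all left unaddressed. The paper avoids any primal construction by LP duality. If $C=\cC(H)$ had no perfect fractional matching, there would be a cover $c$ with $\sum_v c(v)<n/3$, normalised to vanish at some $x$. The link $L_C(x)$ has at least $\alpha^2n^2/3$ edges, each meeting $Z:=\{v: c(v)\ge1/2\}$, so $|Z|\ge\alpha^2n/3$. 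Re-regularise with $Z\dot\cup(V\setminus Z)$ prescribed, which is legitimate only because of the invariance above. Averaging $c$ over clusters via vertex-lower-regularity yields a cover $y$ of $R_G$ with $\sum_i\lambda_i|y_i-1/3|\ge\alpha^2/36$. Test $y$ against the demand $d_i=\lambda_i(1-\sigma\operatorname{sgn}(y_i-1/3))$, which lies in $\cK(R_G)$ by (C2). This forces $\sum_i\lambda_iy_i$ above $1/3$ by a fixed margin, contradicting $\sum_v c(v)<n/3$.

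Your (F3) and (F4) arguments match the paper. However, (F4) also needs the invariance, to know that the joint-refinement canonical components are carried by $\cC(J-x)$ and $\cC(J-y)$.
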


We first record how an actual component colour is seen by the reduced
system.  For an actual tight component $D$ and a vertex $x$, write $d_{\partial D}(x):=|\{y:xy\in\partial D\}|$. 
For a vertex $v$, put
$X_D(v):=\{x\ne v:vx\in\partial D\}$.  If $x\in X_D(v)$, then every
$y\in N_H(vx)$ satisfies $vy,xy\in\partial D$.  Consequently,
$N_H(vx)\subseteq X_D(v)$, so every nonempty root-colour class has
size at least $\alpha n$, and
$d_{\partial D}(x)\ge d_H(vx)\ge\alpha n$ for every
$x\in X_D(v)$.  The following lemma gives a slightly more general
criterion ensuring that an actual component colour is visible in the
reduced system.

\begin{lemma}[Realisation of actual colours]
\label{lem:actual-colour-realisation}
Let $D$ be an actual tight component and let $U\subseteq V(H)$ satisfy
$d_{\partial D}(x)\ge\xi n$ for every $x\in U$, where $\xi>0$ is
fixed.  If
$|U\cap V_i|\ge\chi|V_i|$ for some fixed $\chi>0$, then $i$ belongs
to the support of a retained transition component carrying the actual
colour $D$.
\end{lemma}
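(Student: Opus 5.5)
We need to show: if $U$ has $\chi$-proportion in $V_i$ and every $x\in U$ has $d_{\partial D}(x)\ge\xi n$, then some retained state carrying colour $D$ lives over a cluster pair $(i,j)$ with positive weight, and lies in a retained transition component. The plan is a pure averaging argument over the component-coloured slice of \Cref{lem:component-coloured-slice}.

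First count the pairs of colour $D$ emanating from $U\cap V_i$. We have
\[
    \sum_{x\in U\cap V_i}d_{\partial D}(x)\ge\xi\chi|V_i|n .
\]
Since $|V_0|\le\zeta n$ with $\zeta\ll\xi\chi$, at most $\zeta n|V_i|$ of these pairs go into $V_0$. The number of clusters is bounded, so some cluster $V_j$ receives at least $(\xi\chi/2t)|V_i|n\ge c|V_i||V_j|$ ordered pairs $(x,y)\in V_i\times V_j$ with $xy\in\partial D$, where $c>0$ depends only on $\xi$, $\chi$ and the bounded complexity. Thus the colour-$D$ pairs have relative density at least $c$ in $\overrightarrow V_{ij}$.

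Next, pass to states. By (R1), every pair-cell of the common refinement lies in a single actual colour, so the colour-$D$ pairs in $\overrightarrow V_{ij}$ are exactly a union of pair-cells. By (R2) and the cleaning, the deleted (non-retained) pair-cells over $(i,j)$ carry relative mass at most $\zeta\ll c$. Hence some retained state $a\in\cA_{ij}$ carries colour $D$ and has positive weight, in fact $w(a)\ge\omega_0$ by the cleaning thresholds. If $j=i$, the tagged-copy convention (R5) lets us treat this identically.

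Finally, $a$ lies in some strongly connected component of the transition digraph; let $C'$ be its reversal class. Then $w_{C'}(ij)\ge w(a)>0$, so $d_{C'}(i)>0$ and $i\in S_{C'}$. For $a$ to count as a retained transition, I would use (R2): $\ext(a)\ge\alpha-\eta>0$, so $a$ lies in a retained triad and therefore on a directed $3$-cycle. Every state of the component containing $a$ lies in triads with $a$ or is connected to it through triads, and by (R1) all such triads are monochromatic, so every state of $C'$ carries the actual colour $D$.

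The main obstacle is the bookkeeping of the cleaning losses. The density $c$ must beat the mass removed by deleting $V_0$, the light pair-cells, and the renormalisation; this is the reason for the hierarchy $\zeta\ll\eta\ll\omega_0\ll\chi$ in \eqref{eq:cleaning-hierarchy}, with $\xi$ fixed beforehand. I would check that $c$ depends only on $\xi$, $\chi$ and $M_0$, and then choose $\zeta$ below it.
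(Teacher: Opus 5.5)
Your proposal is correct and follows essentially the same route as the paper. You average the $\partial D$-incidences from $U\cap V_i$ over the non-exceptional clusters to find a $j$ whose $D$-coloured pair density from $V_i$ to $V_j$ is bounded below; because the clusters are equitable this bound is about $\chi(\xi-\zeta)$, independent of the number of clusters, so choosing $\zeta$ below it is not circular. You then use that the colour refinement precedes the cleaning, which removes at most $\zeta$ relative pair mass, to obtain a retained $D$-coloured state over $(i,j)$; your extra check that this state lies on a retained triad, so that its whole reversal class carries colour $D$, is a detail the paper leaves implicit.
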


\begin{proof}
Put $U_i:=U\cap V_i$.  Counting ordered incidences from $U_i$ to the
nonexceptional vertices gives
\[
    \sum_{x\in U_i}
       |N_{\partial D}(x)\setminus V_0|
    \ge
    \chi(\xi-\zeta)n|V_i|.
\]
Consequently, for some $j\in I$, the ordered $D$-coloured pair density
from $V_i$ to $V_j$ is bounded below by a positive constant depending
only on $\chi$ and $\xi$.

The component-coloured refinement is performed before the cleaning.
Since the cleaning deletes only a sufficiently small pair mass on each
ordered cluster pair, some retained state over $(i,j)$ still carries
the colour $D$.  Hence $i$ lies in the support of the transition
component containing this state.
\end{proof}

For a fixed vertex $v$, the nonempty root-colour classes $X_D(v)$
partition $V(H)\setminus\{v\}$.  Since every such class has size at
least $\alpha n$, there are at most $1/\alpha$ nonempty root colours.
Consequently, for every cluster $V_i$, some root colour $D$ satisfies
\[
    |X_D(v)\cap V_i|
    \ge
    \frac{\alpha}{2}|V_i|
\]
for all sufficiently large $n$.  Applying
\Cref{lem:actual-colour-realisation} with
$U=X_D(v)$, $\xi=\alpha$ and $\chi=\alpha/2$, we see that $i$ belongs
to the support of a retained transition component carrying colour
$D$.  Hence the supports arising from the root colours together cover
$I$.

We now combine this observation with \textup{(C1)} to lift the
spanning property of the canonical reduced component to the actual
tight component.

\begin{lemma}[Spanning]
\label{lem:lift-F1}
The actual tight component $C$ carrying the canonical reduced
component $G$ is spanning.  In particular, $C$ satisfies
\textup{(F1)}.
\end{lemma}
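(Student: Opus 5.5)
Spanning here means that every vertex of $H$ lies in some edge of $C$. Since every pair has positive codegree, this is equivalent to $d_{\partial C}(v)>0$ for every vertex $v$. Equivalently, the root colour class $X_C(v)$ is nonempty for every $v$. The plan is to transfer the reduced spanning property (C1) to actual vertices through the root-colour observation made just before the lemma.

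First, the colour is well defined. All states of $G$ carry the single actual colour $C$, and by (C1) the support $S_G$ is all of $I$. So for every cluster $i$ there is some $j$ and a retained state $a\in\cA_{ij}\cap G$. This state is a regular pair-cell of positive weight inside $\partial C$. Hence a positive proportion of the vertices of $V_i$, and in particular at least one, meets $\partial C$. That alone does not give every vertex.

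Fix an arbitrary vertex $v$, possibly in $V_0$, and suppose towards a contradiction that $X_C(v)=\emptyset$. Then the root colours $D$ of $v$ are all different from $C$. By the observation above, every cluster $i$ lies in $S_{D'}$ for some transition component $D'$ carrying some root colour $D\ne C$. Since $D'$ carries colour $D\ne C$, we have $D'\ne G$, so $I$ is covered by the supports of components other than $G$. Now split according to (C1).

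\emph{Case $p\le p_\star$.} Either $G$ is the only component with nonempty support, which already contradicts the covering, or all $S_{D'}$ with $D'\ne G$ lie in a proper $J\subsetneq I$, so they cannot cover $I$.

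\emph{Case $p>p_\star$.} At most one $D'\ne G$ has nonempty support, so that single $D'$ must have $S_{D'}=I$. It is then spanning, hence pair-complete by (C1), hence equal to $G$ by uniqueness. This is a contradiction.

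So $X_C(v)\ne\emptyset$, meaning some pair $vx$ lies in $\partial C$, and $v$ lies in an edge of $C$. This holds for all $v$, including the exceptional ones, giving (F1).

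The main obstacle is the covering step for the exceptional vertices and for vertices whose root classes are spread thinly. To handle it, I will choose $\chi=\alpha/2$ in \Cref{lem:actual-colour-realisation}. Pigeonholing over at most $1/\alpha$ colours then puts at least $\alpha|V_i|/2$ vertices of $V_i$ into one root class. Also $|V_0|\le\zeta n\ll\alpha n$, so the exceptional set does not break the realisation argument. I also need the components produced by the lemma to be genuine transition components of the same cleaned system to which (C1) is applied; this holds because the realisation is carried out inside that system.
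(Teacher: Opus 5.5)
Your proposal is correct and follows the paper's proof essentially step for step. You assume a vertex $v$ lies outside $C$, use the root-colour pigeonhole together with \Cref{lem:actual-colour-realisation} (with $\xi=\alpha$, $\chi=\alpha/2$) to cover $I$ by supports of noncanonical transition components, and then contradict both cases of \textup{(C1)}; the only difference is that you treat explicitly the alternative in which $G$ is the only component with nonempty support, which the paper leaves implicit.
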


\begin{proof}
Suppose that $v\notin V(C)$.  Every pair through $v$ then has an
actual component colour different from $C$.  By the root-colour application above, the supports of the
corresponding noncanonical reduced components together cover $I$.

Suppose first that $p\le p_\star$.  By \textup{(C1)}, all
noncanonical components are supported inside one proper set
$J\subsetneq I$.  Their supports therefore cannot cover $I$, a
contradiction.

Now suppose that $p>p_\star$.  By \textup{(C1)}, at most one
noncanonical component has nonempty support.  Since the root-colour
supports cover $I$, this component must be spanning.  Again by
\textup{(C1)}, every spanning component is pair-complete.  We have
therefore obtained a second pair-complete component, contradicting the
uniqueness of $G$.

Hence no such vertex $v$ exists, and $C$ is spanning.
\end{proof}

The preceding lemma establishes \textup{(F1)}.  We next record an
auxiliary consequence which will be needed when the regular partition
is refined in the proof of \textup{(F2)} and when two one-root
deletions are compared in \textup{(F4)}: the actual component selected
above is intrinsic to $H$ and does not depend on the chosen
regularisation.

\begin{lemma}[Invariance of the actual canonical component]
\label{lem:actual-canonical-invariance}
The $3$-graph $H$ has a unique spanning actual tight component,
denoted by $\cC(H)$.  Every component-coloured cleaned regularisation
of $H$, including every prescribed bounded refinement, has its
canonical reduced component inside $\cC(H)$.
\end{lemma}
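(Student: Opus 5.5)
We prove the two assertions of \Cref{lem:actual-canonical-invariance} separately: first that $H$ has at most one spanning actual tight component, and then that any regularisation places its canonical component inside it. Existence then follows from \Cref{lem:lift-F1}.

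For uniqueness, suppose $C_1\ne C_2$ are two spanning actual tight components. Fix any component-coloured cleaned regularisation and let $G$ be its canonical reduced component, carried by the actual colour $C$. By \Cref{lem:lift-F1}, $C$ is spanning. At least one of $C_1,C_2$, say $D$, differs from $C$. Since $D$ is spanning, every vertex $v$ has $X_D(v)\ne\emptyset$, and hence $d_{\partial D}(x)\ge\alpha n$ for all $x\in X_D(v)$. In particular, every vertex $x$ lies in some pair of $\partial D$, which gives $d_{\partial D}(x)\ge\alpha n$ for every vertex $x$. We apply \Cref{lem:actual-colour-realisation} with $U=V(H)$, $\xi=\alpha$ and $\chi=1$. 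For every $i\in I$, this shows that $i$ lies in the support of a retained transition component carrying colour $D$. The component is noncanonical, since its colour differs from $C$.

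We then conclude by cases using \textup{(C1)}, exactly as in the proof of \Cref{lem:lift-F1}.
\begin{itemize}
\item If $p\le p_\star$, all noncanonical supports lie in a proper set $J\subsetneq I$, which is impossible because the $D$-coloured supports cover $I$.
\item If $p>p_\star$, at most one noncanonical component has nonempty support. That component must therefore cover $I$, so it is spanning and hence pair-complete, contradicting the uniqueness of the pair-complete component $G$.
\end{itemize}
Thus there is at most one spanning actual tight component, which we denote by $\cC(H)$.

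For the second assertion, take any component-coloured cleaned regularisation, including one built on an arbitrary prescribed bounded refinement. \Cref{lem:component-coloured-slice} still applies, and \textup{(R4)} states that refinements are respected. The resulting state system is therefore $(p,\alpha,\eta)$-admissible, so \Cref{thm:canonical} provides a canonical component $G'$. By the lifting argument before \Cref{lem:lift-F1}, $G'$ carries a single actual colour $C'$, and \Cref{lem:lift-F1} applied to this regularisation shows that $C'$ is spanning. Uniqueness then forces $C'=\cC(H)$. Every state of $G'$ consists of pairs of $\partial\cC(H)$, which is what "inside $\cC(H)$" means.

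The main obstacle is a subtle step in the uniqueness argument: the root-colour covering used in \Cref{lem:lift-F1} must be replaced by a covering of $I$ by supports of a single colour $D$, with the correct quantifiers. Spanning gives only that each vertex meets $\partial D$, and the codegree bound upgrades this to $d_{\partial D}\ge\alpha n$. We must also check that the exceptional set $V_0$ and the cleaning do not destroy this. \Cref{lem:actual-colour-realisation} handles both, provided $\zeta<\alpha$, which holds by \eqref{eq:cleaning-hierarchy}.
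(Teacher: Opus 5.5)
Your proposal is correct and follows the paper's proof essentially step for step. The paper also fixes a regularisation and uses \Cref{lem:lift-F1} to see that its selected colour $C$ is spanning; it then shows that any other spanning component $D$ has $d_{\partial D}(x)\ge\alpha n$ at every vertex, so \Cref{lem:actual-colour-realisation} makes the $D$-coloured, hence noncanonical, supports cover $I$, and derives a contradiction from \textup{(C1)} in the two regimes $p\le p_\star$ and $p>p_\star$; the refinement statement is obtained exactly as you do, by rerunning the argument for each bounded prescribed refinement and invoking uniqueness.
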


\begin{proof}
By \Cref{lem:lift-F1}, every component-coloured
regularisation selects an actual spanning component.  Let $C$ be the
component selected by one such regularisation, and suppose that
$D\ne C$ is another actual spanning component.

Since $D$ is spanning, for every vertex $x$ there is a pair
$xy\in\partial D$.  Every edge containing $xy$ belongs to $D$, and
hence
$d_{\partial D}(x)\ge d_H(xy)\ge\alpha n$.
By \Cref{lem:actual-colour-realisation}, the supports of the retained
transition components carrying colour $D$ therefore cover $I$.

If $p\le p_\star$, these pieces are noncanonical and hence, by
\textup{(C1)}, are all supported inside one proper set
$J\subsetneq I$, a contradiction.

If $p>p_\star$, \textup{(C1)} allows at most one noncanonical
component with nonempty support.  It must therefore be spanning, and
hence pair-complete by the same item.  This contradicts the uniqueness
of the canonical pair-complete component.

Thus $C$ is the unique actual spanning component.  The same argument
applies after every bounded prescribed refinement, so all such
regularisations select the same actual component $\cC(H)$.
\end{proof}

We next transfer the robust fractional matching in \textup{(C2)} to
the actual component.  The only additional ingredient is the
vertex-lower-regularity supplied by
\Cref{lem:component-coloured-slice}.

\begin{lemma}[Averaging a cover]
\label{lem:cover-averaging}
For every $\gamma>0$, provided that the regularity error is
sufficiently small, the following holds.  Let a retained canonical
triad have tagged vertex classes $U_1,U_2,U_3$, and let
$c:U_1\cup U_2\cup U_3\to[0,1]$ be a fractional vertex cover of its
actual edges.  Put
$a_j:=|U_j|^{-1}\sum_{v\in U_j}c(v)$ for $j\in[3]$.  Then
$a_1+a_2+a_3\ge1-\gamma$.

The same conclusion holds for repeated cluster indices, with the
corresponding multiplicities.
\end{lemma}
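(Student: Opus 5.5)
We argue by contradiction, using property (R3) of \Cref{lem:component-coloured-slice} (vertex-lower-regularity). Suppose $a_1+a_2+a_3<1-\gamma$. The idea is to find, in each class, a set of at least a $\chi$-fraction of its vertices on which $c$ is small, so that these three sets contain no actual edge of the carried component. That would contradict (R3).

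Fix a threshold. Apply Markov's inequality to each class: for $j\in[3]$ and $\theta_j:=a_j+\gamma/3$, put $L_j:=\{v\in U_j:c(v)\le\theta_j\}$. Since $c\ge0$, we get
\[
    |U_j\setminus L_j|\cdot\theta_j\le\sum_{v\in U_j}c(v)=a_j|U_j|,
\]
and hence $|L_j|\ge\bigl(1-a_j/\theta_j\bigr)|U_j|=\frac{\gamma/3}{a_j+\gamma/3}|U_j|\ge\frac{\gamma}{3+\gamma}|U_j|$, because $a_j\le1$. Now choose $\chi\le\gamma/(3+\gamma)$ when the slice is constructed; the hierarchy \eqref{eq:cleaning-hierarchy} allows $\chi$ to be fixed first, in terms of $\gamma$. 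Then each $L_j$ has relative size at least $\chi$ in its tagged class. By (R3), there is an actual edge $v_1v_2v_3$ of the carried component with $v_j\in L_j$. Since $c$ is a fractional cover,
\[
    1\le c(v_1)+c(v_2)+c(v_3)\le a_1+a_2+a_3+\gamma<1,
\]
which is the desired contradiction. (One can equally avoid contradiction: the displayed inequality holds directly and gives $a_1+a_2+a_3\ge1-\gamma$.)

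Repeated cluster indices need separate handling. If some of $U_1,U_2,U_3$ are tagged copies of the same cluster, then by (R5) they are disjoint coherent tagged copies, so the edge supplied by (R3) still has three distinct vertices. The averages $a_j$ are computed over each copy. If $c$ is given on the original cluster, the copy averages agree with the cluster average up to $o(1)$, since the tagged copies are almost equal random-like subclusters; the multiplicity simply counts the same cluster average once per occurrence. That $o(1)$ error is absorbed into $\gamma$ by running the argument with $\gamma/2$.

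The main obstacle is purely about quantifiers. The parameter $\chi$ in (R3) must be chosen in terms of $\gamma$ before the regularity error is fixed, so that "regularity error sufficiently small" in the statement means exactly the regime in which (R3) applies with this $\chi$. The hierarchy already permits this ordering, so no further work is needed beyond checking that the tagged-copy averages match the cluster averages.
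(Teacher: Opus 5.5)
Your proof is correct, and it takes a different and more economical route than the paper. Both arguments rest on the vertex-lower-regularity (R3). The paper argues by contradiction using the joint distribution:
\begin{itemize}
\item it samples independent uniform $X_j\in U_j$ and notes that $S=c(X_1)+c(X_2)+c(X_3)\le 1-\gamma/2$ for a proportion of triples bounded below in terms of $\gamma$;
\item it partitions $[0,1]$ into intervals of length at most $\gamma/12$;
\item it pigeonholes to find one product of three level sets, of positive relative size, whose upper endpoints sum to less than one;
\item it applies (R3) to that box to get an edge whose cover values sum to less than one.
\end{itemize}

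You instead apply Markov's inequality to each class separately. This makes $L_1\times L_2\times L_3$ a box from the outset, so no discretisation or pigeonholing is needed, and the argument becomes direct rather than by contradiction. It also gives a much better dependence in (R3). You need only $\chi\le\gamma/(3+\gamma)$. The paper's box has relative size of order $(\gamma/2)(\gamma/12)^3$ in each coordinate, so it needs $\chi$ of order $\gamma^4$. Since the cover constraint is additive and the three coordinates are sampled independently, the joint-distribution argument gains nothing here; it would only matter for a non-separable constraint.

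On repeated indices, both you and the paper are terse. For the lemma as literally stated, with $a_j$ defined over the tagged classes, your main argument already applies verbatim. However, your extra claim that the copy averages match the cluster average up to $o(1)$ is not automatic for an arbitrary split into subclusters. It holds exactly if the tagged copies are full copies of the cluster, as the $S_3$-invariant construction behind (R5) suggests. It also holds with high probability if the split is made at random after $c$ is fixed, which is the situation in the fractional-matching application. You should state which of these you are relying on, rather than appealing to the copies being ``random-like''.
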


\begin{proof}
Suppose that $a_1+a_2+a_3<1-\gamma$.  Choose independent uniformly
random vertices $X_j\in U_j$ and put
$S:=c(X_1)+c(X_2)+c(X_3)$.  Since $\mathbb E S<1-\gamma$, a positive
proportion, depending only on $\gamma$, of the triples satisfy
$S\le1-\gamma/2$.

Partition $[0,1]$ into intervals of length at most $\gamma/12$.
One product of three level sets then has fixed positive relative
measure and has the property that the sum of the three upper endpoints
is smaller than one.  By the vertex-lower-regularity supplied by
\Cref{lem:component-coloured-slice}, these three level sets contain an
actual canonical edge.  The cover values on this edge sum to less than
one, a contradiction.

For repeated indices, use the coherent tagged copies from
\Cref{lem:component-coloured-slice}; the same argument applies with
multiplicity.
\end{proof}

We can now lift \textup{(C2)}.

\begin{lemma}[Fractional matching]
\label{lem:lift-F2}
The actual canonical component $\cC(H)$ has a perfect fractional
matching.  In particular, it satisfies \textup{(F2)}.
\end{lemma}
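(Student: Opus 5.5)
By LP duality, the actual canonical component $\cC(H)$ has a perfect fractional matching if and only if every fractional vertex cover $c:V(H)\to[0,1]$ of $E(\cC(H))$ satisfies
\[
    \sum_{v\in V(H)}c(v)\ge|V(H)|/3 .
\]
We argue by contradiction. Suppose some cover $c$ has total weight less than $n/3$; we push $c$ down to the reduced system and show that this contradicts the robust reduced matching in \textup{(C2)}. By \Cref{lem:actual-canonical-invariance}, we may first apply \Cref{lem:component-coloured-slice} with a prescribed bounded refinement of the vertex partition. The refinement we choose partitions $V(H)$ according to the level sets of $c$, using intervals of length at most $\gamma/12$. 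The canonical reduced component of the refined system still lies in $\cC(H)$. The exceptional vertices in $V_0$ have total cover weight at most $\zeta n$, and their effect is harmless here because $c$ is only required to be small.

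For each cluster $i\in I$, let $a_i$ be the average of $c$ on $V_i$. Take any retained triad of the canonical component $G$, with clusters $(i,j,k)$. Its actual edges lie in $\cC(H)$ and are covered by $c$, so \Cref{lem:cover-averaging} gives
\[
    a_i+a_j+a_k\ge1-\gamma .
\]
Hence $\mathbf a=(a_i)$ is, up to $\gamma$, a fractional cover of the reduced triad graph $R_G$. Now use \textup{(C2)}. The demand vector $\mathbf d=\boldsymbol\lambda$ lies in $\cK(R_G)$, so we can write $\boldsymbol\lambda=\sum_e z_e\mathbf 1_e$ with $z_e\ge0$. Since every $\mathbf 1_e$ has coordinate sum three, $\sum_e z_e=1/3$. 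This gives
\[
    \sum_i\lambda_i a_i=\sum_e z_e\sum_{i\in e}a_i\ge\frac{1-\gamma}{3}.
\]
Because $\sum_i\lambda_i a_i$ equals $\sum_{v\notin V_0}c(v)/(n-|V_0|)$, we get
\[
    \sum_v c(v)\ge\Bigl(\frac13-\gamma-\zeta\Bigr)n .
\]

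This only gives the matching asymptotically, so the main obstacle is obtaining an exact perfect fractional matching. The plan is to use the robustness in \textup{(C2)}, which allows every demand $\mathbf d$ with $|d_i-\lambda_i|\le\rho\lambda_i$. I would build the exact matching in two stages.

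In the first stage, every vertex, including the exceptional vertices in $V_0$ and the atypical vertices inside clusters, is assigned a small amount of weight on actual edges of $\cC(H)$. These edges are found using \Cref{lem:lift-F1}, the codegree bound, and \Cref{lem:actual-colour-realisation}: each vertex lies in many edges of $\cC(H)$ that reach typical vertices of clusters in the canonical support. We then top up each such vertex until it is exactly saturated. This uses a total weight of $O(\zeta+\gamma)n$, which perturbs the residual demand on each cluster by at most $\rho\lambda_i/2$.

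In the second stage, the residual cluster demands $\mathbf d$ still satisfy $|d_i-\lambda_i|\le\rho\lambda_i$, so $\mathbf d\in\cK(R_G)$. Within each edge $e$ of $R_G$, we spread the weight $z_e$ uniformly over the actual edges of a retained canonical triad. The dense counting lemma makes this almost regular, with the vertex loads inside each class $V_i$ varying by a factor $1\pm\beta$. The remaining discrepancies are of order $\beta$ per vertex. We correct them by a further local adjustment along triad edges, again using the slack $\rho$ and vertex-lower-regularity.

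An alternative for this second stage is to rerun the LP duality argument on the residual problem with individual vertex demands. A violating cover would average to a reduced cover of weight less than $\mathbf d\cdot$(cover), and this is impossible by \textup{(C2)} with slack $\rho$ once $\gamma\ll\rho$. This duality route is cleaner, and I would try it first. It yields exactness, and since it does not depend on the chosen regularisation, \textup{(F2)} follows.
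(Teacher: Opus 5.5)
Your LP-duality setup and the averaging step are correct. As you observe, though, they only give $\sum_v c(v)\ge(1/3-\gamma-\zeta)n$, and the step that is supposed to produce an \emph{exact} perfect fractional matching is not an argument.

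\textbf{The two-stage construction.} Stage one at best saturates the exceptional and atypical vertices, leaving the typical vertices with non-uniform residual demands. Stage two must then meet these demands exactly, but spreading $z_e$ over a triad matches them only up to a factor $1\pm\beta$. The ``further local adjustment'' that removes the remaining $O(\beta)$ discrepancies is itself an exact fractional-matching problem with prescribed vertex demands. That is essentially the statement being proved, and you give no mechanism for it.

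\textbf{The duality alternative.} This fails for the same reason as your first computation. The reduced information you extract from a violating cover is that the rescaled averages $y_i$ form a fractional cover of $R_G$ with $\sum_i\lambda_iy_i<1/3+O(\gamma+\zeta)$. The constant vector $y\equiv1/3$ satisfies this. The slack $\rho$ in \textup{(C2)} only yields a gain proportional to $\sum_i\lambda_i|y_i-1/3|$. So it cannot beat the $\gamma$-loss of \Cref{lem:cover-averaging} for covers whose cluster averages are close to $1/3$, however small $\gamma$ is relative to $\rho$.

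\textbf{The missing idea.} You need to make the violating cover far from uniform before passing to the reduced system; this is the route the paper takes.
\begin{enumerate}[label=\textup{(\arabic*)},leftmargin=2.5em]
\item Since $\sum_vc(v)<n/3$, we have $m:=\min_vc(v)<1/3$. Then $c'(v):=\min\{(c(v)-m)/(1-3m),1\}$ is again a fractional cover of $\cC(H)$ of total weight less than $n/3$, now with a zero $x$.
\item $\cC(H)$ is spanning (\Cref{lem:lift-F1}), and every edge through a shadow pair lies in $\cC(H)$. Hence the link of $x$ in $\cC(H)$ has at least $\alpha^2n^2/3$ edges. Each of them must meet $Z:=\{v:c'(v)\ge1/2\}$, so $|Z|\ge\tau n$ with $\tau:=\alpha^2/3$.
\item Prescribe the partition $Z\dot\cup(V(H)\setminus Z)$, which is legitimate by \Cref{lem:actual-canonical-invariance}, and average as you did. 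The reduced cover $y_i:=\min\{a_i/(1-\gamma),1\}$ then satisfies $\sum_i\lambda_i|y_i-1/3|\ge\tau/12$.
\item Apply \textup{(C2)} to the demand $d_i:=\lambda_i\bigl(1-\sigma\operatorname{sgn}(y_i-1/3)\bigr)$ with $\sigma\le\min\{\rho,1/2\}$. This gives $\sum_i\lambda_iy_i\ge1/3+\sigma\tau/12$.
\item The total-weight bound gives $\sum_i\lambda_iy_i<1/\bigl(3(1-\zeta)(1-\gamma)\bigr)<1/3+\sigma\tau/24$ once $\gamma,\zeta$ are small.
\end{enumerate}
Steps (4) and (5) contradict each other. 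This yields exactness directly from LP duality on the finite hypergraph, without constructing the matching or treating $V_0$ separately.
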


\begin{proof}
Put $C:=\cC(H)$ and suppose that $C$ has no perfect fractional
matching.  By linear-programming duality, $C$ has a fractional vertex
cover $c:V(H)\to[0,1]$ with
$\sum_{v\in V(H)}c(v)<n/3$.

We may assume that $\min_v c(v)=0$.  Indeed, if
$m:=\min_v c(v)<1/3$, then
$c'(v):=\min\{(c(v)-m)/(1-3m),1\}$ is again a fractional vertex cover
and still has total weight less than $n/3$.  Replacing $c$ by $c'$,
choose $x$ with $c(x)=0$.

Since $C$ is spanning, choose $u$ with $xu\in\partial C$.  Every edge
of $H$ containing $xu$ belongs to $C$, and hence
$|N_C(xu)|\ge\alpha n$.  For every $z\in N_C(xu)$, the pair
$xz$ also belongs to $\partial C$ and therefore has at least
$\alpha n$ extensions in $C$.  Since every edge of the link $L_C(x)$
is counted at most twice, for all sufficiently large $n$ we have
$e(L_C(x))\ge\alpha^2n^2/3$.

Let $Z:=\{v:c(v)\ge1/2\}$.  Every edge $yz$ of $L_C(x)$ meets $Z$,
because $c(y)+c(z)\ge1$.  Since at most $|Z|n$ unordered pairs meet
$Z$, it follows that $|Z|\ge\tau n$, where
$\tau:=\alpha^2/3$.

Let $\rho>0$ be the uniform demand radius supplied by
\textup{(C2)}, and choose $0<\sigma\le\min\{\rho,1/2\}$.
Choose $\gamma>0$ and the exceptional-set parameter $\zeta>0$
sufficiently small that
\[
    \frac{1}{3(1-\zeta)(1-\gamma)}
    <
    \frac13+\frac{\sigma\tau}{24}.
\]
Choose the parameters in
\Cref{lem:component-coloured-slice} so that
\Cref{lem:cover-averaging} applies with this value of $\gamma$.

Apply \Cref{lem:component-coloured-slice} again with these parameters,
now prescribing the initial partition
$Z\dot\cup(V(H)\setminus Z)$.  By
\Cref{lem:actual-canonical-invariance}, the canonical reduced
component of this refined system is still carried by $C$.  Write
$V_0,V_1,\ldots,V_t$ for the resulting partition, put
$N:=n-|V_0|$ and $\lambda_i:=|V_i|/N$, and let $G$ be its canonical
reduced component.

For each $i\in[t]$, put
$a_i:=|V_i|^{-1}\sum_{v\in V_i}c(v)$.  By
\Cref{lem:cover-averaging},
$\sum_{i\in e}a_i\ge1-\gamma$ for every edge $e$ of $R_G$,
with multiplicity when an index is repeated. Hence
$y_i:=\min\{a_i/(1-\gamma),1\}$ defines a fractional vertex cover of
$R_G$.

Since the partition refines
$Z\dot\cup(V(H)\setminus Z)$ and $|V_0|\le\zeta n$, choosing
$\zeta\ll\tau$ ensures that the clusters contained in $Z$ have total
weight at least $\tau/2$.  For each such cluster, $y_i\ge1/2$, and
therefore
\[
    \sum_i\lambda_i
        \left|y_i-\frac13\right|
    \ge\frac{\tau}{12}.
\]

Put $w_i:=y_i-1/3$ and
$d_i:=\lambda_i(1-\sigma\operatorname{sgn}(w_i))$.
Then $|d_i-\lambda_i|\le\rho\lambda_i$ for every $i$, so
$\mathbf d\in\cK(R_G)$ by \textup{(C2)}.  Thus
$\mathbf d=\sum_{e\in E(R_G)}z_e\mathbf1_e$ for some $z_e\ge0$.
Since $\mathbf y$ is a fractional vertex cover of $R_G$,
\[
\begin{aligned}
    0
    &\le
    \sum_i d_i\left(y_i-\frac13\right)\\
    &=
    \sum_i\lambda_i\left(y_i-\frac13\right)
    -
    \sigma\sum_i\lambda_i
        \left|y_i-\frac13\right|.
\end{aligned}
\]
Consequently,
\[
    \sum_i\lambda_i y_i
    \ge
    \frac13+\frac{\sigma\tau}{12}.
\]

On the other hand, the definition of $y_i$, the bound
$|V_0|\le\zeta n$, and the fact that
$\sum_vc(v)<n/3$ give
\[
\begin{aligned}
    \sum_i\lambda_i y_i
    &\le
    \frac{1}{1-\gamma}\sum_i\lambda_i a_i\\
    &<
    \frac{1}{3(1-|V_0|/n)(1-\gamma)}\\
    &\le
    \frac{1}{3(1-\zeta)(1-\gamma)}\\
    &<
    \frac13+\frac{\sigma\tau}{24},
\end{aligned}
\]
contradicting the preceding lower bound.  Hence $C$ has a perfect
fractional matching, proving \textup{(F2)}.
\end{proof}

We next consider aperiodicity.

\begin{lemma}[Aperiodicity]
\label{lem:lift-F3}
The actual canonical component $\cC(H)$ contains a closed tight walk
whose order is congruent to $1$ modulo $3$.  In particular, it
satisfies \textup{(F3)}.
\end{lemma}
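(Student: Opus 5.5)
We lift (C3) through \Cref{lem:finite-state-pattern}. Fix a component-coloured cleaned regularisation of $H$. By \Cref{lem:actual-canonical-invariance}, its canonical reduced component $G$ is carried by $\cC(H)$. Choose a strongly connected representative $G_0$ of $G$. By (C3), $G_0$ is aperiodic, so by \Cref{lem:fixed-length-walks} every state of $G_0$ lies on a closed directed walk of every sufficiently large length $\ell$. In particular, pick a state $a\in G_0$ and a closed directed walk $a=a_0\to a_1\to\cdots\to a_\ell=a$ in $G_0$ with $\ell\equiv1\pmod 3$.

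Each transition $a_q\to a_{q+1}$ is produced by a retained triad $(a_q,a_{q+1},c_q)$, up to cyclic relabelling, and this triad lies in the same strongly connected component. Write $a_q\in\cA_{i_qi_{q+1}}$, with indices taken modulo $\ell$. The walk then determines a closed tight-walk pattern $\mathcal W$ on cyclically indexed positions $z_0,\dots,z_{\ell-1}$:
\begin{itemize}
\item the ordered pair $(z_q,z_{q+1})$ is assigned the state $a_q$;
\item the pair $(z_{q+2},z_q)$ is assigned $c_q$;
\item the triple $z_qz_{q+1}z_{q+2}$ is assigned the retained triad.
\end{itemize}
Consecutive triads share the state $a_{q+1}$, so the assignment is consistent with the transition digraph. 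Since the walk returns to $a$, the pattern closes up cyclically as a closed tight walk of order $\ell$.

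Next apply \Cref{lem:finite-state-pattern} to this closed pattern. Repeated cluster indices are handled there by the coherent tagged copies of \Cref{lem:component-coloured-slice}. This gives a homomorphism $C_\ell^{(3)}\to H$ whose edges all lie in the actual tight component carrying the states of $G_0$, namely $\cC(H)$. By definition this is a closed tight walk of order $\ell\equiv1\pmod 3$ in $\cC(H)$, which is (F3).

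The main obstacle is checking that the pattern genuinely closes up. The pair-cell $c_q$ assigned to $(z_{q+2},z_q)$ must be compatible where consecutive triads overlap, and the wrap-around at $q=\ell-1$ must reuse the initial state $a_0=a$ with the same cluster indices. I would handle this by treating each position as an independent tagged occurrence. Each edge of the pattern then involves only its own triad, and the only shared data are the states $a_q$, which agree by construction of the walk. The dense counting lemma still yields a positive count because all relevant densities are bounded below by $\omega_0$ and $d_0$ and $\ell$ is bounded in terms of the slice complexity. A secondary point is that the length from \Cref{lem:fixed-length-walks} is bounded, since the number of states is bounded by the fixed cleaning bounds, so the counting lemma applies with constants independent of $n$.
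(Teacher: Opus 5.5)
Your proposal is correct and takes essentially the same route as the paper: use \textup{(C3)} to obtain an aperiodic strongly connected representative of the canonical component, choose a bounded closed walk of length $\ell\equiv1\pmod 3$, and embed the resulting finite state pattern into $\cC(H)$ via \Cref{lem:finite-state-pattern}. Your extra check that the pattern closes up is sound provided $\ell\ge5$, since then each chord pair $\{z_q,z_{q+2}\}$ lies in only one edge of $C_\ell^{(3)}$; your choice of $\ell$ sufficiently large guarantees this, whereas the paper's shorter argument leaves it implicit.
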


\begin{proof}
By \textup{(C3)}, choose an aperiodic strongly connected
representative $G_0$ of $G$.  Hence it contains a closed directed walk of
some bounded length $L$ with $L\equiv1\pmod3$.  Every transition of
this walk comes from a retained canonical triad, so the walk determines
a fixed finite state pattern.  By
\Cref{lem:finite-state-pattern}, this pattern has a homomorphic
embedding in the actual component carrying the canonical states,
namely $\cC(H)$.  Its image is a closed tight walk of order
$1$ modulo $3$, as required.
\end{proof}

It remains to verify consistency.

\begin{lemma}[Consistency]
\label{lem:lift-F4}
Let $J-x$ and $J-y$ be two members of the local family.  Then
$\cC(J-x)\cup\cC(J-y)$ is tightly connected in $J$.  In particular,
the canonical actual component assignment satisfies \textup{(F4)}.
\end{lemma}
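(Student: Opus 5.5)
We plan to deduce the statement from \textup{(C4)} together with the invariance lemma, by producing a single actual pair lying simultaneously in the shadows of $\cC(J-x)$ and $\cC(J-y)$ in a way that can be tracked inside $J$. Write $H_x:=J-x$ and $H_y:=J-y$; both are members of the local family, so each has a unique spanning actual tight component $\cC(H_x)$, $\cC(H_y)$ by \Cref{lem:actual-canonical-invariance}. Each of these is a tightly connected subgraph of $J$, since an edge of $H_x$ is an edge of $J$ and tight adjacency in $H_x$ is tight adjacency in $J$. Thus it suffices to find one edge of $\cC(H_x)$ and one edge of $\cC(H_y)$ which are tightly connected in $J$. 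The simplest sufficient condition is a pair $uv$ lying in $\partial\cC(H_x)\cap\partial\cC(H_y)$: an edge of $\cC(H_x)$ through $uv$ and an edge of $\cC(H_y)$ through $uv$ share the pair $uv$, and are therefore tightly adjacent in $J$.

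To find such a pair we apply \Cref{lem:component-coloured-slice} to $J$, with $x,y$ among the distinguished vertices, in its joint form \textup{(R4)}. That is, we regularise $V(J)\setminus\{x,y\}$ with respect to the joint colouring
\[
    uv\mapsto\bigl(\operatorname{col}_{H_x}(uv),\operatorname{col}_{H_y}(uv)\bigr).
\]
Projecting to each coordinate yields a component-coloured cleaned slice for $H_x$ and one for $H_y$; strictly, these are slices of $H_x$ and $H_y$ with the single extra vertex $y$, respectively $x$, placed in the exceptional set, which is permitted since $V_0$ may contain the distinguished vertices. These projected systems are $(p,\alpha',\eta)$-admissible for a slightly smaller $\alpha'>\thr(p)$, because $H_x$ and $H_y$ belong to the local family. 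Hence \Cref{thm:canonical} applies to each, giving canonical components $G_x$ and $G_y$.

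By \Cref{lem:actual-canonical-invariance}, applied to $H_x$ with this particular bounded prescribed refinement, $G_x$ is carried by $\cC(H_x)$; likewise $G_y$ is carried by $\cC(H_y)$. This is the point where independence of the refinement is essential, since the joint refinement is not the one originally used to define $\cC(H_x)$. Now \textup{(C4)} supplies a positive joint state common to $G_x$ and $G_y$, whose joint colour is therefore $(\cC(H_x),\cC(H_y))$. By the last sentence of \textup{(R4)}, this state is a positive-density family of actual pairs contained in $\partial\cC(H_x)\cap\partial\cC(H_y)$. In particular it is nonempty, so it yields the required pair $uv$, and the argument of the first paragraph finishes the proof.

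The main obstacle is the bookkeeping in the middle step. One must check that the projected joint refinement is a legitimate component-coloured slice for each of $H_x$ and $H_y$ separately, so that \Cref{lem:actual-canonical-invariance} and \Cref{thm:canonical} apply to it. This requires that admissibility, cleaning and the codegree and density bounds survive both the projection and the removal of one extra vertex into $V_0$; we expect these losses to be absorbed into $\eta$ and the cleaning hierarchy \eqref{eq:cleaning-hierarchy}.
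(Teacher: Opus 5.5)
Your proposal is correct and follows essentially the same route as the paper. You build the two state systems on the common joint refinement from \textup{(R4)}, use \Cref{lem:actual-canonical-invariance} to identify the canonical reduced components with $\cC(J-x)$ and $\cC(J-y)$, and extract from the positive joint state of \textup{(C4)} an actual pair $uv$ in both shadows, whose edges in the two components are tightly adjacent in $J$. Your extra bookkeeping (the other root placed in $V_0$, and each component being tightly connected inside $J$) is left implicit in the paper's proof.
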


\begin{proof}
Construct the state systems of $J-x$ and $J-y$ using the common joint
refinement from \Cref{lem:component-coloured-slice}.  By
\textup{(C4)}, their canonical reduced components share a positive
joint state.

By \Cref{lem:actual-canonical-invariance}, these two reduced
components are carried by the fixed actual components $\cC(J-x)$ and
$\cC(J-y)$, independently of the chosen joint refinement.  A positive
joint state contains an actual pair $uv$ lying in the shadows of both
components.  Hence there is an edge of $\cC(J-x)$ containing $uv$ and
an edge of $\cC(J-y)$ containing $uv$.  These two edges are tightly
adjacent in $J$, and therefore
$\cC(J-x)\cup\cC(J-y)$ is tightly connected.
\end{proof}

Together,
\Cref{lem:lift-F1,lem:lift-F2,lem:lift-F3,lem:lift-F4}
prove \Cref{thm:component-lifting}.

\subsection{The local Hamilton framework}
\label{subsec:local-hamilton-framework}

We are now ready to prove the local framework theorem in
\Cref{subsec:below-third-reduction}.

\begin{proof}[Proof of \Cref{thm:local-framework}]
Fix $p\in(0,1/3]$ and $\alpha>\thr(p)$.  Put
$\Delta:=\alpha-\thr(p)>0$, and choose $\delta>0$ such that
$4\delta<\Delta$.  Let $\bar\alpha:=\alpha-2\delta$, so that
$\bar\alpha>\thr(p)$.  Choose $\nu>0$ sufficiently small in terms of
$\delta$, and then choose the regularity and cleaning parameters
sufficiently small.

Let $\cP_s$ be the family of $s$-vertex $(p,\nu)$-dense $3$-graphs
with minimum codegree at least $(\alpha-\nu)s$, where $s$ is
sufficiently large.  Fix $F\in\cP_s$. 
Since $\alpha-\nu>\bar\alpha$, choose $\eta>0$ sufficiently small,
with the cleaning bounds fixed as above, so that
\Cref{thm:canonical} applies.  Then
\Cref{lem:component-coloured-slice} gives a cleaned
$(p,\bar\alpha,\eta)$-admissible state system for $F$, and
\Cref{thm:canonical} supplies its canonical component.

By \Cref{thm:component-lifting}, the canonical component determines an
actual tight component $\cC(F)$, independently of the chosen
regularisation, and the assignment $F\mapsto\cC(F)$ satisfies
\textup{(F1)}--\textup{(F4)}.  Hence $\cP_s$ admits a Hamilton
framework, proving \Cref{thm:local-framework}.
\end{proof}

\section{Proof of the canonical component theorem}
\label{sec:canonical-proof}

In this section we prove \Cref{thm:canonical}, the structural result
underlying the lifting argument of the previous section.  Fix
$p\in(0,1/3]$ and $\alpha>\thr(p)$, put $r:=1-p$, and for a transition
component $C$ define
$K_C(i,j):=\sum_k\lambda_k w_C(ik)w_C(jk)$.  By
\Cref{lem:state-projection}, $w_C(ij)>0$ implies
$K_C(i,j)\ge\alpha$.

For \textup{(C1)}, \textup{(C3)}, and \textup{(C4)}, we first work at
zero error, meaning that the admissibility inequalities hold with
$\eta=0$.  We use the following finite compactness observation to pass
to sufficiently small $\eta$.  Here the cleaning bounds fixed in \Cref{thm:canonical} remain fixed
while $\eta\to0$.  Consequently, the cleaned systems have uniformly
bounded complexity, and every retained cluster and state has weight
bounded below by a fixed positive constant.  Hence, from any sequence
$\eta_m\to0$, after relabelling and passing to a subsequence, we may
assume that the cluster set, the retained state sets, the reversal
maps, and the retained triad incidences are fixed, while all cluster
and state weights converge.  For \textup{(C4)}, the same applies
simultaneously to the two systems and their common joint refinement.

The admissibility inequalities pass to the limit and give a zero-error
admissible system.  Since all retained positive weights are bounded
away from zero, the component supports, pair-completeness, transition
components, and their periods are unchanged along the subsequence;
the same is true of the existence or non-existence of a common
positive joint state.  Thus any failure of \textup{(C1)},
\textup{(C3)}, or \textup{(C4)} for arbitrarily small $\eta$ would
persist in a zero-error limit.  We shall use this compactness passage
without further comment.  The robust demand statement in
\textup{(C2)} is quantitative and will instead be proved directly in
\Cref{subsec:canonical-C2}.

\subsection{Spanning and canonical selection: \textup{(C1)}}
\label{subsec:canonical-C1}

We begin with a simple consequence of the projected codegree
condition.  If $i\in S_C$, choose $j$ with $w_C(ij)>0$.  Then
$\alpha\le K_C(i,j)\le d_C(i)\le\lambda(S_C)$.  Since
$\sum_C d_C(i)=1$, every index belongs to at most
$\lfloor1/\alpha\rfloor$ component supports.  Moreover, the projected
density condition shows that every set of at most three indices is
contained in the support of some component.

\begin{lemma}[Spanning component]
\label{lem:spanning-component}
Every zero-error $(p,\alpha)$-admissible state system with
$\alpha>\thr(p)$ has a spanning transition component.
\end{lemma}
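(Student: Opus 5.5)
Suppose, for a contradiction, that no transition component is spanning, so $S_C\subsetneq I$ for every $C\in\mathfrak C$. The plan is to convert this into a weighted "space-type" or "pocket-type" configuration and then run into the thresholds $\thr_1,\thr_2,\thr_3$. The two basic tools are the facts recorded just before the lemma. First, every index set of size at most three lies in some support. Second, $w_C(ij)>0$ forces $\alpha\le K_C(i,j)\le d_C(i)\le\lambda(S_C)$, so every support has weight at least $\alpha>\thr(p)\ge\thr_3(p)$ and each index lies in at most $\lfloor 1/\alpha\rfloor\le 4$ supports.

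\emph{Step 1.} Choose a component $C$ whose support $S:=S_C$ has maximal weight, and put $T:=I\setminus S\neq\emptyset$. For $i\in S$ and $t\in T$ we have $w_C(it)=0$, so every pair $it$ is covered by other components. Apply the density inequality of \Cref{lem:state-projection} to the triples $(i,i,t)$, $(i,t,t)$ and $(i,j,t)$ with $i,j\in S$, $t\in T$. This shows that the $C$-mass on pairs inside $S$ must be matched by components $D\neq C$ that meet both $S$ and $T$. Write $x_{ij}:=w_C(ij)$ and $y_{ij}:=1-x_{ij}$ for the non-$C$ mass. The inequality then reads $\sum_D w_D(ij)w_D(jt)w_D(ti)\ge p$. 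Since $w_D(jt),w_D(ti)$ sum over $D$ to at most $1$, this gives a lower bound on $y_{ij}$, of the type $y_{ij}\gtrsim p$ on average.

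\emph{Step 2.} Convert the codegree condition into quantitative constraints. A pair of $S$ carrying $C$-mass has $K_C\ge\alpha$, so $\sum_{k\in S}\lambda_k x_{ik}x_{jk}\ge\alpha$. A pair carrying non-$C$ mass of colour $D$ has $K_D\ge\alpha$, and this must be realised using $T$ together with $S$. By convexity, these reduce to a scalar optimisation in $\lambda(S)$, $\lambda(T)$ and the averaged densities $x$ (for $C$ inside $S$) and $p'$ (for the complementary colours). This is exactly the optimisation solved by the constructions of \Cref{sec:constructions}. The two-part barrier gives the constraints $r^2\lambda(S)$ versus $\lambda(T)$, yielding $\thr_1$. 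The three-layer barrier adds a middle layer, yielding $\thr_2$, and it is feasible only when $p\le r^3$. The two-pocket barrier splits into two supports, yielding $\thr_3=r^2/2$. Non-spanning of \emph{every} component also forces $T$ to lie outside some other support, which supplies the pocket structure in the third case. Showing that the optimum of the resulting program is at most $\thr(p)$ then gives the contradiction with $\alpha>\thr(p)$.

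\emph{Main obstacle.} Step 2 is the hard part: the reduction from arbitrary matrices $w_C(ij)$, with possibly several overlapping supports, to the three extremal scalar configurations. I would first handle it by averaging over $S$ and $T$ using Cauchy--Schwarz, writing $\sum_k\lambda_kx_{ik}x_{jk}$ bounded by the maximal entry times the row sums, in the style of the proof of \Cref{lem:scalar-main}, by taking an extremal entry. I would then do a case analysis on how many supports cover $T$, using the cap of at most four supports per index. The two-support case should give $\thr_3$, and the case where one support covers almost everything should give $\thr_1/\thr_2$, with the cubic condition $p\le r^3$ arising from the triple $(t,t,t)$ and the pattern $TTT$ in the density inequality.
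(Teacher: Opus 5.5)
\textbf{There is a genuine gap in Step~2.} Your preliminary facts are correct, and they are the same inputs the paper uses. Every set of at most three indices lies in some support. Every support has weight at least $\alpha$. Each index lies in at most $\lfloor 1/\alpha\rfloor\le 4$ supports. Your Step~1 observation that $y_{ij}\ge p$ for $i,j\in S$ is also fine. But Step~2, which you flag as the main obstacle, is the whole proof, and you never write down an inequality that bounds your scalar optimisation.

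The route you propose is also aimed at the wrong extremal structure. The barriers of Section~\ref{sec:constructions} below density $1/3$ all \emph{do} have a spanning reduced component. In the two-part and three-layer barriers the blue component is spanning; what fails is the perfect fractional matching in \textup{(C2)}. In the rooted two-pocket barrier the blue component is again spanning, while the red colour splits into two proper pockets. This violates the requirement in \textup{(C1)} that at most one noncanonical component has nonempty support. Likewise, $p\le r^3$ is the space-obstruction condition from the matching argument, not something a non-spanning profile produces. So there is no reason a non-spanning profile should be governed by an optimisation with value $\thr(p)$. Choosing a maximum-weight support $S$ and averaging over $S$ and $T$ gives no control over how several overlapping supports jointly cover $T$.

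\textbf{The missing idea is a Helly-type minimality argument.} After it, only crude consequences of $\alpha>\thr(p)$ are needed.

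Take an inclusion-minimal $X\subseteq I$ that is not contained in any support. Then $m:=|X|\ge4$. For each $x\in X$, minimality yields a component $C_x$ with $X\setminus\{x\}\subseteq S_{C_x}$ and $x\notin S_{C_x}$. These components are distinct, so every element of $X$ lies in $m-1$ supports, and hence $m-1\le\lfloor1/\alpha\rfloor$.

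For $m=4$, let $A_i(e)$ be the total mass on $e\subseteq F_i:=X\setminus\{i\}$ of the components whose support contains $F_i$. Density gives $p\le\prod_{e\subseteq F_i}A_i(e)$. Each pair lies in exactly two faces, and their component families are disjoint, since a common member would cover $X$. Multiplying the four face inequalities gives $p^4\le4^{-6}$, that is, $p\le1/8$.

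For $m=5$ with multiplicity four, only $C_1,\dots,C_5$ have positive pairs at $X$. Only $C_a$ and $C_b$ can serve a triple with complement $\{a,b\}$. Multiplying the ten triple inequalities gives $p\le(2/3)^3=8/27$.

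Finally, split by range:
\begin{itemize}
\item When $p\le p_\star$, we have $\alpha>1/3$, which already excludes $m\ge4$.
\item When $p_\star<p\le p_0$, we have $\alpha>1/4$, which forces $m=4$ and contradicts $p>p_\star>1/8$.
\item When $p_0<p\le 1/3$, we have $\alpha>2/9$, which forces $m\in\{4,5\}$ and contradicts $p>p_0>8/27$.
\end{itemize}
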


\begin{proof}
Suppose not.  Choose an inclusion-minimal set $X\subseteq I$ which is
not contained in a component support, and put $m:=|X|$.  By the
observation above, $m\ge4$.  For every $x\in X$, minimality gives a
component $C_x$ such that
$X\setminus\{x\}\subseteq S_{C_x}$ and $x\notin S_{C_x}$.
The components $C_x$ are distinct, so every vertex of $X$ belongs to
$m-1$ component supports.  Hence
$m-1\le\lfloor1/\alpha\rfloor$.

We first record the two product estimates which will be needed.  If
$m=4$, write $X=\{1,2,3,4\}$ and
$F_i:=X\setminus\{i\}$.  Let $\mathfrak C_i$ be the set of components
whose support contains $F_i$, and for
$e\in\binom{F_i}{2}$ put
$A_i(e):=\sum_{C\in\mathfrak C_i}w_C(e)$.  Density on $F_i$ gives
\[
    p\le\prod_{e\in\binom{F_i}{2}}A_i(e).
\]
If $e=jk$, then $e$ lies in precisely two faces $F_i,F_\ell$, and
$\mathfrak C_i\cap\mathfrak C_\ell=\varnothing$.  Thus
$A_i(e)+A_\ell(e)\le1$, so
$A_i(e)A_\ell(e)\le1/4$.  Multiplying the four face inequalities gives
$p^4\le(1/4)^6$, and hence $p\le1/8$.

Now suppose that $m=5$ and every index belongs to at most four
component supports.  Write $X=\{1,2,3,4,5\}$ and let $C_i$ be the
component missing $i$.  The support multiplicity is saturated on $X$,
so no further component has a positive pair incident with a vertex of
$X$.  For $e=jk$, put $x_i(e):=w_{C_i}(e)$.  Then
$\sum_{i\notin\{j,k\}}x_i(e)=1$.  If
$T\in\binom X3$ and $X\setminus T=\{a,b\}$, only $C_a$ and $C_b$
can contribute to a retained triangle on $T$, and therefore
\[
    p\le
    \prod_{e\in\binom T2}\bigl(x_a(e)+x_b(e)\bigr).
\]
Multiplying these inequalities over the ten triples, each edge
$e=jk$ contributes
$(1-x_a(e))(1-x_b(e))(1-x_c(e))\le(2/3)^3$, where
$a,b,c$ are the remaining three indices.  Hence $p\le8/27$.

We now distinguish the three ranges of the threshold.  If
$p\le p_\star$, then $\alpha>1/3$, so every index lies in at most two
supports, contradicting $m\ge4$.

If $p_\star<p\le p_0$, then $\alpha>\thr_2(p)>1/4$, and hence
$m=4$.  The four-point calculation gives $p\le1/8$, contrary to
$p>p_\star>1/8$.

Finally, suppose that $p_0<p\le1/3$.  Since
$\alpha>\thr_3(p)=r^2/2\ge2/9$, we have $m\in\{4,5\}$.  The two
calculations above give $p\le1/8$ or $p\le8/27$.  Both are impossible:
indeed $p_0>8/27$, since $(1-p)^3-p$ is strictly decreasing, is
positive at $8/27$, and vanishes at $p_0$.
\end{proof}

For the uniqueness part we aggregate one transition component against
its complement.  For a symmetric array $u=(u_{ij})$, put
$d_u(i):=\sum_j\lambda_j u_{ij}$,
$K_u(i,j):=\sum_k\lambda_k u_{ik}u_{jk}$, and
$\Phi(a,b,c):=abc+(1-a)(1-b)(1-c)$.

We shall use the elementary comparisons
$\thr(p)>(1-\sqrt p)^2$, $\thr(p)>(1-2p)^2$, and
$(p^2+r^2)/2<p$ whenever $p_\star<p\le1/3$.

\begin{lemma}[Two-colour profile]
\label{lem:two-colour-profile}
Let $C$ be a transition component and put $u_{ij}:=w_C(ij)$.
Then the following hold.
\begin{enumerate}[label=\textup{(\roman*)},leftmargin=2.6em]
    \item $\Phi(u_{ij},u_{jk},u_{ki})\ge p$ for all $i,j,k$.
    Moreover, $u_{ij}>0$ implies $K_u(i,j)\ge\alpha$, while
    $u_{ij}<1$ implies $K_{1-u}(i,j)\ge\alpha$.

    \item If $p>p_\star$, then
    $u_{ij}\in\{0,1\}\cup[p,r]$ for every $i,j$.

    \item If $p>1/4$ and $u_{ij}=1$ for some pair, then $C$ is
    pair-complete.

    \item If $d_u(i)\ge\alpha$ and $1-d_u(i)\ge\alpha$ for every
    $i$, then $\alpha\le1/3$.
\end{enumerate}
\end{lemma}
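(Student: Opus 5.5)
We prove the four items in order, working at zero error (the compactness passage described at the start of this section allows this), so that the projected inequalities of \Cref{lem:state-projection} hold with $\eta=0$.

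For (i), we aggregate $C$ against the union of all other components. The density inequality from \Cref{lem:state-projection} gives $\sum_D w_D(ij)w_D(jk)w_D(ki)\ge p$. The terms with $D\ne C$ are bounded by
\[
\Bigl(\sum_{D\ne C}w_D(ij)\Bigr)\Bigl(\sum_{D\ne C}w_D(jk)\Bigr)\Bigl(\sum_{D\ne C}w_D(ki)\Bigr)=(1-u_{ij})(1-u_{jk})(1-u_{ki}),
\]
since all weights are nonnegative. This yields $\Phi\ge p$. The bound $K_u(i,j)\ge\alpha$ when $u_{ij}>0$ is exactly the second part of \Cref{lem:state-projection}. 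If $u_{ij}<1$, some component $D\ne C$ has $w_D(ij)>0$, so $K_D(i,j)\ge\alpha$. Moreover $w_D(ik)\le 1-u_{ik}$ and $w_D(jk)\le1-u_{jk}$ pointwise, so $K_{1-u}(i,j)\ge K_D(i,j)\ge\alpha$.

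For (ii), we apply (i) to the triple $(i,i,j)$, using the tagged-copy convention for repeated indices, and combine it with the diagonal constraint. From $(i,i,i)$ we get $u_{ii}^3+(1-u_{ii})^3\ge p$. For $p\le1/4$ this is vacuous, which is why the hypothesis $p>p_\star$ is needed. For general $x=u_{ij}$ I would instead use the triple $(i,j,j)$, which gives $\Phi(x,u_{jj},x)$, or more simply the degenerate triple in which one pair is $u_{ij}$ and the other two are its transposes. Since $u_{ij}=u_{ji}$, the triple $(i,j,i)$ with tagged copies yields $\Phi(x,x,u_{ii})=x^2u_{ii}+(1-x)^2(1-u_{ii})\ge p$. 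Maximising over $u_{ii}\in[0,1]$ gives $\max\{x^2,(1-x)^2\}\ge p$, that is, $x\le1-\sqrt p$ or $x\ge\sqrt p$. This alone is not $\{0,1\}\cup[p,r]$, so the codegree conditions must enter.

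If $0<x<p$, then (i) gives $K_u(i,j)\ge\alpha$. Combining Cauchy--Schwarz-type bounds with the triangle constraint $\Phi(x,u_{jk},u_{ki})\ge p$ (linear in each variable), we get $u_{jk}u_{ki}\le$ something like $(1-\sqrt{p})^2$ whenever $x<p$. This would give $\alpha\le(1-\sqrt p)^2<\thr(p)$, which is exactly where the stated comparison $\thr(p)>(1-\sqrt p)^2$ is used. Symmetrically, $x>r$ is handled with $1-u$ and $K_{1-u}$.

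The main obstacle is the pointwise bound $u_{jk}u_{ki}\le(1-\sqrt p)^2$. The plan is to mimic the convexity argument $g_M(w)$ from \Cref{lem:scalar-main}: write $bc=w^2$, use $b+c\ge 2w$ in $\Phi(x,b,c)=xbc+(1-x)(1-b)(1-c)\ge p$, and bound the resulting quadratic in $w$ using $x<p\le1/3$.

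For (iii), suppose $u_{ij}=1$. By (i), $K_u(i,j)\ge\alpha$, and every $k$ with $u_{ik}<1$ must still satisfy $\Phi(1,u_{jk},u_{ki})=u_{jk}u_{ki}\ge p$. Hence $u_{jk},u_{ki}\ge p>0$ for all $k$, so every pair through $i$ or $j$ is positive. Propagating, for any $k,\ell$ we have $\Phi(u_{ik},u_{k\ell},u_{\ell i})\ge p$. If $u_{k\ell}=0$, this reduces to $(1-u_{ik})(1-u_{\ell i})\ge p$. Combined with the bounds $u_{ik}u_{jk}$-type inequalities from $u_{ij}=1$, this should give a contradiction once $p>1/4$: we have $u_{ik}u_{jk}\ge p$ from the triple $(i,j,k)$, and $(1-u_{ik})(1-u_{i\ell})\ge p$, and the product of $u$ and $1-u$ terms is bounded by $1/4<p$. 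I would make this AM--GM step precise.

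For (iv), we average the conditions. We have
\[
\sum_i\lambda_i d_u(i)(1-d_u(i))\ge\alpha(1-\alpha)\cdot\text{const},
\]
which is too weak, so instead we use the fractional-matching count. Summing $\Phi$ over all triples with weights $\lambda_i\lambda_j\lambda_k$, and using $u_{ij}>0\Rightarrow K_u\ge\alpha$, we count monochromatic triangles: $\sum\lambda\lambda\lambda\, u_{ij}u_{jk}u_{ki}\ge\alpha\sum\lambda_i\lambda_j u_{ij}$, and similarly for $1-u$. A Goodman-type identity bounds the total monochromatic mass by $1-3\sum_i\lambda_i d(1-d)$. Combining with $d,1-d\ge\alpha$ gives $\alpha\le 1-3\alpha(1-\ldots)$, leading to $\alpha\le1/3$. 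I expect the Goodman inequality to require care with symmetric weights, but it is standard.
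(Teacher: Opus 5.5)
Your plan follows the paper's route on all four items. In (i) you aggregate $C$ against its complement, in (ii) you use AM--GM plus convexity of $f(w)=xw^2+(1-x)(1-w)^2$, in (iii) you multiply density inequalities and use $u_e(1-u_e)\le 1/4$, and in (iv) you compare the weighted Goodman identity $M=1-3\sum_i\lambda_i d_u(i)(1-d_u(i))$ with $M\ge\alpha$.

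In (ii) your cutoff differs slightly from the paper's. The paper evaluates $f$ at $\sqrt\alpha$ and so needs both $\alpha>(1-\sqrt p)^2$ and $\alpha>(1-2p)^2$. Your cutoff also works and is a bit cleaner: $f(1-\sqrt p)-p=x(1-2\sqrt p)<0$ and $f(1)=x<p$, so convexity gives $u_{ik}u_{jk}<(1-\sqrt p)^2$ for every $k$. Only the first comparison is then needed. Note that what makes this work is $p>p_\star>1/4$, not $p\le 1/3$ as you wrote.

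The step that does not close as written is (iii). The two inequalities you list, $u_{ik}u_{jk}\ge p$ and $(1-u_{ik})(1-u_{i\ell})\ge p$, only yield $u_{ik}(1-u_{ik})\ge p^2$, which is no contradiction. You need all four cross-triples $(i,j,k)$, $(i,j,\ell)$, $(k,\ell,i)$ and $(k,\ell,j)$. With $u_{ij}=1$ and $u_{k\ell}=0$ they give
$u_{jk}u_{ki}\ge p$, $u_{j\ell}u_{\ell i}\ge p$, $(1-u_{\ell i})(1-u_{ik})\ge p$ and $(1-u_{\ell j})(1-u_{jk})\ge p$.
In the product each of the four cross masses appears exactly once together with its complement, so $p^4\le 4^{-4}$, contradicting $p>1/4$. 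Three of these inequalities alone give only $p^3\le 1/16$, which is not enough. The same four-fold product covers repeated indices (e.g.\ $k=\ell$ or $i=j$) via tagged copies. Your observation that every pair through $i$ or $j$ is positive already handles a zero pair meeting $\{i,j\}$.
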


\begin{proof}
Every retained triad is either entirely contained in $C$ or entirely
outside $C$, since its three states form a directed $3$-cycle.  This
gives \textup{(i)} directly from the projected density and codegree
conditions.

For \textup{(ii)}, suppose that $0<a:=u_{ij}<p$.  We claim that
$u_{ik}u_{jk}<\alpha$ for every $k$.  Otherwise, with
$b:=u_{ik}$, $c:=u_{jk}$ and $x:=\sqrt{bc}\ge\sqrt\alpha$, AM--GM
gives
\[
    \Phi(a,b,c)
    \le ax^2+(1-a)(1-x)^2.
\]
The right-hand side is convex in $x$.  At $x=1$ it equals $a<p$,
while at $x=\sqrt\alpha$ it is also smaller than $p$, using
$\alpha>(1-\sqrt p)^2$ and
$\alpha>(1-2p)^2$.  This contradicts \textup{(i)}.  Hence
$K_u(i,j)<\alpha$, again a contradiction.  Applying the same argument
to $1-u$ excludes values in $(r,1)$.

For \textup{(iii)}, suppose that $u_{xy}=1$ but $u_{vw}=0$.  If the
two pairs meet, a triple containing both gives
$\Phi(1,0,t)=0$.  If they are disjoint, multiplying the four density
inequalities through the cross-pairs gives
\[
    p^4
    \le
    \prod_{e\in E(K_{\{x,y\},\{v,w\}})}u_e(1-u_e)
    \le\left(\frac14\right)^4,
\]
contrary to $p>1/4$.  The repeated-index cases follow from the same
two-triple calculation.  Thus no zero pair exists.

Finally, for \textup{(iv)}, let
$M:=\sum_{i,j,k}\lambda_i\lambda_j\lambda_k
\Phi(u_{ij},u_{jk},u_{ki})$.  By \textup{(i)},
\[
    M
    =
    \sum_{i,j}\lambda_i\lambda_j
    \bigl(
        u_{ij}K_u(i,j)
        +(1-u_{ij})K_{1-u}(i,j)
    \bigr)
    \ge\alpha.
\]
The weighted Goodman identity gives
$M=1-3\sum_i\lambda_i d_u(i)(1-d_u(i))
\le1-3\alpha(1-\alpha)$.
Thus $(1-\alpha)(1-3\alpha)\ge0$, and hence
$\alpha\le1/3$.
\end{proof}

\begin{lemma}[Canonical selection]
\label{lem:canonical-selection}
If $p\le p_\star$, there is at most one spanning component.  If
$p>p_\star$, every spanning component is pair-complete and there is at
most one pair-complete component.
\end{lemma}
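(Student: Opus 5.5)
The plan is to work at zero error throughout, which is justified by the compactness passage. For a transition component $C$ we put $u_{ij}:=w_C(ij)$ and use the two-colour profile of \Cref{lem:two-colour-profile}. The guiding idea is to aggregate $C$ against its complement. Every retained triad lies entirely inside $C$ or entirely outside it, so $u$ and $1-u$ behave like a red/blue colouring. A spanning component has $d_u(i)>0$ for every $i$, and the projected codegree gives $d_u(i)\ge\alpha$. If a second spanning component $D$ exists, then $1-u\ge w_D$ pointwise, so also $1-d_u(i)\ge d_D(i)\ge\alpha$.

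\textbf{Case $p\le p_\star$.} Suppose $C$ and $D$ are two spanning components. By the remark above, $d_u(i)\ge\alpha$ and $1-d_u(i)\ge\alpha$ for every $i$. Then \Cref{lem:two-colour-profile}\textup{(iv)} gives $\alpha\le1/3=\thr_1(p)$, contradicting $\alpha>\thr(p)$. This settles the first assertion.

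\textbf{Case $p>p_\star$.} Let $C$ be spanning. By \textup{(ii)}, every entry $u_{ij}$ lies in $\{0,1\}\cup[p,r]$. If some entry equals $1$, then \textup{(iii)} makes $C$ pair-complete, since $p>1/4$. So assume no entry equals $1$, and suppose for contradiction that some entry is $0$. The aim is then a contradiction. The positive entries lie in $[p,r]$, and each index $i$ has positive pairs, since $d_u(i)\ge\alpha$. I would take a zero pair $ij$ and some $k$ with $u_{ik},u_{jk}>0$; such a $k$ exists because supports are connected and every index has degree at least $\alpha$. On the triple $(i,j,k)$, the density condition $\Phi(0,b,c)=(1-b)(1-c)\ge p$ must hold. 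The complement codegree $K_{1-u}(i,j)\ge\alpha$ is automatic here, so the useful information is $K_u(i,k)\ge\alpha$ with all positive entries at most $r$. Combining these with $(p^2+r^2)/2<p$, I would try to show that $C$ is forced into a "space" structure. In that structure the zero pairs form a large set on which the density must be carried by other components. This is where the condition $p\le(1-p)^3$ and the bound $\thr_2$ would enter. For $p>p_0$ the structure is outright infeasible. For $p_\star<p\le p_0$ the codegree bound $\alpha>\thr_2(p)$ should kill it, using an optimisation matching \Cref{con:middle-layer}.

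Uniqueness of the pair-complete component is easier. If $C$ and $D$ are both pair-complete, then $u_{ij}\in[p,r]$ and $w_D(ij)\le1-u_{ij}$ everywhere. So both $u$ and $1-u$ satisfy the codegree bounds at every pair. This again gives $d_u,1-d_u\ge\alpha$ and hence $\alpha\le1/3$ via \textup{(iv)}. That alone is not a contradiction when $\alpha\le1/3$, so I would sharpen it. With entries in $[p,r]$, the averaged Goodman count $M\ge\alpha$ combines with the extension bounds restricted to this interval. Together with $\Phi\ge p$, I expect these to force $\alpha\le r^2/2$ or $p\le(p^2+r^2)/2$, both of which are excluded.

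The main obstacle is the middle step: showing that a spanning component with zero pairs but no full pairs cannot exist when $p>p_\star$ and $\alpha>\thr(p)$. This is exactly where the piecewise threshold and its two mechanisms, space and two-pocket, must be matched. I would try first a weighted averaging of $\Phi$ and $K_u$ over the zero/positive partition, mirroring the constructions of \Cref{sec:constructions}.
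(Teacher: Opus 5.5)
You handle the case $p\le p_\star$ correctly, and exactly as the paper does. For $p>p_\star$, however, neither assertion is actually proved; both arguments end in ``I would try'' or ``I expect''. The route you sketch for spanning $\Rightarrow$ pair-complete is the wrong mechanism. It goes through a space structure, $p\le r^3$, $\thr_2$ and an optimisation modelled on the middle-layer construction, but those quantities enter only the fractional-matching part \textup{(C2)}. Here a purely local estimate suffices, and it needs only $\thr(p)>(1-2p)^2$.

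There is also a smaller error: for an arbitrary zero pair $ij$, the existence of $k$ with $u_{ik},u_{jk}>0$ does not follow from connectivity of the positive-pair graph. You have to choose the zero pair. Take it as $v_0v_2$ on a shortest path $v_0v_1\cdots v_\ell$ in the positive-pair graph, or use the repeated triple $(x,x,b)$ when the zero pair is diagonal.

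The missing idea is a cherry estimate driven by a small positive entry.
\begin{enumerate}[label=\textup{(\arabic*)},leftmargin=2.6em]
\item Density on $v_0v_1v_2$ with $u_{v_0v_2}=0$ gives $(1-u_{v_0v_1})(1-u_{v_1v_2})\ge p>1/4$. So some positive entry $q:=u_{ab}$ is below $1/2$, and $q\in[p,1/2)$ by \Cref{lem:two-colour-profile}\textup{(ii)}. Part \textup{(iii)} rules out entries equal to $1$, since a zero pair exists.
\item Because $\Phi$ is multilinear, its maximum on $[p,1/2]\times[1/2,r]\times[p,r]$ is attained at a corner and equals $(p^2+r^2)/2$. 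This is $<p$ exactly when $p>p_\star$. Hence, for every $z$ with $u_{az}u_{bz}>0$, both entries are in $[p,r]$ and both are $<1/2$.
\item Put $t:=\sqrt{u_{az}u_{bz}}\le1/2$. AM--GM gives $(1-u_{az})(1-u_{bz})\le(1-t)^2$. Monotonicity in $q$ (valid because $t^2\le(1-t)^2$) then gives $p\le\Phi(q,u_{az},u_{bz})\le pt^2+r(1-t)^2$, which rearranges to $t\le1-2p$.
\item Summing over $z$ gives $K_u(a,b)\le(1-2p)^2<\thr(p)<\alpha$, contradicting \textup{(i)}.
\end{enumerate}

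The same two tools prove uniqueness, where your Goodman/\textup{(iv)} route cannot get below $1/3$. Suppose $C$ and $D$ are both pair-complete, with $u:=w_C$. Then every $u_{ij}$ lies in $[p,r]$. The corner bound forbids any triple containing one entry $\le1/2$ and another $\ge1/2$, so all entries lie strictly on one side of $1/2$. After possibly replacing $u$ by $1-u$, all entries are in $[p,1/2)$. The cherry estimate then gives $K_u(i,j)\le(1-2p)^2<\alpha$, again contradicting \textup{(i)}.
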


\begin{proof}
Suppose first that $p\le p_\star$ and that $C,D$ are two spanning
components.  Put $u_{ij}:=w_C(ij)$.  Since both components are
spanning, the observation at the beginning of the subsection gives
$d_u(i)\ge\alpha$ and $1-d_u(i)\ge d_D(i)\ge\alpha$ for every $i$.
Part \textup{(iv)} of \Cref{lem:two-colour-profile} gives
$\alpha\le1/3$, a contradiction.

Now suppose that $p>p_\star$ and let $D$ be spanning.  Put
$u_{ij}:=w_D(ij)$ and suppose that some pair has zero $D$-mass.
Choose a zero pair $xy$.  If $x\ne y$, let
$x=v_0,v_1,\ldots,v_\ell=y$ be a shortest path in the positive-pair
graph.  Then $\ell\ge2$ and $u_{v_0v_2}=0$, so density on
$v_0v_1v_2$ gives
$(1-u_{v_0v_1})(1-u_{v_1v_2})\ge p>1/4$.
Hence one of these two positive pair masses is smaller than $1/2$.
If $x=y$, choose $b$ with $u_{xb}>0$; the repeated triple $(x,x,b)$
gives $(1-u_{xb})^2\ge p$, and hence $u_{xb}<1/2$.
Thus in either case there is a positive pair $ab$ with
$q:=u_{ab}<1/2$ adjacent to a zero pair.  By part \textup{(ii)} of
\Cref{lem:two-colour-profile}, $q\in[p,1/2)$.

Fix $z$.  If $u_{az}u_{bz}>0$, part \textup{(iii)} shows that neither
factor can equal one, and hence both lie in $[p,r]$.  Since $q<1/2$, the same corner estimate shows that neither
$u_{az}$ nor $u_{bz}$ can be at least $1/2$.  Thus, with
$t:=\sqrt{u_{az}u_{bz}}$, we have $t\le1/2$, and
\[
    p\le\Phi(q,u_{az},u_{bz})
      \le pt^2+r(1-t)^2.
\]
Hence $t\le1-2p$.  Therefore
$K_D(a,b)\le(1-2p)^2<\alpha$, a contradiction.  Thus every spanning
component is pair-complete.

It remains to prove uniqueness.  Suppose that $C,D$ are pair-complete
and put $u_{ij}:=w_C(ij)$.  Since the complement of $u$ contains $D$,
part \textup{(ii)} gives $p\le u_{ij}\le r$ for every pair.  No
triangle can contain one value at most $1/2$ and another at least
$1/2$, since the maximum of $\Phi$ on
$[p,1/2]\times[1/2,r]\times[p,r]$ is at most
$(p^2+r^2)/2<p$.  Hence all entries lie on the same side of $1/2$.
Replacing $u$ by $1-u$ if necessary, assume
$p\le u_{ij}<1/2$ for every pair.

For fixed $i,j,k$, put $x:=\sqrt{u_{ik}u_{jk}}\le1/2$.  As above,
$p\le px^2+r(1-x)^2$, so $x\le1-2p$.  Consequently
$K_u(i,j)\le(1-2p)^2<\alpha$, contradicting
\Cref{lem:two-colour-profile}\textup{(i)}.
\end{proof}

Fix now a spanning component $G$ and put $g_{ij}:=w_G(ij)$.  The
defect graph $P_G$ has the distinct pair $ij$ as an edge whenever
$g_{ij}<1$; a vertex $i$ is marked when $g_{ii}<1$.  A marked
isolated vertex is regarded as a singleton defect component.  A defect
component $J\ne I$ is called a proper pocket.

\begin{lemma}[Pocket structure]
\label{lem:pocket-structure}
Let $G$ be spanning.
\begin{enumerate}[label=\textup{(\roman*)},leftmargin=2.6em]
    \item Every component $D\ne G$ is supported inside one defect
    component of $P_G$.  If $J$ is a proper pocket, then
    $g_{ij}\ge p$ for $i,j\in J$ and
    $\lambda(J)\ge\alpha/r^2$.

    \item If $J_1,J_2$ are distinct proper pockets, then
    $\alpha\le r^2/2$ and $\alpha\le1/3$.

    \item If $p>p_\star$ and $G$ is the unique pair-complete
    component, then distinct noncanonical components have disjoint
    supports, and at most one noncanonical component has nonempty
    support.
\end{enumerate}
\end{lemma}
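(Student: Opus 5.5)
We work at zero error throughout, as permitted by the compactness passage, and use the notation $g_{ij}=w_G(ij)$, $K_C(i,j)\ge\alpha$ whenever $w_C(ij)>0$, and $\sum_C w_C(ij)=1$.

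\emph{Part (i).} Let $D\ne G$ and suppose $w_D(ij)>0$. Then $g_{ij}\le 1-w_D(ij)<1$, so $ij$ is an edge of $P_G$, or $i$ is marked if $i=j$. The positive-pair graph of $D$ is connected on its non-isolated vertices by \Cref{lem:state-projection}, and every positive pair of $D$ is a defect pair. Hence the whole support of $D$ lies in a single defect component; a support consisting of one index is covered by the singleton convention.

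For a proper pocket $J$ with $i,j\in J$ and $g_{ij}<1$, we have $g_{ij}>0$, because $G$ is spanning and the exclusions used in \Cref{lem:two-colour-profile}\textup{(ii)} apply. The bound $g_{ij}\ge p$ should then follow from the argument of \Cref{lem:two-colour-profile}\textup{(ii)}: a small positive value $a<p$ forces $K_G(i,j)<\alpha$. For non-adjacent $i,j\in J$ I would propagate along a defect path using density on triangles.

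For $\lambda(J)\ge\alpha/r^2$, take a noncanonical $D$ living in $J$ with $w_D(ij)>0$. Then $\alpha\le K_D(i,j)=\sum_k\lambda_kw_D(ik)w_D(jk)$. Only $k\in J$ contribute, and each factor satisfies $w_D\le 1-g\le r$, so $\alpha\le r^2\lambda(J)$. If $J$ carries no noncanonical component, I would instead use $K_{1-g}(i,j)\ge\alpha$ from \Cref{lem:two-colour-profile}\textup{(i)}. The quantity $1-g_{ik}$ vanishes for $k\notin J$ whenever $i\in J$, because pairs leaving $J$ have $g=1$, and it is at most $r$ inside $J$. This gives the same bound.

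\emph{Part (ii).} From (i), $\lambda(J_1)+\lambda(J_2)\ge 2\alpha/r^2$. For $\alpha\le r^2/2$ this seems too weak on its own, so I would combine it with density on a triangle $i_1i_2k$ with $i_1\in J_1$ and $i_2\in J_2$. Here $g_{i_1i_2}=1$, since the pair crosses different defect components. A $1-g$ codegree computation at a defect pair $i_1j_1$ inside $J_1$ gives
\[
\alpha\le\sum_{k\in J_1}\lambda_k(1-g_{i_1k})(1-g_{j_1k})\le r^2\lambda(J_1).
\]
Symmetrically $\alpha\le r^2\lambda(J_2)$. Hence $2\alpha\le r^2(\lambda(J_1)+\lambda(J_2))\le r^2$, which is $\alpha\le r^2/2$.

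For $\alpha\le 1/3$, I expect to apply \Cref{lem:two-colour-profile}\textup{(iv)}-type counting to the indicator of $J_1$, or a Goodman-style count on cross triangles. The idea is that each vertex of $J_1$ sees $G$-weight at least $\alpha$ and $(1-g)$-weight at least $\alpha$. This step is the one I am least sure of, and I would try the weighted Goodman identity restricted to $J_1\cup J_2$ first.

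\emph{Part (iii).} Let $p>p_\star$ and let $D,D'$ be noncanonical components with intersecting supports. At a common index $i$, the degrees satisfy $d_G(i)+d_D(i)+d_{D'}(i)\le 1$. Each noncanonical positive pair has mass at most $r$ and lies in a pocket of weight at least $\alpha/r^2$. Combining the entry restriction $u\in[p,r]$ from \Cref{lem:two-colour-profile}\textup{(ii)} with the triangle exclusion used in \Cref{lem:canonical-selection}, I would derive a contradiction via $K\ge\alpha>(1-2p)^2$. So supports are disjoint, and each lies in a distinct pocket.

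By (ii), two proper pockets force $\alpha\le\min\{r^2/2,1/3\}\le\thr(p)$ on the ranges $p>p_\star$, contradicting $\alpha>\thr(p)$. On $(p_\star,p_0]$ this uses $\thr_2\ge r^2/2$, which should be checked. If the pocket is $J=I$, the components $D$ are spanning and hence pair-complete by \Cref{lem:canonical-selection}, which contradicts uniqueness. Therefore at most one noncanonical component has nonempty support. The main obstacle is the $\alpha\le1/3$ bound in (ii), together with disjointness in (iii).
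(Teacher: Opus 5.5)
\textbf{Parts (i) and (ii).} Several steps are fine: the first assertion of (i), the bound $\lambda(J)\ge\alpha/r^2$ (given $g\ge p$ on $J$), and $\alpha\le r^2/2$ in (ii). The last is exactly the paper's argument. Note that $\lambda(J_1)+\lambda(J_2)\ge2\alpha/r^2$ is not too weak on its own, because distinct defect components are disjoint.

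The first genuine gap is $g_{ij}\ge p$ in (i). Your route through \Cref{lem:two-colour-profile}\textup{(ii)} only works for $p>p_\star$. That lemma uses $\alpha>(1-\sqrt p)^2$ and $\alpha>(1-2p)^2$, which fail for small $p$ when $\alpha$ is just above $1/3$. It also does not exclude $g_{ij}=0$ unless $G$ is pair-complete, and being spanning alone does not give that. Yet the lemma is stated for all $p\le1/3$, and it is invoked when (C1) is finished for $p\le p_\star$.

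The missing idea is just properness of $J$. Pick $k\notin J$. Then $ik$ and $jk$ are pure canonical pairs, since otherwise $k$ would lie in the defect component of $i$. So the projected density inequality on $(i,j,k)$ collapses to $g_{ij}\ge p$. This holds for every $p$ and all $i,j\in J$, including $i=j$. No propagation along defect paths is needed, because non-defect pairs have $g=1$.

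The second gap is $\alpha\le1/3$ in (ii), which you leave open. This is the bound that actually matters for $p\le p_\star$, since there $r^2/2$ can exceed $1/3$. \Cref{lem:two-colour-profile}\textup{(iv)} does not apply to $g$, because indices outside $J_1\cup J_2$ have $1-d_g(i)=0$.

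What works is the pure canonical cross pair. Choose $x_t\in J_t$ in the support of a noncanonical $D_t$, so that $\sum_{z\in J_t}\lambda_z(1-g_{x_tz})\ge d_{D_t}(x_t)\ge\alpha$. Since $g_{x_1x_2}=1$, you get $K_G(x_1,x_2)\ge\alpha$. On the other hand, $g_{x_2z}=1$ for $z\in J_1$ and $g_{x_1z}=1$ for $z\in J_2$ give $K_G(x_1,x_2)\le1-2\alpha$. Hence $\alpha\le1/3$.

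\textbf{Part (iii).} Disjointness of supports is not established. You have the right starting inequality $d_G(i)+d_D(i)+d_{D'}(i)\le1$. However, the proposed $(1-2p)^2$ route with the same-side-of-$1/2$ exclusion from \Cref{lem:canonical-selection} needs both $u$ and $1-u$ to lie in $[p,r]$ on every pair. That fails for $u=w_D$, since $D$ is not pair-complete and vanishes on many pairs.

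The missing step is a degree bound at $i\in S_D\cap S_E$:
\begin{itemize}
\item noncanonical pair masses are at most $r$, so $\alpha\le K_D(i,j)\le r\,d_D(i)$ for a positive pair $ij$ of $D$, whence $d_D(i),d_E(i)\ge\alpha/r$;
\item pair-completeness together with $g\ge p$ gives $d_G(i)\ge p$;
\item therefore $1\ge p+2\alpha/r$, i.e.\ $\alpha\le r^2/2$, contradicting $\alpha>\thr(p)\ge r^2/2$ (note $\thr_2(p)-r^2/2=rp(1+2r)/6>0$).
\end{itemize}

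You also assert without proof that distinct noncanonical components lie in distinct pockets. Your claim that a defect component equal to $I$ makes the components spanning is unjustified. The correct argument runs as follows. By disjointness, every defect pair is positive for exactly one noncanonical component. Consecutive defect edges on a path share an index, so each defect component carries a single noncanonical component. Two noncanonical components with nonempty support therefore yield two distinct, hence proper, pockets, and (ii) finishes.
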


\begin{proof}
Every positive pair of $D\ne G$ is a defect pair.  By
shadow-connectivity, all positive pairs of $D$ lie in one defect
component, proving the first assertion of \textup{(i)}.  If $J$ is a
proper pocket, choose $k\notin J$.  The pairs $ik$ and $jk$ are pure
$G$-pairs, so density on $ijk$ gives $g_{ij}\ge p$.  Hence every
noncanonical pair mass in $J$ is at most $r$.  Choosing a positive
pair of a noncanonical component inside $J$ gives
$\alpha\le r^2\lambda(J)$.

For \textup{(ii)}, the pocket-mass estimate gives
$1\ge\lambda(J_1)+\lambda(J_2)\ge2\alpha/r^2$, and hence
$\alpha\le r^2/2$.  Choose $x_t\in J_t$ belonging to a noncanonical
support.  Then
$\sum_{z\in J_t}\lambda_z(1-g_{x_tz})\ge\alpha$.  Since the cross-pair
$x_1x_2$ is pure canonical,
\[
    \alpha
    \le K_G(x_1,x_2)
    \le 1-2\alpha,
\]
and therefore $\alpha\le1/3$.

Finally suppose that $p>p_\star$.  By
\Cref{lem:two-colour-profile}, every positive pair mass of a
noncanonical component is at most $r$: a larger nontrivial mass is excluded by
\Cref{lem:two-colour-profile}\textup{(ii)}, while a pure pair would
make that component pair-complete by
\Cref{lem:two-colour-profile}\textup{(iii)}.  If
$i\in S_D\cap S_E$ for distinct $D,E\ne G$, then
$d_D(i),d_E(i)\ge\alpha/r$, while $d_G(i)\ge p$.  Thus
$1\ge p+2\alpha/r$, which gives $\alpha\le r^2/2$, contrary to
$\alpha>\thr(p)\ge r^2/2$.  Hence distinct noncanonical supports are
disjoint.

By \Cref{lem:canonical-selection}, every noncanonical support is
proper.  Moreover, two distinct such supports must lie in distinct
defect components: along a path of defect edges, two consecutive
edges have a common endpoint and hence, by the disjointness just
proved, must belong to the same noncanonical component.  Thus two
noncanonical components with nonempty support would give two proper
pockets, contradicting \textup{(ii)}.
\end{proof}

We can now finish \textup{(C1)}.  By
\Cref{lem:spanning-component}, a spanning component $G$ exists.

If $p\le p_\star$, then $G$ is unique by
\Cref{lem:canonical-selection}.  Since $\alpha>1/3$, every cluster
belongs to at most two component supports, one of which is $S_G=I$.
Thus two defect edges sharing a vertex belong to the same
noncanonical component.  Consequently each defect component is
contained in one noncanonical support.  No defect component can be all
of $I$, since that would produce a second spanning component.
By \Cref{lem:pocket-structure}\textup{(ii)}, there cannot be two
proper pockets.  Hence either $G$ is the only component with nonempty
support, or there is a proper set $J\subsetneq I$ containing the
support of every component $D\ne G$.

If $p>p_\star$, \Cref{lem:canonical-selection} shows that every
spanning component is pair-complete and that $G$ is the unique
pair-complete component.  Part \textup{(iii)} of
\Cref{lem:pocket-structure} gives that at most one component
$D\ne G$ has nonempty support.

This proves \textup{(C1)} at zero error, and hence for all sufficiently
small $\eta$ by the compactness passage above.

\subsection{Fractional matching and robust demands: \textup{(C2)}}
\label{subsec:canonical-C2}

We now turn to the matching property of the canonical component $G$.
Both parts of \textup{(C2)} are most naturally handled on the dual
side.  A fractional cover below the balanced value $1/3$ can be
normalised to have minimum zero, and then a fractional Berge--Tutte
argument produces a rooted separator in $R_G$.  The structure obtained
in \textup{(C1)} restricts how this separator can meet the canonical
component and leads to the sharp space obstruction.  Once this
obstruction is excluded, the same argument gives a uniform gap for
normalised covers, from which the robust demand cone follows by duality.

\begin{lemma}[Cover separator]
\label{lem:cover-separator}
Let $R$ be a finite weighted $3$-graph of total vertex weight one,
and let $c$ be a fractional vertex cover of $R$ with $\min_i c(i)=0$ and
$\sum_i\lambda_i c(i)\le 1/3+\xi$.
After splitting a zero-cover atom into twins and passing to the
zero-weight-root limit, there are a root $x$ and a partition
$A\dot\cup S\dot\cup T$ of the remaining weighted vertex set such that
$L_R(x)$ has no edge joining $A$ to $A\cup T$ and
$\lambda(A)-\lambda(S)\ge 1/3-2\xi$.
In particular, $\lambda(S)\le 1/3+\xi$.
\end{lemma}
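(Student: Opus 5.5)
The natural partition comes from the cover values themselves. Pick a root $x$ with $c(x)=0$; splitting the atom of $x$ into twins lets us treat $x$ as a zero-weight vertex whose link is the link of the original cluster, so the remaining weighted vertex set still has total weight one. Every edge $xyz$ of $R$ then satisfies $c(y)+c(z)\ge1$. We will take
\[
    A:=\{i:c(i)<1/2\},\qquad S:=\{i:c(i)>1/2\},\qquad T:=\{i:c(i)=1/2\}.
\]
Two vertices of $A$ have cover sum below one, so $L_R(x)$ has no $AA$ edge. A vertex of $A$ and a vertex of $T$ have sum below one, so there is no $AT$ edge either. This gives the separation property directly.

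For the weight inequality, rather than the crude sets above, we will use a threshold/averaging argument in the style of fractional Berge--Tutte. For $\theta\in(0,1)$ define
\[
    A_\theta=\{c<\theta\},\qquad S_\theta=\{c>1-\theta\},\qquad T_\theta=\text{the rest}.
\]
If $\theta\le1/2$, a pair in $A_\theta\times(A_\theta\cup T_\theta)$ has sum less than $\theta+(1-\theta)=1$, so the link has no such edge for every such $\theta$. Integrate over $\theta$ uniform on $[0,1/2]$, or use the layer-cake identity. One computes
\[
    \int_0^{1/2}\bigl(\lambda(A_\theta)-\lambda(S_\theta)\bigr)\,2\,d\theta=\sum_i\lambda_i f(c(i)),
\]
where $f(c)=1-2c$ for $c\le1/2$ and $f(c)=-(2c-1)$ for $c\ge1/2$. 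Hence $f(c)=1-2c$ throughout, and the average equals $1-2\sum_i\lambda_ic(i)\ge1/3-2\xi$. Some $\theta$ achieves at least the average, which gives the partition with $\lambda(A)-\lambda(S)\ge1/3-2\xi$. The final claim follows from $\lambda(A)+\lambda(S)\le1$: adding the two inequalities gives $2\lambda(S)\le 2/3+2\xi$, so $\lambda(S)\le1/3+\xi$.

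The main obstacle is the bookkeeping around the root. The root $x$ must not be counted in $A$, since it has cover $0$ and positive weight. It also must not be counted twice against the total weight. This is exactly why the statement splits the zero-cover atom into twins and passes to the zero-weight-root limit. I would make this precise by splitting the cluster of $x$ into a tiny piece of weight $\epsilon$ (the root) and the rest, both carrying cover $0$ and the same incidences. I would then run the averaging argument on the remaining weight $1-\epsilon$ and let $\epsilon\to0$. Because there are only finitely many threshold partitions, some fixed partition works along a subsequence, and the inequality passes to the limit.
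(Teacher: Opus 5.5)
Your argument is correct, but it reaches the separator by a different route from the paper. The paper approximates the weights by rationals, blows $R$ up, and splits the zero-cover atom into twins to get a root $x$ of vanishing weight. It then uses weak LP duality to bound the fractional matching number of $L_R(x)$ by $1/3+\xi$, and quotes the fractional Berge--Tutte formula to obtain $S$, with $A$ the set of isolated vertices of $L_R(x)-S$. The $o(1)$ loss from the root's weight is removed in the limit.

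You bypass the matching side and round the cover directly. The level sets $A_\theta=\{c<\theta\}$ and $S_\theta=\{c>1-\theta\}$ separate for every $\theta\in(0,1/2]$, and averaging over $\theta$ gives $\mathbb{E}[\lambda(A_\theta)-\lambda(S_\theta)]=1-2\sum_i\lambda_ic(i)$. This is exactly the half-integral rounding behind the fractional Berge--Tutte formula, so you prove inline the one inequality the paper imports.

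What your version buys:
\begin{itemize}
\item It is self-contained and works with arbitrary real weights, with no rational approximation, blow-up or strong duality.
\item It tolerates loops in the link, so repeated indices need no tagged copies: a link loop at $y$ forces $c(y)\ge1/2$, hence $y\notin A_\theta$.
\item Your $\epsilon$-root bookkeeping is unnecessary. The cover values do not depend on $\epsilon$, so you can work directly with a weightless clone of the zero-cover atom.
\end{itemize}
The paper's version gives the slightly more canonical conclusion that $A$ is precisely the isolated set of $L_R(x)-S$. However, only the non-adjacency of $A$ to $A\cup T$ is used later, in the space obstruction.

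One small point: your identity $f(c)=1-2c$ assumes $c\le1$. For $c>1$ the contribution is $-1\ge1-2c$, which still goes the right way; alternatively, truncate the cover at one first, as the paper does elsewhere.
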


\begin{proof}
Choose an atom $i_0$ with $c(i_0)=0$.  First suppose that all vertex
weights are rational.  Replace each weighted atom by a sufficiently
large proportional blow-up, and split the atom $i_0$ into twins.
Choose one of these twins as the root $x$.  Its relative weight tends
to zero as the blow-up size tends to infinity.

The restriction of $c$ to the link $L_R(x)$ is a fractional vertex
cover of this weighted graph, and hence its fractional matching number
is at most $1/3+\xi$. By the fractional Berge--Tutte formula
\cite[Chapter~2]{ScheinermanUllman1997}, there is a set $S$ such that,
writing $A$ for the isolated vertices of $L_R(x)-S$ and $T$ for the
remaining vertices, 
\[
    \lambda(A)-\lambda(S)
    \ge \frac13-2\xi-o(1).
\]
By definition, no link edge joins $A$ to $A\cup T$.

Letting the blow-up size tend to infinity gives a zero-weight root and
removes the $o(1)$ term.  Since the remaining vertices have total
weight one, $\lambda(A)\le1-\lambda(S)$, and therefore
$\lambda(S)\le1/3+\xi$.

For general real weights, approximate them by rational weights and pass
to a limit.  Repeated indices are first separated using the coherent
tagged-copy convention from
\Cref{lem:component-coloured-slice}, so that $L_R(x)$ is an ordinary
weighted graph.  Pair-state weights and triad incidences are copied
under these refinements, and hence the admissibility conditions are
preserved.
\end{proof}

When applying \Cref{lem:cover-separator} to $R_G$, we regard the
zero-weight root $x$ as a tagged clone of its original atom and copy
all state and component data to it.  Thus the support conclusions in
\textup{(C1)} remain valid for $x$, while all weighted identities are
unchanged.

The next lemma is the only place where the first two branches of the
threshold enter the matching argument.  We use the zero-weight-root
separator from \Cref{lem:cover-separator} with $\xi=0$, and write
$s:=\lambda(S)$ and $Q:=A\cup T$.

\begin{lemma}[Space obstruction]
\label{lem:space-obstruction}
Suppose that the separator in \Cref{lem:cover-separator} exists with
$\xi=0$.  If $p\le p_\star$, then $\alpha\le1/3$.  If
$p>p_\star$, then
$\alpha\le(r+4r^2-2r^3)/6$ and $p\le r^3$.
\end{lemma}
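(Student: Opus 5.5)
We will work entirely inside the zero-error admissible system restricted to the canonical component $G$, with the zero-weight root $x$ from \Cref{lem:cover-separator} (with $\xi=0$). Thus $\lambda(A)-s\ge1/3$, where $s=\lambda(S)$, and no link edge of $L_{R_G}(x)$ joins $A$ to $Q=A\cup T$. The idea is to translate the separator into constraints on the canonical pair masses $g_{ij}=w_G(ij)$ through the root, and then to combine them with the projected codegree and density inequalities of \Cref{lem:state-projection}.

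\emph{Step 1: codegree bound.} Since $x$ is a clone of an atom in the spanning support $S_G=I$, there is an index $y$ with $g_{xy}>0$. Every $k$ with $g_{xk}g_{yk}>0$ gives an edge $xyk$ of $R_G$, hence a link edge $yk$. If $y\in A$, this forces $k\in S$. Taking $y\in A$ with $g_{xy}>0$, we get
\[
\alpha\le K_G(x,y)\le\sum_{k\in S}\lambda_k g_{xk}g_{yk}\le s.
\]
We therefore want to show that $x$ has positive canonical pairs into $A$. If it did not, then all pairs $xa$ with $a\in A$ would be noncanonical. Since $\lambda(A)\ge1/3$ is large, (C1) then confines these pairs to a single noncanonical component $D$, supported on a proper pocket containing $A\cup\{x\}$. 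In that case we bound $\alpha$ by $K_D$ and use the pocket estimates of \Cref{lem:pocket-structure}.

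\emph{Step 2: the case $p\le p_\star$.} Combining $\alpha\le s$ with $s\le\lambda(A)-1/3\le 1-s-1/3$ gives $s\le1/3$, hence $\alpha\le1/3$. In the pocket case we instead use $\alpha\le1/3$ directly from \Cref{lem:pocket-structure}(ii) and (C1).

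\emph{Step 3: the case $p>p_\star$.} Here $G$ is pair-complete, so every $g_{xa}$ with $a\in A$ is positive, and by \Cref{lem:two-colour-profile}(ii) it lies in $\{1\}\cup[p,r]$. For $a,a'\in A$ the triple $xaa'$ is not an edge of $R_G$, so by projected density all its mass must come from noncanonical components. Together with (C1), which allows at most one noncanonical component $D$, this gives
\[
(1-g_{xa})(1-g_{aa'})(1-g_{a'x})\ge p.
\]
Hence $g\le 1-p^{1/3}$ on these pairs. I expect the aim is to show that $g_{aa'}=p$ and $g_{xa}=p$ are forced inside $A$. Then $D$ has mass $r$ on $A\cup\{x\}$, and density on triples inside $A$ gives $r^3\ge p$, using the fact that $D$ alone covers these triples.

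For the codegree bound we then bound $K_G(x,a)$ and $K_D(a,a')$ simultaneously, splitting the sum over $k\in A$, $k\in S$ and $k\in T$. Pairs from $A$ to $T$ must be non-edges of the link, and pairs from $A$ to $S$ are unconstrained, with masses at most $r$ or equal to $1$. Optimising the parameters $\lambda(A)$, $s$ and $\lambda(T)$ subject to $\lambda(A)\ge s+1/3$ should reproduce the three-layer parameters $m_*,y_*$ of \Cref{con:middle-layer} and yield $\alpha\le(r+4r^2-2r^3)/6$.

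\emph{Main obstacle.} The hard step is this final optimisation. One must decide which pairs carry mass $p$ versus $1$, and show that the worst case is exactly the $A/M/Y$ structure. I would first try linearising: averaging the two codegree inequalities with weights chosen so that the unknown cross terms cancel, matching the equality $r^2(1-y_*)=y_*+p^2m_*$ from \Cref{prop:middle-layer-properties}.
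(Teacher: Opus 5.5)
\textbf{Gap: the main bound for $p>p_\star$ is not proved.} Several parts of your argument do match the paper. For $p\le p_\star$ you use $\alpha\le\ext(u_G)\le\sum_{z\in S}\lambda_z g_{xz}g_{az}\le s\le 1/3$. For $p>p_\star$ you obtain $\alpha\le 1/3$, and $p\le r^3$ follows from your display. Indeed, every $g\ge p$, so $(1-g_{xa})(1-g_{aa'})(1-g_{a'x})\ge p$ gives $p\le r^3$ directly, and you do not need to show that $g=p$ is forced.

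The central inequality $\alpha\le(r+4r^2-2r^3)/6$, however, is only anticipated as an optimisation over $\lambda(A)$, $s$ and $\lambda(T)$. The pairing you propose, $K_G(x,a)$ against $K_D(a,a')$, involves different products on $S$: $g_{xz}g_{az}$ versus $(1-g_{az})(1-g_{a'z})$. So the cross terms cannot be cancelled pointwise.

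The missing idea is to use a second, noncanonical state on the \emph{same} pair $xa$. For $z_0\in Q$, the triple $xaz_0$ has no canonical triad but has retained density at least $p$. Hence there is a retained state $u\notin G$ over $xa$. Every noncanonical pair mass is at most $r$, so
\[
    \alpha\le\ext(u)\le L:=r^2(1-s)+\sum_{z\in S}\lambda_z(1-g_{xz})(1-g_{az}).
\]
Put $B:=\sum_{z\in S}\lambda_z g_{xz}g_{az}\ge\alpha$ and $\theta:=(1+p)/2$. The form $\theta(1-u)(1-v)+(1-\theta)uv$ is at most $r/2$ on $[p,1]^2$, as one checks at the corners. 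Hence
\[
    \alpha\le\theta L+(1-\theta)B\le\theta r^2(1-s)+\frac{r}{2}s.
\]
The coefficient of $s$ is $rp^2/2>0$, and $s\le1/3$ then gives exactly $\thr_2(p)$. No information about $T$, and no decision about which pairs carry mass $p$ versus $1$, is needed.

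\textbf{Smaller issue: the pocket case for $p\le p_\star$.} Your fallback here is invalid. \Cref{lem:pocket-structure}(ii) requires two distinct proper pockets and gives nothing from one. Also, (C1) only provides a proper set $J$ containing all noncanonical supports, not a single component $D$.

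The case is in fact vacuous, and you should say why. If $G$ is the only component with nonempty support, then $g_{xa}=1$. Otherwise choose $y\in I\setminus J$. Then $xy$ and $ay$ are pure canonical, so every retained triad on $xay$ is canonical, and density forces $g_{xa}\ge p>0$. After that your Step~1 applies as written.
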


\begin{proof}
Suppose first that $p\le p_\star$.  If $G$ is the only component with
nonempty support, then for $a\in A$ and $z\in Q$ the absence of a
$G$-triad on $xaz$ contradicts the state-level density condition.
Otherwise, by \textup{(C1)} all noncanonical supports lie in one proper
set $J\subsetneq I$.  For every $a\in A$ and $z\in Q$, the separator
excludes a $G$-triad on $xaz$, while density supplies a retained
noncanonical triad.  Hence $x$ and all of $Q$ lie in $J$.  Put
$M:=S\cap J$ and $Y:=I\setminus J$, so $Y\subseteq S$.

Choose $a\in A$ and $y\in Y$.  The pairs $xy$ and $ay$ are pure
canonical, and density on $xay$ implies $w_G(xa)\ge p$.  A canonical
state over $xa$ has no extension through $Q$ by the separator, so its
extension weight is at most $s$.  Therefore
$\alpha\le s\le1/3$.

Now suppose that $p>p_\star$.  By \textup{(C1)}, $G$ is pair-complete.
Part \textup{(ii)} of \Cref{lem:two-colour-profile} gives
$g_{ij}:=w_G(ij)\ge p$ on every pair, and hence every total
noncanonical pair mass is at most $r$.  Fix $a\in A$ and $z_0\in Q$.
Since there is no retained $G$-triad on $xaz_0$, density supplies a
retained state $u$ outside $G$ over $xa$.  Thus
\[
    \alpha\le\ext(u)
    \le
    L:=r^2(1-s)+
       \sum_{z\in S}\lambda_z(1-g_{xz})(1-g_{az}).
\]
There is also a canonical state $u_G$ over $xa$, and the separator
gives
$\alpha\le\ext(u_G)\le
B:=\sum_{z\in S}\lambda_zg_{xz}g_{az}\le s$.
In particular, $\alpha\le1/3$.

For every $z\in Q$, all retained triads on $xaz$ are noncanonical.
The total noncanonical mass on each of their three pairs is at most
$r$, and therefore the density inequality gives $p\le r^3$.

It remains to combine the two extension bounds.  Put
$\theta:=(1+p)/2$.  For $u,v\in[p,1]$, the bilinear expression
$\theta(1-u)(1-v)+(1-\theta)uv$ is at most $r/2$; it is enough to
check the four corners of $[p,1]^2$.  Applying this pointwise on $S$
gives
\[
\begin{aligned}
    \alpha
    &\le \theta L+(1-\theta)B\\
    &\le \theta r^2(1-s)+\frac r2s
     \le \frac23\theta r^2+\frac r6
      =\frac{r+4r^2-2r^3}{6},
\end{aligned}
\]
where the penultimate inequality uses $s\le1/3$ and the coefficient of
$s$ is $rp^2/2>0$.
\end{proof}

We can now obtain both assertions in \textup{(C2)} from the same
separator calculation.  The strict inequality
$\alpha>\thr(p)$ first excludes a fractional cover below $1/3$; after
compactness, it gives a fixed positive gap for every normalised cover.
That gap is exactly what is needed to make the matching cone contain a
relative neighbourhood of the cluster-weight vector.

\begin{lemma}[Robust fractional matching]
\label{lem:robust-fractional-matching}
For all sufficiently small $\eta>0$, the reduced triad graph $R_G$ has
a perfect fractional matching.  Moreover, there is $\rho>0$ such that
every non-negative demand vector $\mathbf d=(d_i)_{i\in I}$ satisfying
$|d_i-\lambda_i|\le\rho\lambda_i$ for every $i$ belongs to
$\cK(R_G)$.
\end{lemma}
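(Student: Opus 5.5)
We argue on the dual side throughout, using the Farkas lemma for the cone $\cK(R_G)$: a non-negative vector $\mathbf d$ lies in $\cK(R_G)$ unless there is a vector $\mathbf y\in\mathbb R^I$ with $\sum_{i\in e}y_i\ge0$ for every edge $e$ of $R_G$ (with multiplicity) and $\sum_i d_iy_i<0$.

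\textbf{Step 1: exclude a fractional cover below $1/3$.} Suppose first that $R_G$ has no perfect fractional matching. By linear-programming duality there is a fractional vertex cover $c$ with $\sum_i\lambda_ic(i)<1/3$. Exactly as in the proof of \Cref{lem:lift-F2}, we normalise via $c'=\min\{(c-m)/(1-3m),1\}$ so that $\min_ic(i)=0$. \Cref{lem:cover-separator} with $\xi=0$ then produces a zero-weight root $x$ and a separator $A\dot\cup S\dot\cup T$ with $\lambda(A)-\lambda(S)\ge1/3$.
\begin{itemize}
\item If $p\le p_\star$, \Cref{lem:space-obstruction} gives $\alpha\le1/3=\thr_1(p)$, a contradiction.
\item If $p_\star<p\le p_0$, it gives $\alpha\le\thr_2(p)$, again a contradiction.
\item If $p_0<p\le1/3$, it gives $p\le r^3$, contradicting $p>p_0$.
\end{itemize}
All of this is first carried out at zero error and then transferred to small $\eta$ by the compactness passage. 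The limit of the normalised covers is again a cover of the limiting $R_G$, because the triad incidences are fixed along the subsequence.

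\textbf{Step 2: a uniform gap for normalised covers.} Next we strengthen Step 1 to a quantitative statement. There should be $\xi_0>0$, depending only on $p$, $\alpha$ and the cleaning bounds, such that for every sufficiently small $\eta$, every fractional cover $c$ of $R_G$ with $\min c=0$ satisfies $\sum_i\lambda_ic(i)\ge1/3+\xi_0$.

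We prove this by contradiction and compactness. Take $\eta_m\to0$ and normalised covers $c_m$ with weight at most $1/3+1/m$. Pass to a subsequence on which the cluster set, the state sets and the triad incidences are fixed while the weights and the covers $c_m\in[0,1]^I$ converge. Since the index set is finite, the minimum-zero condition survives in the limit. The limit is then a zero-error admissible system carrying a normalised cover of weight at most $1/3$.

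The $\xi=0$ separator and \Cref{lem:space-obstruction} apply verbatim in this limit, giving the same contradiction as in Step 1. One point needs care here: the proof of \Cref{lem:space-obstruction} uses only non-strict inequalities together with $s\le1/3$. It therefore survives the limit, provided (C1) also holds for the limiting system, which it does by the same compactness argument.

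\textbf{Step 3: robust demands.} Now let $\mathbf d$ satisfy $|d_i-\lambda_i|\le\rho\lambda_i$, and suppose $\mathbf d\notin\cK(R_G)$. Farkas gives a vector $\mathbf y$ with $\sum_{i\in e}y_i\ge0$ on every edge and $\sum_id_iy_i<0$.

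We convert $\mathbf y$ into a normalised cover. Scale so that $\max|y_i|=1$ and shift to $c:=(\mathbf y-\min\mathbf y)/(\text{scale})$, with the aim of writing $\mathbf y$ as $c-\frac13\cdot(\text{const})$. A cleaner route is to take a vertex $u$ of the polyhedron of covers. Equivalently, we use that the set of $\mathbf d$ lying in the cone is closed under this dual test, and reduce to the following claim.

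\emph{Claim.} Every $\mathbf y$ with non-negative edge sums satisfies
\[
\sum_i\lambda_iy_i\ge\kappa\,\max_i|y_i|
\]
for some fixed $\kappa>0$. Given the claim, choose $\rho<\kappa$. Then
\[
\sum_id_iy_i\ge\sum_i\lambda_iy_i-\rho\max|y_i|>0,
\]
which is a contradiction, since $\sum_i\lambda_i=1$.

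To prove the claim, write $m:=-\min y_i$. If $m\le0$, then $\mathbf y\ge0$, and we need a positive lower bound on $\sum\lambda_iy_i$ relative to the maximum. Since every edge has three vertices, the vector $c=\mathbf y/(3\max y)+$ normalisation gives this via the Step 2 gap, after applying the truncation trick. If $m>0$, then $c:=(\mathbf y+m\mathbf1)/(3m)$ is a cover with minimum zero. The Step 2 gap yields $\sum\lambda_ic(i)\ge1/3+\xi_0$, that is $\sum\lambda_iy_i\ge3m\xi_0$, and we bound $\max|y_i|$ by $O(m)$ after truncating entries above $2m$, which preserves the cover property.

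\textbf{Main obstacle.} The delicate step is Step 2's compactness: making sure that the limiting covers stay normalised, and that the separator lemma, stated for exactly zero slack, correctly absorbs the limit. The secondary difficulty is the truncation bookkeeping in the claim.
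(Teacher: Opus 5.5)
Your proposal is correct and follows the paper's proof essentially step for step: LP duality and normalisation to a minimum-zero cover, the $\xi=0$ separator together with \Cref{lem:space-obstruction} at zero error, a compactness argument for a uniform gap $\xi_0$ (relying, as the paper does, on the separator and space-obstruction bounds being non-strict), and then Farkas' lemma with truncation of the normalised certificate to obtain the robust cone. The only thing to tidy is Step 3, in two places.

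\emph{The case $\min_i y_i\ge0$.} This case is vacuous, because $\mathbf d\ge0$ already forces $\langle\mathbf d,\mathbf y\rangle\ge0$. Your appeal to the gap there does not work, since $\mathbf y/(3\max_i y_i)$ need not be a cover; for instance $\mathbf y=\mathbf 1_{\{j\}}$ has zero sum on every edge avoiding $j$.

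\emph{The truncation at $2m$.} It should be applied to the Farkas certificate itself, not used to prove your claim for untruncated $\mathbf y$. This is legitimate because truncation preserves non-negative edge sums and only lowers $\langle\mathbf d,\mathbf y\rangle$. Afterwards $\max_i|y_i|\le2m$, and any $\rho<3\xi_0/2$ suffices.
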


\begin{proof}
We first work at zero error.  Suppose that $R_G$ has no perfect
fractional matching.  By linear-programming duality it has a fractional
vertex cover $c$ of weighted size smaller than $1/3$.  If
$m:=\min_i c(i)$, then $m<1/3$ and
$c_0(i):=\min\{(c(i)-m)/(1-3m),1\}$ is again a fractional vertex
cover, has minimum zero, and still has weighted size smaller than
$1/3$.  Apply \Cref{lem:cover-separator} to $c_0$ with $\xi=0$.

If $p\le p_\star$, \Cref{lem:space-obstruction} gives
$\alpha\le1/3=\thr_1(p)$, a contradiction.  If
$p_\star<p\le p_0$, it gives
$\alpha\le(r+4r^2-2r^3)/6=\thr_2(p)$, again a contradiction.  If
$p>p_0$, it gives $p\le r^3$, contrary to the definition of $p_0$.
Thus $R_G$ has a perfect fractional matching in every zero-error
system.

We next claim that there are constants $\xi_0>0$ and $\eta_0>0$ such
that, whenever $0<\eta<\eta_0$, every fractional vertex cover $c$ of
the canonical $R_G$ with $\min_i c(i)=0$ satisfies
$\sum_i\lambda_i c(i)\ge1/3+\xi_0$.  Otherwise, there is a sequence of
cleaned admissible systems with $\eta\to0$ and normalised covers whose
weighted sizes tend to at most $1/3$.  By bounded complexity and the
positive cleaning thresholds, pass to a subsequence in which the
cluster set and all retained incidences are fixed, while the weights and
cover values converge.  The limit is a zero-error admissible system
with a normalised fractional cover of weighted size at most $1/3$.
The first part of the proof forces its weight to be exactly $1/3$;
applying \Cref{lem:cover-separator} with $\xi=0$ and then
\Cref{lem:space-obstruction} gives the same contradiction as above.
This proves the uniform gap.

In particular, for sufficiently small $\eta$, $R_G$ has a perfect
fractional matching: otherwise a cover of weight smaller than $1/3$
could be normalised as above, contradicting the gap.  Let now $c$ be
any fractional vertex cover and write
$\sum_i\lambda_i c(i)=1/3+\varepsilon$ and
$D(c):=\sum_i\lambda_i|c(i)-1/3|$.  Then $\varepsilon\ge0$.  We claim
that $\varepsilon\ge\rho_0D(c)$ for some fixed $\rho_0>0$.

Let $m:=\min_i c(i)$.  If $m\ge1/3$, then $D(c)=\varepsilon$.  If
$m<1/3$, put $a:=1/3-m$ and define
$c'(i):=\min\{(c(i)-m)/(3a),1\}$.  The uniform gap gives
\[
    \frac13+\xi_0
    \le\sum_i\lambda_i c'(i)
    \le\frac13+\frac{\varepsilon}{3a},
\]
so $\varepsilon\ge3\xi_0a$.  If
$N:=\sum_i\lambda_i(1/3-c(i))_+$, then $N\le a$ and
$D(c)=\varepsilon+2N\le\varepsilon+2a$.  Hence
$\varepsilon\ge\rho_0D(c)$, for example with
$\rho_0:=\min\{1/3,\xi_0\}$.

Choose $0<\rho<\rho_0$ and suppose that
$|d_i-\lambda_i|\le\rho\lambda_i$ for every $i$.  For every fractional
cover $c$, writing $w_i:=c(i)-1/3$, we obtain
\[
    \sum_i d_iw_i
    \ge
    \sum_i\lambda_iw_i-
       \rho\sum_i\lambda_i|w_i|
    \ge(\rho_0-\rho)D(c)\ge0.
\]
We claim that these inequalities imply $\mathbf d\in\cK(R_G)$.
Otherwise Farkas' lemma gives a vector $y$ with
$\langle y,\mathbf1_e\rangle\ge0$ for every $e\in E(R_G)$ but
$\langle y,\mathbf d\rangle<0$.  If $\min_i y_i\ge0$, this is
impossible since $\mathbf d\ge0$.  Otherwise let $m:=\min_i y_i<0$.
Then $c_i:=(y_i-m)/(-3m)$ is a fractional vertex cover; truncating it
at one preserves the cover property and only decreases its pairing with
$\mathbf d$.  Applying the preceding inequality to the truncated cover
and then undoing the normalisation gives
$\langle y,\mathbf d\rangle\ge0$, a contradiction.  Therefore
$\mathbf d\in\cK(R_G)$.

The constants $\xi_0$, $\rho_0$ and $\rho$ depend only on
$p$, $\alpha$ and the fixed cleaning bounds, and hence the same
positive value of $\rho$ works for every admissible system under
consideration.
This proves \textup{(C2)}.

\end{proof}

\subsection{Aperiodicity: \textup{(C3)}}
\label{subsec:canonical-C3}

We next turn from the support geometry of $G$ to the internal dynamics
of its transition digraph.  Every retained triad gives a directed
$3$-cycle, so the period of $G$ divides three; the only possible
obstruction is therefore a genuine period-three decomposition.  The two
canonical regimes exclude this obstruction in different ways.  Below
$p_\star$ the support statement in \textup{(C1)} leaves a pure
canonical diagonal outside the unique pocket, whereas above $p_\star$
pair-completeness and reversal symmetry force the sharp bound
$\alpha\le r^2/2$.

\begin{lemma}[Aperiodicity]
\label{lem:canonical-aperiodic}
The canonical transition component $G$ is aperiodic.
\end{lemma}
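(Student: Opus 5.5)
We work at zero error, using the compactness passage set up at the start of this section. Every retained triad creates a directed $3$-cycle inside one strongly connected representative $G_0$ of $G$, so the period of $G_0$ divides three. It therefore suffices to exclude period exactly three.

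Suppose, for contradiction, that the states of $G_0$ split as $\mathcal C_0\cup\mathcal C_1\cup\mathcal C_2$, with every transition going from $\mathcal C_r$ to $\mathcal C_{r+1}$. For a cluster $i$, let $s_i^r$ denote the weight of states over $(i,i)$ lying in $\mathcal C_r$. As in the proof of \Cref{lem:reduced-pair-state-main}, the retained canonical triads on the diagonal triple $(i,i,i)$ then have total weight at most $3s_i^0s_i^1s_i^2\le\frac19\bigl(\sum_r s_i^r\bigr)^3=g_{ii}^3/9$, where $g_{ii}=w_G(ii)$. The plan is to find one cluster whose canonical diagonal triads must carry more weight than this.

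\emph{Case $p\le p_\star$.} By (C1), either $G$ is the only component with nonempty support, or all noncanonical supports lie in a proper set $J\subsetneq I$. In either case there is a cluster $i\notin J$ (any $i$ in the first case). All pairs through $i$ are then pure canonical, so $g_{ii}=1$, and every retained triad on $(i,i,i)$ lies in $G$. Density gives weight at least $p$, while the periodic bound gives at most $1/9$. When $p>1/9$ this is a contradiction. For $p\le1/9$ this diagonal count alone does not suffice, and I would instead use the codegree condition $\alpha>1/3$ on a diagonal state $a\in\mathcal C_r$ over $(i,i)$. Its extensions through a cluster $k$ use states in $\mathcal C_{r+1}$ over $(i,k)$ and in $\mathcal C_{r+2}$ over $(k,i)$. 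Reversal sends $\mathcal C_{r+2}$ over $(k,i)$ to a class over $(i,k)$ with a shifted index. Summing the extension inequalities over $r$, weighted by $s_i^r$, with $t_k^r$ the class masses over $(i,k)$, should bound the total extension weight by something like $\sum_k\lambda_k\sum_r t_k^{r}t_k^{r'}$ for mismatched indices, which is at most $1/3$. This contradicts $\alpha>1/3$. I expect getting the exact index bookkeeping under reversal right to be delicate but routine.

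\emph{Case $p>p_\star$.} Here $G$ is pair-complete, and \Cref{lem:two-colour-profile}(ii) gives $g_{ij}\in[p,1]$. Reversal maps $G_0$ to $G_0^\top$; if $G_0^\top=G_0$, transposition reverses the cyclic order, so it maps $\mathcal C_r$ to $\mathcal C_{-r+c}$ for some constant $c$. I would average the codegree inequality $\ext(a)\ge\alpha$ over canonical states $a$ in one class, as above. Combined with the noncanonical states (mass at most $r$ per pair), this should give $\alpha\le\max\{\text{canonical share},\ \text{noncanonical share}\}\le r^2/2$, in the same way the two-pocket construction attains $r^2/2$. Since $\alpha>\thr(p)\ge r^2/2$ throughout $p_\star<p\le1/3$ (indeed $\thr_2(p)\ge r^2/2$ on $(p_\star,p_0]$), this is a contradiction.

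The main obstacle is this last averaging: showing that a period-three split forces every canonical extension to pass through mismatched classes, carrying a factor $1/2$ after symmetrisation by reversal. I would first try to prove, for each cluster pair, the pointwise bound $\sum_r t^r t^{r+1}\le\frac12(\sum_r t^r)^2$-type estimate on class masses, then integrate it against $\lambda_k$, and finally combine with the noncanonical contribution using $g\ge p$.
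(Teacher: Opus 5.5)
\textbf{The case $p\le p_\star$} is essentially the paper's argument. A cluster $i\notin J$ has $ii$ pure canonical. The diagonal count $3x_0x_1x_2\le 1/9$ disposes of $p>1/9$, and extension weights handle $p\le 1/9$. Two small repairs are needed.

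\begin{itemize}
\item When $G_0^\top\ne G_0$, your $s_i^r$ (defined from $G_0$ alone) need not sum to $g_{ii}$. Assign phases to the whole reversal class instead, so that transposition sends phase $t$ to phase $-t$, and bound all canonical diagonal triads at once.
\item Do not weight by $s_i^r$; a weighted sum does not give $1/3$. The retained density on $(i,i,i)$ is positive, so every phase occurs on $ii$. Pick $a_t$ of phase $t$ and sum $\alpha\le\ext(a_t)\le\sum_k\lambda_k x^{t+1}_{ik}x^{t+2}_{ki}$ over $t$ without weights. The three products are distinct terms of $\bigl(\sum_t x^t_{ik}\bigr)\bigl(\sum_t x^t_{ki}\bigr)\le 1$, so $3\alpha\le 1$. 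No reversal bookkeeping is needed here.
\end{itemize}

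\textbf{The case $p>p_\star$ has a genuine gap.} The claim ``$\alpha\le\max\{\text{canonical share},\text{noncanonical share}\}\le r^2/2$'' is not supported by any inequality. A per-pair estimate on class masses such as $\sum_r t^rt^{r+1}\le\frac12(\sum_r t^r)^2$ cannot produce the factor $1/2$. The paper's proof uses three ingredients that your proposal lacks.

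\begin{itemize}
\item[(a)] \emph{The diagonal canonical mass lies entirely in the self-transposed phase $0$.}
\begin{itemize}
\item First, $y_{ii}>0$, since otherwise the density on $(i,i,i)$ is at most $3x^0_{ii}x^1_{ii}x^2_{ii}\le 1/9<p$.
\item Next, suppose $x^1_{ii}>0$. Reversal gives $x^2_{ii}>0$. A phase-$1$ diagonal state has $\ext\le\sum_k\lambda_k x^0_{ik}x^2_{ik}$, and a noncanonical diagonal state has $\ext\le\sum_k\lambda_k y_{ik}^2$.
\item The pointwise bound $4x^0x^2+y^2\le 1$ then gives $5\alpha\le 1$, a contradiction.
\item By pair-completeness, a phase-$0$ diagonal state therefore exists. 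It extends only through phase $1$ over $(i,j)$ and its transpose over $(j,i)$, so $\alpha\le\sum_j\lambda_j(x^1_{ij})^2$.
\end{itemize}
Without this step you do not know which phases the diagonal extensions use. Products like $x^0x^2$ then appear in place of a square.

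\item[(b)] \emph{An upper bound $x^1_{ij}\le s_{ij}\le r$ on the canonical mass.} Your bound $g_{ij}\ge p$ only controls the noncanonical mass. One must exclude pure canonical pairs.
\begin{itemize}
\item If $y_{ij}=0$, the triples $(i,i,j)$ and $(j,j,i)$ give $p\le x^0_{ii}(x^1_{ij})^2$ and $p\le x^0_{jj}(x^2_{ij})^2$.
\item With $x^0\le r$ this forces $p\le 1/5$, a contradiction.
\item \Cref{lem:two-colour-profile}(ii) then gives $s_{ij}\le r$.
\end{itemize}

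\item[(c)] \emph{The factor $1/2$ comes from choosing the root cluster, not from a per-pair bound.} Per pair, $(x^1_{ij})^2$ can be as large as $r^2$. Put $d_1(i):=\sum_j\lambda_jx^1_{ij}$. Reversal gives $x^1_{ji}=x^2_{ij}$, so $2\sum_i\lambda_i d_1(i)=\sum_{i,j}\lambda_i\lambda_j(x^1_{ij}+x^2_{ij})\le r$, using (b). Hence some $i$ has $d_1(i)\le r/2$. Then, using (b) again, $\sum_j\lambda_j(x^1_{ij})^2\le r\,d_1(i)\le r^2/2$.
\end{itemize}

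Only after (a)--(c) does the comparison with $\thr(p)\ge r^2/2$ on $(p_\star,1/3]$ close the argument.
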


\begin{proof}
Suppose that $G$ has period three.  Choose a strongly connected
representative $C$ of $G$ with cyclic classes $C_0,C_1,C_2$.
If $C^\top\ne C$, give $C^\top$ the corresponding cyclic classes so
that transposition sends phase $t$ to phase $-t$ modulo three.  If
$C^\top=C$, relabel $C_0,C_1,C_2$ so that the same property holds;
this is possible because transposition reverses every transition.
Let $G_t$ denote the total phase-$t$ part of the reversal class $G$.

Assume first that $p\le p_\star$.  By \textup{(C1)}, either $G$ is the
only component or all noncanonical supports lie in one proper set
$J$.  Hence there is an index $i$ such that the diagonal pair $ii$ is
pure canonical.  Let $x_t$ be the total mass of phase $t$ over $ii$.
If $p>1/9$, then the retained density on $(i,i,i)$ is at most
$3x_0x_1x_2\le1/9$, a contradiction.

Suppose that $p\le1/9$.  Since the retained density on $(i,i,i)$ is
positive, each phase occurs on $ii$.  Choose a state $a_t$ of phase
$t$ over $ii$.  If $x^t_{ik}$ denotes the phase-$t$ mass over the
ordered pair $(i,k)$, then
\[
    \alpha\le \ext(a_t)
    \le\sum_k\lambda_k x^{t+1}_{ik}x^{t+2}_{ki},
\]
with indices modulo three.  Summing over $t$ and using
$\sum_t x^t_{ik}\le1$ and $\sum_t x^t_{ki}\le1$ gives
$3\alpha\le1$, contrary to $\alpha>1/3$.  Thus the first regime is
aperiodic.

Now suppose that $p>p_\star$.  For each ordered pair put
\[
    x^t_{ij}:=\sum_{a\in\mathcal A_{ij}\cap G_t}w(a),
    \qquad
    y_{ij}:=1-x^0_{ij}-x^1_{ij}-x^2_{ij}.
\]
By the phase convention,
$x^t_{ji}=x^{-t}_{ij}$ and $y_{ji}=y_{ij}$.

We first claim that $y_{ii}>0$ for every $i$.  Otherwise every state
over $ii$ belongs to $G$, and the retained density on $(i,i,i)$ is at
most $3x^0_{ii}x^1_{ii}x^2_{ii}\le1/9<p$, a contradiction.  Next,
$x^1_{ii}=x^2_{ii}=0$.  Indeed, if $x^1_{ii}>0$, then reversal gives
$x^2_{ii}>0$.  A phase-$1$ state over $ii$ has extension at most
$\sum_k\lambda_kx^2_{ik}x^0_{ik}$, while an outside state over $ii$
has extension at most $\sum_k\lambda_ky_{ik}^2$.  Since pointwise
$4x^0_{ik}x^2_{ik}+y_{ik}^2\le1$, we obtain $5\alpha\le1$, contrary
to $\alpha>\thr(p)\ge2/9$.  Pair-completeness now gives
$x^0_{ii}>0$, and \Cref{lem:two-colour-profile}\textup{(ii)} gives
$x^0_{ii}\le r$.

No pair is pure canonical.  Otherwise, for some distinct $i,j$ we
have $y_{ij}=0$.  The repeated triples $(i,i,j)$ and $(j,j,i)$ give
\[
    p\le x^0_{ii}(x^1_{ij})^2,
    \qquad
    p\le x^0_{jj}(x^2_{ij})^2.
\]
Since the two diagonal masses are at most $r$, we get
$x^1_{ij},x^2_{ij}\ge\sqrt{p/r}$, and hence
$1\ge2\sqrt{p/r}$.  This implies $p\le1/5$, contrary to
$p>p_\star$.  Therefore every canonical pair mass
$s_{ij}:=x^0_{ij}+x^1_{ij}+x^2_{ij}$ lies in $(0,1)$, and another
application of \Cref{lem:two-colour-profile}\textup{(ii)} gives
$s_{ij}\le r$.

Put $d_1(i):=\sum_j\lambda_jx^1_{ij}$.  Reversal gives
\[
    2\sum_i\lambda_i d_1(i)
    =
    \sum_{i,j}\lambda_i\lambda_j
    \bigl(x^1_{ij}+x^2_{ij}\bigr)
    \le r.
\]
Choose $i$ with $d_1(i)\le r/2$.  Since $x^1_{ij}\le r$,
\[
    \sum_j\lambda_j(x^1_{ij})^2\le\frac{r^2}{2}.
\]
A phase-$0$ diagonal state over $ii$ extends only through a phase-$1$
state over $(i,j)$ and its reversed phase-$2$ state over $(j,i)$, so
\[
    \alpha
    \le\sum_j\lambda_jx^1_{ij}x^2_{ji}
    =\sum_j\lambda_j(x^1_{ij})^2
    \le\frac{r^2}{2}.
\]
This contradicts $\alpha>\thr(p)$, since
$\thr_2(p)>r^2/2$ on $(p_\star,p_0]$ and
$\thr_3(p)=r^2/2$ on $(p_0,1/3]$.

This proves \textup{(C3)} at zero error, and hence for all sufficiently
small $\eta$ by the compactness passage above.
\end{proof}

\subsection{Two-root consistency: \textup{(C4)}}
\label{subsec:canonical-C4}

It remains to compare canonical components belonging to two different
one-root deletions.  The common joint refinement places the two state
systems on a common finite refinement, so a failure to share a positive
joint state would separate the states of the first canonical component
from all states of the second.  Aggregating the first component against
its complement then produces exactly the closed two-colour profile
studied in \Cref{subsec:canonical-C1}.  Thus the uniqueness mechanisms
behind \textup{(C1)} also force the required consistency.

\begin{lemma}[Common joint state]
\label{lem:canonical-common-state}
The two canonical components share a positive joint state on the common
clusters.
\end{lemma}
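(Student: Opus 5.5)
Work in the common joint refinement of \Cref{lem:component-coloured-slice}(R4), restricted to the common clusters (those of $V(J)\setminus\{x,y\}$), and argue at zero error; the compactness passage at the start of this section then transfers the conclusion to all sufficiently small $\eta$. Write $G_x$ and $G_y$ for the canonical components of the two projected systems. Every joint state $\sigma$ carries a colour pair $(C_x(\sigma),C_y(\sigma))$, so it projects to a state of each deletion. Suppose, for a contradiction, that no joint state of positive weight lies in $G_x$ in the first coordinate and in $G_y$ in the second coordinate.

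Put $u_{ij}$ for the total joint weight over $(i,j)$ of states whose first projection lies in $G_x$, so that $u_{ij}=w_{G_x}(ij)$ up to the negligible loss coming from the deleted root. By assumption, every joint state whose second projection lies in $G_y$ is counted by $1-u_{ij}$, hence $w_{G_y}(ij)\le 1-u_{ij}$ for all $i,j$. Since a retained triad of either deletion lies entirely inside one transition component, \Cref{lem:two-colour-profile}(i) applies to $u$ on the common clusters (the roots change the projected densities and extension weights by $o(1)$, which is harmless for the strict inequalities used). We obtain $\Phi(u_{ij},u_{jk},u_{ki})\ge p$, together with $K_u(i,j)\ge\alpha$ when $u_{ij}>0$. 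Moreover $K_{1-u}(i,j)\ge K_{G_y}(i,j)\ge\alpha$ whenever $w_{G_y}(ij)>0$.

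Split into the two regimes, mirroring \Cref{lem:canonical-selection}.

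If $p\le p_\star$, both canonical components are spanning. By the observation opening \Cref{subsec:canonical-C1}, this gives $d_u(i)\ge\alpha$ and $1-d_u(i)\ge d_{G_y}(i)\ge\alpha$ for every common $i$. The Goodman argument of \Cref{lem:two-colour-profile}(iv) then yields $\alpha\le1/3$, contradicting $\alpha>\thr_1(p)=1/3$.

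If $p>p_\star$, both $G_x$ and $G_y$ are pair-complete. Hence $0<u_{ij}<1$ on every pair, and both $u$ and $1-u$ satisfy the extension condition on every pair. This is exactly the situation of the uniqueness part of \Cref{lem:canonical-selection}:
\begin{itemize}[leftmargin=2em]
\item part (ii) of \Cref{lem:two-colour-profile} gives $u_{ij}\in[p,r]$;
\item the bound $(p^2+r^2)/2<p$ forces all entries onto one side of $1/2$;
\item the estimate $x\le1-2p$ then gives $K\le(1-2p)^2<\alpha$, a contradiction.
\end{itemize}

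The main obstacle is the first step: justifying that the assumed absence of a positive common state really yields the pointwise inequality $w_{G_y}\le 1-u$ together with the full two-colour profile. This needs three ingredients.
\begin{enumerate}[label=\textup{(\alph*)},leftmargin=2.2em]
\item Joint-state weights must project correctly onto both systems. Here one uses that both projected systems come from one common regularisation, so triads of each deletion are unions of joint triads.
\item The zero-weight clusters containing $x$ or $y$ must be discarded without affecting spanning or pair-completeness.
\item The retained triads used for the profile on $u$ must be those of the first deletion, while the extension bound on $1-u$ comes from $G_y$ in the second deletion. The profile lemma therefore has to be applied in this mixed form, and I would recheck that its proofs only use these two one-sided inputs.
\end{enumerate}
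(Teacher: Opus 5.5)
Your proposal is correct and follows the paper's proof essentially step for step. It uses the same mass $u_{ij}$ of joint states whose first projection lies in $G_x$, the Goodman bound of \Cref{lem:two-colour-profile}\textup{(iv)} when $p\le p_\star$, and, when $p>p_\star$, pair-completeness, part \textup{(ii)}, the corner estimate $(p^2+r^2)/2<p$ and $K_u(i,j)\le(1-2p)^2<\alpha$. On your concern (c): since every retained joint triad lies inside $U$ or avoids it, each joint state outside $U$ (not only those carrying $G_y$-mass) has extension weight at least $\alpha-\eta$ through triads avoiding $U$, which gives the full profile \textup{(i)} the Goodman step needs; on (b), the roots are distinguished vertices placed in $V_0$, so there are no root clusters to discard.
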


\begin{proof}
Let $G_x$ and $G_y$ be the two canonical components and suppose that
they have no common positive joint state.  On every common ordered
cluster pair, let $U$ be the set of joint states whose first projection
belongs to $G_x$, and let $u_{ij}$ be its total mass.  Every retained triad on the common
clusters is either entirely contained in $U$ or disjoint from $U$,
while the complement contains all states of $G_y$.  Hence the joint
profile satisfies the conclusions of
\Cref{lem:two-colour-profile}, with the complementary cherry bounded
below by the $G_y$-cherry.

If $p\le p_\star$, both canonical components are spanning.  Thus
$d_u(i)\ge\alpha$ and $1-d_u(i)\ge\alpha$ for every common cluster,
and \Cref{lem:two-colour-profile}\textup{(iv)} gives
$\alpha\le1/3$, a contradiction.

Suppose now that $p>p_\star$.  Both canonical components are
pair-complete, so $0<u_{ij}<1$ on every common pair.  Part
\textup{(ii)} of \Cref{lem:two-colour-profile} gives
$p\le u_{ij}\le r$.  A triple cannot contain one pair mass at most
$1/2$ and another at least $1/2$, since the same corner estimate used in
\Cref{lem:canonical-selection} gives
$\Phi(a,b,c)\le(p^2+r^2)/2<p$.  Hence all pair masses lie on the same
side of $1/2$.  Replacing $u$ by $1-u$ if necessary, assume
$p\le u_{ij}<1/2$ for every pair.

Fix $i,j,k$ and put $t:=\sqrt{u_{ik}u_{jk}}\le1/2$.  By AM--GM and
the closed density inequality,
\[
    p
    \le\Phi(u_{ij},u_{ik},u_{jk})
    \le p t^2+r(1-t)^2.
\]
Hence $t\le1-2p$.  Averaging over $k$ gives
$K_u(i,j)\le(1-2p)^2$, while $u_{ij}>0$ gives
$K_u(i,j)\ge\alpha$.  Therefore
$\alpha\le(1-2p)^2<\thr(p)$, again a contradiction.
Thus the two canonical components share a positive joint state.
\end{proof}

This proves \textup{(C4)} at zero error, and hence for all sufficiently
small $\eta$ by the compactness passage above. Together with
\textup{(C1)}--\textup{(C3)}, it completes the proof of
\Cref{thm:canonical}.

\section{Concluding remarks}
\label{sec:concluding}

In this paper, we have determined the minimum-codegree threshold for
tight Hamilton cycles in linearly quasirandom $3$-graphs throughout
the full density range $p\in(0,1)$. Our proof uses a common ordered pair-state regularity
language, but the relevant reduced structure changes at the density
barrier $1/3$.  Above $1/3$, the whole state digraph is primitive and
arbitrary endpoint pairs can be connected.  At and below $1/3$, the
whole digraph may be disconnected, and the correct object is instead a canonical aperiodic transition
component whose actual tight colour is stable under bounded
refinements.

A notable feature of the result is that the threshold function is
discontinuous, with two genuine jumps.  This differs from many familiar
extremal threshold problems in which the governing obstruction varies
continuously with the density parameter.  In the present setting, the
discontinuities correspond to abrupt changes in the extremal mechanism
governing Hamiltonicity, despite the ambient assumption of linear
quasirandomness.  More generally, it would be interesting to investigate
when such discontinuous transitions can occur in quasirandom hypergraph
problems, which structural obstructions are responsible for them, and
whether several distinct transitions can arise within a single threshold
problem.

In the present problem, the two jumps have different origins.  The
point $p_0$, defined by $p_0=(1-p_0)^3$, marks the disappearance of the
fractional-matching, or space, obstruction.  Consequently, the jump at
$p_0$ is caused by the loss of a feasible extremal mechanism rather
than by two threshold curves crossing.  The second jump occurs at
$p=1/3$.  Below this density, Hamiltonicity can be governed by a
suitable canonical tight component even though arbitrary pairs need not
be connectable, whereas above $1/3$ the relevant reduced structure is
the whole pair-state digraph and global pair-connectability becomes
available.  The discontinuity at $1/3$ therefore reflects a genuine
change in the connectivity mechanism underlying the Hamilton cycle.

The component-coloured regular slice and the reduced-to-actual demand
transfer developed here may be useful in other quasirandom spanning
problems where connectivity, fractional matching and periodicity must
be controlled simultaneously.  It would also be interesting to develop
analogous profile theorems for higher uniformity, other minimum
$\ell$-degree conditions, and different notions of hypergraph
quasirandomness, particularly in settings where further phase
transitions may appear.

\section*{Acknowledgements}
The author is grateful to Richard Lang for suggesting that the
Hamiltonicity framework developed in his joint work with
N.~Sanhueza-Matamala \cite{LangSanhuezaMatamala2026} could be useful
for the remaining cases.  This suggestion was made after the first
version of the paper appeared and proved useful in the subsequent
development of the paper.
The author is also grateful to Daniel Kr\'al' for
reading an earlier version of the paper and for several helpful comments.

The author also acknowledges the use of ChatGPT by OpenAI in the early
stages of this project and during the preparation of the manuscript.  It
was used to improve the language, organisation, and presentation of the
manuscript, to assist in revising preliminary drafts, and as an
interactive tool for discussing possible proof strategies and extremal
constructions.  In particular, ideas underlying most of the lower-bound constructions
in the paper arose during interactions with ChatGPT.  The author assumes
full responsibility for all statements, proofs, constructions, citations,
and conclusions presented in this paper.

\bibliographystyle{abbrv}
\bibliography{reference_new}

\end{document}